\newtheorem{thm}{Theorem}[section]
\newtheorem{lem}[thm]{Lemma}
\newtheorem{prop}[thm]{Proposition}
\newtheorem{claim}[thm]{Claim}
\newtheorem{cor}[thm]{Corollary}
\theoremstyle{definition}
\newtheorem{defn}[thm]{Definition}
\theoremstyle{remark}
\newtheorem{rem}[thm]{Remark}
\theoremstyle{Fact}
\theoremstyle{Example}
\newcounter{Ass}
\theoremstyle{definition}
\newtheorem{Assumption}[Ass]{Assumption}
\numberwithin{equation}{section}
\begin{document}
	
	

	
	\title[Optimal savings and Value of Population]{Optimal Savings and Value of Population in A Stochastic Environment: Transient Behavior}

	\author[Hao Liu,    Suresh P. Sethi,   Tak Kwong Wong, and  Sheung Chi Phillip Yam]
	{
		Hao Liu$^{1}$,
		Suresh P. Sethi$^{2}$, Tak Kwong Wong$^{3}$,   and Sheung Chi Phillip Yam$^{4}$}
	\thanks{${}^1$ School of Mathematical Sciences and Institute of Natural Sciences, Shanghai Jiao Tong University, Shanghai, China (mathhao.liu@sjtu.edu.cn)}
	\thanks{${}^2$ Jindal School of Management, University of Texas at Dallas, Richardson, TX	75080-3021, USA (sethi@utdallas.edu)}
	\thanks{${}^3$ Department of Mathematics, The University of Hong Kong, Pokfulam, Hong Kong (takkwong@maths.hku.hk)}
	\thanks{${}^4$ Department of Statistics, The Chinese University of Hong Kong, Shatin, Hong Kong (scpyam@sta.cuhk.edu.hk)}

	\maketitle
	\textit{We dedicate this paper to the memory of Kenneth J. Arrow;  it extends the research he initiated and pursued further with a coauthor of this paper.}

	\textbf{Abstract}%
	
	\selectfont
	We  extend the work on optimal investment and consumption of a population considered in \cite{arrow2007optimal}
	to a general stochastic setting over a finite time horizon. 
	We  incorporate the Cobb-Douglas production function    in the  capital dynamics while the consumption utility function and the drift rate in the population dynamics can be general, 
	in contrast with   \cite{arrow2007optimal,morimoto2008optimal2,morimoto2008optimal}. 
	The dynamic programming formulation yields an unconventional nonlinear Hamilton-Jacobi-Bellman (HJB) equation, in which the Cobb-Douglas production function as the coefficient of the gradient of the value function induces the mismatching of power rates between capital and population. Moreover, the equation has a very singular term, essentially a very negative power  of the partial derivative of the value function with respect to the capital, coming from the optimization of control, and their resolution turns out to be a complex problem not amenable to classical analysis.
	To show that this singular term, which has not been studied in any physical systems yet, does not actually blow up, we establish new pointwise generalized power laws for the partial derivative of the value function. Our contribution lies in providing a theoretical treatment that combines both the probabilistic approach and theory of partial differential equations to derive the pointwise upper and lower bounds as well as energy estimates in weighted Sobolev spaces. By then, we  accomplish showing the well-posedness of classical solutions to a non-canonical parabolic equation arising from a long-lasting problem in macroeconomics.

Unlike the stationary problem  in \cite{arrow2007optimal}, the intricate transient behavior is only evident over finite time horizons;
particularly, if the production function $F$ is  dominant, the capital will be steadily built and then reach  an equilibrium level; otherwise, with a weak influence of $F$, the capital will gradually shrink to a lower level. Further, due to the nonlinearity of the system, there could be a phase transition at certain critical parameter thresholds, under which an overshooting (undershooting, resp.) of the capital dynamics can be observed when the initial population is larger (smaller, resp.) than the equilibrium level. However, the presence of large enough volatility in the capital investment detours us from determining what the exact  transient mechanism of the  underlying economy is.


%


Keywords:{  Cobb-Douglas production function; Stochastic logistic growth model for population; Stochastic optimal control; Two-dimensional  Hamilton-Jacobi-Bellman equation; Weighted Sobolev theory; 
	Control of growth rate;
 Schauder's fixed point theorem.
	} 


%
{
	\hypersetup{linkcolor=blue}
	\tableofcontents
}
\section{Introduction }\label{sec:Introduction}


\subsection{Literature review }\label{sec:Literature review}
In 1928, Frank Ramsey \cite{ramsey1928mathematical} introduced the dynamic problem of optimal savings.
He used the calculus of variations to derive the general principle of maximizing society’s enjoyment by equating the product of the savings rate and the marginal utility of consumption with the bliss point (the maximum obtainable enjoyment rate minus the actual total utility rate). 
Ever since, economists and financial mathematicians have extensively explored various versions of the optimal savings problem; for instances, various related works can be founded in \cite{bank2001optimal,  bensoussan2009optimal, cuoco1997optimal,  deshmukh1983optimal, fleming1991optimal, grandits2016optimal,  kraft2011optimal, lehoczky1983optimal, levy1976multi,     liu2004optimal,  mendelson1982optimal, shreve1994optimal,  xu1992duality, yao2004optimal}.

Following Arrow et al. \cite{arrow2007optimal, arrow2003genuine},  we aim to maximize the total utility of consumption  by a randomly growing population over a finite time horizon and of its terminal wealth in a stochastic economic environment. The vital difference between our model and the existing literature is that both the capital stock and the population size, the states in our model, are subject to stochastic dynamics. As will become clear later, such presence of randomness has subtle implications absent in any deterministic framework.



In the economic growth theory, the society’s capital stock represents the total value of assets used to produce goods and services. These assets produce output to be allocated between consumption and investment. Any individual in the society derives utility from his own consumption, and   investment generates more capital to help sustain future economic growth. As Ramsey \cite{ramsey1928mathematical} pointed out in a deterministic framework, sensible economic planning regarding consumption and investment is in striving for a balance between current and future welfare.

How does  a society plan for its economic future? Arrow et al.  \cite{arrow2003genuine} studied this question 
by formulating an optimal control problem to determine whether and to what extent the optimal consumption policy results in the gain in aggregate welfare.
The population plays two roles. One, it  represents
the working labor force that goes into the production function  along with the capital. Two, the individuals in the population consume and derive utility from the capital acquired.
The concept of total utilitarianism considers  the society’s utility as the product 
of per capita consumption and  the size
of the population. This  concept dates back to the work of  Henry Sidgwick and Francis Edgeworth
in the 1870s \cite{sidgwick2019methods,edgeworth1877new} and  has been echoed by several subsequent works including  Meade \cite{meade1962trade} and
Mirrlees \cite{MirrleesOptimumGrowth}. 

More importantly, Arrow et al. \cite{arrow2003genuine}  also jettisoned the standard exponential growth assumption for the population as unrealistic, given the dramatic reduction in birth rates worldwide. 
The assumption has been a hallmark of the classical economic growth literature mainly because  it helps to eliminate the population size from being a state variable.
The underlying optimal control problem is greatly simplified with capital stock as the only state variable.
Arrow et al. \cite{arrow2007optimal} considered arbitrary population growth until it reached a critical level or  exponential growth until it reached a saturation level, after which there was no more growth.
By letting the population variable serve as the time surrogate, they depicted the optimal path and  its convergence to the long-run equilibrium on a two-dimensional phase diagram. The phase diagram consists of a transient curve that reaches the
classical curve associated with a positive exponential growth when the population reaches the critical mass. In the case of an asymptotic population saturation, the transient curve approaches the equilibrium  as the population tends to its saturation level. The authors also characterized the transient approach to the classical curve and to the equilibrium.

Morimoto and Zhou \cite{morimoto2008optimal} used the Cobb-Douglas production function to study a finite time horizon optimal consumption problem. They used a stochastic differential equation (SDE) to model the population dynamics and an ordinary differential equation (ODE) to model the capital dynamics.  
The resulting equation is of second-order parabolic type and  the spatial dimensions of the equation could be reduced to one  due to the special form of the population drift. Under a boundedness condition on the consumption process, they proved that a unique solution exists using viscosity techniques under a sign condition on the model parameters. They also synthesized the optimal consumption policy in the feedback form. The same problem with the constant elasticity of substitution (CES) production function was studied in 
\cite{adachi2009optimal, morimoto2008optimal2}. 
Huang and Khalili \cite{huang2019optimal} removed the boundedness condition on the consumption process and proved the existence and uniqueness of the solution to the corresponding HJB equation under the same sign condition on the parameters as in
\cite{morimoto2008optimal}.

Here, we consider a more realistic economic growth model where capital and population dynamics can interact on a finite time interval. Moreover, we use SDEs to model both the capital and population dynamics. Relating to \cite{morimoto2008optimal}, it is fair to say that modeling capital by an SDE is more realistic and meaningful since, in a short time, capital increment is noisier and fluctuates more frequently compared to labor force. 
Perhaps, this formulation has not been adopted in the literature due to the difficulty of the required analysis. 
We also consider a more general function for the drift rate of the population dynamics, representing a generalization of the Ornstein–Uhlenbeck process. 
As for the objective function, we consider a more general, both  intertemporal and terminal, utility functions over a finite time horizon, and we only impose some natural growth conditions on them. 
We use dynamic programming and see that the corresponding HJB equation (cf. \eqref{eq:second HJB for v(K,N)} below) is parabolic. 
Unlike \cite{morimoto2008optimal} and \cite{huang2019optimal}, our spatial domain remains two-dimensional and it cannot be reduced to one-dimensional because of the general form of the  drift rate of population growth. Moreover, we are able to dispense with the sign condition on the model parameters used in the papers cited above.

The main  ingredient in  solving the HJB equation \eqref{eq:second HJB for v(K,N)}  is establishing enough a priori estimates along with the using of
analytical tools such as weighted Sobolev spaces, the Feynman-Kac formula, and Schauder’s fixed point theorem. Compared to \cite{arrow2007optimal}, the optimal capital and population paths exhibit some oscillating behavior because of the associated random noise terms. Notably, the random noise enables us to cover different optimal paths of capital with essentially different behavior, which we call the transient behavior of the capital. However, this phenomenon also depends on the power of the production function, which plays a vital role in the optimal path of capital.


\subsection{Model formulation and  HJB equation}\label{sec:Model formulation}
This section formulates the problem and defines relevant notations. Our objective is to maximize the value of society’s utility of consumption over a finite time horizon  $[0,T]$ while we treat the consumption per head as the control variable. The two state variables are the capital stock $K(t)$  of the whole community and the population $N (t)$, and we model them by the following stochastic differential equations: for $0\leq t \leq T$,
\begin{equation}\label{eq:SDE for K}
	\left\{ 
	\begin{aligned}
		dK(t) &= \left[F(K(t),N(t)) -N(t)c(t)\right]dt+\epsilon K(t)dW_K(t),\\
		K(0)&=K>0,
	\end{aligned}
	\right.
\end{equation}
\begin{equation}\label{eq:SDE for N}
	\left\{
	\begin{aligned}
		dN(t) &=  f(N(t))dt + {\sigma} N(t) dW_N(t), \\
		N(0) &=N>0,
	\end{aligned}
	\right.
\end{equation}
where $\epsilon$ and ${\sigma}$ are two positive constants, 
$W_K$ and $W_N$ are two independent Brownian motions on a complete probability space ($\mathit{\Omega}$, $\mathcal{F}$, $\mathcal{F}_t$, $\mathbb{P}$),  while the filtration $\mathcal{F}_t$ being generated jointly by $W_K$ and $W_N$.
We  use $F(K,N)$ to denote the production function, which represents the goods produced by the society using capital stock $K$ and labor force $N$, and control $c(t)$ represents per capita consumption.
We aim to maximize  society’s expected utility of consumption
\begin{equation}\label{eq:optimal control problem}
	J(c) = \mathbb{E}\left[\int_{0}^{T}N(t)u_1(c(t))dt+N(T)u_2\left(\frac{K(T)}{N(T)}\right)\right],
\end{equation}
where $u_1(\cdot)$ is the running utility function of each individual’s consumption and $u_2(\cdot)$ is the individual’s terminal utility of per capita capital.
We  require $c(t)\ge 0$ and  progressively measurable with respect to the filtration $\mathcal{F}_t$.
\begin{rem}
	We do not discount the utilities over time for convenience in notation, given that we are considering a finite horizon problem. It does not alter the underlying mathematical structure much, and we can easily extend our analysis to allow for discounting. 
\end{rem}
We give the following definition:
\begin{defn}\label{def:definition for value function}
	The value function $v(t,K,N)$ -- a function of time $t$, capital stock $K$ and population $N$ is defined as
	\begin{equation}\label{eq:definition for value function}
		v (t, K,N) = \sup_{c \in\mathcal{A}_t} J(t,c):=\sup_{c \in\mathcal{A}_t} \mathbb{E}\left[\int_{t}^{T}N(s)u_1(c(s))dt+N(T)u_2\left(\frac{K(T)}{N(T)}\right)\right],
	\end{equation}
	where $\mathcal{A}_t$, the class of all admissible consumption policies is defined as:
	\begin{equation}\label{def: def of dmissible consumption policies}
		\begin{aligned}
			\mathcal{A}_t:&=\left.\Bigg{\{}c: \mathit{\Omega} \times[t, T] \to \mathbb{R}_+ |~c(s) ~ \textrm{is $\mathcal{F}_s$-progressively measurable}\right.\\
			&\qquad  \left. \textrm{and}~ \int_{t}^{r} c(\omega, s) \frac{N(\omega, s)}{K(\omega, s)}ds < \infty, ~\forall r \in [t,T] ~ ~\textrm{for}~ \mathbb{P}-\text{a.s.}~\omega \in \mathit{\Omega}  \right.\Bigg{\}},
		\end{aligned}
	\end{equation}
	with $K(\omega, s)$ and $N(\omega, s)$ ($t\leq s \leq T$) are characterized 
	by the SDEs \eqref{eq:SDE for K} and \eqref{eq:SDE for N} with the corresponding initials  changed to $K(t)=K>0$ and $N(t)=N>0$.
\end{defn}
Using  the standard Dynamic Programming Principle (DPP), we can derive the corresponding Hamilton-Jacobi-Bellman (HJB) equation for the value function $v$: 
\begin{equation}\label{eq:HJB for v(K,N)}
	\left\{
	\begin{aligned}
		&v_t + F(K,N)v_K + f(N)v_N + \frac{1}{2} {\epsilon}^2K^2v_{KK}+\frac{1}{2}{{\sigma}^2 }N^2v_{NN}\\
		&\quad+ \sup_{\substack {c\ge 0} }\left\{-Ncv_K+Nu_1(c)\right\}=0, ~K, N>0, ~\textrm{for}~ 0\leq t<T,\\
		&v(T) = Nu_2\left(\frac{K}{N}\right).
	\end{aligned}
	\right.
\end{equation}
The first-order condition gives $u'_1(\hat{c})$ = $v_K$, where the optimal control $\hat{c}:=\text{arg} \sup_{\substack {c\ge 0} }\left\{-Ncv_K+Nu_1(c)\right\}$.
Consequently, we have $\hat{c}(\omega,t)=(u'_1)^{-1}(v_K(t, K(\omega), N(\omega)))$.
Therefore,   we can rewrite \eqref{eq:HJB for v(K,N)}  as
\begin{equation}\label{eq:second HJB for v(K,N)}
	\left\{
	\begin{aligned}
		&0 = v_t + Nu_1\left((u'_1)^{-1}(v_K)\right) -Nv_K\cdot(u'_1)^{-1}(v_K) + F(K,N)v_K
		+f(N)v_N \\
		&\quad +\frac{1}{2}{\epsilon}^2K^2v_{KK} + \frac{1}{2}{\sigma}^2N^2v_{NN},\\
		&v(T, K, N) = Nu_2\left(\frac{K}{N}\right).\\
	\end{aligned}
	\right.
\end{equation}
It is known that   solving the HJB equation  is not always equivalent to solving the original optimal control problem unless its solution is classical, in which case, the solution gives the value function of the problem. 
Here, the traditional approach does not help, and we must thus develop  a novel method to accomplish the task.
Next, we impose   technical assumptions on the given functions $F$, $f$, $u_1$ and $u_2$ with their economic interpretations.

\begin{Assumption}[Cobb-Douglas  production function]\label{Assum:production function}
	The production function $F$ is of the form
	\begin{equation}\label{eq:production function}
		F(K,N) = AK^{\beta}N^{1-\beta},
	\end{equation}
	where $A$ is a positive constant and $\beta$ is a constant satisfying $0<\beta<1$. 
	
	\begin{rem}
		The Cobb-Douglas production function was first proposed and statistically tested  by Charles Cobb and Paul Douglas during 1927-1947; see \cite{cobb1928theory} for details. This concise and useful production function has been widely adopted;  see  \cite{douglas1976cobb}.
	\end{rem}
	
\end{Assumption}

\begin{Assumption}[Lipschitz  drift in population dynamics]\label{Assum:drift rate in population model}
	Assume  $f \in C^{1}[0,\infty)$, and there is a positive constant $C_f$ such that 
	\begin{equation*}
		f(0)=0,  \quad \textrm{and} \quad |f(x)-f(y)|\le C_{f}|x-y|, ~\forall~ x, y \in [0,\infty).
	\end{equation*}
\end{Assumption}

\begin{rem}\label{rem: remark on the drift rate f}
	It is natural that $f(0)=0$; the growth rate of zero population is zero.
	As we only require $f$ to be Lipschitz continuous, any well-chosen   
	$f$ can  model the general phenomenon that a population  increases rapidly at the beginning when the people in need are scarce, and 
	approaches  its saturation level due to its ultimate limit.
	An example of such an $f$ is
	\begin{equation}\label{eq: O-U like function}
		f(x)=
		\begin{cases}
			\alpha x(\bar{N}-x)\qquad \qquad \textrm{for $x$ $\le$ $M$};\\  \alpha M^2 + \alpha(\bar{N}-2M)x\quad\textrm{for $x> M$},
		\end{cases}
	\end{equation}
	where $\alpha$, $M$, and $\bar{N}$ are positive constants and $M\ge \bar{N}$.
	This function has the same effect as the logistic growth model (see Chapter 6.1 in \cite{edelstein2005mathematical})  but has a slower convergence rate to the equilibrium state.
	In contrast,  \cite{morimoto2008optimal} and \cite{huang2019optimal} assume  $f(N) = \nu N$ for some constant $ \nu$, which must satisfy  the sign condition $\nu -\sigma^2>0$ to make their analysis viable.  Our approach extends their case even when $\nu -\sigma^2\leq0$. 
\end{rem}

\begin{Assumption}[Running utility function $u_1$]\label{Assum: running utility function $u_1$}
	The following conditions are satisfied:
	\begin{enumerate}
		\item
		(Inada condition) 	$u_1\in C[0,\infty) \cap C^2(0,\infty)$, $u'_1(x)>0$, $u''_1(x)<0$, $u'_1(0+)=u_1(\infty)=\infty$, \textrm{and} $u'_1(\infty)=u_1(0)=0$;
		\item
		(Risk aversion condition) $0< u'_1(x)\le {a_1^{\gamma}}x^{-\gamma}$ for some $0<\gamma<1$;
		\item(Minimal level of risk aversion) $0< -x\frac{1}{u_1''((u'_1)^{-1}(x) )}\le \tilde{a}_1(u'_1)^{-1}(x)+ \tilde{\tilde{a}}_1$,
	\end{enumerate}
	for some  positive constants $a_1$, $\tilde{a}_1$, and $\tilde{\tilde{a}}_1$.
\end{Assumption}

\begin{Assumption}[Terminal utility function $u_2$]\label{Assum: terminal utility function $u_2$}	The following conditions holds:
	\begin{enumerate}
		\item (Smoothness of $u_2$)
		$u_2\in C[0,\infty) \cap C^3(0,\infty)$ and $u_2(0)=0$;
		\item (Risk-aversion condition)
		$a_2x^{-\gamma}\le u'_2(x)\le a_3x^{-\gamma}$;
		\item (Boundedness of relative risk aversion) $\beta \wedge \gamma\le -x\frac{u''_2(x)}{u'_2(x)}\le \beta\vee\gamma  $.  Additionally, $ \inf _{x>0}\frac{\tilde{u}_2' }{\tilde{u}_2}(x) >\beta$ when $\beta <\gamma$, where $\tilde{u}_2(x):=u'_2(\frac{1}{x})$;
		\item (Higher order derivative technical assumption) $a_4{u}'_2(x)\le x^2 u'''_2(x) \le a_5u'_2(x)$,
	\end{enumerate}
	for some positive constants $a_2$, $a_3$, $a_4$, and $a_5$.
		\begin{rem}
			Condition (3) in our Assumption \ref{Assum: terminal utility function $u_2$} gets its name since it is equivalent to   the inequality $\min\{\beta, \gamma \}  \le \textrm{RRA}_2(x) \le \max\{\beta, \gamma \} $. We recall that the	RRA of a utility function $u$ is the Arrow-Pratt measure of its relative risk-aversion   defined as 
			RRA: = $-\frac{xu''(x)}{u'(x)}$.
			Here, $\textrm{RRA}_2(x) $ means the RRA of $u_2$.
			We also note  the RRA of a  power function $u = \frac{x^{1-\gamma}}{1-\gamma}+e$ is the constant $\gamma$.
		\end{rem}
		
		\begin{rem}
			Note that the commonly used constant relative risk aversion utilities $u_1(x)$ and $u_2(x)$ of the form $cx^{1-\gamma}$ satisfy Assumption \ref{Assum: running utility function $u_1$}
			and Assumption \ref{Assum: terminal utility function $u_2$}, respectively.	The second condition in \ref{Assum: terminal utility function $u_2$} ensures that
			\begin{equation*}
				u_2(x) = \int_{0}^{x} u'_2(t)dt + u_2(0)\le \int_{0}^{x} a_3t^{-\gamma} dt = \frac{a_3}{1-\gamma}x^{1-\gamma},
			\end{equation*}
			and	hence,
			\begin{equation}\label{eq:production is 0 when population is 0}
				\begin{aligned}
					&\lim_{N\to 0} Nu_2\left(\frac{K}{N}\right)\le 	\lim_{N\to 0}  \frac{a_3}{1-\gamma}N\left(\frac{K}{N}\right)^{1-\gamma} =  	\lim_{N\to 0} \frac{a_3}{1-\gamma}N^{\gamma}K^{1-\gamma}=0,\\
					&\lim_{K\to 0} Nu_2\left(\frac{K}{N}\right) = Nu_2(0) =0.
				\end{aligned}
			\end{equation}
			Equation \eqref{eq:production is 0 when population is 0}  agrees with the intuition that when either population or capital vanishes, the society's terminal utility drops to 0.
		\end{rem}
	\end{Assumption}
	
	\subsection{Matters arising from tackling   HJB equation \eqref{eq:second HJB for v(K,N)}}\label{sec:Main resluts}
	We note the HJB equation \eqref{eq:second HJB for v(K,N)} is a nonlinear degenerate second-order parabolic PDE on the unbounded domain $\mathbb{R}_+^2 := (\mathbb{R}_+)^2 := \{(K,N); K, N >0\}$ with unbounded coefficients. Due to the unboundedness of the spatial domain $\mathbb{R}_+^2$ and the degeneracy of the underlying elliptic operator of \eqref{eq:second HJB for v(K,N)} at the spatial boundaries $K=0$ and $N=0$, 
	we have to grasp the asymptotic behavior (e.g., upper bounds) of solutions and their derivatives as $N$, $K\to 0^+$ or $+\infty$ so that a unique solution with  a sound economic meaning can be obtained. 
	Moreover,  the nonlinearity arising from the running utility $u_1$ in \eqref{eq:second HJB for v(K,N)} behaves like a negative power of $v_K$ after incorporating the first-order condition, indicating the lower bound on $v_K$  is quite critical for the well-posedness of the equation. 
	Thus both a lower and upper bound for the growth rate for $v_K$  must be sought to establish the solution's very existence. 
	
	Furthermore, the difference between the corresponding exponents of $K$ and $N$ as defined in the Cobb-Douglas production function also brings in additional difficulty, especially when we try to obtain the aforementioned lower and upper bounds and when applying the energy estimates method.
	To see the difficulty from another aspect, we can apply the transformation $x=\ln K$ and $y=\ln N$ to \eqref{eq:second HJB for v(K,N)},  then  the Cobb-Douglas function $F$ and the terms involving $(u'_1)^{-1}(v_K)$ in \eqref{eq:second HJB for v(K,N)} have exponential growth rates with respect to $x$ and $y$ on $\mathbb{R}^2 := \{(x,y); x, y \in \mathbb{R}\}$. 
	Notably, classical theories of parabolic equations, which commonly focus on various physical systems, rarely address equations with such rapid exponential growth rates. So an extra caution has to be paid. One major  novelty of our present work is to  apply the probabilistic techniques to derive appropriate pointwise lower and upper bounds,  using which good, appropriately-weighted energy estimates can be obtained.
	
	To this end, we first show the  unique existence of classical solutions to Equation  \eqref{eq: differential equation for original $lambda$} for $\lambda:=\frac{\partial v}{\partial K}$ by obtaining the optimal 
	upper and lower bounds for $\lambda$. On the one hand, we  must obtain the necessary upper and lower bounds for  $\lambda$ as  explained above. On the other, 
	it is almost equivalent to the   unique existence of a classical solution to the HJB Equation  \eqref{eq:second HJB for v(K,N)}. Once we obtain $\lambda$, then heuristically, we can obtain $v$ by  direct integration with respect to $K$. 
	Of course, the resulting function is well-defined only modulo a function of $N$ and $t$, so to select the correct one, we have to use  the HJB equation \eqref{eq:second HJB for v(K,N)} again. Technically and more precisely, we replace all  $v_K$ by this $\lambda$, which is the unique solution to  \eqref{eq: differential equation for original $lambda$} (that we have already obtained)  in \eqref{eq:second HJB for v(K,N)}, and then solve  $v$ in the resulting linear equation (see  \eqref{eq:third HJB for v(K,N)} for instance). The last thing is to check that we  have the compatible relation $\lambda=\frac{\partial v}{\partial K}$, which can finally be shown by using a specific uniqueness theorem for parabolic equations; see Section \ref{Existence  and uniqueness to the original HJB equation} for details.
	
	Further, we aim to solve the nonlinear problem \eqref{eq: differential equation for original $lambda$} with a proper fixed-point map. Thus, we must  solve its linearized counterpart \eqref{eq: differential equation for $lambda$} first. We  note that the coefficient $(u_1')^{-1}(\psi) = j(\psi) \sim \psi^{-\frac{1}{\gamma}}$ (e.g., the lower bound of $\psi$) affects the wellposedness of PDE \eqref{eq: differential equation for $lambda$} critically; this suggests a certain lower bound on $\psi$ is essential for solving  \eqref{eq: differential equation for $lambda$}.
	Indeed, for a sufficient smooth   $\psi$ with a decent lower bound, the rapid growth of   $j(\psi) \sim \psi^{-\frac{1}{\gamma}}$ can be controlled. By then, we are able to establish the upper bound and unique existence  of the classical solution to \eqref{eq: differential equation for $lambda$}  in a local H\"older space; see Theorem \ref{thm:existence when psi is smooth}.  The next challenge  is  to ensure that  the output $\lambda$ is bounded from below by the same  lower bound of the input $\psi$ so that, a fixed-point argument can be utilized to solve the nonlinear problem \eqref{eq: differential equation for original $lambda$}. This standard approach will not work without our optimal pointwise estimates obtained by using the following probabilistic techniques.
	
	 For the crucial lower bound for the solution $\lambda$ to  the linearized problem \eqref{eq: differential equation for $lambda$}, we  shall prove a stronger result, namely  {\itshape a priori} lower bound for $\lambda$  independent of $\psi$ can be obtained. Particularly in Section \ref{section: pointwise estimate}
	we shall  deduce the pointwise lower bounds  via  the terminal condition and estimates of various  SDEs induced from  Feynman-Kac formulas for \eqref{eq: differential equation for $lambda$}; the  upper bounds can be then obtained based on the crucial lower bounds and the same techniques;
	see Theorems \ref{thm:crucial estimate theorem}, \ref{thm:crucial estimate theorem for case beta<gamma} and their proofs for details. 
	We may ultimately verify the pointwise estimates by  the usual  maximum principle, it can never be an effective method here (cf. Remark \ref{rem:comment on maximum principle}) since one has to correctly guess the corresponding pointwise behavior in advance, which is far from trivial and out of the blue. 
	Technically speaking, the existence of a solution in a local H\"older space guarantees that
	the Feynman-Kac  representation is valid, based on which the crucial pointwise estimates can be derived rigorously. 
	
	We then extend this  existence result to  measurable inputs $\psi$ with the same aforementioned lower bound a.e.  (cf. Lemma \ref{lem:extension lemma}) based on the energy estimates in weighted Sobolev spaces to be obtained in Theorem \ref{thm: A priori estimates} and properties of weights. By then, a fixed-point argument on a properly chosen subset $S$ defined in \eqref{def:definition for the growth rate set} below (involving the pointwise bounds)   of the weighted Sobolev space 
	will give the solution to the nonlinear problem \eqref{eq: differential equation for original $lambda$}, which relies on the compactness of the weighted Sobolev spaces crucially. The
	reason we use weighted Sobolev spaces to find a fixed point is that they are much more feasible to deal with  compactness than the weighted H\"older spaces.
	
	The organization  of the remainder of this article  is as follows.
	In Section \ref{sec:Solving the HJB},
	we introduce the most relevant definition of  the  functional spaces in which the first proxy weak solution is constructed.  
	Section \ref{section:Empirical study} provides some numerical experiments to illustrate   some interesting phenomena that are ubiquitous over finite time horizon, which makes the economic prediction so hard in a short time.
	In Section \ref{sec:paraboliceq}, we   prove the unique existence   of a classical solution to  \eqref{eq: differential equation for $lambda$} in a H\"older space when the input  $\psi$  is smooth enough and possesses a specific  lower bound. The  existence and uniqueness are both based on a specialized maximum principle for parabolic equations with possible exponential growth coefficients on the whole space. The solution obtained in this section is sufficiently regular  to use the Feynman-Kac  representation for deriving the lower and upper growth rate estimates in Section \ref{section: pointwise estimate}.
	In Section  \ref{section: pointwise estimate}, we prove the almost optimal pointwise estimates of the solution to the linearized equation \eqref{eq: differential equation for $lambda$} and  verify that   the solution's lower bound  can be  the same  as that of  the input $\psi$.
	We present  the weighted energy estimates for the weak solution of the linearized equation \eqref{eq: differential equation for $lambda$} in Section \ref{sec:energy estimate}. Finally, we return  to the nonlinear problem 
	in Section \ref{sec:Weak solution and fixed point for nonlinear equation}, where we  solve the nonlinear problem  \eqref{eq: differential equation for original $lambda$} by a fixed point argument and the original HJB equation \eqref{eq:second HJB for v(K,N)}.
	To conclude this section, we  state the main theorem  motivating  subsequent sections.
	\begin{thm}\label{main theorem} 
		There exists a  classical solution $v$ to  \eqref{eq:second HJB for v(K,N)}. Moreover,  it is
		the unique one that satisfies the following pointwise estimates: for some positive  constants  $B_1$, ... , $B_9$ and some real (not necessarily positive)  constants $\beta_1$ and  $\gamma_1$ such that
		\begin{equation*}
			B_1\left(\frac{N}{K}\right)^{\gamma} \le \frac{\partial v}{\partial K}(t,K,N) \le B_2 \left(\frac{N}{K}\right)^{\gamma} + B_3  \left(\frac{N}{K}\right)^{\beta} ~\textrm{when} ~ \beta \ge \gamma; ~\textrm{or}~ \qquad \qquad(\ast 1)
		\end{equation*}
		\begin{equation*}
			B_4  \left(\frac{N}{K}\right)^{\beta}\left(  B_5 \left(\frac{N}{K}\right)^{\beta-1} + B_6\right)^{ \frac{\gamma-\beta}{\beta-1 } }\le \frac{\partial v}{\partial K} (t,K,N) \le B_7\left(\frac{N}{K}\right)^{\gamma} ~\textrm{when} ~ \beta < \gamma,  (\ast2)
		\end{equation*}
		and
		\begin{equation*}
			|v(t,K,N)| \le B_8 K^{\beta_1}+ B_8N^{\gamma_1}+B_9. \qquad \qquad\qquad \qquad \qquad \qquad \qquad \qquad \quad \quad \, 	 (\ast 3)
		\end{equation*}
	\end{thm}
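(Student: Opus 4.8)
The plan is to not attack $v$ directly but to first pin down its key derivative $\lambda:=\partial v/\partial K$. The reason is structural: after inserting the first-order condition, with $j:=(u_1')^{-1}$, the nonlinearity $Nu_1(j(v_K))-Nv_K\,j(v_K)$ behaves like a very negative power of $v_K$ (roughly $-N\,v_K^{\,1-1/\gamma}$ by Assumption \ref{Assum: running utility function $u_1$}), so the entire well-posedness question rests on two-sided growth control of $v_K$ --- which is precisely what $(\ast 1)$--$(\ast 2)$ provide. Differentiating \eqref{eq:second HJB for v(K,N)} formally in $K$ gives a quasilinear parabolic equation \eqref{eq: differential equation for original $lambda$} for $\lambda$, with terminal datum $\partial_K[Nu_2(K/N)]=u_2'(K/N)\asymp(N/K)^\gamma$ by Assumption \ref{Assum: terminal utility function $u_2$}; this already fixes the leading exponent $\gamma$, while the secondary exponent $\beta$ is produced by the Cobb--Douglas term $F_K=A\beta(K/N)^{\beta-1}$, and the case split in the theorem mirrors the mismatch between these two powers.

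To solve the quasilinear equation I would freeze the nonlinearity and run a fixed-point scheme. Given a sufficiently regular input $\psi$ bounded below by the lower bounds in $(\ast 1)$ and $(\ast 2)$, the linear equation \eqref{eq: differential equation for $lambda$} with coefficient $j(\psi)\sim\psi^{-1/\gamma}$ has, after the change of variables $x=\ln K$, $y=\ln N$ --- which turns $F$ and $j(\psi)$ into coefficients of merely exponential growth kept under control by the lower bound on $\psi$ --- a unique classical solution in a local Hölder space together with an a priori upper bound, via a maximum principle tailored to parabolic equations with exponentially growing coefficients on $\mathbb{R}^2$ (Theorem \ref{thm:existence when psi is smooth}). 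That Hölder regularity licences a Feynman--Kac representation of the solution, and analysing the auxiliary SDEs there --- propagating the terminal comparison forward and splitting according to the sign of $\beta-\gamma$ --- yields a pointwise lower bound for $\lambda$ that is, crucially, \emph{independent of $\psi$} and of exactly the form imposed on the inputs; the matching upper bound then follows from the same SDE estimates (Theorems \ref{thm:crucial estimate theorem} and \ref{thm:crucial estimate theorem for case beta<gamma}). Using the weighted-Sobolev energy estimates of Theorem \ref{thm: A priori estimates} and the extension Lemma \ref{lem:extension lemma}, I would extend solvability to merely measurable inputs with the same a.e. lower bound, so that the map $\psi\mapsto\lambda$ sends the closed convex set $S$ of \eqref{def:definition for the growth rate set} into itself and is compact by the compact embedding of the weighted Sobolev space; Schauder's fixed-point theorem then supplies $\lambda\in S$ solving \eqref{eq: differential equation for original $lambda$}, which by construction satisfies $(\ast 1)$--$(\ast 2)$.

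Having $\lambda$, I would obtain $v$ by substituting this $\lambda$ for $v_K$ everywhere in \eqref{eq:second HJB for v(K,N)}, turning it into a \emph{linear} parabolic equation \eqref{eq:third HJB for v(K,N)} for $v$ with known coefficients and terminal datum $Nu_2(K/N)$, whose growth is controlled by $u_2(x)\le\frac{a_3}{1-\gamma}x^{1-\gamma}$ from the remark following Assumption \ref{Assum: terminal utility function $u_2$}; solving it in a weighted space compatible with the growth of $\lambda$ produces a $v$ obeying $(\ast 3)$. It then remains to check the compatibility relation $\partial_K v=\lambda$: differentiating \eqref{eq:third HJB for v(K,N)} in $K$ shows $\partial_K v$ solves the same equation \eqref{eq: differential equation for original $lambda$} as $\lambda$ with the same terminal datum and in the same uniqueness class, so the specialized parabolic uniqueness theorem forces equality, whence $v$ is a classical solution of \eqref{eq:second HJB for v(K,N)} satisfying $(\ast 1)$--$(\ast 3)$. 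Uniqueness in the theorem goes the same way: if $\tilde v$ is another classical solution obeying $(\ast 1)$--$(\ast 3)$, then $\partial_K\tilde v$ satisfies the bounds defining $S$ and the equation \eqref{eq: differential equation for original $lambda$}, hence equals $\lambda$ by parabolic uniqueness, after which $\tilde v-v$ depends only on $(t,N)$, solves a linear equation with zero terminal datum within the growth class $(\ast 3)$, and therefore vanishes.

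I expect the pointwise-estimate step to be the main obstacle. Because the log-transformed equation genuinely carries exponential-growth coefficients, none of the standard parabolic a priori estimates or comparison arguments apply off the shelf, and one cannot simply guess a workable barrier for a direct maximum-principle proof (cf. Remark \ref{rem:comment on maximum principle}); the generalized power laws must instead be teased out of a delicate, regime-dependent analysis of the SDEs in the Feynman--Kac formula, and it is exactly the $\psi$-independence of the resulting lower bound --- together with the sharp weighted-Sobolev compactness --- that makes the Schauder iteration closeable and so drives the whole argument.
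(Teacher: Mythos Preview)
Your proposal is correct and follows the paper's strategy almost exactly: pass to $\lambda=v_K$, linearize, obtain local H\"older solutions via a weighted maximum principle (Theorem~\ref{thm:existence when psi is smooth}), derive the $\psi$-independent pointwise bounds through the Feynman--Kac representation (Theorems~\ref{thm:crucial estimate theorem}--\ref{thm:crucial estimate theorem for case beta<gamma}), close the Schauder iteration on $S$ via the weighted-Sobolev energy estimates and compactness (Theorem~\ref{thm: A priori estimates}, Lemma~\ref{lem:extension lemma}), and finally recover $v$ from \eqref{eq:third HJB for v(K,N)}.

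Two small points where your write-up departs from the paper and would need adjustment in execution. First, in the compatibility step you say that differentiating \eqref{eq:third HJB for v(K,N)} in $K$ shows $\partial_K v$ solves \eqref{eq: differential equation for original $lambda$}. It does not: the terms $-Nj(\lambda)\partial_K\lambda$, $AN^{1-\beta}K^\beta\partial_K\lambda$ and $A\beta N^{1-\beta}K^{\beta-1}\lambda$ still carry the \emph{fixed} $\lambda$, not $\partial_K v$. The paper instead subtracts the equation for $\lambda$ and observes that $\bar\lambda-\lambda$ satisfies a genuinely linear, constant-free parabolic equation with zero terminal datum, whence it vanishes in the stated growth class. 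Second, for uniqueness of $v$ you propose to first match the $K$-derivatives via ``parabolic uniqueness'' for the nonlinear equation \eqref{eq: differential equation for original $lambda$}; the paper never proves such a result (Schauder gives only existence). Instead, Theorem~\ref{thm:uniqueness for HJB} works directly with $v_1-v_2$: the nonlinear pieces $u_1(j(\cdot))$ and $(\cdot)j(\cdot)$ are linearized by the mean-value theorem using the bounds $(\ast1)$--$(\ast2)$ on both $\partial_K v_i$, and a weighted maximum principle in the $(x,y)=(\ln K,\ln N)$ variables finishes. Your route could be made to work by proving a nonlinear uniqueness lemma for \eqref{eq: differential equation for original $lambda$} along the same mean-value lines, but as written it invokes a statement the paper does not supply.
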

	\begin{rem}
		Without using the stochastic techniques, the generalized power laws $(\ast 1)-(\ast 2)$  are hard to be observed, so proving $(\ast 1)-(\ast 2)$  solely by  the classical techniques, such as the maximum principle, is highly non-trivial. Indeed, the non-effectiveness of classical techniques for showing  $(\ast 1)-(\ast 2)$ is anticipated, since the generalized power laws like $(\ast 1)-(\ast 2)$ are rare in physical systems.
	\end{rem}
	Since the solution $v$ we obtained is classical, while $(\ast 1-\ast3)$ are all economically soundly, we can conclude that it indeed solves the original optimal control problem.
	\begin{cor}[Verification Theorem]\label{verification theorem}
		{\itshape The solution  $v$ to equation \eqref{eq:second HJB for v(K,N)} is   the value function of Definition \ref{def:definition for value function}.}
	\end{cor}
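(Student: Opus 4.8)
The plan is to run the classical verification argument for HJB equations, using that $v$ from Theorem \ref{main theorem} is a \emph{classical} solution of \eqref{eq:second HJB for v(K,N)} with the pointwise controls $(\ast 1)$--$(\ast 3)$ at our disposal. Fix $(t,K,N)$ and let $(K(\cdot),N(\cdot))$ denote the state process started there under a given control. It is enough to establish (i) $v(t,K,N)\ge J(t,c)$ for every $c\in\mathcal A_t$, and (ii) $v(t,K,N)=J(t,\hat c)$ for the feedback control $\hat c(s):=(u_1')^{-1}\!\big(v_K(s,K(s),N(s))\big)$, once we have checked that $\hat c\in\mathcal A_t$; together with Definition \ref{def:definition for value function}, (i) and (ii) yield $v(t,K,N)=\sup_{c\in\mathcal A_t}J(t,c)$ and, as a bonus, optimality of $\hat c$.

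For (i), I would apply It\^o's formula to $s\mapsto v(s,K(s),N(s))$ on $[t,\tau_n]$, where $\tau_n:=\inf\{s>t:\ K(s)\notin(1/n,n)\ \text{or}\ N(s)\notin(1/n,n)\}\wedge T$. Independence of $W_K,W_N$ kills the mixed second-order term, and since $v$ solves \eqref{eq:second HJB for v(K,N)}, the It\^o drift equals $\big[v_t+Fv_K+fv_N+\tfrac12\epsilon^2K^2v_{KK}+\tfrac12\sigma^2N^2v_{NN}\big]-Ncv_K=-\sup_{c'\ge 0}\{-Nc'v_K+Nu_1(c')\}-Ncv_K\le -Nu_1(c)$, with equality when $c=\hat c$. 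On $[t,\tau_n]$ the integrands $\epsilon Kv_K$ and $\sigma Nv_N$ are bounded, so the stochastic integrals are true martingales; taking expectations,
\[
v(t,K,N)\ \ge\ \E\,v\big(\tau_n,K(\tau_n),N(\tau_n)\big)\ +\ \E\!\int_t^{\tau_n}\!N(s)u_1(c(s))\,ds .
\]
I then let $n\to\infty$: the at-most-linear growth of the coefficients of \eqref{eq:SDE for K}--\eqref{eq:SDE for N} together with the admissibility constraint $\int_t^r cN/K\,ds<\infty$ prevents explosion or absorption at the axes, so $\tau_n\uparrow T$ a.s.; the running term tends to $\E\int_t^T Nu_1(c)\,ds$ by monotone convergence since $u_1\ge 0$; and the boundary term tends to $\E\,v(T,K(T),N(T))=\E\big[N(T)u_2(K(T)/N(T))\big]$ once uniform integrability is secured from the growth bound $(\ast 3)$, nonnegativity of $v$, and the finiteness of the relevant polynomial moments of $K(s)$ and $N(s)$, which for the geometric/Lipschitz SDEs \eqref{eq:SDE for K}--\eqref{eq:SDE for N} are standard. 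This gives $v(t,K,N)\ge J(t,c)$, hence $v\ge$ the value function.

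For (ii), I would first verify $\hat c\in\mathcal A_t$. Continuity of $v_K$ makes $\hat c$ adapted and continuous, and the closed-loop system \eqref{eq:SDE for K}--\eqref{eq:SDE for N} with $c=\hat c$ is well posed with a strictly positive, non-exploding solution since its coefficients are locally Lipschitz off the axes; the required $\int_t^r\hat c N/K\,ds<\infty$ follows because the lower bounds $(\ast 1)$--$(\ast 2)$ give $v_K\gtrsim (N/K)^{\beta\wedge\gamma}$ for $N/K$ large, while the risk-aversion bound $u_1'(x)\le a_1^\gamma x^{-\gamma}$ forces $(u_1')^{-1}(z)\le a_1 z^{-1/\gamma}$, so $\hat c(s)\lesssim (N(s)/K(s))^{-(\beta\wedge\gamma)/\gamma}$. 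Repeating the It\^o computation above with $c=\hat c$ makes every inequality an equality, so the same limiting procedure gives $v(t,K,N)=J(t,\hat c)\le$ the value function; combined with (i) this proves that $v$ is the value function.

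The step I expect to be the genuine obstacle is the passage to the limit $n\to\infty$, specifically securing the uniform integrability of $\{v(\tau_n,K(\tau_n),N(\tau_n))\}_n$ and of the running integrand, and in particular discarding the contribution of $v$ near $K=0$, where $(\ast 1)$--$(\ast 2)$ only bound $v_K$ by a (possibly large) negative power of $K$ and $(\ast 3)$ bounds $v$ only by $K^{\beta_1}$ with $\beta_1$ not necessarily nonnegative. Overcoming this requires marrying the pointwise power laws of Theorem \ref{main theorem} with sharp moment and occupation-time estimates for the controlled state --- precisely the quantitative information that the admissibility condition $\int_t^r cN/K\,ds<\infty$ is built to encode --- so that the stochastic bounds of Theorem \ref{main theorem} carry the analysis through.
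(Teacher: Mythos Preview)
Your proposal is the standard verification argument, which is precisely what the paper has in mind; the paper itself gives no detailed proof beyond the one-sentence justification preceding the corollary (``Since the solution $v$ we obtained is classical, while $(\ast 1)$--$(\ast 3)$ are all economically soundly, we can conclude that it indeed solves the original optimal control problem''), treating the localization-plus-limit procedure you outline as routine. Your identification of the uniform-integrability step at $n\to\infty$ as the only genuine obstacle is accurate, and your sketch of how the pointwise bounds $(\ast 1)$--$(\ast 3)$ together with the admissibility condition in \eqref{def: def of dmissible consumption policies} supply the needed moment control is in line with what the paper expects the reader to fill in.
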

	%
	\section{Classical solvability of   the HJB equation}
	\label{sec:Solving the HJB}
	\subsection{The linearized equation for the partial derivative of the  value function}\label{Linearized equation}
	As illustrated in the ``generalized" power laws $(\ast 1)-(\ast 2)$, one may heuristically realize that $\frac{\partial v}{\partial K}$ is a better
	unknown, because of the scaling. More essentially, due
	to the  nonlinearity of equation \eqref{eq:second HJB for v(K,N)} and the necessity to obtain lower and upper bounds of the derivative $\frac{\partial v}{\partial K}$, we shall  deal with $\lambda:=\frac{\partial v}{\partial K}$ first, which is more effective; and  it satisfies the following  equation:
	\begin{equation}\label{eq: differential equation for original $lambda $ unsimplified} 
		\left\{
		\begin{aligned}
			&{\lambda}_t +Nu'_1((u'_1)^{-1}(v_K))\cdot((u'_1)^{-1})'(v_K)\cdot v_{KK}-N(u'_1)^{-1}(v_K)\cdot v_{KK}\\
			&\quad	-Nv_K\cdot((u'_1)^{-1})'(v_K) \cdot v_{KK} 
			+  A N^{1-\beta}K^{\beta}\frac{\partial \lambda}{\partial K} +{\epsilon}^2K\frac{\partial \lambda}{\partial K} + \frac{1}{2}{\epsilon}^2K^2\frac{\partial^2 \lambda}{\partial K^2} \\
			&\quad + f(N)\frac{\partial \lambda}{\partial N} +  \frac{1}{2}{{\sigma}}^2N^2 \frac{\partial^2 \lambda}{\partial N^2} + A\beta N^{1-\beta}K^{\beta-1}\lambda= 0,\\
			& \lambda(T,K,N) = u'_2\left(\frac{K}{N}\right).
		\end{aligned}
		\right.
	\end{equation}
	The Inada condition on $u_1$ implies that $u'_1(x)$ is strictly decreasing in $x$, hence its inverse $j(\cdot):=(u'_1)^{-1}(\cdot)$ is well-defined and strictly decreasing, which leads to the following equality:
	\begin{equation*}
		\begin{aligned}
			&\quad Nu'_1((u'_1)^{-1}(v_K))((u'_1)^{-1})'(v_K)v_{KK}-N(u'_1)^{-1}(v_K)v_{KK}-Nv_K((u'_1)^{-1})'(v_K)v_{KK}=-Nj(v_K)v_{KK}.
		\end{aligned}
	\end{equation*}
	With this result, we can simplify \eqref{eq: differential equation for original $lambda $ unsimplified}  further  as follows: 
	\begin{equation}\label{eq: differential equation for original $lambda$} 
		\left\{
		\begin{aligned}
			&{\lambda}_t - Nj(\lambda)\frac{\partial \lambda}{\partial K} + A N^{1-\beta}K^{\beta}\frac{\partial \lambda}{\partial K} +{\epsilon}^2K\frac{\partial \lambda}{\partial K} + \frac{1}{2}{\epsilon}^2K^2\frac{\partial^2 \lambda}{\partial K^2} \\
			&\quad + f(N)\frac{\partial \lambda}{\partial N} +  \frac{1}{2}{\sigma}^2N^2 \frac{\partial^2 \lambda}{\partial N^2} + A\beta N^{1-\beta}K^{\beta-1}\lambda= 0,\\
			&\lambda(T,K,N) = u'_2\left(\frac{K}{N}\right).
		\end{aligned}
		\right.
	\end{equation}
	We note the inverse function $j$, using the first condition (on $u_1$) in Assumption \ref{Assum: running utility function $u_1$},  satisfies the inequality:
	\begin{equation}\label{eq:condition on j(x)}
		0< j(x)\le {a_1}x^{-\frac{1}{\gamma}}, \quad 0<-xj'(x)<\tilde{a}_1j(x) + \tilde{\tilde{a}}_1,~ \textrm{for}~ x>0.
	\end{equation}
	From \eqref{eq:condition on j(x)}, we know that $0<-j'(x)<a_1\tilde{a}_1 x^{-1-\frac{1}{\gamma}} + \frac{\tilde{\tilde{a}}_1}{x}\lesssim x^{-1-\frac{1}{\gamma}} +1$, 
	then for the sake of convenience and to avoid introducing new parameters, we also assume that
	\begin{equation}\label{eq:another condition on j(x)}
		0<-j'(x)\leq \tilde{a}_1 x^{-1-\frac{1}{\gamma}} + \tilde{\tilde{a}}_1, ~ \textrm{for}~ x>0.
	\end{equation}
	Note that $j(\lambda)$ is generally nonlinear in $\lambda$.  To rectify this, one may replace the argument $\lambda$ in $j(\lambda)$ by a given function $\psi$. We therefore consider the  linearized problem
	\begin{equation}\label{eq: differential equation for $lambda$} 
		\left\{
		\begin{aligned}
			&{\lambda}_t - Nj(\psi)\frac{\partial \lambda}{\partial K} +  A N^{1-\beta}K^{\beta}\frac{\partial \lambda}{\partial K} +{\epsilon}^2K\frac{\partial \lambda}{\partial K} + \frac{1}{2}{\epsilon}^2K^2\frac{\partial^2 \lambda}{\partial K^2} \\
			&\quad + f(N)\frac{\partial \lambda}{\partial N} +  \frac{1}{2}{{\sigma}^2}N^2 \frac{\partial^2 \lambda}{\partial N^2} + A\beta N^{1-\beta}K^{\beta-1}\lambda= 0,\\
			&\lambda(T,K,N) =u'_2\left(\frac{K}{N}\right).
		\end{aligned}
		\right.
	\end{equation}
	Based on it, we define a map $\Gamma$ from  an input $\psi$ to   $\lambda$.
	A suitable choice of weighted Sobolev space will be chosen later, on which  a fixed point theorem  for   $\Gamma$ can be applied to construct a weak solution. 
	To this end, we first discuss  the weak solution to  Equation \eqref{eq: differential equation for $lambda$}  in Subsection \ref{sec:Weak solution of the linearized PDE}. Its unique existence and  regularity will be discussed in Sections \ref{sec:paraboliceq}, \ref{sec:energy estimate} and \ref{sec:Weak solution and fixed point for nonlinear equation}.
	\subsection{Weak solution of the linearized equation \eqref{eq: differential equation for $lambda$} }\label{sec:Weak solution of the linearized PDE}
	
	The unbounded domain of  PDEs \eqref{eq: differential equation for original $lambda$} and \eqref{eq: differential equation for $lambda$} necessitates us to adopt a weighted Sobolev space;  it   essentially compactifies the domain.
	To this end, we define the following inner products   associated with different power weights of $K$ and $N$ and a common weight $\varphi$:
	\begin{equation}\label{eq:inner product}
		\begin{aligned}
			(\phi,\psi)_{k,K,n,N,\varphi} :=\int_{\mathbb{R}_+^2 }\phi \psi K^{2k}N^{2n}{\varphi}  dKdN, ~k,n \in \mathbb{N},
		\end{aligned}
	\end{equation}
	where $\varphi$ is a smooth positive bounded function on $\mathbb{R}^2_+$ to be specified later in Equation \eqref{eq: the choice of the weight}. 
	We  denote the  corresponding spaces  by  
	${L^2_{k,K, n,N,\varphi}}$ , with  norms  
	denoted by $||\cdot||_{L^2_{k,K, n,N,\varphi}}$. 
	Moreover, we denote $(\phi,\psi)_{\varphi}=(\phi,\psi)_{0,K,0,N,\varphi}$, 
	$(\phi,\psi)_{k,K,\varphi}=(\phi,\psi)_{k,K,0,N,\varphi}$ and $(\phi,\psi)_{n,N,\varphi}=(\phi,\psi)_{0,K,n,N,\varphi}$.
	The spaces $L^2_{\varphi}$, ${L^2_{k,K,\varphi}}$ and ${L^2_{n,N,\varphi}}$, and their norms can be understood accordingly.
	
	Next, we define  the weighted Sobolev space 
	\begin{equation}\label{functional space H^1_varphi}
		H^1_\varphi:=\left\{ \psi \in  L^2_\varphi\left|  \left|\left|\frac{\partial \psi}{\partial K}\right|\right|^2_{L^2_{1,K,\varphi}}<\infty, \left|\left|\frac{\partial \psi}{\partial N}\right|\right|^2_{L^2_{1,N,\varphi}} <\infty \right.\right\},
	\end{equation}
	with the norm
	$||\psi||_{H^1_\varphi}^2=||\psi||^2_{L^2_\varphi} + \left|\left|\frac{\partial \psi}{\partial K}\right|\right|^2_{L^2_{1,K,\varphi}} + \left|\left|\frac{\partial \psi}{\partial N}\right|\right|^2_{L^2_{1,N,\varphi}}$.
	To handle the mismatching of the exponents in the  term $AN^{1-\beta}K^{\beta}$, we further define
{\color{black}  $Y:= L^2_\varphi \cap L^2_{\tilde{\varphi}} = L^2_{\varphi+\tilde{\varphi}}$ where
	\begin{equation}\label{eq:def of tildevarphi}
			\tilde{\varphi}= \left\{
		\begin{aligned}
		(N^{4-4\beta}K^{4\beta-4}  + N^{2+2\beta}K^{-2-2\beta})\varphi~ \textrm{ when $\beta \ge \gamma$};\\
			  (N^{4-4\beta}K^{4\beta-4}  + N^{2+2\gamma}K^{-2-2\gamma})\varphi ~\textrm{ when $\beta < \gamma$}.\\
		\end{aligned}
		\right.
	\end{equation} 
	We  define $ Y_1:=H^1_\varphi \cap L^2_{\tilde{\varphi}}$, 
	and $Y_0$ is the dual space of $Y_1$. 
	Note that 
	$Y_1$ is a Hilbert space with the inner product $	(\phi,\psi)_{Y_1}:=\int_{\mathbb{R}_+^2 }\phi \psi ({\varphi}+\tilde{\varphi}) +\frac{\partial \phi}{\partial K}\frac{\partial \psi}{\partial K}K^2\varphi +\frac{\partial \phi}{\partial N}\frac{\partial \psi}{\partial N}N^2\varphi  dKdN$. The  dual space $Y_0$ is endowed with the operator norm.}
We shall always identify the space $L^2_{\varphi}$ with its dual space, but not $Y$ and its dual space, as we cannot identity them with their dual spaces simultaneously.
\begin{rem}
	Any $\psi \in L^2_{\varphi}$ can be identified as an element in $Y_0$ through the  linear functional: for any $\phi \in Y_1$, $
	\phi  \mapsto \int_{\mathbb{R}_+^2 } \psi \phi \varphi  dKdN = (\psi, \phi)_{ \varphi}$.
	With  this  identification,   we  have a  Hilbert triple $Y_1 \subset  L^2_{\varphi} \subset Y_0$.
\end{rem}
If $\psi$ is a square integrable function of time $t$, capital $K$ and population $N$, we can treat $\psi$ as a map from $[0,T]$ to $L^2_{\varphi}$. Hence, we can naturally define the following space 
\begin{equation}\label{functional space L^2([0,T];L^2_varphi)}
	L^2([0,T];L^2_\varphi): = \left\{ \psi : [0, T] \to L^2_\varphi\left|  \int_{0}^{T}\|\psi \|^2_{L^2_\varphi}dt<\infty \right.\right\},
\end{equation}
with the norm
$||\psi||_{L^2([0,T];L^2_\varphi)}^2=\int_{0}^{T} \|\psi \|^2_{L^2_\varphi} dt.$
Analogous to \eqref{functional space L^2([0,T];L^2_varphi)}, we further define other functional spaces by using $Y_1$ and  $Y_0$  as follows: The space 
\begin{equation}\label{functional space L^2([0,T];Y_1)}
	L^2([0,T];Y_1): = \left\{ \psi : [0, T] \to Y_1\left|   \int_{0}^{T}\left|\left|\psi \right|\right|^2_{Y_1}dt<\infty\right. \right\} 
\end{equation}
is equipped with norm
$||\psi||_{L^2([0,T];Y_1)}^2=\int_{0}^{T} \left|\left|\psi \right|\right|^2_{Y_1} dt$.
The space
\begin{equation}\label{functional space L^2([0,T];Y_0)}
	L^2([0,T];Y_0)
	: = \left\{ \psi : [0, T] \to Y_0\left|   \int_{0}^{T}\left|\left|\psi\right|\right|^2_{Y_0}dt<\infty\right.\right\}
\end{equation}
is equipped with norm
$||\psi||_{L^2([0,T];Y_0)}^2= \int_{0}^{T}\left|\left|\psi\right|\right|^2_{Y_0}dt$.
The following space  is the space in which we shall seek for a weak solution:
\begin{equation}\label{functional space mathcal{H}^1}
	\mathcal{H}^1_\varphi := \left\{  \psi\in L^2([0,T];Y_1),\frac{\partial \psi}{\partial t}\in L^2([0,T];Y_0)\right\}.
\end{equation}
\begin{defn}\label{def: definiton of the weak solution of the linearized problem}
	We call $\lambda$ $\in$ $\mathcal{H}^1_\varphi$ a weak solution to Problem \eqref{eq: differential equation for $lambda$} if the following equations are satisfied: for every $\phi \in Y_1$, 
	\begin{equation}\label{eq:weak solution of equation of lambda}
		\left\{
		\begin{aligned}
			&\langle\lambda_t, \phi\rangle_{Y_0,Y_1} +\int_{\mathbb{R}_+^2 }\left(- N j(\psi)  \frac{\partial \lambda}{\partial K}+  A N^{1-\beta}K^{\beta}\frac{\partial \lambda}{\partial K} +{\epsilon}^2K\frac{\partial \lambda}{\partial K} \right) \phi \varphi dKdN \\
			& \quad+ \int_{\mathbb{R}_+^2 }\left(f(N)\frac{\partial \lambda}{\partial N} +  A\beta N^{1-\beta}K^{\beta-1}\lambda\right)\phi \varphi dKdN \\
			&\quad -\int_{\mathbb{R}_+^2 }\frac{1}{2}{\epsilon}^2\frac{\partial \lambda}{\partial K}\left(\frac{\partial \phi}{\partial K}K^2\varphi + \phi\frac{\partial (K^2\varphi)}{\partial K}\right) +  \frac{1}{2}{{\sigma}^2}\frac{\partial \lambda}{\partial N}\left(\frac{\partial \phi}{\partial N}N^2\varphi + \phi\frac{\partial (N^2\varphi)}{\partial N}\right)dKdN \\
			&\qquad = 0, \quad \textrm{a.e. $t \in [0,T],$}
			\\
			&\lambda(T,K,N)  = u'_2\left(\frac{K}{N}\right).
		\end{aligned}
		\right.
	\end{equation}
	Here, $\langle\lambda_t, \phi\rangle_{Y_0,Y_1}$ represents the pairing between the dual space $Y_0$ and $Y_1$ and $\langle\lambda_t, \phi\rangle_{Y_0,Y_1} = \left(\lambda_t, \phi\right)_{\varphi}$ if $\lambda_{t} \in L^2_\varphi$.  
	Similarly, we call $\lambda$ $\in$ $\mathcal{H}^1_\varphi$ a weak solution to Problem \eqref{eq: differential equation for original $lambda$} if  for all $\phi \in Y_1$,
	\begin{equation}\label{eq:weak solution of nonlinear equation of lambda}
		\left\{
		\begin{aligned}
			&\langle\lambda_t, \phi\rangle_{Y_0,Y_1} +\int_{\mathbb{R}_+^2 }\left(- N j(\lambda)  \frac{\partial \lambda}{\partial K}+  A N^{1-\beta}K^{\beta}\frac{\partial \lambda}{\partial K} +{\epsilon}^2K\frac{\partial \lambda}{\partial K} \right) \phi \varphi dKdN \\
			& \quad+ \int_{\mathbb{R}_+^2 }\left(f(N)\frac{\partial \lambda}{\partial N} +  A\beta N^{1-\beta}K^{\beta-1}\lambda\right)\phi \varphi dKdN \\
			&\quad -\int_{\mathbb{R}_+^2 }\frac{1}{2}{\epsilon}^2\frac{\partial \lambda}{\partial K}\left(\frac{\partial \phi}{\partial K}K^2\varphi + \phi\frac{\partial (K^2\varphi)}{\partial K}\right) +  \frac{1}{2}{{\sigma}^2}\frac{\partial \lambda}{\partial N}\left(\frac{\partial \phi}{\partial N}N^2\varphi + \phi\frac{\partial (N^2\varphi)}{\partial N}\right)dKdN \\
			&\qquad = 0, ~ \textrm{a.e. for  $t \in [0,T],$}
			\\
			&\lambda(T,K,N)  = u'_2\left(\frac{K}{N}\right).
		\end{aligned}
		\right.
	\end{equation}
\end{defn}
We note that 
by Lions–Magenes lemma \cite{Lions72}, $\mathcal{H}^1_\varphi$ is embedded into $C([0,T];L^2_{\varphi})$, hence,   the terminal condition $\lambda(T,K,N)  = K^{-\gamma}N^{\gamma}$ a.e. in $K,N$ in Equations \eqref{eq:weak solution of equation of lambda} and \eqref{eq:weak solution of nonlinear equation of lambda} makes sense. 
We define a bilinear map at time $t$:
\begin{equation}\label{eq:bilinear form}
	\begin{aligned}
		B[\lambda(t),\phi;t]  :&=
		\int_{\mathbb{R}_+^2 }\left(- N\frac{\partial \lambda}{\partial K}j(\psi) +  A N^{1-\beta}K^{\beta}\frac{\partial \lambda}{\partial K} +{\epsilon}^2K\frac{\partial \lambda}{\partial K} \right) \phi \varphi dKdN \\
		& + \int_{\mathbb{R}_+^2 }\left(f(N)\frac{\partial \lambda}{\partial N} +  A\beta N^{1-\beta}K^{\beta-1}\lambda\right) \phi \varphi dKdN\\
		& - \int_{\mathbb{R}_+^2 }\frac{1}{2}{\epsilon}^2\frac{\partial \lambda}{\partial K}\left(\frac{\partial \phi}{\partial K}K^2\varphi + \phi\frac{\partial (K^2\varphi)}{\partial K}\right) +  \frac{1}{2}{{\sigma}^2}\frac{\partial \lambda}{\partial N}\left(\frac{\partial \phi}{\partial N}N^2\varphi + \phi\frac{\partial (N^2\varphi)}{\partial N}\right)dKdN.\\
	\end{aligned}
\end{equation}
Then, we can rewrite   Equation \eqref{eq:weak solution of equation of lambda} as 
\begin{equation}\label{eq:another writing for weak solution}
	\left\{
	\begin{aligned}
		&\langle\lambda_t, \phi\rangle_{Y_0, Y_1}   + B[\lambda(t),\phi;t]  =0 \textrm{ for all $\phi$ in $Y_1$ and a.e. $t$ $\in$ $[0,T]$,}\\
		&\lambda(T,K,N)  = u'_2\left(\frac{K}{N}\right).
	\end{aligned}
	\right.
\end{equation}
We note that if $\lambda$ is a classical solution to Equation \eqref{eq: differential equation for $lambda$}, then Equation \eqref{eq:weak solution of equation of lambda} or \eqref{eq:another writing for weak solution} holds for any $\phi\in C^{\infty}_c({\mathbb{R}_+^2 })$. If we know that $C^{\infty}_c({\mathbb{R}_+^2 })$ is dense in  $Y_1$ and $B[\lambda(t),\phi;t]$ is a bilinear form on $Y_1$ for each  $t\in[0,T]$, then by a density argument, we can see that Equation \eqref{eq:weak solution of equation of lambda} or \eqref{eq:another writing for weak solution}  still holds for all $\phi \in Y_1$ under the requirement that $\lambda(t)\in Y_1$ with $\frac{\partial \lambda}{\partial t} \in Y_0$. The density of  $C^{\infty}_c({\mathbb{R}_+^2 })$ in $Y_1$ follows from the standard argument with the help of weights $K^2\varphi$ and $N^2\varphi$. We deduce $|B[\lambda(t),\phi;t]| \le C||\lambda(t)||_{Y_1}||\phi||_{Y_1}$ for some constant $C$ independent of $\lambda$ and $\phi$  from the  H\"older's inequality and the definition of $Y_1$;  we refer to the proof of Theorem \ref{thm:Y_0 estimate} for more details.

We introduce some relevant notations before we proceed.
The parabolic distance between two points $P=(t,x)$ and $Q=(s,y)$ in $Q_T:= (0,T)\times \Omega$ for some domain $\Omega$ in $\mathbb{R}^n$ is defined by 
$d(P,Q) := (|t-s|+|x-y|^2 )^{\frac{1}{2}}$,
where $|x-y|$ is the Euclidean distance between $x$ and $y$. 
We define the ($t-$anisotropic) H\"older space $C^{\frac{\alpha}{2},\alpha}(Q_T)$  for any $0<\alpha<1$  as the collection of functions $u$ that
\begin{equation*}
	\|u\|_{\frac{\alpha}{2},\alpha}:=\sup_{P\in Q_T} |u(P)| + \sup_{P,Q \in Q_T, P\ne Q}\frac{|u(P)-u(Q)|}{|d(P,Q)|^\alpha} <\infty,
\end{equation*}
and the norm of $u$ in $C^{\frac{\alpha}{2},\alpha}(Q_T)$ is defined by the quantity $\|u\|_{\frac{\alpha}{2},\alpha}$ above.
Similarly, for any  integer $k$, 
we define $C^{k+\frac{\alpha}{2},2k+\alpha}(Q_T)$  as  the collection of functions $u$ that
\begin{equation*}
	\left\|u\right\|_{k+\frac{\alpha}{2},2k+\alpha} :=\sum_{2|k_1|+|k_2|\le 2k}\left\|D^{k_1}_tD^{k_2}_x u \right\|_{\frac{\alpha}{2},\alpha} < \infty,
\end{equation*}
and the norm of $u$ in $C^{k+\frac{\alpha}{2},2k+\alpha}(Q_T)$ is defined by $\|u\|_{k+\frac{\alpha}{2},2k+\alpha}$. 
Moreover, we say that $u$ is  in $C^{k+\frac{\alpha}{2},2k+\alpha}_{loc}(Q_T)$ if $u\in $ $C^{k+\frac{\alpha}{2},2k+\alpha}(V)$ for any compact set $V$ of $Q_T$. 
Similarly, we also consider the usual (spatial) H\"older space $C^{\alpha}(\Omega)$ consists of functions $u$ such that 
\begin{equation*}
	\|u\|_{\alpha}:=\sup_{x\in \Omega} |u_1(x)| + \sup_{x,y \in \Omega, x\ne y}\frac{|u_1(x)-u(y)|}{|x-y|^\alpha} <\infty,
\end{equation*}
with the corresponding norm $\|u\|_{\alpha}$. The spaces $C^{k+\alpha}(\Omega)$ and $C^{k+\alpha}_{loc}(\Omega)$ are defined accordingly.

\subsection{On the solution of the linearized equation \eqref{eq: differential equation for $lambda$} and the fixed point argument}
We first discuss the feasible choices of $\psi$ that ensure the existence of a solution $\lambda$ to \eqref{eq: differential equation for $lambda$}.
As pointed out in Subsection \ref{sec:Main resluts}, the problematic coefficient $j(\psi)\sim \psi^{-\frac{1}{\gamma}}$ demands us to bound the growth of   $\psi$ from below so that a pointwise upper bound of $\lambda$ can be obtained. Then we are able to choose suitable weights according to this   upper bound, and henceforth  to derive a proper weighted energy estimate;  see Section \ref{sec:energy estimate} for details. 
Finally, to ensure  $\Gamma$ is a self-map, we have to obtain the same lower bound for $\lambda$.
Fortunately, we can obtain this lower bound {\itshape a priori} (independent of the input $\psi$), mainly due to the terminal data already has a certain lower bound  that can propagate over time. The proof will heavily rely  on the Feymann-Kac theorem; also see Section \ref{section: pointwise estimate}.


To facilitate the subsequent fixed point argument in Section \ref{sec:Weak solution and fixed point for nonlinear equation}, we define the set in which we seek a fixed point as follows:
\begin{equation}\label{def:definition for the growth rate set}
	S:=\left\{
	\begin{aligned}
		&\left\{\psi \left|c_0^{\gamma}\left(\frac{N}{K}\right)^{\gamma}\le\psi(K,N,t)\le C_1\left(\frac{N}{K}\right)^{\gamma} + C_2\left(\frac{N}{K}\right)^{\beta} \right. \textrm{a.e. } \right\}, 
		\qquad \qquad \qquad\textrm{if $\beta \ge \gamma$};\\
		&\left\{\psi\left|C_3  \left(\frac{N}{K}\right)^{\beta}\left(  C_4 \left(\frac{N}{K}\right)^{\beta-1} + C_5\right)^{ \frac{\gamma-\beta}{\beta-1 } }\le \psi (t,K,N) \le C_6\left(\frac{N}{K}\right)^{\gamma} \right. \textrm{a.e. } \right\},  \textrm{if $\beta < \gamma$}.
	\end{aligned}
	\right.
\end{equation}
The constant $c_0$ will be given in \eqref{eq:value for c_0},	 $C_1$ and $C_2$ will be given in \eqref{eq: values for C_1 and C_2 case1} when $\gamma \le 2\beta -1$ and  $\gamma<\beta$ while it  will be given in \eqref{eq: values for C_1 and C_2} when  $\beta>\gamma > 2\beta -1$;  finally the values of $C_1$ and $C_2$ will be given in \eqref{eq: values for C_1 and C_2 when bera=gamma} when $\beta =\gamma$. The constants  
$C_3, C_4$ and $C_5$ will be given in \eqref{eq:value for C3, C4, C5},  and $C_6$ will be given in 
\eqref{eq:value for C6}.
In Theorem \ref{sec:The extension lemma and fixed point argument}, we shall demonstrate that $\Gamma:S\to S$ is well-defined and possesses a fixed point.

\section{Numerical experiments and  economic interpretations}\label{section:Empirical study}

We present numerical simulations to explore the dynamics of state variables $K$ and $N$ and compare their behaviors with their deterministic counterparts when both $\sigma$ and $\epsilon$ vanish. We observe overshooting and undershooting phenomena in the capital process $K$, as well as  how the relative magnitude of   the production function and the volatilities of $K$ and $N$ affect the path of  the capital, resulting in transient behavior. 
To conduct the simulations, we employ the Euler-Maruyama method (cf. \cite{Kloeden92}) with a step size of $\Delta t=0.002$ for time $t$. Additionally, we set $\alpha$ = 0.5 and $ M $= 3 in $f(x)$ and $\beta = \gamma = 0.5$ in $F(K,N)$.

We first provide numerical examples for the population state. The quantity $\bar{N}$ in $f(x)$ represents the equilibrium level of the population, to which the population should converge  as $t$ approaches infinity. Figure \ref{simulation of population with different initials} depicts the movement of $N$ against $t$ for different initial values $N_0=$  1.4 and 3 with $\bar{N}=2$, along with varying values of $\sigma$, representing different levels of volatility and fluctuation.
Notably, all scenarios demonstrate quick convergence, and the red curves in both subfigures cover all the other curves due to their largest magnitudes of fluctuations. This observation highlights the significant impact of $\sigma$ on the population dynamics.

\begin{figure}[!t]
	\centering
	\includegraphics[width=0.4\textwidth]{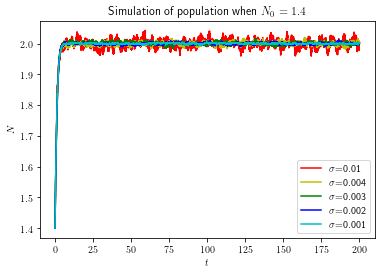}
	\includegraphics[width=0.4\textwidth]{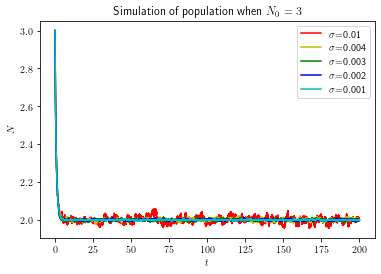}
	\caption{Simulation of population with  $\alpha$= 0.5, $M=3$ and $\bar{N}=2.0$.}
	\label{simulation of population with different initials}
\end{figure}


As for the capital dynamics, we first consider a scenario where the production function is dominant due to a very large coefficient $A$, indicating high production efficiency. In our simulations, given an initial value for capital (which is 2 in our simulation), the production function drives the behavior of capital, leading to its increase regardless of population movement. Capital also converges to an equilibrium in a manner similar to population, as shown in Figure \ref{simulation of capital with large production factor (N_0, A)=(1.4, 10)}.
If we ignore production (i.e., $A=0$), the capital decreases significantly with the sole contribution from consumption. Hence, a phase transition behavior is expected for intermediate values of $A$.

Meanwhile, it is crucial in our study whether the population is increasing or decreasing to the equilibrium level.
The total consumption remains approximately proportional to capital due to the control effect $c=(u'_1)^{-1}(v_k)\simeq \frac{K}{N}$.
If the population is relatively small at the beginning and increases towards the equilibrium level,  when $A$ falls within a critical region, a small initial value of $N$ leads to  a small value for the production function initially, after subtracting the total consumption it will lead to the decay of capital. 
Subsequently, we may expect the capital  to increase due to the population growth and reduced total consumption, resulting in an undershoot behavior.
Conversely, if the initial population is relatively large and decreases towards equilibrium, we may expect an overshoot behavior in capital.
This model motivates an explanation of a sharp economic growth  after a body boom. 
These two types of capital behavior illustrate the potential pitfalls in anticipating economic trends. For instance, a decrease in capital might be perceived negatively, but it could actually be the turning point for the economy. In a certain sense, in order to understand the economy, one has to consider both the population and capital dynamics.
By carefully considering the interactions between production function, population, and consumption, we gain insights into the dynamic behavior of the economy, shedding light on potential economic trends and the need for cautious interpretation of observed phenomena.
To verify our observations, we present various numerical illustrations on the movement of the capital. We have previously observed the rapid convergence of population in Figure \ref{simulation of population with different initials} regardless of the value of $\sigma$. Therefore, in the following illustrations, we set $\sigma=0$ to demonstrate the capital dynamics more clearly. Now, let us describe the other parameter settings as follows:
the population deterministically increase or decrease from initial value $N_0$ to the equilibrium level $\bar{N}=2$; $\epsilon$ takes values from 0 to 0.05 at an 0.01 interval. 
We present the simulations  of $K(t)$ with initial value $K_0=2$ in   Figures \ref{simulation of capital with large production factor (N_0, A)=(1.4, 10)}, \ref{simulation of capitial with (N_0, A)=(1.4, 1),}, \ref{simulation of capitial with (N_0, A)=(3,10)} and     \ref{simulation of capitial with (N_0, A)=(3,1)}    when the pair of parameters $(N_0, A) = (1.4, 10), (1.4, 1), (3, 10), (3, 1)$, respectively. We observe  that when $A=10$, $K(t)$ exhibits an increasing trend regardless of the values of $N_0$ and $\epsilon$. 
On the other hand, when $A=1$, which actually falls into a critical region, we observe clear undershoot (resp. overshoot) behavior as long as $\epsilon$ below 0.03 when there is an increasing (resp. decreasing) trend towards equilibrium for population. However, such behavior becomes unidentifiable when $\epsilon$ is above 0.03 due to the large fluctuations, regardless of the population trend.

\begin{figure}[!t]
	\centering
	\includegraphics[width=0.4\textwidth]{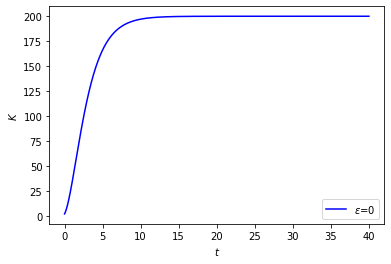}
	\includegraphics[width=0.4\textwidth]{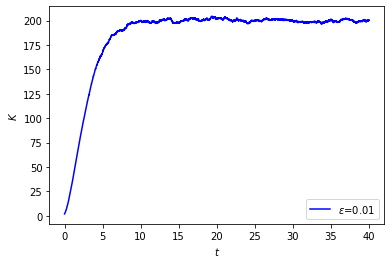}
	\includegraphics[width=0.4\textwidth]{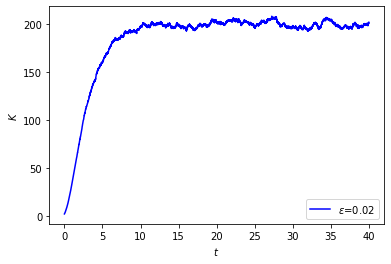}
	\includegraphics[width=0.4\textwidth]{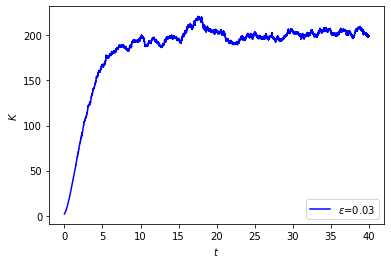}
	\includegraphics[width=0.4\textwidth]{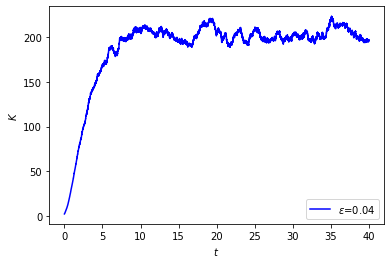}
	\includegraphics[width=0.4\textwidth]{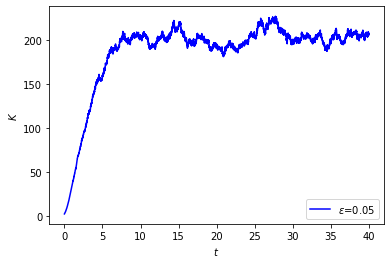}
	\caption{Simulation of capital when $(N_0, A)=(1.4, 10)$, $K_0=2$ and $\epsilon= 0,\ 0.01, \ 0.02, \ 0.03, \ 0.04, \ 0.05$.}
	\label{simulation of capital with large production factor (N_0, A)=(1.4, 10)}
\end{figure}


\begin{figure}[!t]
	\centering
	\includegraphics[width=0.4\textwidth]{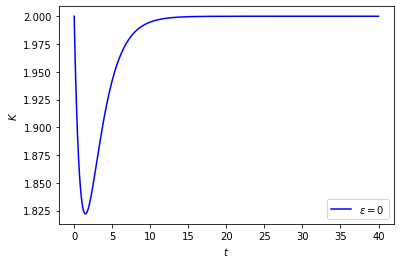}
	\includegraphics[width=0.4\textwidth]{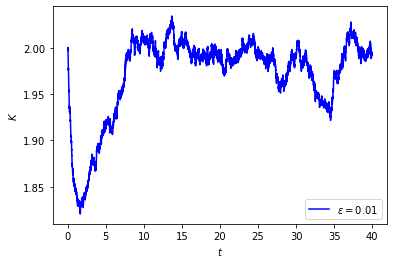}
	\includegraphics[width=0.4\textwidth]{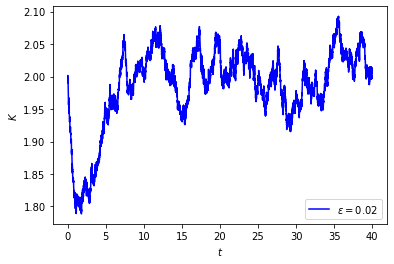}
	\includegraphics[width=0.4\textwidth]{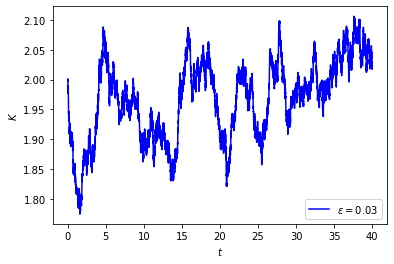}
	\includegraphics[width=0.4\textwidth]{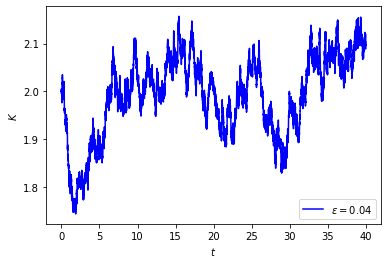}
	\includegraphics[width=0.4\textwidth]{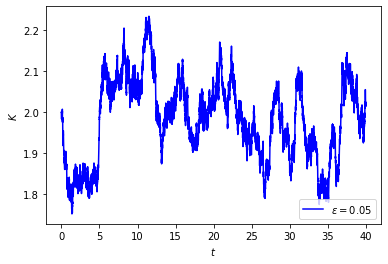}
	\caption{Simulation of capital when $(N_0, A)=(1.4, 1)$, $K_0=2$ and $\epsilon=0, \ 0.01, \  0.02, \ 0.03, \ 0.04, \ 0.05$}
	\label{simulation of capitial with (N_0, A)=(1.4, 1),}
\end{figure}


%
%
\begin{figure}[!t]
	\centering
	\includegraphics[width=0.4\textwidth]{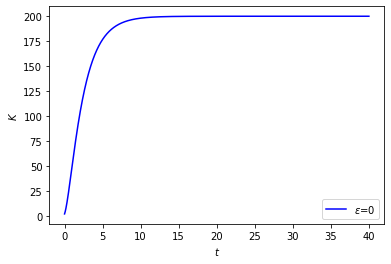}
	\includegraphics[width=0.4\textwidth]{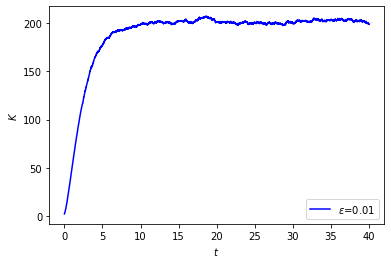}
	\includegraphics[width=0.4\textwidth]{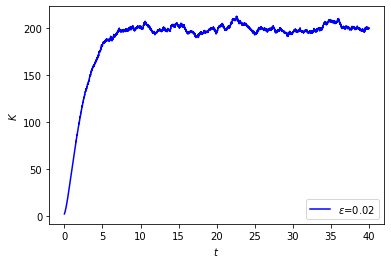}
	\includegraphics[width=0.4\textwidth]{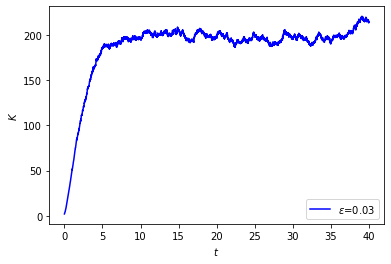}
	\includegraphics[width=0.4\textwidth]{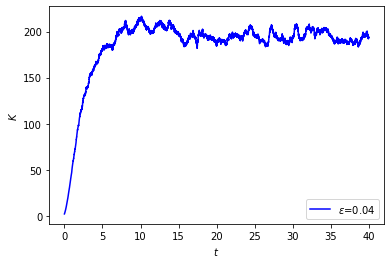}
	\includegraphics[width=0.4\textwidth]{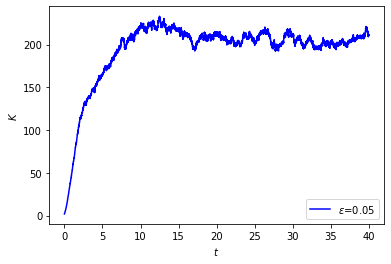}
	\caption{Simulation of capital when $(N_0, A)=(3,10)$, $K_0=2$ and $\epsilon=0, \ 0.01, \ 0.02, \ 0.03, \ 0.04, \ 0.05$.}
	\label{simulation of capitial with (N_0, A)=(3,10)}
\end{figure}


\begin{figure}[!t]
	\centering
	\includegraphics[width=0.4\textwidth]{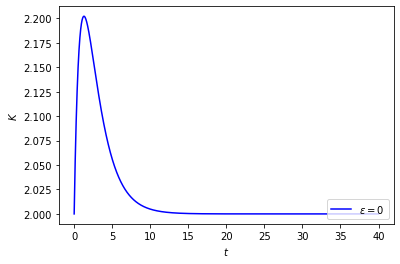}
	\includegraphics[width=0.4\textwidth]{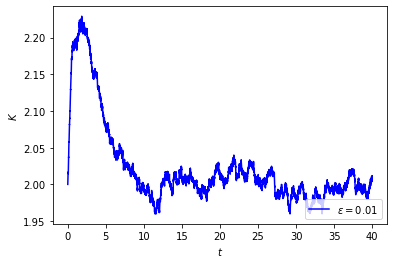}
	\includegraphics[width=0.4\textwidth]{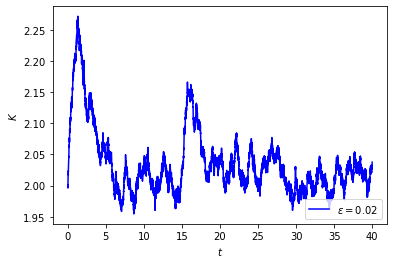}
	\includegraphics[width=0.4\textwidth]{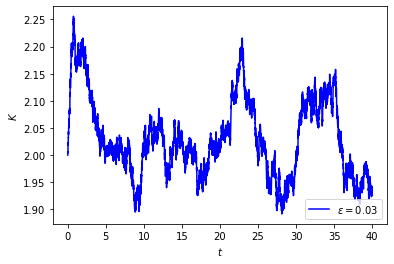}
	\includegraphics[width=0.4\textwidth]{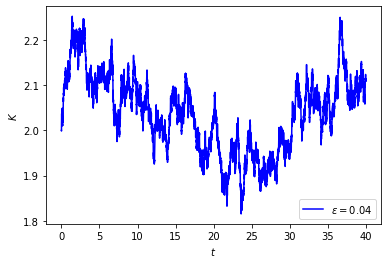}
	\includegraphics[width=0.4\textwidth]{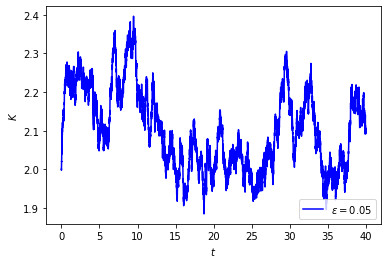}
	\caption{Simulation of capital when $(N_0, A)=(3,1)$, $K_0=2$ and $\epsilon= 0, \ 0.01, \ 0.02, \ 0.03, \ 0.04, \ 0.05$.}
	\label{simulation of capitial with (N_0, A)=(3,1)}
\end{figure}



To conclude this section, we further investigate the effect of the random noise; namely its existence hinders our ability to tell whether the overshoot or undershoot behavior is present for $K(t)$. 
To do so, we conduct two sets of simulations and plot their respective results in Figures \ref{Simulation of population with equilibrium level1.6}  to \ref{simulation of capital under different population dynamics for small initial N}. 
In these figures, we first consider two populations with the same initial value but different equilibrium, with corresponding parameters specified in each plot.
We then  plot the paths of $K$ in Figure \ref{simulation of capital under different population dynamics for large initial N}  and  Figure \ref{simulation of capital under different population dynamics for small initial N} under the different dynamics of populations. We can see that if we are only given the development of the capital,  it is challenging for us to determine whether the red line or the violet line of population traces represent the true underlying population, especially when the random noise is significant, hence increasing the difficulty to identify the overshoot (resp. undershoot) behavior of capital correctly. 
That is, the presence of random noise complicates the interpretation of the asymptotic behavior of the capital, leading to potential misunderstandings. Therefore, the effect of random fluctuations should be carefully considered when we analyze the behavior of the capital process and draw conclusions based solely on observed data.


\begin{figure}[!t]
	\centering
	\includegraphics[width=0.4\textwidth]{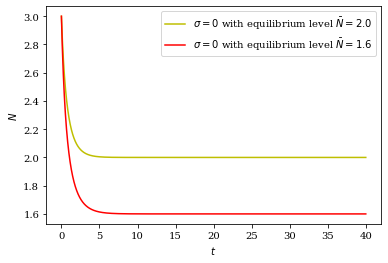}
	\caption{Simulation of population with initial value $N_0=$3 and equilibrium levels $\bar{N}=$1.6 and $\bar{N}=$2.}
	\label{Simulation of population with equilibrium level1.6}
\end{figure}

\begin{figure}[!t]
	\centering
	\includegraphics[width=0.4\textwidth]{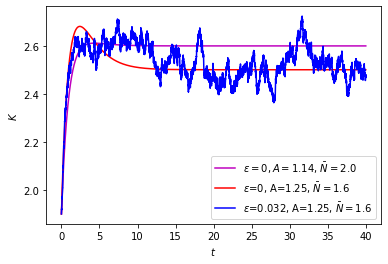}
	\includegraphics[width=0.4\textwidth]{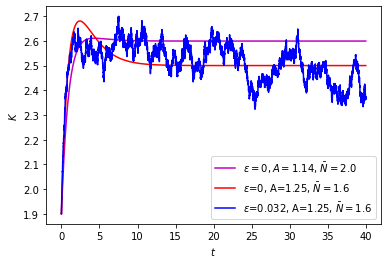}
	\caption{Simulation of capital under different population dynamics when $N_0=3, K_0=1.9$.}
	\label{simulation of capital under different population dynamics for large initial N}
\end{figure}


\begin{figure}[!t]
	\centering
	\includegraphics[width=0.4\textwidth]{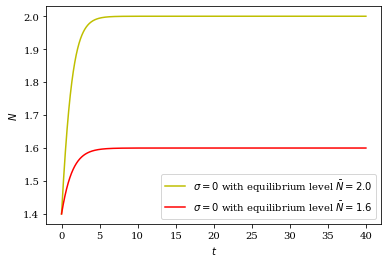}
	\caption{Simulation of population with initial value $N_0=$1.4 and  equilibrium levels $\bar{N}=$1.6 and $\bar{N}=$2.}
	\label{Simulation of population with equilibrium level 1.6 and 2 and initial 1.4,}
\end{figure}

\begin{figure}[!t]
	\centering
	\includegraphics[width=0.4\textwidth]{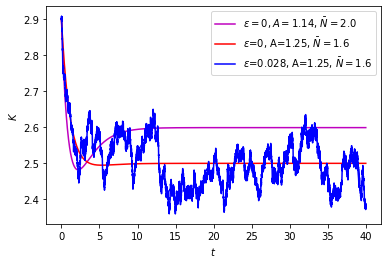}
	\includegraphics[width=0.4\textwidth]{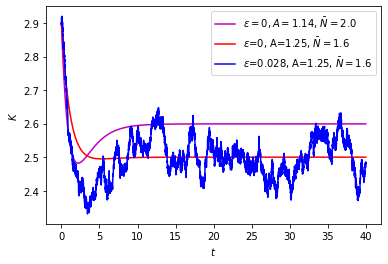}
	\caption{ Simulation of capital evolution under different population dynamics.}
	\label{simulation of capital under different population dynamics for small initial N}
\end{figure}


\section{Unique existence of the classical solution of  \eqref{eq: differential equation for $lambda$} in the local H\"older space}\label{sec:paraboliceq}

We  show the existence in Subsection \ref{sec: L^infty a prior estimates} and uniqueness in Subsection \ref{sec:Maximum Principle and Uniqueness} for Problem \eqref{eq: differential equation for $lambda$} with smooth input $\psi$.  The main difficulty encountered here is the   exponential growth of the coefficients of  Equation \eqref{eq: equation for tilde lamba},  this prevents us from obtaining a global Schauder estimates to establish a solution, and it is also not possible to use the classical maximum principle to conclude uniqueness. 
Since the solution is expected to be unbounded on the whole space, our main idea is to multiply the solution by an appropriate spatial weight so that the weighted solution is bounded.  Then, we can apply the maximum principle to the weighted solution to derive $L^{\infty}$ estimates and uniqueness.

%
One should note  that the $L^{\infty}$ bounds obtained in this section do not provide the crucial non-trivial lower bounds of the solutions to \eqref{eq: differential equation for $lambda$}, which is essential for the well-posedness of the nonlinear problem, and this will be the main objective of   Section \ref{section: pointwise estimate}.

We now state the existence and uniqueness results given in this section:
\begin{thm}\label{thm:existence when psi is smooth}
	Let   $\psi \in {C^{\infty}(  [0,T] \times \mathbb{R}^2_+) }$  such that
	
(i) when $\beta\geq \gamma$, $~\psi(t,K,N) \ge c_0^{\gamma}\left(\frac{N}{K}\right)^{\gamma}~$ for a positive constant $c_0$;
	
(ii) 
when $\beta< \gamma$, $~\psi(t,K,N) \ge C_3  \left(\frac{N}{K}\right)^{\beta}\left(  C_4 \left(\frac{N}{K}\right)^{\beta-1} + C_5\right)^{ \frac{\gamma-\beta}{\beta-1 } }~$ for some positive constants $C_3$, $C_4$ 

 ~ \quad and $C_5$.
\\
	Then there is a unique classical solution $\lambda$ to \eqref{eq: differential equation for $lambda$} satisfying  
	\begin{equation}
		|\lambda(t, K, N)| \le B \left(\left(\frac{N}{K}\right)^{\beta_1} + \left(\frac{N}{K}\right)^{\gamma_1}\right), ~ t \in [0,T],
	\end{equation}
	for some  $B>0$ and  $0<\beta_1, \gamma_1<1$.
	Furthermore, 
	$\lambda \in C^{1+\frac{\alpha}{2},2+\alpha}_{loc}(  [0,T] \times \mathbb{R}^2_+)$   for any $0<\alpha<1$.
\end{thm}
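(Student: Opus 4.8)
The plan is to solve the linearized problem \eqref{eq: differential equation for $lambda$} on the whole space $[0,T]\times\mathbb{R}^2_+$ by first transforming it into an equation with bounded principal part via a logarithmic change of variables, then taming the exponential growth of the lower-order coefficients using a multiplicative spatial weight, and finally combining interior Schauder estimates with a compactness/diagonal argument over an exhaustion of $\mathbb{R}^2_+$ by bounded domains. Concretely, I would set $x=\ln K$, $y=\ln N$, so that $K\partial_K$, $K^2\partial_{KK}$ etc.\ become constant-coefficient operators in $x$, and the new unknown $\tilde\lambda(t,x,y):=\lambda(t,e^x,e^y)$ satisfies a uniformly parabolic equation \eqref{eq: equation for tilde lamba} on $\mathbb{R}^2$ whose remaining coefficients — namely $AN^{1-\beta}K^\beta$, $A\beta N^{1-\beta}K^{\beta-1}$, and the nasty term $N j(\psi)$, which by the hypothesis on $\psi$ and \eqref{eq:condition on j(x)} is controlled by $a_1 N (c_0^\gamma (N/K)^\gamma)^{-1/\gamma} = a_1 c_0^{-1} K$ — all have at worst exponential growth in $x,y$. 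The terminal datum $u_2'(K/N)$ has polynomial growth by Assumption \ref{Assum: terminal utility function $u_2$}(2), i.e.\ is bounded by $a_3 (N/K)^\gamma$, so it lies between $(N/K)^{\beta_1}$-type and $(N/K)^{\gamma_1}$-type barriers after the change of variables.

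Next I would guess the exponential weight. The claimed bound $|\lambda|\le B((N/K)^{\beta_1}+(N/K)^{\gamma_1})$ suggests looking for $W(t,K,N)$ of the form $\theta_1(t)(N/K)^{\beta_1}+\theta_2(t)(N/K)^{\gamma_1}$ that is a \emph{supersolution} of the linearized operator. Plugging such a $W$ into \eqref{eq: differential equation for $lambda$}, each power $(N/K)^p$ is (up to the time factor) an eigenfunction of the constant-coefficient part $\tfrac12\epsilon^2 K^2\partial_{KK}+\epsilon^2K\partial_K+\tfrac12\sigma^2 N^2\partial_{NN}$ producing a term $\propto (N/K)^p$, while the dangerous terms contribute: $A N^{1-\beta}K^\beta\partial_K(N/K)^p = -pA(N/K)^{p+1-\beta}$ and $-Nj(\psi)\partial_K(N/K)^p = pNj(\psi)(N/K)^p/K$; since $j(\psi)\lesssim K$ the latter is $\lesssim (N/K)^p N$, and these off-power terms are absorbed by choosing $\beta_1,\gamma_1$ in $(0,1)$ appropriately and letting the ODEs for $\theta_1,\theta_2$ (run backward from $T$, so the $\dot\theta$ term helps) dominate — the Lipschitz bound on $f$ and $f(0)=0$ give $|f(N)\partial_N(N/K)^p|\le C_f p (N/K)^p$, which is harmless. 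This produces a spatial weight $W$ with $W\ge u_2'(K/N)$ at $t=T$ and $W$ a supersolution, hence $-W$ a subsolution by symmetry of the argument. Applying the comparison principle established in Subsection \ref{sec:Maximum Principle and Uniqueness} (the specialized maximum principle for parabolic equations with exponential-growth coefficients, applied to $\lambda/W$ which is bounded) yields both the existence of a solution and the pointwise bound $|\lambda|\le W\le B((N/K)^{\beta_1}+(N/K)^{\gamma_1})$, and simultaneously uniqueness within this growth class.

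For the actual construction of the solution I would use the standard Perron-type / exhaustion method: solve the terminal-boundary value problem on $[0,T]\times B_R$ (an expanding sequence of smooth bounded domains, with boundary data taken equal to $W$, or better, obtained by a cutoff of a fixed smooth majorant between $-W$ and $W$), obtaining classical solutions $\lambda_R$ by the classical linear parabolic theory since $\psi\in C^\infty$ makes all coefficients smooth there; the uniform bound $|\lambda_R|\le W$ is provided by the barrier; then interior Schauder estimates (Ladyzhenskaya–Solonnikov–Ural'tseva) on compact subsets give uniform $C^{1+\alpha/2,2+\alpha}_{loc}$ bounds independent of $R$, so Arzelà–Ascoli plus a diagonal subsequence extracts a limit $\lambda\in C^{1+\alpha/2,2+\alpha}_{loc}([0,T]\times\mathbb{R}^2_+)$ solving \eqref{eq: differential equation for $lambda$} classically, with the same pointwise bound and matching the terminal data. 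The main obstacle is the construction of the exponential supersolution $W$: one must simultaneously choose the exponents $\beta_1,\gamma_1\in(0,1)$ and the time-dependent amplitudes so that \emph{every} bad term — and in particular the singular coefficient $Nj(\psi)\lambda_K$, whose sign and size depend on the (only lower-bounded) input $\psi$ — is dominated, while keeping $W$ above the terminal datum; getting the inequalities on $\beta_1,\gamma_1$ to be compatible (they interact with $\beta$, $\gamma$, and the constants $\epsilon,\sigma,A,C_f$) is the delicate computation, and it is exactly where the restriction $0<\beta_1,\gamma_1<1$ comes from.
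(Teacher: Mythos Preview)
Your approach is essentially the paper's: logarithmic change of variables, a weight/barrier of the form $(N/K)^{\beta_1}+(N/K)^{\gamma_1}$, an approximation step plus maximum principle for the uniform $L^\infty$ bound, and interior Schauder with a diagonal argument for construction; the paper merely truncates the \emph{coefficients} (multiplying lower-order terms by a cutoff $\xi_m$ on $\mathbb{R}^2$) rather than exhausting the domain, and phrases the barrier as the quotient $w=\tilde\lambda/\rho$ with $\rho=e^{(\beta+\theta)(y-x)}+e^{\gamma(y-x)}$, where showing the zeroth-order coefficient of the $w$-equation is bounded above is exactly your supersolution condition $L\rho\le M\rho$. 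One slip to fix: earlier you correctly computed $Nj(\psi)\le a_1c_0^{-1}K$, so the $j$-term contributes $pNj(\psi)(N/K)^p/K\lesssim (N/K)^p$, not $(N/K)^pN$---this is precisely what makes the absorption into the time factor work.
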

For this theorem, any constants $c_0$, $C_3$, $C_4$ and $C_5$  suffice for the claim of the unique existence of the solution.

\subsection{$L^{\infty}$ {\itshape a priori} estimates and  existence of a solution of the linearized equation \eqref{eq: differential equation for $lambda$}}\label{sec: L^infty a prior estimates}

For the linearized equation \eqref{eq: differential equation for $lambda$}, we try to use some classical tools of parabolic equations directly to obtain a regular solution. To do so, we want to first  transform \eqref{eq: differential equation for $lambda$} into a uniformly parabolic equation. Precisely,
we apply  the transformation $K=e^x$, $N=e^y$, $x,y\in \mathbb{R}$, and denote  $\tilde{\lambda}(t,x,y) = {\lambda}(t, e^x, e^y)$,  we then obtain the following equation for $\tilde{\lambda}$ from Equation  \eqref{eq: differential equation for $lambda$}: 
\begin{equation}\label{eq: equation for tilde lamba}
	\left\{
	\begin{aligned}
		\tilde{\lambda}_t &+\left(-e^{y-x}j(\psi(t, e^x, e^y))+ Ae^{(y-x)(1-\beta)} + \frac{1}{2}\epsilon^2\right)\tilde{\lambda}_x + \left(\frac{f(e^y)}{e^y}-\frac{1}{2}\sigma^2\right)\tilde{\lambda}_y \\
		&+ \frac{1}{2}\epsilon^2\tilde{\lambda}_{xx} + \frac{1}{2}\sigma^2\tilde{\lambda}_{yy}+ A\beta e^{(y-x)(1-\beta)}\tilde{\lambda} = 0, ~x, y \in \mathbb{R}, ~t \in [0,T),\\
		&\tilde{\lambda}(T,x,y)= \tilde{u}_2(e^{y-x}) \quad(\textrm{its rate is asymptotically equal to } e^{\gamma(y-x)}),
	\end{aligned}\right.
\end{equation}
where $\tilde{u}_2(z) = u'_2(\frac{1}{z})$.	
We  use $w: =\frac{\tilde{\lambda}}{\rho}$ to transform Equation \eqref{eq: equation for tilde lamba}, where the  auxiliary function
$\rho=e^{(\beta+\theta)(y-x)}+e^{\gamma(y-x)}>0$ and $\theta$ is a positive number.  We then obtain:
\begin{equation}\label{eq: the equation of w}
	\left\{
	\begin{aligned}
		&w_t+\left({-e^{y-x}j(\psi)} + Ae^{(y-x)(1-\beta)} + \frac{1}{2}\epsilon^2 +\epsilon^2\frac{\rho_x}{\rho}\right)w_x + \left(\frac{f(e^y)}{e^y}-\frac{1}{2}\sigma^2+\sigma^2\frac{\rho_y}{\rho}\right)w_y
		\\
		&\quad+ \frac{1}{2}\epsilon^2 w_{xx}
		+ \frac{1}{2}\sigma^2w_{yy}+ \left[\left( {\color{red}-e^{y-x}j(\psi)} + {\color{red}Ae^{(y-x)(1-\beta)} }+ \frac{1}{2}\epsilon^2\right)\frac{\rho_x}{\rho}\right.\\
		&\left.\quad+\left(\frac{f(e^y)}{e^y}-\frac{1}{2}\sigma^2\right)\frac{\rho_y}{\rho}+\frac{1}{2}\epsilon^2\frac{\rho_{xx}}{\rho}+\frac{1}{2}\sigma^2\frac{\rho_{yy}}{\rho}+ {\color{red}A\beta e^{(y-x)(1-\beta)}}\right]w = 0, ~\textrm{for}~ (x,y)\in \mathbb{R}^2, t\in [0,T],\\
		&w(T,x,y)= \frac{\tilde{u}_2(e^{y-x}) }{\rho}.
	\end{aligned}\right.
\end{equation}
The auxiliary function $\rho$ serves as a weight function and it captures the growth rate of $\tilde{\lambda}$ (the solution of Equation \eqref{eq: equation for tilde lamba}) in order to make $w=\frac{\tilde{\lambda}}{\rho}$ as a bounded function.
To derive the Schauder estimate for $w$, we shall  apply an  approximation scheme that truncates the unbounded coefficients.
The key is to obtain the  $L^{\infty}$ estimate uniformly in these  approximate solutions so that we can extract
a convergent subsequence from the approximate solutions.

To apply 
the  maximum principle, we need to ensure an upper bound for the coefficient of zeroth-order term of $w$. To this end, it is easy to see that we only need to provide an upper bound for  the sum of  red terms in \eqref{eq: the equation of w}, i.e., $-e^{y-x} j(\psi)\frac{\rho_x}{\rho} + Ae^{(y-x)(1-\beta)}\frac{\rho_x+\beta \rho}{\rho}$ since all the other terms are already bounded.
It could be done by  a detailed calculation with this particular choice of  $\rho$ using the bounds on $\psi$ stated in Theorem \ref{thm:existence when psi is smooth} and the bound on $j(x)$ in \eqref{eq:condition on j(x)} by considering the cases $\beta\geq \gamma$ and $\beta< \gamma$ separately, one can refer to \cite[Section 3.2]{HKUHUB_Hao} for more detailed calculations, and we omit the details here.
Hence, we obtain that  the coefficient of zeroth-order term of $w$ is bounded from above by some constant $M$ depending on  given parameters as follows:
\begin{equation}\label{eq:upper bound for zero-th oredr term}
	\begin{aligned}
		&\left( -e^{y-x}j(\psi)+ Ae^{(y-x)(1-\beta)} + \frac{1}{2}\epsilon^2\right)\frac{\rho_x}{\rho}
		+\left(\frac{f(e^y)}{e^y}-\frac{1}{2}\sigma^2\right)\frac{\rho_y}{\rho}\\
		&\quad+\frac{1}{2}\epsilon^2\frac{\rho_{xx}}{\rho}+\frac{1}{2}\sigma^2\frac{\rho_{yy}}{\rho}+A\beta e^{(y-x)(1-\beta)}\le M(\theta,\beta,\gamma,C_f,A,\epsilon, \sigma, a_1, c_0).
	\end{aligned}
\end{equation}
We can then  consider the approximate solution $w^m$ to the following approximate equation for Equation \eqref{eq: the equation of w} and derive its uniform $L^{\infty}$ bound being independent of $m$ with the aid of the classical maximum principle:
\begin{equation}\label{eq:Approximation wm}
	\left\{
	\begin{aligned}
		w^m_t&+\left(-e^{y-x}j(\psi)+ Ae^{(y-x)(1-\beta)} + \frac{1}{2}\epsilon^2 +\epsilon^2\frac{\rho_x}{\rho}\right)\xi_mw^m_x \\
		& + \left(\frac{f(e^y)}{e^y}-\frac{1}{2}\sigma^2+\sigma^2\frac{\rho_y}{\rho}\right)\xi_mw^m_y
		+ \frac{1}{2}\epsilon^2 w^m_{xx}
		+ \frac{1}{2}\sigma^2w^m_{yy}\\
		& + \left(( -e^{y-x}j(\psi)+ Ae^{(y-x)(1-\beta)} + \frac{1}{2}\epsilon^2)\frac{\rho_x}{\rho}\right.\\
		&\left. \quad +\left(\frac{f(e^y)}{e^y}-\frac{1}{2}\sigma^2\right)\frac{\rho_y}{\rho}+\frac{1}{2}\epsilon^2\frac{\rho_{xx}}{\rho}+\frac{1}{2}\sigma^2\frac{\rho_{yy}}{\rho}+A\beta e^{(y-x)(1-\beta)}\right)\xi_mw^m = 0,\\
		&w^m(T,x,y)= \frac{\tilde{u}_2(e^{y-x}) }{\rho},
	\end{aligned}\right.
\end{equation} 
where $\xi_m$ for $m\ge$1 is a sequence of  smooth cutoff functions such that $\xi_m=1$ on the ball $\mathcal{B}_{\frac{m}{2}}(0)$ and vanishes outside the ball $\mathcal{B}_{m}(0)$ with bounded derivatives whose bounds
independent of $m$. 
The unique existence of solution $w_m$  in $C^{1+\frac{\alpha}{2},2+\alpha}([0,T] \times \mathbb{R}^2)$  for Equation \eqref{eq:Approximation wm}  is standard as  the coefficients are  all bounded, being at least Lipschitz continuous, and the terminal data is $C^3$.  It is also clear that the coefficient of the zeroth-order term of Equation \eqref{eq:Approximation wm} is  bounded from above by the same bound $M=M(\theta,\beta,\gamma,C_f,A,\epsilon, \sigma, a_1, c_0)$ in \eqref{eq:upper bound for zero-th oredr term} which is independent of $m$. 
Hence, it follows from the comparison principle that, by first  comparing with 0, we see that $w^m>0$.  
Clearly, $w$ is bounded as  $0<w(T,x,y)\leq a_3$  due to Assumption \ref{Assum: terminal utility function $u_2$} on $u_2$ (see also \eqref{Conditons on tilde u_2}). 
Then comparing with $e^{M(T-t)}M_1$ such that  $M_1:=\sup_{\mathbb{R}^2} \frac{\tilde{u}_2(e^{y-x}) }{\rho}$, we also have 
\begin{equation}\label{eq:uniform upper bound for wm}
	0<w^m\le e^{M(T-t)}M_1, ~M=M(\theta,\beta,\gamma,C_f,A,\epsilon, \sigma, a_1, c_0).
\end{equation} 
Using the classical  interior Schauder estimates (cf. \cite{MR0181836}), 
and the uniform $L^{\infty}$ bound of $w^m$ we obtained in \eqref{eq:uniform upper bound for wm},   we can use a diagonal argument to find a subsequence  $\{w^{m_k}\}_{k=0}^{\infty}$ such that $w^{m_k}$ converge to some $w$ in $C^{1+\frac{\alpha}{2},2+\alpha}_{loc}([0,T] \times\mathbb{R}^2)$ for any $0<\alpha< 1$. We can now conclude that Equation \eqref{eq: the equation of w}, and so, Equation \eqref{eq: equation for tilde lamba} has a   classical solution in $C^{1+\frac{\alpha}{2},2+\alpha}_{loc}([0,T] \times\mathbb{R}^2)$ for any $\alpha< 1$. Besides, it is smooth as long as $j$, $\psi$, and $f$ are smooth
by the standard regularity theory; see \cite{MR0181836} for instance.
\begin{rem}\label{rem:comment on maximum principle}
	
	The conventional maximum principle helps  in establishing an upper bound but not so for getting a  more precise lower bound other than the trivial bound 0, 
	without which it is impossible to ensure the well-posedness of
	the  nonlinear problem \eqref{eq: differential equation for original $lambda$}. We shall obtain more accurate lower and upper bounds for the solution $\lambda$ below in next Section \ref{section: pointwise estimate} through Theorem \ref{thm:crucial estimate theorem} and Theorem \ref{thm:crucial estimate theorem for case beta<gamma}.
\end{rem}
\subsection{A modified maximum principle for the linearized equation  \eqref{eq: equation for tilde lamba} and   uniqueness of its solution}\label{sec:Maximum Principle and Uniqueness}

In this section, we aim to establish the uniqueness of the classical solution for Equation \eqref{eq: equation for tilde lamba} within a certain function class. For this purpose, we need a customized maximum principle to handle the growth rate. We begin with a lemma  asserting that if we already have the control,  uniformly in time for the solution at the far field, then the maximum principle holds.

\begin{lem}\label{lem: lemma of maximum principle}
	Consider the parabolic operator 
	\begin{equation}\label{eq:parabolic operator}
		Lu : = \sum_{i,j =1}^{n} a_{ij} (t,x)\frac{\partial ^2 u}{\partial x_i \partial x_j} + \sum_{i=1}^{n}b_i(t,x) \frac{\partial u}{\partial x_i} + c(t,x) u -\frac{\partial u}{\partial t},
	\end{equation}
	for $(t,x)\in (0,T] \times \mathbb{R}^n$. Assume that $a_{ij}$, $b_i$ and $c$ are continuous for $i,j=1,\cdots, n$. Moreover,   $c$ is assumed to be bounded from above.
	If $Lu \le 0$ on $(0,T] \times \mathbb{R}^n$, $u(0, x) \ge 0$ on $\mathbb{R}^n$ and 
	\begin{equation}\label{eq:one side control in maximum prin}
		\liminf_{|x| \to \infty} u(t,x) \ge 0
	\end{equation}
	uniformly in $t \in [0,T]$, then $u(t,x) \ge 0$ on $[0,T] \times \mathbb{R}^n$.
\end{lem}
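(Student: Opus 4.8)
The plan is to reduce to a bounded domain by a standard comparison/barrier argument, using condition \eqref{eq:one side control in maximum prin} to handle the unbounded directions. First I would fix an arbitrary $\varepsilon>0$ and, to exploit the upper bound $c\le c_0$ on the zeroth-order coefficient, pass to the substitution $v:=e^{-c_0 t}u$ (or rather $v := e^{\kappa t} u$ for a suitable constant $\kappa$), which transforms $Lu\le 0$ into an inequality $L_0 v\le 0$ for an operator $L_0$ of the same form whose zeroth-order coefficient is $\le 0$; this is exactly the setting in which the classical weak maximum principle on bounded cylinders applies without sign issues on the boundary contributions. Since $u\ge 0$ on $[0,T]\times\mathbb{R}^n$ is equivalent to $v\ge 0$, it suffices to prove $v\ge 0$.

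Next I would argue by contradiction: suppose $\inf v =: -2\delta < 0$ over $[0,T]\times\mathbb{R}^n$. By \eqref{eq:one side control in maximum prin}, $\liminf_{|x|\to\infty} v(t,x)\ge 0$ uniformly in $t$, so there is $R>0$ with $v(t,x)\ge -\delta$ whenever $|x|\ge R$, for all $t\in[0,T]$. On the bounded parabolic cylinder $Q_R := (0,T]\times B_R(0)$ the coefficients of $L_0$ are continuous, hence bounded, and $a_{ij}$ is (at least) nonnegative definite, so the standard weak maximum principle for parabolic operators with nonpositive zeroth-order coefficient on a bounded cylinder yields
\[
\min_{\overline{Q_R}} v \;=\; \min_{\partial_p Q_R} v,
\]
where $\partial_p Q_R$ is the parabolic boundary, i.e. the union of the bottom $\{t=0\}\times \overline{B_R}$ and the lateral boundary $[0,T]\times \partial B_R$. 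On the bottom $v(0,x)=u(0,x)\ge 0$, and on the lateral boundary $|x|=R$ so $v\ge -\delta$; hence $v\ge -\delta$ on all of $\overline{Q_R}$. Combined with $v\ge -\delta$ for $|x|\ge R$, this gives $v\ge -\delta$ everywhere, contradicting $\inf v = -2\delta$. Letting the contradiction close the argument gives $v\ge 0$, hence $u\ge 0$ on $[0,T]\times\mathbb{R}^n$.

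One technical point to be careful about: the weak maximum principle on $\overline{Q_R}$ as quoted requires either that $v$ attain its minimum on the compact set $\overline{Q_R}$ (immediate, by continuity) and that, if the minimum is negative and attained at an interior or top point, one derives $L_0 v>0$ there from the second-order and first-order conditions together with $c\le 0$ — the usual perturbation trick of replacing $v$ by $v+\eta(T-t)$ or $v-\eta t$ handles the degenerate (parabolic, not strictly parabolic) and the $t=T$ boundary cases, so no strict parabolicity of $a_{ij}$ is needed, only $\sum a_{ij}\xi_i\xi_j\ge 0$, which follows from continuity and the operator being genuinely parabolic in our application. I expect the main obstacle — really the only nonroutine point — to be organizing this reduction cleanly so that the uniform-in-$t$ far-field control \eqref{eq:one side control in maximum prin} is used at exactly the right place (choosing $R$ before invoking the bounded-domain principle), and making sure the exponential-in-$t$ rescaling is legitimate despite the coefficients being merely continuous and unbounded in $x$; boundedness of the coefficients is only invoked after restricting to $\overline{Q_R}$, which is what makes the classical theory applicable there.
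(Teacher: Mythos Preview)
Your argument is correct and is precisely the standard reduction that underlies the result: rescale in time to make the zeroth-order coefficient nonpositive, use the uniform far-field condition \eqref{eq:one side control in maximum prin} to localize to a bounded cylinder, and apply the classical weak maximum principle there. The paper itself does not give a proof of this lemma---it simply cites \cite[Section~2.4, Lemma~5]{MR0181836}---so your write-up actually supplies what the paper omits, and matches the argument one would find in that reference.
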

This lemma is standard, we refer to  \cite[Section 2.4, Lemma 5]{MR0181836}.
\begin{thm}[Maximum principle]\label{thm:Maximum principle}
	Let $\psi$ satisfy the condition specified in Theorem \ref{thm:existence when psi is smooth}. Assume that $\tilde{\lambda}$ is a classical solution of Equation ${\eqref{eq: equation for tilde lamba}}_1$\footnote{Here, ${\eqref{eq: equation for tilde lamba}}_1$ means the first equation in ${\eqref{eq: equation for tilde lamba}}$. In general, we use $\textrm{(i.j)}_{\textrm{k}}$ to represent the k-th equation in System (i.j).}, and  $|\tilde{\lambda} (t,x, y)| \le B (e^{\beta_1(y-x)} + e^{\gamma_1(y-x)})$ for all $(x,y)\in \mathbb{R}^2$, $t \in [0,T]$ for some constants $0< \beta_1, \gamma_1<1$ and  $B>0$. If $\tilde{\lambda}(T, x,y) \ge 0$ on $\mathbb{R}^2$, then $\tilde{\lambda}(t, x,y) \ge 0$ on $[0,T] \times \mathbb{R}^2$.
\end{thm}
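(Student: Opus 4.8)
The plan is to bring Equation ${\eqref{eq: equation for tilde lamba}}_1$ into the scope of Lemma \ref{lem: lemma of maximum principle} through three successive reductions: a spatial weight that removes the unboundedness of the zeroth-order coefficient, a time rescaling that makes it strictly negative, and a sacrificial barrier that restores the far-field sign condition, which the reduced equation does not itself possess.

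First, exactly as in Subsection \ref{sec: L^infty a prior estimates}, set $w:=\tilde\lambda/\rho$ with $\rho=e^{p(y-x)}+e^{q(y-x)}$, now choosing the exponents so that $0<q<\min\{\beta_1,\gamma_1\}$ and $\max\{\beta,\beta_1,\gamma_1\}<p<1$. Since $p>\beta$, the very computation behind \eqref{eq:upper bound for zero-th oredr term} — which only uses the lower bound on $\psi$ from Theorem \ref{thm:existence when psi is smooth} together with \eqref{eq:condition on j(x)} to control $e^{y-x}j(\psi)$ from above, treating $\beta\ge\gamma$ and $\beta<\gamma$ separately — shows that $w$ solves the analogue of \eqref{eq: the equation of w} whose zeroth-order coefficient is bounded above by a constant $M$; moreover its second-order coefficients are the constants $\frac{1}{2}\epsilon^2,\frac{1}{2}\sigma^2$, its $y$-drift is bounded, and the only unbounded term in its $x$-drift is the \emph{positive} Cobb--Douglas term $Ae^{(y-x)(1-\beta)}$, which is large precisely when $y-x\to+\infty$. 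Because $p,q$ bracket $\{\beta_1,\gamma_1\}$, the growth hypothesis $|\tilde\lambda|\le B(e^{\beta_1(y-x)}+e^{\gamma_1(y-x)})$ forces $|w(t,x,y)|\le g(y-x)$ for a continuous $g$ with $g(\pm\infty)=0$; hence $w$ is globally bounded. Next put $\bar w(t,x,y):=e^{(M+\delta_0)(t-T)}w(t,x,y)$ for a constant $\delta_0$ to be fixed large; then $\bar w$ solves the same equation with zeroth-order coefficient $\le M-(M+\delta_0)=-\delta_0$, with $\bar w(T,\cdot)=\tilde\lambda(T,\cdot)/\rho\ge0$, and $\bar w$ has the same sign as $\tilde\lambda$. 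Finally reverse time, $u(s,\cdot):=\bar w(T-s,\cdot)$, $s\in[0,T]$: then $u$ solves $Lu=0$ with $L$ of the form \eqref{eq:parabolic operator}, having continuous coefficients, zeroth-order coefficient $\le-\delta_0<0$ (hence bounded above), $u(0,\cdot)\ge0$, and $|u|\le\sup g<\infty$. It then remains to show $u\ge0$ on $[0,T]\times\mathbb{R}^2$.

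Take the barrier $V(x,y):=\cosh(\mu x)+\cosh(\mu y)$ with $\mu:=1-\beta$, so that $|V_x|+|V_y|+|V_{xx}|+|V_{yy}|\le(\mu+\mu^2)V$ and $V\to+\infty$ at spatial infinity. One verifies $LV\le0$ on $[0,T]\times\mathbb{R}^2$ once $\delta_0$ is large. On any half-plane $\{y-x\le R_0\}$ all coefficients of $L$ are bounded, so there $LV\le\bigl(C(R_0)(\mu+\mu^2)-\delta_0\bigr)V\le0$ as soon as $\delta_0>C(R_0)(\mu+\mu^2)$. On $\{y-x>R_0\}$ the $x$-drift is essentially $Ae^{(y-x)(1-\beta)}$ multiplied by $V_x=\mu\sinh(\mu x)$: where $x\le0$ this product is $\le0$ and of order $-e^{(1-\beta)(y-x)+\mu|x|}$, which dominates every other term and drives $LV\to-\infty$; where $x>0$ one necessarily has $y>x>0$ and $V\asymp e^{\mu y}$, while the product is of order $e^{(1-\beta)(y-x)+\mu x}=e^{\mu y}$ \emph{because} $\mu=1-\beta$, hence is absorbed by the zeroth-order term $\asymp-\delta_0 e^{\mu y}$ for $\delta_0$ large. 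This establishes $LV\le0$. Now fix $\delta>0$ and set $u_\delta:=u+\delta V$; then $Lu_\delta=Lu+\delta LV=\delta LV\le0$, $u_\delta(0,\cdot)\ge\delta V>0$, and $\liminf_{|(x,y)|\to\infty}u_\delta\ge-\sup g+\delta\liminf V=+\infty$ uniformly in $s$. Lemma \ref{lem: lemma of maximum principle} then gives $u_\delta\ge0$ on $[0,T]\times\mathbb{R}^2$; letting $\delta\downarrow0$ yields $u\ge0$, i.e.\ $\tilde\lambda\ge0$.

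The main obstacle is the barrier step. Because the log-transformed drift grows exponentially, the classical Friedman-type quadratic-exponential auxiliary functions no longer satisfy $LV\le0$, and one has to exploit the precise sign of the Cobb--Douglas drift term $Ae^{(y-x)(1-\beta)}$ — it pushes $x$ toward $+\infty$, i.e.\ opposite to the direction in which the barrier blows up, so it contributes a dominant \emph{negative} term to $LV$ there — together with the strongly negative zeroth-order coefficient manufactured in the first reduction, and then to check that these two mechanisms jointly control $LV$ across all of $\mathbb{R}^2$, the critical balance being $1-\beta\le\mu$. The remaining verifications (continuity of the transformed coefficients, regularity of $w$ and $u_\delta$, and the limit $\delta\downarrow0$) are routine.
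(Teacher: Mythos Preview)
Your argument is correct, but it takes a genuinely different route from the paper's. The paper chooses a single weight
\[
\rho=e^{-x+2y}+e^{x-y}+e^{-x+y}+e^{-x-y},
\]
which grows in \emph{every} direction of $\mathbb{R}^2$, not just along $y-x$. With this choice one still has the zeroth-order coefficient of the $w$-equation bounded from above (this is the ``elaborate calculation'' the paper defers to \cite{HKUHUB_Hao}), and in addition the growth hypothesis $|\tilde\lambda|\le B(e^{\beta_1(y-x)}+e^{\gamma_1(y-x)})$ with $\beta_1,\gamma_1\in(0,1)$ now forces $w=\tilde\lambda/\rho\to0$ uniformly as $|(x,y)|\to\infty$. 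Hence Lemma~\ref{lem: lemma of maximum principle} applies \emph{directly} to $w$ (after time reversal), with no time rescaling and no barrier.

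Your two-term weight $\rho=e^{p(y-x)}+e^{q(y-x)}$ only renders $w$ bounded, not decaying, which is why you then need the extra machinery: the exponential-in-time rescaling to force the zeroth-order coefficient below $-\delta_0$, and the $\cosh$ barrier $V$ to manufacture the far-field sign condition. Your barrier analysis is sound --- the choice $\mu=1-\beta$ is exactly what makes the Cobb--Douglas drift contribution $Ae^{(1-\beta)(y-x)}V_x$ balance against $-\delta_0 V$ in the region $x>0,\,y-x\gg1$, and in the region $x\le0$ the sign of $V_x$ turns that same term into a help rather than a hindrance. (A minor imprecision: when $\beta<\gamma$ the $x$-drift also contains the unbounded term $-e^{y-x}j(\psi)$, of order $-e^{(1-\beta/\gamma)(y-x)}$; but since this is subordinate to the Cobb--Douglas term and its sign is favourable when multiplied by $V_x>0$, your conclusions are unaffected.) What each approach buys: the paper's is shorter and avoids the barrier step entirely, at the cost of a less transparent weight whose verification of the zeroth-order bound is more involved; yours is more systematic --- separating the boundedness of the zeroth-order coefficient from the far-field control --- and makes explicit why the exponent $1-\beta$ is the critical balance, but requires three reductions instead of one.
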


\begin{proof}
	The  validity of our claim relies on  a suitable choice of a weight function $\rho$, which should have faster growth at infinity than those assumed for $\tilde{\lambda}$.  Let $\rho = e^{ -x + 2y}  + e^{x-y} + e^{-x+y} +e^{-x-y}$. Consider $w = \frac{\tilde{\lambda}}{\rho}$ which satisfies ${\eqref{eq: the equation of w}}_1$ for this new choice $\rho$.
	After some elaborate calculations (cf. \cite[Pooof of Theorem 3.3.1]{HKUHUB_Hao}), one can show that  the coefficient of the zeroth-order term in  ${\eqref{eq: the equation of w}}_1$ is  still bounded from above no matter $\beta \ge \gamma$ or $\beta<\gamma$; moreover
	\begin{equation}\label{eq:uniform to 0 at far field}
		\lim_{R \to \infty} \sup_{|x| + |y| \ge R} |w| \to 0 ~ \textrm{uniformly in $t \in [0,T]$}, 
	\end{equation}
	due to the choice of this new $\rho$ and the  bound on $\tilde{\lambda}$ assumed in the statement, i.e., $|\tilde{\lambda} (t,x, y)| \le B (e^{\beta_1(y-x)} + e^{\gamma_1(y-x)})$.
	Applying Lemma \ref{lem: lemma of maximum principle}  (after a reversal of time), we have $w \ge 0$, so is $\tilde{\lambda}$.
\end{proof}

\begin{rem}
	In fact  Theorem \ref{thm:Maximum principle} remains valid under a weaker condition $\tilde{\lambda} (t,x, y) \ge -B (e^{\beta_1(y-x)} + e^{\gamma_1(y-x)})$ upon Lemma \ref{lem: lemma of maximum principle}.
\end{rem}
\begin{rem}
	The maximum principle provided in Theorem \ref{thm:Maximum principle} is designed for our parabolic operator with exponential growth coefficients,
	for which we need a more stringent growth rate on the solution compared with the classical result.
	That is,  if the growth rates of the coefficients are not so rapid, then the maximum principle holds for a broader class of functions. In particular,  if the coefficients $a_{ij}$'s of the parabolic operator in \eqref{eq:parabolic operator} are bounded, $b_i$'s   are of at most linear growth
	rate, and $c$ is of at most quadratic growth,
	then the maximum principle holds for  the class of solutions with at most $e^{k|x|^2}$ (for any $k>0$) growth rate in $x$; see   \cite[Section 2.4, Theorem 9 and Theorem 10]{MR0181836}.
\end{rem}
\begin{cor}[Uniqueness of the classical solution]\label{cor:Uniqueness of the classical solution}
	There exists at most one classical solution to the Cauchy problem \eqref{eq: equation for tilde lamba} satisfying 
	\begin{equation*}
		|\tilde{\lambda}(t,x,y)| \le B (e^{\beta_1(y-x)} + e^{\gamma_1(y-x)}),~ t \in [0,T],
	\end{equation*}
	for some  constants $B>0$ and  $0< \beta_1, \gamma_1<1$.
	\begin{proof}
		To show the uniqueness, we only need to show $\tilde{\lambda} =0$ when  the terminal data $\tilde{u}_2$ in the second equation of Problem $\eqref{eq: equation for tilde lamba}$ is replaced by 0.
		We  then apply Theorem \ref{thm:Maximum principle} for $\tilde{\lambda} $ and $-\tilde{\lambda} $ to deduce that.
	\end{proof}
\end{cor}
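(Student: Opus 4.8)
The plan is to exploit the linearity of Equation \eqref{eq: equation for tilde lamba} in the unknown together with the one-sided comparison furnished by Theorem \ref{thm:Maximum principle}. Suppose $\tilde{\lambda}^{(1)}$ and $\tilde{\lambda}^{(2)}$ are two classical solutions of \eqref{eq: equation for tilde lamba}, each obeying the growth bound $|\tilde{\lambda}^{(i)}(t,x,y)| \le B_i(e^{\beta_1(y-x)} + e^{\gamma_1(y-x)})$ for some $B_i>0$ and $0<\beta_1,\gamma_1<1$. Since the coefficients of $\eqref{eq: equation for tilde lamba}_1$ depend only on the fixed input $\psi$ (through $j(\psi)$) and not on the unknown, the difference $\tilde{\lambda} := \tilde{\lambda}^{(1)} - \tilde{\lambda}^{(2)}$ is again a classical solution of $\eqref{eq: equation for tilde lamba}_1$, now with terminal data $\tilde{\lambda}(T,\cdot)\equiv 0$, and it inherits the bound $|\tilde{\lambda}(t,x,y)| \le (B_1+B_2)(e^{\beta_1(y-x)} + e^{\gamma_1(y-x)})$ with the \emph{same} exponents. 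Hence it suffices to show that any such $\tilde{\lambda}$ vanishes identically.

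First I would apply Theorem \ref{thm:Maximum principle} directly to $\tilde{\lambda}$: because $\tilde{\lambda}(T,\cdot)\ge 0$ on $\mathbb{R}^2$ and the required exponential growth bound holds, the theorem gives $\tilde{\lambda}\ge 0$ on $[0,T]\times\mathbb{R}^2$. Next I would apply the same theorem to $-\tilde{\lambda}$; this is legitimate since $-\tilde{\lambda}$ solves the same linear equation $\eqref{eq: equation for tilde lamba}_1$, has terminal value $-\tilde{\lambda}(T,\cdot)=0\ge 0$, and satisfies $|-\tilde{\lambda}| = |\tilde{\lambda}| \le (B_1+B_2)(e^{\beta_1(y-x)} + e^{\gamma_1(y-x)})$ with the same rate. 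Consequently $-\tilde{\lambda}\ge 0$, i.e. $\tilde{\lambda}\le 0$ on $[0,T]\times\mathbb{R}^2$. Combining the two inequalities forces $\tilde{\lambda}\equiv 0$, whence $\tilde{\lambda}^{(1)}=\tilde{\lambda}^{(2)}$, which is the asserted uniqueness.

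I do not expect a genuine obstacle here: the substantive work — choosing a weight $\rho$ of strictly faster exponential growth, checking that the zeroth-order coefficient of $\eqref{eq: the equation of w}_1$ stays bounded above, and verifying the uniform far-field decay of $w=\tilde{\lambda}/\rho$ — is precisely what Theorem \ref{thm:Maximum principle} already delivers, and the growth class in the corollary is exactly the one that theorem is built for. The only detail to keep in mind is that one must have the \emph{two-sided} exponential control with a \emph{common} rate on the difference of solutions, so that the maximum principle can be invoked for both $\tilde{\lambda}$ and $-\tilde{\lambda}$; by the remark following Theorem \ref{thm:Maximum principle} even the one-sided bound $\tilde{\lambda}\ge -B(e^{\beta_1(y-x)} + e^{\gamma_1(y-x)})$ would suffice when treating $-\tilde{\lambda}$. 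This completes the argument.
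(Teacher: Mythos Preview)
Your proposal is correct and follows essentially the same approach as the paper: reduce to the case of zero terminal data by linearity, then apply Theorem \ref{thm:Maximum principle} to both $\tilde{\lambda}$ and $-\tilde{\lambda}$ to force $\tilde{\lambda}\equiv 0$. The additional details you supply (closure of the growth class under differences, invocation of the remark after Theorem \ref{thm:Maximum principle}) are accurate and only make the argument more explicit.
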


\section{Refined pointwise estimates for the solution to the linearized problem \eqref{eq: differential equation for $lambda$}}\label{section: pointwise estimate}


We derive the refined pointwise lower and upper estimates, which seem  almost optimal for the solution $\lambda$ to the linearized equation \eqref{eq: differential equation for $lambda$}. 
In particular, we shall establish a crucial lower bound for solutions to \eqref{eq: differential equation for $lambda$}  that enables the application of a fixed point argument to solve \eqref{eq: differential equation for original $lambda$}. To achieve this, we employ a combination of probabilistic and analytic methodologies, combining the strengths of both approaches.

The main tools of use are the celebrated Feynman-Kac formula with a delicate stochastic analysis for a pair of partially coupled diffusion processes with non-Lipschitz driving coefficients associated with the infinitesimal generator of Equation \eqref{eq: differential equation for $lambda$} and a sophisticated use of 
H\"older's and reverse H\"older's inequalities.
%
The main results in this section are the following.

\begin{thm}[Pointwise estimates when $\beta\ge\gamma$]\label{thm:crucial estimate theorem}
Let $\beta$, $\gamma$ $\in (0,1)$ such that  $\beta\ge\gamma$.
Let the input function  $\psi$  be a continuous function satisfying $\psi(t,K,N) \ge c_0^{\gamma}\left(\frac{N}{K}\right)^{\gamma} $ for some positive constant $c_0$ which is specified in \eqref{eq:value for c_0}. Assume that $\lambda \in  C^{1,2}( [0,T]\times \mathbb{R}_+^2 )$ is  a classical solution of the linearized problem \eqref{eq: differential equation for $lambda$}.  Under Assumptions \ref{Assum: running utility function $u_1$} and \ref{Assum: terminal utility function $u_2$} in Section \ref{sec:Model formulation},  the  solution $\lambda$  satisfies
\begin{equation*}
	c_0^{\gamma}\left(\frac{N}{K}\right)^{\gamma} \le\lambda(t,K,N) \le C_1\left(\frac{N}{K}\right)^{\gamma} + C_2\left(\frac{N}{K}\right)^{\beta},
\end{equation*}
where the positive constants $C_1$ and $C_2$ are given in \eqref{eq: values for C_1 and C_2 case1} and \eqref{eq: values for C_1 and C_2 when bera=gamma} when $\gamma \le 2\beta -1$ or in \eqref{eq: values for C_1 and C_2}  when  $\gamma > 2\beta -1$. 
\end{thm}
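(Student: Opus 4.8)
The plan is to use the Feynman--Kac representation for the linear parabolic equation \eqref{eq: differential equation for $lambda$} and extract the pointwise bounds from the terminal datum $u_2'(K/N)$, exploiting the scaling structure $(N/K)^\gamma$ and $(N/K)^\beta$. First I would freeze the input $\psi$ and write \eqref{eq: differential equation for $lambda$} as a backward linear PDE with generator
\begin{equation*}
	\mathcal{L} = \frac{1}{2}\epsilon^2 K^2 \partial_{KK} + \frac{1}{2}\sigma^2 N^2 \partial_{NN} + \left(AN^{1-\beta}K^\beta + \epsilon^2 K - Nj(\psi)\right)\partial_K + f(N)\partial_N,
\end{equation*}
together with the zeroth-order coefficient $c(t,K,N) = A\beta N^{1-\beta}K^{\beta-1} \ge 0$. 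Since $\lambda\in C^{1,2}$ and satisfies the growth bound from Theorem \ref{thm:existence when psi is smooth}, the Feynman--Kac formula applies and gives
\begin{equation*}
	\lambda(t,K,N) = \E\left[\exp\left(\int_t^T c(s,\mathbf{K}_s,\mathbf{N}_s)\,ds\right) u_2'\!\left(\frac{\mathbf{K}_T}{\mathbf{N}_T}\right)\right],
\end{equation*}
where $(\mathbf{K}_s,\mathbf{N}_s)$ solves the SDE system with the drift and diffusion read off from $\mathcal{L}$, started at $(K,N)$ at time $t$. The first technical point is to justify this representation rigorously: one truncates, uses the a priori growth of $\lambda$ and the moment bounds of the diffusion to control the localizing sequence, and passes to the limit; the non-Lipschitz drift term $Nj(\psi)$ is handled because $j$ is decreasing and $\psi$ is bounded below, so $j(\psi)$ is bounded above locally and the $K$-process cannot explode downward through zero.

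For the lower bound, the idea is that the positive factor $\exp(\int_t^T c\,ds)\ge 1$ can be dropped, and $u_2'(x)\ge a_2 x^{-\gamma}$ by Assumption \ref{Assum: terminal utility function $u_2$}(2), so
\begin{equation*}
	\lambda(t,K,N) \ge a_2\,\E\left[\left(\frac{\mathbf{N}_T}{\mathbf{K}_T}\right)^{\gamma}\right].
\end{equation*}
To produce a bound of the form $c_0^\gamma (N/K)^\gamma$, I would study the ratio process $R_s := \mathbf{N}_s/\mathbf{K}_s$ (or rather estimate $\E[(\mathbf{N}_T/\mathbf{K}_T)^\gamma]$ directly using Jensen and the explicit geometric-Brownian-motion structure of $\mathbf{K}$). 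Applying It\^o to $(\mathbf{N}_s/\mathbf{K}_s)^\gamma$ and using the concavity inequalities encoded in the assumptions on $j$ and $f$, together with $f(0)=0$ and the Lipschitz bound $|f(N)|\le C_f N$, one obtains a differential inequality whose Gr\"onwall integration over $[t,T]$ yields a multiplicative constant $c_0^\gamma$ independent of $\psi$ --- the key being that the only $\psi$-dependent term, $-Nj(\psi)\partial_K$, contributes with a favorable sign to $(\mathbf{N}/\mathbf{K})^\gamma$ because increasing consumption decreases $\mathbf{K}$, hence increases the ratio. This is where the explicit value \eqref{eq:value for c_0} of $c_0$ comes from.

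For the upper bound, I would split $u_2'(K/N) \le a_3 (N/K)^\gamma$ and bound $\exp(\int_t^T A\beta N^{1-\beta}K^{\beta-1}\,ds)$. Here the mismatch $\beta$ versus $\gamma$ forces the two-term ansatz $C_1(N/K)^\gamma + C_2(N/K)^\beta$: one seeks a supersolution of $\mathcal{L}$-type for the terminal-value problem of this form, verifies via direct substitution (using $A\beta N^{1-\beta}K^{\beta-1}$, the bound $j(\psi)\le a_1\psi^{-1/\gamma}\le a_1 c_0^{-1}(K/N)$, and the sign of the remaining terms) that it dominates the Feynman--Kac integrand in expectation, and then invokes the comparison principle of Theorem \ref{thm:Maximum principle} to conclude $\lambda \le C_1(N/K)^\gamma + C_2(N/K)^\beta$. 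The dichotomy $\gamma\le 2\beta-1$ versus $\gamma>2\beta-1$ arises because the cross term generated when $\mathcal{L}$ hits $(N/K)^\beta$ has exponent $2\beta-1$ relative to $(N/K)^\gamma$, so whether it can be absorbed into the $(N/K)^\gamma$ term or needs the $(N/K)^\beta$ term changes the algebra of choosing $C_1,C_2$; this is where \eqref{eq: values for C_1 and C_2 case1}, \eqref{eq: values for C_1 and C_2} and \eqref{eq: values for C_1 and C_2 when bera=gamma} are pinned down.

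The main obstacle I anticipate is the rigorous control of the coupled non-Lipschitz diffusion $(\mathbf{K}_s,\mathbf{N}_s)$: establishing enough integrability of $(\mathbf{N}_T/\mathbf{K}_T)^{\pm\gamma}$ and $(\mathbf{N}_T/\mathbf{K}_T)^{\beta}$ to justify Feynman--Kac and to carry out the Gr\"onwall estimates --- and doing so with constants that do \emph{not} depend on the input $\psi$ (only on its fixed lower bound $c_0^\gamma(N/K)^\gamma$) --- is delicate precisely because the drift term $-Nj(\psi)\partial_K$ is only controlled from one side. The reverse H\"older inequality is needed to handle the negative moments of $\mathbf{K}_T$ that appear when bounding $\E[(\mathbf{N}_T/\mathbf{K}_T)^\gamma]$, and balancing the H\"older exponents against the exponential weight $\exp(\int c\,ds)$ is the crux of the quantitative estimate.
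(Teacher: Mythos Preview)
Your overall framework (Feynman--Kac plus pointwise control of the ratio process) matches the paper, but both halves of your argument contain a genuine gap.

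\textbf{Lower bound.} Dropping the factor $\exp\bigl(\int_t^T A\beta(\mathbf{N}/\mathbf{K})^{1-\beta}ds\bigr)\ge 1$ and then applying It\^o to $(\mathbf{N}_s/\mathbf{K}_s)^\gamma$ does not work. The drift of $\mathbf{N}/\mathbf{K}$ contains the production term $-A(\mathbf{N}/\mathbf{K})^{2-\beta}$, which after It\^o produces $-\gamma A(\mathbf{N}/\mathbf{K})^{\gamma+1-\beta}$ in the drift of $(\mathbf{N}/\mathbf{K})^\gamma$. Since $\gamma+1-\beta>\gamma$, this term cannot be absorbed into a multiple of $(\mathbf{N}/\mathbf{K})^\gamma$, so no Gr\"onwall inequality closes. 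The paper avoids this by \emph{not} discarding the exponential: it applies It\^o directly to the full process $G(s)=\exp\bigl(\int_t^s A\beta(\mathbf{N}/\mathbf{K})^{1-\beta}\bigr)\tilde u_2(\mathbf{N}_s/\mathbf{K}_s)$, and the derivative of the exponential contributes $+A\beta(\mathbf{N}/\mathbf{K})^{1-\beta}G$, which combines with the production drift to give $A\bigl(\beta-\tfrac{x\tilde u_2'}{\tilde u_2}\bigr)(\mathbf{N}/\mathbf{K})^{1-\beta}G\ge 0$ (using $x\tilde u_2'/\tilde u_2\le\beta$ from Assumption~\ref{Assum: terminal utility function $u_2$}). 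That cancellation is the whole point; throwing away the exponential destroys it.

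\textbf{Upper bound.} Your supersolution/comparison idea works only in the subcase $\gamma\le 2\beta-1$. Applying the operator to $C_1(N/K)^\gamma$ produces, via the production drift and the zeroth-order term together, the residual $A(\beta-\gamma)C_1(N/K)^{1-\beta+\gamma}$. When $\gamma>2\beta-1$ one has $1-\beta+\gamma>\beta$, so this exponent exceeds both $\gamma$ and $\beta$ and cannot be absorbed by any combination $C_1(N/K)^\gamma+C_2(N/K)^\beta$, even with time-dependent $C_1(t),C_2(t)$. Hence no function of this form is a supersolution, and the maximum principle argument collapses. The paper circumvents this by a genuinely probabilistic route: it splits $\E[G(T)]$ via H\"older's inequality into three factors, two of which are variants of the process $h$ and are estimated as in Lemma~\ref{lem:Estimate for process h(s)}, while the third, $\E[(\mathbf{N}_T/\mathbf{K}_T)^{q(\gamma-\beta)}]$, is controlled via Jensen and the observation that $(\mathbf{N}/\mathbf{K})^{\beta-1}$ satisfies an \emph{almost linear} SDE (Lemma~\ref{lem:Estimate for frac{X_2}{X_1}(s)^{beta -1}}). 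This is precisely where the case split $\gamma\lessgtr 2\beta-1$ originates, and your proposal does not supply a substitute mechanism for the harder subcase.

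A minor point: the reverse H\"older inequality you mention at the end is used in the paper only for the regime $\beta<\gamma$ (Lemma~\ref{lem: lower bound for lambda for beta<gamma}); it plays no role in Theorem~\ref{thm:crucial estimate theorem}.
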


\begin{thm}[Pointwise estimates for $\beta<\gamma$]\label{thm:crucial estimate theorem for case beta<gamma}
Let $\beta$, $\gamma$ $\in (0,1)$ such that   $\beta<\gamma$.
Let the input function $\psi$  be a continuous function satisfying $\psi(t,K,N) \ge  C_3  \left(\frac{N}{K}\right)^{\beta}\left(  C_4 \left(\frac{N}{K}\right)^{\beta-1} + C_5\right)^{ \frac{\gamma-\beta}{\beta-1 } }$. Assume that $\lambda \in  C^{1,2}( [0,T]\times \mathbb{R}_+^2 )$ is a classical solution of the linearized problem \eqref{eq: differential equation for $lambda$}.  Under Assumptions \ref{Assum: running utility function $u_1$} and \ref{Assum: terminal utility function $u_2$} in Section \ref{sec:Model formulation}, then the  solution $\lambda$  satisfies
\begin{equation*}
	C_3  \left(\frac{N}{K}\right)^{\beta}\left(  C_4 \left(\frac{N}{K}\right)^{\beta-1} + C_5\right)^{ \frac{\gamma-\beta}{\beta-1 } }\le\lambda(t,K,N) \le C_6\left(\frac{N}{K}\right)^{\gamma}  ,
\end{equation*}
where the positive constants $C_3, C_4$ and $C_5$ are given in \eqref{eq:value for C3, C4, C5},  and $C_6$ is  given in 
\eqref{eq:value for C6}.
\end{thm}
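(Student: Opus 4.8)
\textbf{Proof proposal for Theorems~\ref{thm:crucial estimate theorem} and \ref{thm:crucial estimate theorem for case beta<gamma}.}
The plan is to exploit the fact that, once $\psi$ is fixed, Equation \eqref{eq: differential equation for $lambda$} is a \emph{linear} parabolic equation with a zeroth-order coefficient $A\beta N^{1-\beta}K^{\beta-1}\geq 0$, so its classical solution admits a Feynman--Kac representation
\[
\lambda(t,K,N)=\E\!\left[u_2'\!\left(\tfrac{K_T}{N_T}\right)\exp\!\left(\int_t^T A\beta N_s^{1-\beta}K_s^{\beta-1}\,ds\right)\right],
\]
where $(K_s,N_s)_{s\geq t}$ solves the SDE system associated with the generator of \eqref{eq: differential equation for $lambda$}, namely $dK_s=\bigl(AN_s^{1-\beta}K_s^{\beta}-N_sj(\psi(s,K_s,N_s))+\epsilon^2K_s\bigr)ds+\epsilon K_s\,dW_K$ (the $\epsilon^2 K$ drift being the It\^o correction that turns $\tfrac12\epsilon^2K^2\partial_{KK}+\epsilon^2K\partial_K$ into the generator of a geometric-type diffusion) and $dN_s=f(N_s)\,ds+\sigma N_s\,dW_N$, started from $(K,N)$ at time $t$. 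The first step is to justify rigorously that this representation is valid: the SDE for $K$ has a non-Lipschitz, possibly singular coefficient through $j(\psi)$, so I would use the assumed lower bound on $\psi$ (which makes $j(\psi)\le a_1\psi^{-1/\gamma}\lesssim (K/N)$ times a bounded factor, hence the consumption drift $N j(\psi)$ is controlled by $K$ up to bounded factors) together with the $C^{1,2}$ regularity of $\lambda$ and a localization/stopping-time argument to apply It\^o's formula to $\lambda(s,K_s,N_s)\exp(\int_t^s A\beta N^{1-\beta}K^{\beta-1})$ and pass to the limit, controlling the local martingale part via the a priori polynomial-type bound already available (or via the $L^\infty$-weighted bound from Theorem~\ref{thm:existence when psi is smooth}).

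The heart of the argument is to track the ratio $Z_s:=K_s/N_s$, since both the terminal data $u_2'(Z_T)$ and the exponential weight $\exp(\int A\beta Z_s^{\beta-1}\,ds)$ are functions of $Z$ only, and the risk-aversion bounds in Assumption~\ref{Assum: terminal utility function $u_2$}(2) give $a_2 Z_T^{-\gamma}\le u_2'(Z_T)\le a_3 Z_T^{-\gamma}$. Applying It\^o to $Z_s=K_s/N_s$ produces an SDE of the form $dZ_s=Z_s\bigl(\text{drift}\bigr)ds+Z_s(\epsilon\,dW_K-\sigma\,dW_N)$ where the drift contains the crucial terms $AN^{-\beta}K^{\beta-1}\cdot K/N=A Z^{\beta-1}$ (from production), $-j(\psi)$ (from consumption, which is $\le 0$ and $\gtrsim -Z$ up to constants by the $\psi$ lower bound), $-f(N)/N$ (bounded above and below by Lipschitz-ness, $f(0)=0$), and bounded volatility corrections. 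For the \textbf{lower bound} on $\lambda$: one bounds $u_2'(Z_T)\ge a_2 Z_T^{-\gamma}$ and then needs a \emph{lower} bound on $\E[Z_T^{-\gamma}\exp(\cdots)]$; here I would drop the nonnegative exponential weight only for an upper estimate, but for the lower estimate keep it and instead show $Z_s^{-\gamma}$ cannot be too small by comparing $Z_s$ with a solution of a simpler SDE (using that the consumption drift $-j(\psi)$ has the ``right sign'' to \emph{keep $Z$ from exploding}, so $Z^{-\gamma}$ stays bounded below), then take expectations and use Jensen/convexity of $z\mapsto z^{-\gamma}$. For the \textbf{upper bound}: this is where the generalized power laws $(\ast1)$--$(\ast2)$ come from --- one must bound $\E[Z_T^{-\gamma}\exp(\int A\beta Z_s^{\beta-1}\,ds)]$; the exponential weight is dangerous when $Z_s$ is small (then $Z^{\beta-1}$ is large), so one splits $\exp(\int A\beta Z_s^{\beta-1}ds)\le$ (something absorbable) and applies H\"older's inequality to separate $Z_T^{-\gamma}$ from the exponential, controlling the latter by an exponential-moment / Gronwall estimate on $\int Z_s^{\beta-1}ds$, and the former by a negative-moment estimate on $Z_T$ that uses the reverse-H\"older inequality because negative moments of lognormal-type variables are what appear. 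The distinction $\beta\ge\gamma$ versus $\beta<\gamma$ and the sub-threshold $\gamma\lessgtr 2\beta-1$ governs which of the two competing powers $(N/K)^{\gamma}$ and $(N/K)^{\beta}$ dominates after these H\"older splits, and dictates the precise algebraic form of the bound (a single power versus the product-with-offset form in $(\ast2)$).

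The final step is bookkeeping: identify the explicit constants $c_0, C_1,\dots,C_6$ by choosing the H\"older exponents and the auxiliary comparison processes so that the estimates close up (in particular $c_0$ must be chosen compatibly with the lower bound hypothesis on $\psi$ so that the output lower bound for $\lambda$ equals the assumed input lower bound --- this is exactly the self-map property needed later for the fixed point), and verify all the stochastic-calculus manipulations were legitimate by the localization argument from Step~1. I expect the \textbf{main obstacle} to be the upper bound in the regime where $Z_s$ can get small: the exponential weight $\exp(\int A\beta Z_s^{\beta-1}\,ds)$ then has a heavy right tail, and showing it has finite exponential moments (so that H\"older against the negative moment of $Z_T$ actually produces a \emph{finite} constant of the correct power) requires a careful Gronwall-type a priori control on how long $Z$ can stay near zero --- essentially using that the production drift $AZ^{\beta-1}$ pushes $Z$ \emph{up} precisely when $Z$ is small, which is the probabilistic incarnation of why the singular term in the HJB equation ``does not blow up.'' A secondary technical difficulty is that $Z$ is driven by a non-Lipschitz coefficient through $j(\psi)$ and $\psi$ is only continuous (not smooth) in the fixed-point stage, so strong existence/uniqueness and the comparison theorems must be invoked with care; fortunately in these two theorems $\psi$ is assumed continuous and $\lambda$ is assumed to already be a classical $C^{1,2}$ solution, so the representation formula is the thing to establish rather than well-posedness of the SDE itself.
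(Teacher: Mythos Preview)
Your overall architecture—Feynman–Kac representation, the ratio process $X_2/X_1$, H\"older-type splits, Gr\"onwall—matches the paper's, and the H\"older splitting you describe is indeed how the paper handles the \emph{upper} bound when $\beta\ge\gamma$ (Lemma~\ref{lem:upper bound}). But for the $\beta<\gamma$ case stated here the roles are essentially swapped, and you miss the cancellation that makes the upper bound easy.

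\textbf{Lower bound.} The paper does \emph{not} proceed by ``comparison with a simpler SDE plus Jensen.'' It applies the \emph{reverse} H\"older inequality (which you mention, but in your upper-bound paragraph) to $\E[G(s)]$ after factoring $G$ as $\bigl(e^{\int A\beta(\cdot)^{1-\beta}}(X_2/X_1)^{\beta}\bigr)\cdot\bigl((X_2/X_1)^{-\beta}\tilde u_2(X_2/X_1)\bigr)$, with exponent $p=\frac{\gamma+1-2\beta}{1-\beta}>1$. The first factor is bounded below by a Gr\"onwall argument on a process of the same type as $h$ in Lemma~\ref{lem:Estimate for process h(s)}; the second is controlled via Jensen and the explicit estimate $\E[(X_2/X_1)^{\beta-1}]\le e^{b(s-t)}(N/K)^{\beta-1}+A(1-\beta)\tfrac{e^{b(s-t)}-1}{b}$ of Lemma~\ref{lem:Estimate for frac{X_2}{X_1}(s)^{beta -1}}. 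That second piece is exactly what produces the offset form $\bigl(C_4(N/K)^{\beta-1}+C_5\bigr)^{(\gamma-\beta)/(\beta-1)}$; your comparison sketch gives no indication of how to recover this shape, and it must coincide with the assumed input bound on $\psi$ for the later self-map argument to close.

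\textbf{Upper bound.} Here the paper is much \emph{simpler} than you anticipate: no H\"older split, no exponential-moment control. In the drift of $G$ (equation~\eqref{eq:Process of G(s)}) the dangerous term is $A\bigl(\beta-\tfrac{X_2}{X_1}\tfrac{\tilde u_2'}{\tilde u_2}\bigr)(X_2/X_1)^{1-\beta}G$, which is \emph{nonpositive} by Assumption~\ref{Assum: terminal utility function $u_2$}(3) (strictly, under the extra hypothesis there for $\beta<\gamma$, which you do not invoke). Plugging the assumed lower bound on $\psi$ into $j(\psi)\le a_1\psi^{-1/\gamma}$ then shows (equation~\eqref{eq: estimate for term j(psi)}) that the consumption contribution to the drift is bounded by a constant $b_6$ plus a term of the \emph{same} form with the opposite sign. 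After cancellation one has $dG\le \tilde a\,G\,ds+(\text{martingale})$, and Gr\"onwall gives $\lambda\le C_6(N/K)^\gamma$ directly. Your ``main obstacle''—the heavy right tail of $\exp\!\bigl(\int A\beta Z^{\beta-1}ds\bigr)$—is precisely what this cancellation circumvents; bounding that exponential moment on its own is a harder route the paper never needs.
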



The proof of Theorem \ref{thm:crucial estimate theorem} relies on Lemma \ref{lem:lower bound} (providing the lower bound when $\beta \ge \gamma$), Lemma \ref{lem:upper bound case1}, and Lemma \ref{lem:upper bound} (together providing the upper bound when $\beta \ge \gamma$). On the other hand, the proof of Theorem \ref{thm:crucial estimate theorem for case beta<gamma} involves Lemma \ref{lem: lower bound for lambda for beta<gamma} (giving the lower bound when $\beta < \gamma$) and Lemma \ref{lem: upper bound for lambda for beta<gamma} (giving the upper bound when $\beta < \gamma$).

To summarize, in Subsection \ref{Probabilistic representation for solution} we   derive a Feynman-Kac representation for the solution to Problem \eqref{eq: differential equation for $lambda$}. In Subsections \ref{Lower bound for the solution} and \ref{Upper bound for the solution}, we establish  the lower and upper bounds when $\beta \ge \gamma$, respectively; while in Subsection \ref{Upper bound for the solution when $beta < gamma$}, we derive the lower and upper bounds  in the case  $\beta < \gamma$.  All of them rely on the Feynman-Kac representation in Subsection  \ref{Probabilistic representation for solution}.
\subsection{Feynman-Kac  representation of the solution to the linearized problem \eqref{eq: differential equation for $lambda$}}\label{Probabilistic representation for solution}


Given the existence of the solution $\lambda$ of Equation \eqref{eq: differential equation for $lambda$}, we give its solution formula.
To this end, we adopt the celebrated Feynman-Kac formula to express it through some properly chosen diffusion processes. It is more feasible for us to derive pointwise estimates, particularly the more critical lower bounds. 


Specifically, we first define two processes $X_1$ and $X_2$, which are associated with the   infinitesimal generators of Equation \eqref{eq: differential equation for $lambda$}, with which the corresponding  Feynman-Kac formulas can be derived. We consider
\begin{equation}\label{eq:process of X_1}
\left\{
\begin{aligned}
	dX_1(s) =& \left[AX_2^{1-\beta}(s)X_1^{\beta}(s) -X_2(s)j\left(\psi\left(s,X_1(s),X_2(s)\right)\right) + {\epsilon}^2X_1(s)\right]ds 
	+ {\epsilon}X_1(s)dB_1(s),\\
	X_1(t) =& K,
\end{aligned}\right.
\end{equation} 
\begin{equation}\label{eq:process of X_2}
\left\{
\begin{aligned}
	dX_2(s) &= f(X_2(s))ds + {{\sigma}}X_2(s)dB_2(s),\\
	X_2(t) &= N,
\end{aligned}\right.
\end{equation}
where $B_1(s)$ and $B_2(s)$ are two independent Brownian motions on some suitable probability space.
There is a unique process $X_2$ satisfying \eqref{eq:process of X_2} since $f(x)$ is Lipschitz continuous. 
Moreover, since 
\begin{equation*}
\begin{aligned}
	dX_2(s) &\ge -C_fX_2ds + {{\sigma}}X_2(s)dB_2(s),\quad
	X_2(t) = N>0,
\end{aligned}
\end{equation*}
we deduce from the comparison principle that $X_2(s)> 0$ for all $s\in[t,T]$.
For the process $X_1$, it is not obvious that it has a  solution, so we need  to first show the solvability of $X_1$. 
\begin{lem}[Existence of process $X_1$]\label{Solvability of $X_1$}
There exists  a  solution $X_1$ of \eqref{eq:process of X_1}.
\end{lem}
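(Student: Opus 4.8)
The plan is to establish existence of a solution to the SDE \eqref{eq:process of X_1} by an approximation-and-compactness argument, since the drift contains the non-Lipschitz term $-X_2 j(\psi(s,X_1,X_2))$, where $j$ blows up like $\psi^{-1/\gamma}$ near $\psi=0$ and, through the lower bound $\psi\ge c_0^\gamma (X_2/X_1)^\gamma$, behaves like a (possibly large) positive power of $X_1/X_2$. First I would note that $X_2$ is already constructed and strictly positive on $[t,T]$, so it may be treated as a known, continuous, strictly positive driving process. The remaining equation for $X_1$ is then a scalar SDE with random time-dependent coefficients; the geometric-type noise $\epsilon X_1\,dB_1$ and the terms $AX_2^{1-\beta}X_1^\beta+\epsilon^2 X_1$ are benign (sublinear or linear in $X_1$), so the only obstruction is the consumption drift term, which is locally Lipschitz in $X_1$ on $(0,\infty)$ but singular as $X_1\to 0^+$ (and superlinear as $X_1\to\infty$ when the relevant power exceeds one).

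The key steps, in order, would be: (1) Fix a realization of $X_2$ and truncate: for $m\ge 1$ replace $j(\psi(s,X_1,X_2))$ by $j(\psi(s,X_1\vee \tfrac1m, X_2))$ and cap the drift at level $m$, obtaining a globally Lipschitz (in $X_1$) coefficient; standard SDE theory then yields a unique strong solution $X_1^m$ up to time $T$. (2) Show $X_1^m>0$ for all $s\in[t,T]$: since near $X_1=0$ the drift is dominated by the positive term $-X_2 j(\cdot)$ which is $+\infty$-like (i.e. strongly pushes $X_1$ upward) together with $\epsilon^2 X_1>0$, one compares $X_1^m$ from below with a simple process (e.g. $K e^{-C(s-t)+\epsilon(B_1(s)-B_1(t))}$ for suitable $C$) to get a strictly positive lower bound, uniform in $m$ on each realization; this is where the structure of the problem — the consumption term has the "right sign" to keep capital positive — is used decisively. (3) Obtain an upper bound and tightness: using the positivity just established, the singular term $-X_2 j(\psi)$ is actually \emph{negative}, hence harmless for an upper estimate, and the remaining terms grow at most linearly (using $0<\beta<1$ for the $X_1^\beta$ term), so Gronwall/BDG arguments give $\E\sup_{[t,T]}|X_1^m|^p\le C_p$ uniformly in $m$, and moment estimates on increments give tightness of $\{X_1^m\}$. (4) Pass to the limit: by Skorokhod representation (or by showing the truncation is not activated once $m$ exceeds the a priori lower and upper bounds on a given path, which is the cleanest route), extract a limit $X_1$ solving the original equation; the local Lipschitz continuity of $j\circ\psi$ away from $0$ together with the uniform-in-$m$ positive lower bound ensures the nonlinear drift term converges appropriately.

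The main obstacle I expect is Step (2)–(3): controlling the singular consumption drift near $X_1=0$ and confirming that the truncation can be removed, i.e. that solutions genuinely stay in a region $[\delta(\omega),M(\omega)]$ where the coefficients are Lipschitz. This requires exploiting the sign of the term $-X_2 j(\psi)$ carefully — it is a large positive push when capital is small, which intuitively prevents $X_1$ from reaching $0$ — and making this rigorous via a comparison or Feller-test-type argument for the lower barrier, while the upper barrier follows from the sublinearity of the production term and the linear growth of the noise. A secondary technical point is that $\psi$ is only assumed continuous (not Lipschitz) in Theorems \ref{thm:crucial estimate theorem}–\ref{thm:crucial estimate theorem for case beta<gamma}, so for the pure existence statement here one may additionally need to approximate $\psi$ by smooth functions and use the stability of the construction, or simply invoke that a weak solution suffices for the Feynman–Kac representation that follows. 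I would present the argument so that it yields at least a weak solution, which is all that is needed for the probabilistic representation in Subsection \ref{Probabilistic representation for solution}.
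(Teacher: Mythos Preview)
Your Step (2) contains a sign error that undermines the positivity argument as written. The consumption term $-X_2\,j(\psi(s,X_1,X_2))$ is \emph{negative}, not positive: $X_2>0$ and $j=(u_1')^{-1}>0$. Moreover, it is not ``$+\infty$-like'' near $X_1=0$. Using the lower bound $\psi\ge c_0^\gamma(X_2/X_1)^\gamma$ (case $\beta\ge\gamma$) together with $j(x)\le a_1x^{-1/\gamma}$ from \eqref{eq:condition on j(x)}, one finds $X_2\,j(\psi)\le (a_1/c_0)\,X_1$; in the case $\beta<\gamma$ the bound is similarly at most sublinear in $X_1$. So near $X_1=0$ the consumption drift vanishes at least linearly and certainly does not push $X_1$ upward. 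You seem aware of the correct sign in Step (3), where you call the term negative, so Steps (2) and (3) are internally inconsistent.

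What actually prevents $X_1$ from hitting zero is that, once the consumption term is controlled linearly as above, the remaining drift-plus-noise structure is essentially that of a geometric Brownian motion (plus the nonnegative production term $AX_2^{1-\beta}X_1^\beta$), so a comparison with $K\exp\{-C(s-t)+\epsilon(B_1(s)-B_1(t))-\tfrac12\epsilon^2(s-t)\}$ gives the strictly positive lower barrier. With this correction your truncation scheme can be salvaged, but it is substantially heavier than necessary.

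The paper takes a much shorter route, motivated by the Bernoulli ordinary differential equation: apply It\^o's formula to $Z:=X_1^{1-\beta}$. The troublesome power term $AX_2^{1-\beta}X_1^\beta$ becomes $(1-\beta)AX_2^{1-\beta}$, independent of $Z$, and the consumption contribution becomes $-(1-\beta)Z\cdot\tfrac{X_2}{X_1}j(\psi)$, whose random coefficient $\tfrac{X_2}{X_1}j(\psi)$ is locally bounded and continuous by the lower bound on $\psi$ and \eqref{eq:condition on j(x)}. The resulting SDE for $Z$ is thus ``almost linear'' with locally bounded continuous coefficients, so existence follows from standard results (Ikeda--Watanabe), and positivity of $Z$---hence of $X_1$---is obtained by comparison. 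This single change of variable replaces your four-step programme.
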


To prove this, we adopt the idea of considering the stochastic differential equation (SDE) for $X_1^{1-\beta}$, which is motivated by the Bernoulli differential equation. Importantly, the corresponding SDE is indeed ``almost"  linear. The proof of Lemma \ref{Solvability of $X_1$}  will be provided in Appendix \ref{appen:Solvability of X1 and Probabilistic representation of lambda}.
Applying the Feynman-Kac formula, we can express $\lambda$ in an expectation form in terms of the processes $X_1$ and $X_2$ as follows.
\begin{lem}[Feynman-Kac formula for $\lambda$]\label{lem:Probabilistic representation for lambda}
For $\lambda$,  the classical solution to Problem \eqref{eq: differential equation for $lambda$}, we can write 
\begin{equation}\label{eq:Probabilistic representation for lambda}
	\lambda(t,K,N)	= \mathbb{E}\left[e^{\int_{t}^{T}A\beta(X_2(\tau))^{1-\beta}(X_1(\tau))^{\beta-1}d\tau}u'_2\left(\frac{X_1(T)}{X_2(T)}\right)\right].
\end{equation}
\end{lem}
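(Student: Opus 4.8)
The plan is to read \eqref{eq: differential equation for $lambda$} as a linear terminal value problem $\partial_t\lambda+\mathcal{L}\lambda+c\lambda=0$ on $[0,T)\times\mathbb{R}_+^2$ with $\lambda(T,K,N)=u_2'(K/N)$, where $c(t,K,N):=A\beta N^{1-\beta}K^{\beta-1}$ and $\mathcal{L}$ is exactly the infinitesimal generator of the partially coupled diffusion $(X_1,X_2)$ defined by \eqref{eq:process of X_1}--\eqref{eq:process of X_2}, namely
\begin{equation*}
	\mathcal{L}g=\bigl(AX_2^{1-\beta}X_1^{\beta}-X_2\,j(\psi)+\epsilon^2X_1\bigr)\partial_{X_1}g+f(X_2)\,\partial_{X_2}g+\tfrac12\epsilon^2X_1^2\,\partial_{X_1X_1}g+\tfrac12\sigma^2X_2^2\,\partial_{X_2X_2}g.
\end{equation*}
Given this identification, \eqref{eq:Probabilistic representation for lambda} is simply the Feynman--Kac formula, and I would prove it the standard way: show that
\begin{equation*}
	M(s):=\exp\!\Bigl(\int_t^s c\bigl(\tau,X_1(\tau),X_2(\tau)\bigr)\,d\tau\Bigr)\,\lambda\bigl(s,X_1(s),X_2(s)\bigr),\qquad t\le s\le T,
\end{equation*}
is a martingale, whence $\lambda(t,K,N)=\mathbb{E}[M(t)]=\mathbb{E}[M(T)]$, which is precisely \eqref{eq:Probabilistic representation for lambda} since $M(T)=\exp(\int_t^TA\beta X_2^{1-\beta}X_1^{\beta-1}\,d\tau)\,u_2'(X_1(T)/X_2(T))$.

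Step by step, I would first secure the probabilistic prerequisites: existence of $X_1$ is Lemma \ref{Solvability of $X_1$}, strict positivity of $X_2$ on $[t,T]$ is already obtained above by comparison with a geometric Brownian motion, and strict positivity of $X_1$ I would extract from the Bernoulli-type equation for $X_1^{1-\beta}$ that underlies the proof of Lemma \ref{Solvability of $X_1$}. Then, because the $X_1$-drift contains the merely continuous (and, near the boundary, singular) term $-X_2\,j(\psi)$ and because $\lambda$ is only $C^{1,2}_{loc}$ with spatially unbounded coefficients, I would apply It\^o's formula to $M$ only on a stopped interval $[t,\tau_n]$, with $\tau_n:=\inf\{s\ge t:(X_1(s),X_2(s))\notin(1/n,n)^2\}\wedge T$. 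On $[t,\tau_n]$ every coefficient of the SDEs, every derivative of $\lambda$ entering It\^o's formula, and $c$ are bounded, so It\^o applies, and by the PDE the finite--variation part of $dM(s)$ equals $\exp(\int_t^s c\,d\tau)\,\bigl(\partial_t\lambda+\mathcal{L}\lambda+c\lambda\bigr)(s,X_1(s),X_2(s))\,ds=0$. Hence $s\mapsto M(s\wedge\tau_n)$ is a bounded martingale and $\lambda(t,K,N)=\mathbb{E}[M(\tau_n)]$ for every $n$.

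The remaining task is to let $n\to\infty$. Since $X_1,X_2$ are non-explosive and a.s.\ strictly positive on $[t,T]$, $\tau_n\uparrow T$ a.s., so $M(\tau_n)\to M(T)$ a.s.; it then suffices to establish uniform integrability of $\{M(\tau_n)\}_n$ (note $M\ge 0$ because $\lambda\ge 0$ by the maximum principle Theorem \ref{thm:Maximum principle}, so Fatou already yields one inequality). For this I would combine the polynomial growth bound $|\lambda(t,K,N)|\le B\bigl((N/K)^{\beta_1}+(N/K)^{\gamma_1}\bigr)$ from Theorem \ref{thm:existence when psi is smooth} with moment estimates for $X_1,X_1^{-1},X_2,X_2^{-1}$ and an exponential moment of $\int_t^T A\beta(X_2/X_1)^{1-\beta}\,d\tau$. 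The $X_2$-bounds are standard since $f$ is $C_f$-Lipschitz with $f(0)=0$, so $X_2$ is squeezed between two geometric Brownian motions. The $X_1$-bounds are where the Bernoulli substitution is decisive: for $Z:=X_1^{1-\beta}$, It\^o turns \eqref{eq:process of X_1} into an SDE that is \emph{linear in $Z$} except for the nonnegative term $(1-\beta)X_2\,j(\psi)\,Z^{-\beta/(1-\beta)}$, and feeding in $j(\psi)\le a_1\psi^{-1/\gamma}$ together with the assumed pointwise lower bound on $\psi$ reduces even that term to one dominated by a constant multiple of $Z$; the resulting linear comparison SDE then supplies all the required moments of $X_1$, of $X_1^{-1}$, and of the exponential functional, and uniform integrability follows.

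The main obstacle is precisely this last step: controlling the singular integrating factor $e^{\int_t^T A\beta X_2^{1-\beta}X_1^{\beta-1}d\tau}$, which blows up when $X_1$ approaches $0$, simultaneously with the negative moments of $X_1$, all in the presence of the non-Lipschitz drift $-X_2\,j(\psi)$. Everything hinges on recasting the $X_1$-dynamics through the Bernoulli change of variable $X_1^{1-\beta}$ and exploiting the a priori lower bound on $\psi$ to tame $j(\psi)$; once that is in place, the localization and the passage to the limit, hence \eqref{eq:Probabilistic representation for lambda}, become routine. These details, together with Lemma \ref{Solvability of $X_1$}, are deferred to Appendix \ref{appen:Solvability of X1 and Probabilistic representation of lambda}.
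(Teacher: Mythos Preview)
Your proposal is correct and follows essentially the same route as the paper: define the discounted process $M(s)=Y(s)$, apply It\^o's formula so that the PDE \eqref{eq: differential equation for $lambda$} kills the finite-variation part, and then take expectations after a stopping-time localization. The paper's proof in Appendix \ref{appen:Solvability of X1 and Probabilistic representation of lambda} is in fact terser than yours---it writes the It\^o expansion and then simply says ``by also stopping time argument, so as to vanish the martingale part''---whereas you spell out the localization $\tau_n$ and the uniform-integrability issue explicitly; your more careful treatment is welcome and does not deviate in spirit.
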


Indeed, by considering $Y(s) = e^{\int_{t}^{s}A\beta(X_2(\tau))^{1-\beta}(X_1(\tau))^{\beta-1}d\tau} \lambda(s,X_1(s),X_2(s))$, we can establish this Lemma \ref{lem:Probabilistic representation for lambda}, and the details are  in Appendix  \ref{appen:Solvability of X1 and Probabilistic representation of lambda}.
We mainly aim to utilize \eqref{eq:Probabilistic representation for lambda} to derive both lower and upper pointwise bounds for $\lambda(t,K,N)$. Motivated by the expression \eqref{eq:Probabilistic representation for lambda}, to estimate
$\lambda(t, K,N)$,
we consider the process 
\begin{align}\label{eq:defofG}
G (s)&:=e^{\int_{t}^{s}A\beta(X_2(\tau))^{1-\beta}(X_1(\tau))^{\beta-1}d\tau} u'_2\left(\frac{X_1(s)}{X_2(s)}\right)=e^{\int_{t}^{s}A\beta(X_2(\tau))^{1-\beta}(X_1(\tau))^{\beta-1}d\tau} \tilde{u}_2\left(\frac{X_2(s)}{X_1(s)}\right),
\end{align}
where $\tilde{u}_2(x) = u'_2(\frac{1}{x})$, since we have $\lambda(t,K,N) = \mathbb{E}[G(T)]$.
Our main goal is then to estimate $\mathbb{E}[G(T)]$ by studying the process $G(s)$ and other related processes.
A direct computation using \eqref{eq:process of X_1} and \eqref{eq:process of X_2} shows that 
\begin{equation}\label{eq: process of X_2/X_1}
	\begin{aligned}
		d\left(\frac{X_2}{X_1}\right) &= \frac{1}{X_1}dX_2 + X_2d\left(\frac{1}{X_1}\right) + d(X_2)d\left(\frac{1}{X_1}\right)\\
		& = \frac{1}{X_1}\left(f(X_2)ds + {{\sigma}}X_2dB_2\right)  - \frac{X_2}{X_1^2}(AX_2^{1-\beta}X_1^{\beta} -X_2j(\psi) +\epsilon^2X_1)ds + \epsilon^2 X_2 X_1^{-1}ds -\epsilon\frac{X_2}{X_1}dB_1\\
		& = \frac{1}{X_1}f(X_2)ds - \left(A\left(\frac{X_2}{X_1}\right)^{2-\beta} -\left(\frac{X_2}{X_1}\right)^{2}j(\psi) \right)ds  + {{\sigma}}\frac{X_2}{X_1}dB_2(s) - \epsilon\frac{X_2}{X_1}dB_1(s).
	\end{aligned}
\end{equation}
From above,
the dynamics of $G$ can be easily obtained, and it will be used to  derive various  useful estimates. We have
\begin{prop}
For $0<s<T$,
\begin{equation}\label{eq:Process of G(s)}
	\begin{aligned}
		dG(s) &= A\left(\beta-\frac{\tilde{u}_2'\left(\frac{X_2}{X_1}\right) }{\tilde{u}_2\left(\frac{X_2}{X_1}\right) } \cdot \frac{X_2}{X_1}\right) \cdot \left(\frac{X_2}{X_1}\right)^{1-\beta}Gds \\
		&\quad+ e^{\int_{t}^{s}A\beta(X_2(\tau))^{1-\beta}(X_1(\tau))^{\beta-1}d\tau}\tilde{u}_2'\left(\frac{X_2}{X_1}\right) \left[\frac{1}{X_1}f(X_2)ds  +\left(\frac{X_2}{X_1}\right)^{2}j(\psi) ds\right] \\
		&\quad +   e^{\int_{t}^{s}A\beta(X_2(\tau))^{1-\beta}(X_1(\tau))^{\beta-1}d\tau}\left[ \frac{1}{2}\tilde{u}_2''\left(\frac{X_2}{X_1}\right)\left(\frac{X_2}{X_1}\right)^{2} ({{\sigma}^2}+ {\epsilon}^2)ds \right]\\ 
		&\quad  + e^{\int_{t}^{s}A\beta(X_2(\tau))^{1-\beta}(X_1(\tau))^{\beta-1}d\tau}\tilde{u}_2'\left(\frac{X_2}{X_1}\right)\left(\frac{X_2}{X_1}\right)\left({{\sigma}}dB_2 - \epsilon dB_1 \right).
	\end{aligned}
\end{equation}
\end{prop}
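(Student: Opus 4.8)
The plan is to read off \eqref{eq:Process of G(s)} as a one-step consequence of It\^o's formula, using the dynamics of the ratio $X_2/X_1$ already recorded in \eqref{eq: process of X_2/X_1}. First I would abbreviate $Z(s):=X_2(s)/X_1(s)$ and $E(s):=\exp\bigl(\int_t^s A\beta\,(X_2(\tau))^{1-\beta}(X_1(\tau))^{\beta-1}\,d\tau\bigr)$, and note the algebraic identity $(X_2)^{1-\beta}(X_1)^{\beta-1}=Z^{1-\beta}$, so that the process $G$ in \eqref{eq:defofG} is exactly $G(s)=E(s)\,\tilde u_2(Z(s))$ with $\tilde u_2(x)=u'_2(1/x)$. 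Before differentiating, I would record that $X_2>0$ on $[t,T]$ by the comparison principle (as in the paragraph preceding Lemma~\ref{Solvability of $X_1$}) and that $X_1>0$ on $[t,T]$ by Lemma~\ref{Solvability of $X_1$}; hence $Z$ is a strictly positive continuous semimartingale, and since $\tilde u_2\in C^3(0,\infty)$ by Assumption~\ref{Assum: terminal utility function $u_2$}, It\^o's chain rule applies to $\tilde u_2(Z)$.

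Next I would compute $d\bigl(\tilde u_2(Z)\bigr)$. From \eqref{eq: process of X_2/X_1}, the drift of $Z$ is $\tfrac{1}{X_1}f(X_2)-AZ^{2-\beta}+Z^2 j(\psi)$ and its martingale part is $\sigma Z\,dB_2-\epsilon Z\,dB_1$; using that $B_1$ and $B_2$ are independent one finds $d\langle Z\rangle_s=(\sigma^2+\epsilon^2)Z^2\,ds$. The It\^o chain rule then gives
\[
d\bigl(\tilde u_2(Z)\bigr)=\tilde u_2'(Z)\Bigl[\tfrac{1}{X_1}f(X_2)-AZ^{2-\beta}+Z^2 j(\psi)\Bigr]ds+\tfrac12\tilde u_2''(Z)(\sigma^2+\epsilon^2)Z^2\,ds+\tilde u_2'(Z)\,Z\bigl(\sigma\,dB_2-\epsilon\,dB_1\bigr).
\]
Since $E$ is absolutely continuous in $s$ with $dE=A\beta Z^{1-\beta}E\,ds$, it has no martingale part, so the It\^o product rule for $G=E\cdot\tilde u_2(Z)$ has no cross-variation term and yields $dG=A\beta Z^{1-\beta}\tilde u_2(Z)\,E\,ds+E\,d(\tilde u_2(Z))$.

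Finally I would regroup. The two terms carrying both an $A$ and a factor $Z^{1-\beta}$, namely $A\beta Z^{1-\beta}\tilde u_2(Z)E$ coming from $dE$ and $-A\,E\,\tilde u_2'(Z)Z^{2-\beta}$ coming from the drift of $Z$, collapse to $A\bigl(\beta-\tfrac{\tilde u_2'(Z)}{\tilde u_2(Z)}\,Z\bigr)Z^{1-\beta}\cdot E\tilde u_2(Z)=A\bigl(\beta-\tfrac{\tilde u_2'(Z)}{\tilde u_2(Z)}\,Z\bigr)Z^{1-\beta}G$, which is precisely the first line of \eqref{eq:Process of G(s)}; the surviving terms $E\,\tilde u_2'(Z)\bigl[\tfrac{1}{X_1}f(X_2)+Z^2 j(\psi)\bigr]ds$, $\tfrac12 E\,\tilde u_2''(Z)Z^2(\sigma^2+\epsilon^2)\,ds$ and $E\,\tilde u_2'(Z)Z(\sigma\,dB_2-\epsilon\,dB_1)$ become, after substituting $X_2/X_1$ back for $Z$ (and recalling $E(s)=e^{\int_t^s A\beta(X_2(\tau))^{1-\beta}(X_1(\tau))^{\beta-1}d\tau}$), exactly the remaining three lines of \eqref{eq:Process of G(s)}. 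There is no genuine difficulty in this argument; the only points deserving care are the purely algebraic bookkeeping in the last regrouping and the preliminary check that $X_1$ (hence $Z$) stays strictly positive up to $T$, which is exactly why the solvability and positivity of $X_1$ are established beforehand, so that the powers $Z^{1-\beta}$, $Z^{2-\beta}$ and the derivatives $\tilde u_2^{(k)}(Z)$ are all well defined along the path and It\^o's formula is legitimately applicable.
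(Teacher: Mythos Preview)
Your argument is correct and is exactly the computation the paper has in mind: it presents \eqref{eq:Process of G(s)} as an immediate consequence of It\^o's formula applied to $G=E\cdot\tilde u_2(X_2/X_1)$ using the already-derived dynamics \eqref{eq: process of X_2/X_1}, without spelling out the steps. Your careful bookkeeping (positivity of $X_1,X_2$, finite-variation of $E$ so no cross term, and the regrouping of the two $A$-terms) is precisely what is needed.
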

We  note that it follows from Assumption \ref{Assum: terminal utility function $u_2$} (i.e., conditions on $u_2$) that $\tilde{u}_2$ satisfies the following conditions:
\begin{equation}\label{Conditons on tilde u_2}
a_2x^{\gamma}\le\tilde{u}_2(x)\le a_3x^{\gamma}, ~
\min\{\beta, \gamma\} \tilde{u}_2(x)\leq x{\tilde{u}_2'}(x)\le \max\{\beta, \gamma\} \tilde{u}_2(x), ~
\tilde{a}_4\tilde{u}_2(x)\leq x^2\tilde{u}_2''(x)\le \tilde{a}_5\tilde{u}_2(x),
\end{equation}
where 
\begin{equation}\label{eq:constansa4a5}
\tilde{a}_4=a_4-2\max\{\beta, \gamma\} \textrm{ and } \tilde{a}_5=a_5-2\min\{\beta, \gamma\}
\end{equation}
are both constants.

\subsection{Lower bound for the solution in the case $\beta \ge \gamma$}\label{Lower bound for the solution} 
This section  derives a lower bound  for the  solution $\lambda$ to the linearized problem \eqref{eq: differential equation for $lambda$} when $\beta \ge \gamma$. Our main result is:
\begin{lem}[Lower bound for $\lambda$]\label{lem:lower bound}
Under the same assumptions  stated in Theorem \ref{thm:crucial estimate theorem}, we have 
\begin{equation*}
	\lambda(t,K,N)=\mathbb{E}[G(T)]  \ge a_2e^{(- C_f \beta + \frac{\tilde{a}_4}{2}({\sigma}^2+\epsilon^2))(T-t)}\left(\frac{N}{K}\right)^{\gamma}\ge
	c_0^{\gamma}\left(\frac{N}{K}\right)^{\gamma},
\end{equation*}
where the constant 
\begin{equation}\label{eq:value for c_0}
	c_0=\min{\left\{a_2^{\frac{1}{\gamma}},a_2^{\frac{1}{\gamma}}e^{{(- C_f\beta  +\frac{\tilde{a}_4}{2}({\sigma}^2+\epsilon^2))T}/{\gamma}}\right\}}.
\end{equation}
\end{lem}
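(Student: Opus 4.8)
The plan is to start from the Feynman--Kac representation $\lambda(t,K,N)=\mathbb{E}[G(T)]$ of Lemma \ref{lem:Probabilistic representation for lambda} and turn the $G$-dynamics \eqref{eq:Process of G(s)} into a differential inequality of the form $dG(s)\ge c\,G(s)\,ds+d(\mathrm{mart.})$ with $c:=-C_f\beta+\tfrac{\tilde{a}_4}{2}(\sigma^2+\epsilon^2)$. Writing $Z:=X_2/X_1$, I would bound the four drift terms of \eqref{eq:Process of G(s)} separately using the structural inequalities \eqref{Conditons on tilde u_2}. The first term $A\big(\beta-\tfrac{\tilde{u}_2'(Z)}{\tilde{u}_2(Z)}Z\big)Z^{1-\beta}G$ is nonnegative, since $\beta\ge\gamma$ and \eqref{Conditons on tilde u_2} give $\tfrac{\tilde{u}_2'(Z)}{\tilde{u}_2(Z)}Z\le\max\{\beta,\gamma\}=\beta$. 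In the second term, $e^{\int\cdots}\tilde{u}_2'(Z)Z^2 j(\psi)\ge0$ because $j>0$, while Assumption \ref{Assum:drift rate in population model} ($f(0)=0$, $f$ Lipschitz with constant $C_f$) yields $f(X_2)/X_1\ge -C_f Z$, so $e^{\int\cdots}\tilde{u}_2'(Z)\,f(X_2)/X_1\ge -C_f\tfrac{Z\tilde{u}_2'(Z)}{\tilde{u}_2(Z)}G\ge -C_f\beta\,G$, again by \eqref{Conditons on tilde u_2}. The third term is bounded below by $\tfrac{\tilde{a}_4}{2}(\sigma^2+\epsilon^2)G$ using $x^2\tilde{u}_2''(x)\ge\tilde{a}_4\tilde{u}_2(x)$ (valid whatever the sign of $\tilde a_4$, since $\tilde u_2>0$); and the last term is the driving local martingale $e^{\int\cdots}\tilde{u}_2'(Z)Z(\sigma\,dB_2-\epsilon\,dB_1)$. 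Adding these gives the claimed inequality.

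Consequently $d\big(e^{-c(s-t)}G(s)\big)=e^{-c(s-t)}\big(dG-cG\,ds\big)$ has a nonnegative bounded-variation part and a local-martingale part. Localising the latter by a sequence $\tau_n\uparrow\infty$ (say, exit times of $(X_1,X_2)$ from compact subsets of $\mathbb{R}_+^2$), I would take expectations over $[t,T\wedge\tau_n]$ to kill the martingale part and obtain $\mathbb{E}\big[e^{-c(T\wedge\tau_n-t)}G(T\wedge\tau_n)\big]\ge G(t)$. Since $G\ge0$ and $G(T\wedge\tau_n)\to G(T)$ a.s., I would then pass to the limit via uniform integrability of $\{e^{-c(T\wedge\tau_n-t)}G(T\wedge\tau_n)\}_n$, obtained from moment estimates for the coupled system $(X_1,X_2)$: here the hypothesis $\psi\ge c_0^\gamma(N/K)^\gamma$ enters through $j(\psi(s,X_1,X_2))\le a_1 c_0^{-1}Z^{-1}$, which makes the drift of $Z$ in \eqref{eq: process of X_2/X_1} super-linearly mean-reverting (the $-AZ^{2-\beta}$ term eventually dominates), and the a priori polynomial bound on $\lambda$ from Theorem \ref{thm:existence when psi is smooth} guarantees $\mathbb{E}[G(T)]<\infty$. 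This yields $\mathbb{E}[G(T)]\ge e^{c(T-t)}G(t)$.

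It then remains to evaluate the right-hand side: $G(t)=\tilde{u}_2\big(X_2(t)/X_1(t)\big)=\tilde{u}_2(N/K)\ge a_2(N/K)^\gamma$ by the first inequality in \eqref{Conditons on tilde u_2}, so $\lambda(t,K,N)=\mathbb{E}[G(T)]\ge a_2 e^{c(T-t)}(N/K)^\gamma$. Since $T-t\in[0,T]$, one has $e^{c(T-t)}\ge\min\{1,e^{cT}\}$, and with $c_0$ as in \eqref{eq:value for c_0} a direct check gives $a_2\min\{1,e^{cT}\}=c_0^\gamma$, which delivers $\lambda(t,K,N)\ge c_0^\gamma(N/K)^\gamma$ as required.

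I expect the genuine obstacle to be the limiting step, i.e.\ upgrading the driving local martingale to a true martingale on $[t,T]$ (equivalently, justifying the interchange of expectation and the localising limit): the diffusion $(X_1,X_2)$ has degenerate, non-Lipschitz coefficients, so one must control sufficiently high---and essentially exponential-type---moments of $\int_t^T Z^{1-\beta}\,d\tau$ and of $Z(T)$, which is exactly where the mean-reversion in \eqref{eq: process of X_2/X_1} and the lower bound on $\psi$ do the work. Everything else is routine bookkeeping with the inequalities in \eqref{Conditons on tilde u_2}.
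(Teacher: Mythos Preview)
Your proposal is correct and follows essentially the same route as the paper: bound the four drift contributions in \eqref{eq:Process of G(s)} exactly as you do, arrive at $dG(s)\ge \tilde a\,G(s)\,ds+d(\text{mart.})$ with $\tilde a=-C_f\beta+\tfrac{\tilde a_4}{2}(\sigma^2+\epsilon^2)$, and conclude $\mathbb{E}[G(T)]\ge a_2 e^{\tilde a(T-t)}(N/K)^\gamma$. The only cosmetic difference is that the paper integrates, takes expectations, and applies Gr\"onwall to $s\mapsto\mathbb{E}[G(s)]$, whereas you package the same computation as a submartingale argument for $e^{-\tilde a(s-t)}G(s)$; these are equivalent.

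One point worth noting: you invoke the hypothesis $\psi\ge c_0^\gamma(N/K)^\gamma$ to control $j(\psi)$ and thereby justify uniform integrability in the localisation step. In fact the paper observes (Remark~\ref{rem: remark for lower bound beta>gamma}) that the lower-bound argument uses only $\psi>0$ and $\beta\ge\gamma$; the specific lower bound on $\psi$ is not needed here, and the paper simply takes expectations without dwelling on the martingale justification (implicitly relying on the stopping-time argument already used for the Feynman--Kac formula in Lemma~\ref{lem:Probabilistic representation for lambda}). Your extra care is not wrong, just more than the result requires.
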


\begin{proof}
It follows from \eqref{eq:Process of G(s)}, conditions \eqref{Conditons on tilde u_2}, Assumption \ref{Assum:drift rate in population model}, $j(\psi)>0$  and $\beta \ge \gamma$ that we have
\begin{equation*}
	dG(s)\ge \tilde{a}G(s)ds + e^{\int_{t}^{s}A\beta(X_2(\tau))^{1-\beta}(X_1(\tau))^{\beta-1}d\tau}  \tilde{u}_2'\left(\frac{X_2}{X_1}\right) \frac{X_2}{X_1} ({{\sigma}}dB_2 - \epsilon dB_1 ),
\end{equation*}
for the constant $\tilde{a} : =- C_f \beta + \frac{\tilde{a}_4}{2}({\sigma}^2+\epsilon^2)$.
Integrating from $t$ to $s$ yields 
\begin{equation*}
	\begin{aligned}
		G(s)-G(t)\ge& \int_t^s \tilde{a} G(\tau)d\tau + \int_{t}^{s} e^{\int_{t}^{\tau}A\beta(X_2(\tau))^{1-\beta}(X_1(\tau))^{\beta-1}d\tau}  \tilde{u}_2'\left(\frac{X_2(\tau)}{X_1(\tau)}\right)\frac{X_2}{X_1}(\tau) ({{\sigma}}dB_2 - \epsilon dB_1 ).
	\end{aligned}
\end{equation*}
Taking expectations on both sides and using the initial conditions $\eqref{eq:process of X_1}_2$ and $\eqref{eq:process of X_2}_2$, we have 
\begin{equation*}
	\begin{aligned}
		&\mathbb{E}[G(s)]-a_2\left(\frac{N}{K}\right)^{\gamma} 
		\ge\mathbb{E}[G(s)-G(t)]\\
		\ge& \mathbb{E}\left[\int_t^s \tilde{a}G(\tau)d\tau + \int_{t}^{s}e^{\int_{t}^{\tau}A\beta\left(\frac{X_2}{X_1}(\tau)\right)^{1-\beta}d\tau}  \tilde{u}_2'\left(\frac{X_2(\tau)}{X_1(\tau)}\right)\left(\frac{X_2	(\tau)}{X_1	(\tau)}
	\right) ({{\sigma}}dB_2 - \epsilon dB_1 )\right] = \mathbb{E}\left[\int_t^s \tilde{a}G(\tau)d\tau \right].
	\end{aligned}
\end{equation*}
It follows from  the  integral form of the Gr\"onwall  inequality to deduce that $
\mathbb{E}[G(s)] \ge a_2e^{\tilde{a}(s-t)}\left(\frac{N}{K}\right)^{\gamma}$.
In particular, evaluating at $s=T$, and noting the definition of $c_0$, we conclude the proof.
\end{proof}	

\begin{rem}\label{rem: remark for lower bound beta>gamma}
In the proof of Lemma \ref{lem:lower bound}, we do not use the explicit lower bound or upper bound of the input $\psi$ but only require that $\beta \ge \gamma$ and $\psi > 0$.  In this sense, the lower bound of  $\lambda$ mainly comes from the terminal data and $\beta \ge \gamma$.
\end{rem}

\subsection{Upper bound for the solution in the case $\beta \ge \gamma$}\label{Upper bound for the solution}

We next derive an upper bound for $\mathbb{E}[G(s)]$ when $\beta \ge \gamma$. 
To do so, we first  consider the process
\begin{equation}\label{eq:process h}
h(s)=e^{\int_{t}^{s}A(\gamma-\beta)(X_2(\tau))^{1-\beta}(X_1(\tau))^{\beta-1}d\tau}G(s)=e^{\int_{t}^{s}A\gamma(X_2(\tau))^{1-\beta}(X_1(\tau))^{\beta-1}d\tau}\tilde{u}_2\left(\frac{X_2(s)}{X_1(s)}\right),
\end{equation}
and its variants, then use $\mathbb{E}[h(s)]$ and  its variants to bound $\mathbb{E}[G(s)]$. Let us begin with the following estimate for $\mathbb{E}[h(s)]$. The estimates for its variants  can be obtained similarly.
\begin{lem}[Estimate for process $h(s)$]\label{lem:Estimate for process h(s)}	
Under the same assumptions stated in Theorem \ref{thm:crucial estimate theorem}, we have 
\begin{equation}\label{eq:estimatesforh}
	\mathbb{E}[h(s)] \le {a}_3e^{{a}(s-t)}\left(\frac{N}{K}\right)^{\gamma},
\end{equation}
where the constant $a:=C_f\beta  +  \frac{\tilde{a}_5}{2}({\sigma}^2+\epsilon^2)+\frac{a_1\beta}{c_0}$ with $c_0$ given in \eqref{eq:value for c_0}.
\end{lem}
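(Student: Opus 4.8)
The plan is to parallel the proof of Lemma~\ref{lem:lower bound}, now driving the drift of the auxiliary process \emph{upward} rather than downward. Write $Z:=X_2/X_1$ for brevity. First I would compute the It\^o differential of $h(s)=e^{\int_t^s A(\gamma-\beta)(X_2(\tau))^{1-\beta}(X_1(\tau))^{\beta-1}d\tau}G(s)$: since the exponential prefactor has finite variation and $(X_2)^{1-\beta}(X_1)^{\beta-1}=Z^{1-\beta}$, multiplying the expansion \eqref{eq:Process of G(s)} by $e^{\int_t^s A(\gamma-\beta)Z^{1-\beta}d\tau}$ and adding the derivative of the prefactor (which recombines with the first line of \eqref{eq:Process of G(s)}) gives, using \eqref{eq: process of X_2/X_1},
\begin{multline*}
	dh(s)=A Z^{1-\beta}\bigl(\gamma\tilde{u}_2(Z)-Z\tilde{u}_2'(Z)\bigr)e^{\int_t^s A\gamma Z^{1-\beta}\,d\tau}\,ds\\
	+e^{\int_t^s A\gamma Z^{1-\beta}\,d\tau}\,\tilde{u}_2'(Z)\Bigl(\tfrac{f(X_2)}{X_1}+Z^2 j(\psi)\Bigr)\,ds\\
	+\tfrac12 e^{\int_t^s A\gamma Z^{1-\beta}\,d\tau}\,\tilde{u}_2''(Z)\,Z^2(\sigma^2+\epsilon^2)\,ds+dM(s),
\end{multline*}
where $dM(s)=e^{\int_t^s A\gamma Z^{1-\beta}d\tau}\tilde{u}_2'(Z)\,Z\,(\sigma\,dB_2-\epsilon\,dB_1)$ is the (local-)martingale part and $\psi=\psi(s,X_1(s),X_2(s))$.

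Next I would bound the three drift terms from above. Because $\beta\ge\gamma$, we have $\min\{\beta,\gamma\}=\gamma$, so \eqref{Conditons on tilde u_2} gives $Z\tilde{u}_2'(Z)\ge\gamma\tilde{u}_2(Z)$, hence the first term is $\le 0$ and is discarded. For the second term: $f(0)=0$ with $f$ Lipschitz (Assumption~\ref{Assum:drift rate in population model}) yields $\tfrac{f(X_2)}{X_1}\le C_f Z$; the hypothesis $\psi(s,X_1(s),X_2(s))\ge c_0^{\gamma}Z^{\gamma}$ together with \eqref{eq:condition on j(x)} yields $j(\psi)\le a_1\psi^{-1/\gamma}\le\tfrac{a_1}{c_0}Z^{-1}$; and $\tilde{u}_2'>0$ with $Z\tilde{u}_2'(Z)\le\max\{\beta,\gamma\}\tilde{u}_2(Z)=\beta\tilde{u}_2(Z)$ by \eqref{Conditons on tilde u_2}; combining these, the second term is at most $\bigl(C_f\beta+\tfrac{a_1\beta}{c_0}\bigr)e^{\int_t^s A\gamma Z^{1-\beta}d\tau}\tilde{u}_2(Z)\,ds=\bigl(C_f\beta+\tfrac{a_1\beta}{c_0}\bigr)h(s)\,ds$. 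For the third term, $Z^2\tilde{u}_2''(Z)\le\tilde{a}_5\tilde{u}_2(Z)$ from \eqref{Conditons on tilde u_2} gives the bound $\tfrac{\tilde{a}_5}{2}(\sigma^2+\epsilon^2)h(s)\,ds$. Altogether $dh(s)\le a\,h(s)\,ds+dM(s)$ with $a=C_f\beta+\tfrac{\tilde{a}_5}{2}(\sigma^2+\epsilon^2)+\tfrac{a_1\beta}{c_0}$, which is precisely the constant in the statement.

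Finally I would integrate from $t$ to $s$, take expectations, and invoke Gr\"onwall's inequality. Since $M$ is \emph{a priori} only a local martingale and $h$ is unbounded on the state space, I would stop at a localizing sequence $\tau_n\uparrow T$ (for instance, first exit times of $(X_1,X_2)$ from $[1/n,n]^2$, which exhaust $[t,T]$ because \eqref{eq:process of X_1}--\eqref{eq:process of X_2} are non-explosive, as already used for Lemma~\ref{Solvability of $X_1$}), obtain $\mathbb{E}[h(s\wedge\tau_n)]\le\mathbb{E}[h(t)]+a\int_t^s\mathbb{E}[h(\tau\wedge\tau_n)]\,d\tau$ from $h\ge 0$, deduce $\mathbb{E}[h(s\wedge\tau_n)]\le\mathbb{E}[h(t)]e^{a(s-t)}$ by Gr\"onwall, and let $n\to\infty$ by Fatou's lemma. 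Because $\mathbb{E}[h(t)]=\tilde{u}_2(N/K)\le a_3(N/K)^{\gamma}$ by \eqref{Conditons on tilde u_2}, this yields $\mathbb{E}[h(s)]\le a_3 e^{a(s-t)}(N/K)^{\gamma}$. The same scheme, applied to the variants of $h$ (the exponential weight with $\gamma$ replaced by another exponent), supplies the analogous estimates needed subsequently.

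I expect the main obstacle to be the bookkeeping that produces \emph{precisely} the constant $a$: it depends on using $\max\{\beta,\gamma\}=\beta$ in the $f$- and $j(\psi)$-terms, $\min\{\beta,\gamma\}=\gamma$ to annihilate the $A$-term, and crucially on the pointwise lower bound for $\psi$ — without it the coefficient $j(\psi)\sim\psi^{-1/\gamma}$ is unbounded and no finite $a$ exists, which is exactly why that hypothesis appears in Theorem~\ref{thm:crucial estimate theorem}. The localization/Fatou argument is routine but must be spelled out, precisely because $h$ genuinely fails to be bounded on $\mathbb{R}^2_+$.
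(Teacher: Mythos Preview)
Your proposal is correct and follows essentially the same route as the paper: compute $dh(s)$, use $Z\tilde{u}_2'(Z)\ge\gamma\tilde{u}_2(Z)$ to drop the $A$-term, bound the $f$- and $j(\psi)$-contributions via $Z\tilde{u}_2'(Z)\le\beta\tilde{u}_2(Z)$ together with the lower bound on $\psi$, bound the second-order term via $Z^2\tilde{u}_2''(Z)\le\tilde{a}_5\tilde{u}_2(Z)$, and finish with Gr\"onwall. Your explicit localization/Fatou step is a welcome addition that the paper leaves implicit by appealing to the argument of Lemma~\ref{lem:lower bound}.
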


\begin{proof}
It follows from a direct computation using \eqref{eq: process of X_2/X_1} that
\begin{equation}\label{eq: process of h(s)}
	\begin{aligned}
		dh(s)
		&= A\left(\gamma-\frac{\tilde{u}'_2\left(\frac{X_2}{X_1}\right)}{\tilde{u}_2\left(\frac{X_2}{X_1}\right)}\cdot\frac{X_2}{X_1}\right)X_2^{1-\beta}X_1^{\beta-1}hds\\
		&	\quad+  e^{\int_{t}^{s}A\gamma(X_2(\tau))^{1-\beta}(X_1(\tau))^{\beta-1}d\tau}\tilde{u}_2'\left(\frac{X_2}{X_1}\right) \left[\frac{1}{X_1}f(X_2)ds  +\left(\frac{X_2}{X_1}\right)^{2}j(\psi) ds\right] \\
		&\quad +   e^{\int_{t}^{s}A\gamma(X_2(\tau))^{1-\beta}(X_1(\tau))^{\beta-1}d\tau}\left[ \frac{1}{2}\tilde{u}_2''\left(\frac{X_2}{X_1}\right)\left(\frac{X_2}{X_1}\right)^{2} ({{\sigma}^2}+ {\epsilon}^2)ds  \right]\\ 
		&\quad +   e^{\int_{t}^{s}A\gamma(X_2(\tau))^{1-\beta}(X_1(\tau))^{\beta-1}d\tau}\left[ \tilde{u}_2'\left(\frac{X_2}{X_1}\right)\left(\frac{X_2}{X_1}\right)\left({{\sigma}}dB_2 - \epsilon dB_1 \right)\right]\\ 
		&\le  e^{\int_{t}^{s}A\gamma(X_2(\tau))^{1-\beta}(X_1(\tau))^{\beta-1}d\tau}\tilde{u}_2'\left(\frac{X_2}{X_1}\right) \left[\frac{1}{X_1}f(X_2)ds  +\left(\frac{X_2}{X_1}\right)^{2}j(\psi) ds\right] \\
		&\quad +   e^{\int_{t}^{s}A\gamma(X_2(\tau))^{1-\beta}(X_1(\tau))^{\beta-1}d\tau}\left[ \frac{1}{2}\tilde{u}_2''\left(\frac{X_2}{X_1}\right)\left(\frac{X_2}{X_1}\right)^{2} ({{\sigma}^2}+ {\epsilon}^2)ds  \right]\\ 
		&\quad +   e^{\int_{t}^{s}A\gamma(X_2(\tau))^{1-\beta}(X_1(\tau))^{\beta-1}d\tau}\left[ \tilde{u}_2'\left(\frac{X_2}{X_1}\right)\left(\frac{X_2}{X_1}\right)\left({{\sigma}}dB_2 - \epsilon dB_1 \right)\right],\\ 
	\end{aligned}
\end{equation}
where  last inequality holds due to  $\frac{\tilde{u}'_2\left(\frac{X_2}{X_1}\right)}{\tilde{u}_2\left(\frac{X_2}{X_1}\right)}\cdot \frac{X_2}{X_1}\ge \gamma$ which follows from  \eqref{Conditons on tilde u_2}. 
Using  \eqref{Conditons on tilde u_2} and \eqref{eq:condition on j(x)}, we obtain
\begin{equation}\label{eq:estimate for a term in h(s)}
	\begin{aligned}
		&\quad\tilde{u}_2'\left(\frac{X_2}{X_1}\right)\cdot\left(\frac{X_2}{X_1}\right)^{2}j(\psi)=\tilde{u}_2'\left(\frac{X_2}{X_1}\right)\frac{X_2}{X_1}\frac{X_2}{X_1}j(\psi)
		\le\beta \tilde{u}_2\left(\frac{X_2}{X_1}\right)\frac{X_2}{X_1}j(\psi)\\
		&\le \beta \tilde{u}_2\left(\frac{X_2}{X_1}\right)\frac{X_2}{X_1}a_1\psi^{-\frac{1}{\gamma}}
		\le a_1\beta \tilde{u}_2\left(\frac{X_2}{X_1}\right)\frac{X_2}{X_1}\left(c_0^{\gamma}\left(\frac{X_2}{X_1}\right)^{\gamma}\right)^{-\frac{1}{\gamma}}
		\le \frac{a_1\beta }{c_0}\tilde{u}_2\left(\frac{X_2}{X_1}\right).
	\end{aligned}
\end{equation}
Now, using  the above estimate \eqref{eq:estimate for a term in h(s)}, the conditions \eqref{Conditons on tilde u_2} and Assumption \ref{Assum:drift rate in population model} into  Equation \eqref{eq: process of h(s)}, we  conclude  that
\begin{equation*}
	dh(s) \le ah(s)ds + e^{\int_{t}^{s}A\beta(X_2(\tau))^{1-\beta}(X_1(\tau))^{\beta-1}d\tau}  \tilde{u}_2'\left(\frac{X_2}{X_1}\right)\left(\frac{X_2}{X_1}\right) ({{\sigma}}dB_2 - \epsilon dB_1 ),\\
\end{equation*}
where the constant 
$a:=C_f\beta  +  \frac{\tilde{a}_5}{2}({\sigma}^2+\epsilon^2)+\frac{{a_1\beta}}{c_0}$.
Using a similar argument as  that in the proof of Lemma \ref{lem:lower bound} and noting that $h(t)\le {a}_3\left(\frac{N}{K}\right)^{\gamma}$ due to \eqref{Conditons on tilde u_2}, we finally obtain \eqref{eq:estimatesforh}.
\end{proof}
Now we derive the upper bound for $\mathbb{E}[G(s)]$ when $\beta\geq \gamma$ in the two distinct subcases: $\gamma \leq 2\beta - 1$ and $\gamma > 2\beta - 1$. 
In the case when $\gamma \leq 2\beta - 1$, we can control the nonlinear term $\left(\frac{X_2}{X_1}\right)^{1-\beta+\gamma}$ by using a linear combination of $\left(\frac{X_2}{X_1}\right)^{\beta}$ and $\left(\frac{X_2}{X_1}\right)^{\gamma}$, such as \eqref {eq:linear combination to controll the process} below, we can then obtain the unconventional upper bound; see  Lemma \ref{lem:upper bound case1}.
However, when $\gamma > 2\beta - 1$, \eqref {eq:linear combination to controll the process} does not hold, the proof for Lemma \ref{lem:upper bound case1} will not be applicable anymore; we need to study the relationship between processes $G$ and $h$ carefully, for this purpose we shall analyse the process $\left(\frac{X_2}{X_1}\right)^{\beta-1}$, which indeed satisfies an ``almost'' linear SDE, see Equation \eqref{process of (X_2/X_1)^(beta-1)} below. For technical details
for obtaining the upper bound in this case, see  Lemma \ref{lem:Estimate for frac{X_2}{X_1}(s)^{beta -1}} and Lemma \ref{lem:upper bound}.

\begin{lem}[Upper bound for $\lambda$ when $\gamma \le 2\beta -1$]\label{lem:upper bound case1}
Under the same assumptions stated in Theorem \ref{thm:crucial estimate theorem} and  $\gamma \le 2\beta -1$, we have
\begin{equation*}
	\lambda(t,K,N) =\mathbb{E}[G(T)]\le C_1\left(\frac{N}{K}\right)^{\gamma} + C_2\left(\frac{N}{K}\right)^{\beta}.
\end{equation*}
When $\beta>\gamma$, the constants $C_1$ and $C_2$ above are given by
\begin{equation}\label{eq: values for C_1 and C_2 case1}
	\begin{aligned}
		C_1 :&= \max \left\{ a_3,  a_3e^{b_1T}\right\}, \quad
		C_2: =  \max \left\{  b_2 e^{b_1T}\frac{e^{b_3T}  -1 }{b_3},  b_2 \frac{e^{b_3T}  -1 }{b_3}\right\},~
		\textrm{where} \\
		b_1: &=  (\beta -\gamma) A  \frac{a_3}{a_2}+C_f\beta  + \frac{{a_1\beta}}{c_0} + \frac{\tilde{a}_5}{2}({\sigma}^2+\epsilon^2),\\
		b_2: &=  (\beta -\gamma) A a_3C(\beta, \gamma), \quad
		b_3: =  C_f\beta+ \frac{a_1}{c_0}\beta + \frac{\beta(\beta-1)}{2} (\sigma^2+\epsilon^2),
	\end{aligned}
\end{equation}
and $C(\beta ,\gamma)$ is any positive constant so that $1\le C(\beta, \gamma) x^{2\beta-1-\gamma} + x^{\beta-1}$ holds for  all $x>0$.
And when $\beta=\gamma$, $C_1$ and $C_2$ are given by 
\begin{equation}\label{eq: values for C_1 and C_2 when bera=gamma}
	\begin{aligned}
		C_1 =a_3 e^{{a}T}, ~C_2=0, \textrm{and}~ a=C_f\beta  +  \frac{\tilde{a}_5}{2}({\sigma}^2+\epsilon^2)+\frac{a_1\beta}{c_0}.
	\end{aligned}
\end{equation}

\end{lem}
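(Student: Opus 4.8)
The plan is to follow closely the structure of the proof of Lemma~\ref{lem:Estimate for process h(s)}, the one new ingredient being an auxiliary estimate for the process $P(s):=e^{\int_t^s A\beta (X_2(\tau))^{1-\beta}(X_1(\tau))^{\beta-1}\,d\tau}\left(\frac{X_2(s)}{X_1(s)}\right)^{\beta}$, which is needed to absorb the single drift term of $G$ carrying the ``wrong'' sign relative to the $h$-computation. The boundary case $\beta=\gamma$ is immediate: the exponential weights of $G$ and $h$ then coincide, so $G\equiv h$, and Lemma~\ref{lem:Estimate for process h(s)} with $T-t\le T$ gives $\lambda(t,K,N)=\mathbb{E}[G(T)]=\mathbb{E}[h(T)]\le a_3 e^{aT}\left(\frac{N}{K}\right)^{\gamma}$, i.e.\ the claim with the constants in \eqref{eq: values for C_1 and C_2 when bera=gamma}. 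From now on assume $\beta>\gamma$.

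First I would start from the dynamics \eqref{eq:Process of G(s)} of $G$ and bound three of the four drift contributions exactly as in the proof of Lemma~\ref{lem:Estimate for process h(s)}: using $x\tilde u_2'(x)\le\beta\tilde u_2(x)$ from \eqref{Conditons on tilde u_2}, $|f(x)|\le C_f x$ from Assumption~\ref{Assum:drift rate in population model}, $x^2\tilde u_2''(x)\le\tilde a_5\tilde u_2(x)$, and the computation \eqref{eq:estimate for a term in h(s)}, the $f$-, $\tilde u_2''$- and $j(\psi)$-terms together contribute at most $\left(C_f\beta+\frac{\tilde a_5}{2}(\sigma^2+\epsilon^2)+\frac{a_1\beta}{c_0}\right)G\,ds$. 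The remaining term is $A\left(\beta-\frac{\tilde u_2'(X_2/X_1)}{\tilde u_2(X_2/X_1)}\frac{X_2}{X_1}\right)\left(\frac{X_2}{X_1}\right)^{1-\beta}G\,ds$; whereas in the $h$-case the analogous factor was $\le 0$, here \eqref{Conditons on tilde u_2} only yields $0\le\beta-\frac{\tilde u_2'}{\tilde u_2}\frac{X_2}{X_1}\le\beta-\gamma$, so this term is nonnegative and must be kept. I would bound it by $A(\beta-\gamma)\left(\frac{X_2}{X_1}\right)^{1-\beta}G$, then use $G\le a_3 e^{\int_t^s A\beta(\cdots)d\tau}\left(\frac{X_2}{X_1}\right)^{\gamma}$ and $\tilde u_2(x)\ge a_2 x^\gamma$ together with the elementary inequality $z^{1-\beta+\gamma}\le C(\beta,\gamma)z^{\beta}+z^{\gamma}$ for all $z>0$ — valid precisely because $\gamma\le 2\beta-1$ forces $\gamma\le 1-\beta+\gamma\le\beta$, and being exactly the reformulation of the defining property of $C(\beta,\gamma)$ in \eqref{eq: values for C_1 and C_2 case1} — to conclude $A(\beta-\gamma)\left(\frac{X_2}{X_1}\right)^{1-\beta}G\le b_2 P+A(\beta-\gamma)\frac{a_3}{a_2}G$. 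Collecting everything yields $dG\le(b_1 G+b_2 P)\,ds+dM$ with $M$ a local martingale and $b_1,b_2$ as in \eqref{eq: values for C_1 and C_2 case1}.

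Next I would estimate $\mathbb{E}[P(s)]$. Applying It\^o's formula to $\left(\frac{X_2}{X_1}\right)^{\beta}$ via \eqref{eq: process of X_2/X_1} and then attaching the exponential weight, the drift $-A\left(\frac{X_2}{X_1}\right)^{2-\beta}$ of $\frac{X_2}{X_1}$ produces a term that cancels exactly the one coming from differentiating the weight (the ``almost linear'' Bernoulli structure), so that, after bounding the $f$- and $j(\psi)$-terms as for $h$, one gets $dP\le b_3 P\,ds+dM'$ with $b_3$ as in \eqref{eq: values for C_1 and C_2 case1} and $M'$ a local martingale; Gr\"onwall's inequality then gives $\mathbb{E}[P(s)]\le\left(\frac{N}{K}\right)^{\beta}e^{b_3(s-t)}$. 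Substituting this back, taking expectations in the inequality for $G$ (so the stochastic terms drop) and using $\mathbb{E}[G(t)]=\tilde u_2\!\left(\frac{N}{K}\right)\le a_3\left(\frac{N}{K}\right)^{\gamma}$, I obtain $\mathbb{E}[G(s)]\le\alpha(s)+b_1\int_t^s\mathbb{E}[G(\tau)]\,d\tau$ with the nondecreasing majorant $\alpha(s):=a_3\left(\frac{N}{K}\right)^{\gamma}+b_2\left(\frac{N}{K}\right)^{\beta}\frac{e^{b_3(s-t)}-1}{b_3}$. A final application of Gr\"onwall (treating $b_1\ge0$ and $b_1<0$ separately), followed by $s=T$ and $0\le T-t\le T$, produces $\mathbb{E}[G(T)]\le C_1\left(\frac{N}{K}\right)^{\gamma}+C_2\left(\frac{N}{K}\right)^{\beta}$ with exactly the stated constants; the $\max\{\cdot,\cdot\}$ in $C_1$ and $C_2$ is precisely what absorbs the two sign cases of $b_1$ and $b_3$.

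The step I expect to be the main obstacle is the rigorous justification that the stochastic integrals $dM,dM'$ have vanishing expectation — the exponential weights and the non-Lipschitz coefficients make this non-automatic. I would handle it by a standard localization: stop at $\tau_n:=\inf\{s\ge t: X_1(s)\notin(1/n,n)\text{ or }X_2(s)\ge n\}$, derive all the inequalities above up to $s\wedge\tau_n$ (where every integrand is bounded, so the stopped integrals are true martingales), apply Gr\"onwall to get bounds uniform in $n$, and pass to the limit with Fatou's lemma together with the non-explosion and moment estimates for $X_1$, $X_2$ furnished by the construction of $X_1$ in Appendix~\ref{appen:Solvability of X1 and Probabilistic representation of lambda}. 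It is worth stressing that the hypothesis $\gamma\le 2\beta-1$ enters only through the elementary inequality $z^{1-\beta+\gamma}\le C(\beta,\gamma)z^{\beta}+z^{\gamma}$, which is false when $\gamma>2\beta-1$; that is exactly why the complementary regime needs the separate, more delicate analysis of $\left(\frac{X_2}{X_1}\right)^{\beta-1}$ announced before Lemma~\ref{lem:Estimate for frac{X_2}{X_1}(s)^{beta -1}}.
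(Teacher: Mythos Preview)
Your proposal is correct and follows essentially the same approach as the paper's own proof: your auxiliary process $P$ is exactly the paper's $h_1$, the key inequality $z^{1-\beta+\gamma}\le C(\beta,\gamma)z^{\beta}+z^{\gamma}$ is used identically to bound $\left(\frac{X_2}{X_1}\right)^{1-\beta}G$ by a combination of $G$ and $h_1$, and the subsequent Gr\"onwall steps match. Your explicit remark on localizing the stochastic integrals is a point the paper leaves implicit, but otherwise the two arguments coincide.
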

\begin{proof}
	   Assume that $\beta> \gamma$, otherwise if $\beta=\gamma$,  we have  $\lambda(t,K,N)=\mathbb{E}[G(T)] = \mathbb{E}[h(T)] \le a_3 e^{{a}(T-t)}\left(\frac{N}{K}\right)^{\gamma}$, by then the lemma follows. 		Since $\beta \ge 2\beta -1 \ge \gamma$, we know $\beta \ge 1- \beta +\gamma \ge \gamma$ and by the definition of $C(\beta, \gamma)$, we have
\begin{equation}\label{eq:linear combination to controll the process}
	\left(\frac{X_2}{X_1}\right)^{1-\beta+\gamma} \le C(\beta, \gamma) \left(\frac{X_2}{X_1}\right)^{\beta} + \left(\frac{X_2}{X_1}\right)^{\gamma}.
\end{equation}
Let us now define 
\begin{equation}\label{eq:eqforh_1}
	h_1(s): = e^{\int_{t}^{s}A\beta (X_2(\tau))^{1-\beta}(X_1(\tau))^{\beta-1}d\tau} \left(\frac{X_2(s)}{X_1(s)}\right)^{\beta}.
\end{equation}
Then using the definition of $G(s)$ and $h_1(s)$, i.e., Equations \eqref{eq:defofG} and  \eqref{eq:eqforh_1} and  combining conditions \eqref{Conditons on tilde u_2}  on $\tilde{u}_2$ and  the inequality \eqref{eq:linear combination to controll the process}, we have
\begin{equation}\label{eq:estimatesX^{1-beta}G}
	\begin{aligned}
		\left(\frac{X_2}{X_1}\right)^{1-\beta}(s)G(s) &=  e^{\int_{t}^{s}A\beta(X_2(\tau))^{1-\beta}(X_1(\tau))^{\beta-1}d\tau} \tilde{u}_2\left(\frac{X_2(s)}{X_1(s)}\right)\left(\frac{X_2}{X_1}\right)^{1-\beta}\\
		&\leq a_3e^{\int_{t}^{s}A\beta(X_2(\tau))^{1-\beta}(X_1(\tau))^{\beta-1}d\tau} \left(\frac{X_2}{X_1}\right)^{1-\beta+\gamma}\\
		&\leq a_3e^{\int_{t}^{s}A\beta(X_2(\tau))^{1-\beta}(X_1(\tau))^{\beta-1}d\tau} \left(C(\beta, \gamma) \left(\frac{X_2}{X_1}\right)^{\beta} + \left(\frac{X_2}{X_1}\right)^{\gamma}\right)\\
		&\leq a_3e^{\int_{t}^{s}A\beta(X_2(\tau))^{1-\beta}(X_1(\tau))^{\beta-1}d\tau} \left( C(\beta, \gamma) \left(\frac{X_2}{X_1}\right)^{\beta} + \frac{1}{a_2} \tilde{u}_2\left(\frac{X_2(s)}{X_1(s)}\right)\right) \\
		&= a_3 \left( \frac{1}{a_2}G(s) + C(\beta, \gamma) h_1(s) \right). 
	\end{aligned}
\end{equation}
Hence, 
it follows from Equations   \eqref{eq:Process of G(s)}, \eqref{eq:estimatesX^{1-beta}G} and  conditions \eqref{Conditons on tilde u_2}  on $\tilde{u}_2$  that 
\begin{equation}\label{eq:diffeqforG}
	\begin{aligned}
		dG(s) &\le A\left(\beta-\gamma\right)\left(\frac{X_2}{X_1}\right)^{1-\beta}(s)G(s)ds \\
		&\quad+ e^{\int_{t}^{s}A\beta(X_2(\tau))^{1-\beta}(X_1(\tau))^{\beta-1}d\tau}\tilde{u}_2'\left(\frac{X_2(s)}{X_1(s)}\right) \left[\frac{1}{X_1}f(X_2)ds  +\left(\frac{X_2}{X_1}\right)^{2}j(\psi) ds\right] \\
		&\quad +   e^{\int_{t}^{s}A\beta(X_2(\tau))^{1-\beta}(X_1(\tau))^{\beta-1}d\tau}\left[ \frac{1}{2}\tilde{u}_2''\left(\frac{X_2(s)}{X_1(s)}\right)\left(\frac{X_2}{X_1}\right)^{2} ({{\sigma}^2}+ {\epsilon}^2)ds \right]\\ 
		&\quad  + e^{\int_{t}^{s}A\beta(X_2(\tau))^{1-\beta}(X_1(\tau))^{\beta-1}d\tau}\tilde{u}_2'\left(\frac{X_2(s)}{X_1(s)}\right)\left(\frac{X_2}{X_1}\right)\left({{\sigma}}dB_2 - \epsilon dB_1 \right)\\
		&\le Aa_3 \left(\beta-\gamma\right) \left( \frac{1}{a_2}G(s) + C(\beta, \gamma) h_1(s) \right) ds \\
		&\quad+ e^{\int_{t}^{s}A\beta(X_2(\tau))^{1-\beta}(X_1(\tau))^{\beta-1}d\tau}\tilde{u}_2'\left(\frac{X_2(s)}{X_1(s)}\right) \left[\frac{1}{X_1}f(X_2)ds  +\left(\frac{X_2}{X_1}\right)^{2}j(\psi) ds\right] \\
		&\quad +   e^{\int_{t}^{s}A\beta(X_2(\tau))^{1-\beta}(X_1(\tau))^{\beta-1}d\tau}\left[ \frac{1}{2}\tilde{u}_2''\left(\frac{X_2(s)}{X_1(s)}\right)\left(\frac{X_2}{X_1}\right)^{2} ({{\sigma}^2}+ {\epsilon}^2)ds \right]\\ 
		&\quad  + e^{\int_{t}^{s}A\beta(X_2(\tau))^{1-\beta}(X_1(\tau))^{\beta-1}d\tau}\tilde{u}_2'\left(\frac{X_2(s)}{X_1(s)}\right)\left(\frac{X_2}{X_1}\right)\left({{\sigma}}dB_2 - \epsilon dB_1 \right).\\
	\end{aligned}
\end{equation}

We now integrate \eqref{eq:diffeqforG} from $t$ to $s$, and then take expectations on both sides, by using the conditions \eqref{Conditons on tilde u_2}, Assumption \ref{Assum:drift rate in population model}  and  the initial conditions  $\eqref{eq:process of X_1}_2$ and $\eqref{eq:process of X_2}_2$, we have
\begin{equation}\label{eq:upperdiffeqofG}
	\begin{aligned}
		&\mathbb{E}[G(s)]- a_3\left(\frac{N}{K}\right)^{\gamma} 
		\le \mathbb{E}[G(s)-G(t)] \\
		\le& \int_{t}^{s}  \left(  Aa_3 (\beta -\gamma) \frac{1}{a_2}+C_f\beta  + \frac{{a_1\beta}}{c_0} + \frac{\tilde{a}_5}{2}({\sigma}^2+\epsilon^2)\right) \mathbb{E}[G( \tau )] 
		+ Aa_3(\beta -\gamma)C(\beta, \gamma)\mathbb{E}[h_1(\tau )] d\tau\\
		=: & \int_{t}^{s} b_1 \mathbb{E}[G( \tau )] + b_2 \mathbb{E}[h_1(\tau )] d\tau.
	\end{aligned}
\end{equation}
Following   the same argument as that for Lemma \ref{lem:Estimate for process h(s)}, we can first obtain 
\begin{equation}\label{eq:estiforh1}
\mathbb{E}[h_1(s)] \le e^{b_3(s-t)}\left(\frac{N}{K}\right)^{\beta}, ~ s \in [t, T],
\end{equation}
where $ b_3:= C_f\beta+ \frac{a_1}{c_0}\beta + \frac{\beta(\beta-1)}{2} (\sigma^2+\epsilon^2) $.
Finally, applying Gr\"onwall inequality to  \eqref{eq:upperdiffeqofG} and using \eqref{eq:estiforh1}, we have
\begin{equation}
	\begin{aligned}
		\mathbb{E}[G(s)] \le  e^{b_1(s-t)}
		\left( b_2\frac{e^{b_3(s-t)}  -1 }{b_3}\left(\frac{N}{K}\right)^{\beta} + a_3\left(\frac{N}{K}\right)^{\gamma} \right),
	\end{aligned}
\end{equation}
then  choosing $s= T$,
we obtain
\begin{equation}
	\begin{aligned}
		\lambda(t, K,N) &= \mathbb{E}[G(T)] \le e^{b_1(T-t)}
		\left( b_2\frac{e^{b_3(T-t)}  -1 }{b_3}\left(\frac{N}{K}\right)^{\beta} + a_3\left(\frac{N}{K}\right)^{\gamma} \right)
		\le  C_1 \left(\frac{N}{K}\right)^{\gamma} + C_2 \left(\frac{N}{K}\right)^{\beta}.
	\end{aligned}
\end{equation}
This finishes the proof.
\end{proof}

Next, we consider the subcase when $\gamma>2\beta-1$, and as mentioned above, we analyze    $\left(\frac{X_2}{X_1}\right)^{\beta-1}$ first.
It leads to the following result.
\begin{lem}[$\text{Estimate for} \left(\frac{X_2(s)}{X_1(s)}\right)^{\beta -1}$ ] \label{lem:Estimate for frac{X_2}{X_1}(s)^{beta -1}}
We have
\begin{equation*}
	\mathbb{E}\left[\left(\frac{X_2(s)}{X_1(s)}\right)^{\beta -1}\right] \le e^{b(s-t)}\left(\frac{N}{K}\right)^{\beta-1} + A(1-\beta)\frac{ e^{b(s-t)}-1}{b},
\end{equation*}
where $b: = (\beta-1) \left(-C_f +\frac{\beta -2}{2}({{\sigma}^2} + {\epsilon}^2)\right)>0$.
\end{lem}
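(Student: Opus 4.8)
The plan is to apply It\^o's formula to $Z(s):=\left(\frac{X_2(s)}{X_1(s)}\right)^{\beta-1}$ and to turn the resulting It\^o equation into a linear differential inequality for $\mathbb{E}[Z(s)]$. First I would recall from \eqref{eq: process of X_2/X_1} that, writing $R:=X_2/X_1$,
\[
dR=\left[\tfrac{f(X_2)}{X_1}-AR^{2-\beta}+R^2 j(\psi)\right]ds+\sigma R\,dB_2-\epsilon R\,dB_1,
\]
so that $d\langle R\rangle=(\sigma^2+\epsilon^2)R^2\,ds$ since $B_1\perp B_2$. Applying It\^o to $g(r)=r^{\beta-1}$, with $g'(r)=(\beta-1)r^{\beta-2}$ and $g''(r)=(\beta-1)(\beta-2)r^{\beta-3}$, gives
\[
dZ=(\beta-1)R^{\beta-2}\left[\tfrac{f(X_2)}{X_1}-AR^{2-\beta}+R^2 j(\psi)\right]ds+\tfrac12(\beta-1)(\beta-2)(\sigma^2+\epsilon^2)Z\,ds+(\beta-1)R^{\beta-1}\left(\sigma\,dB_2-\epsilon\,dB_1\right).
\]

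Next I would bound the drift term by term, exploiting $\beta-1<0$. The $-AR^{2-\beta}$ contribution collapses to the exact constant $(\beta-1)R^{\beta-2}\cdot(-AR^{2-\beta})=A(1-\beta)$. The term $(\beta-1)R^{\beta-2}R^2 j(\psi)=(\beta-1)R^{\beta}j(\psi)$ is nonpositive (as $j(\psi)>0$ by \eqref{eq:condition on j(x)}) and is simply discarded. For the population drift I rewrite $(\beta-1)R^{\beta-2}\tfrac{f(X_2)}{X_1}=(\beta-1)\tfrac{f(X_2)}{X_2}Z$; since $f(0)=0$ and $f$ is Lipschitz (Assumption \ref{Assum:drift rate in population model}) we have $\tfrac{f(X_2)}{X_2}\ge -C_f$, hence $(\beta-1)\tfrac{f(X_2)}{X_2}\le (1-\beta)C_f$. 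Combining this with the It\^o correction term $\tfrac12(\beta-1)(\beta-2)(\sigma^2+\epsilon^2)Z$ yields
\[
dZ\le\Big[(1-\beta)C_f+\tfrac12(\beta-1)(\beta-2)(\sigma^2+\epsilon^2)\Big]Z\,ds+A(1-\beta)\,ds+dM_s=\big(bZ+A(1-\beta)\big)ds+dM_s,
\]
where $b=(\beta-1)\big(-C_f+\tfrac{\beta-2}{2}(\sigma^2+\epsilon^2)\big)>0$ is exactly the stated constant and $M$ is the local martingale $\int(\beta-1)R^{\beta-1}(\sigma\,dB_2-\epsilon\,dB_1)$.

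Then I would take expectations. Because the integrand $R^{\beta-1}$ involves a negative power of $X_1/X_2$ and need not be square-integrable a priori, I would localize with $\tau_n=\inf\{s\ge t:X_1(s)\notin(1/n,n)\text{ or }X_2(s)\notin(1/n,n)\}\wedge T$; on $[t,\tau_n]$ all coefficients are bounded, $M_{\cdot\wedge\tau_n}$ is a true martingale, and the drift inequality above still holds, giving $\frac{d}{ds}\mathbb{E}[Z(s\wedge\tau_n)]\le b\,\mathbb{E}[Z(s\wedge\tau_n)]+A(1-\beta)$. Integrating this linear inequality (integrating factor $e^{-bs}$) yields
\[
\mathbb{E}[Z(s\wedge\tau_n)]\le e^{b(s-t)}\left(\frac{N}{K}\right)^{\beta-1}+A(1-\beta)\,\frac{e^{b(s-t)}-1}{b}.
\]
Since $X_2$ is a nonexploding strictly positive diffusion (Lipschitz, linear growth) and $X_1$ stays strictly positive and finite by Lemma \ref{Solvability of $X_1$} (its $(1-\beta)$-power solves an almost linear SDE), we have $\tau_n\uparrow T$ a.s., so $Z(s\wedge\tau_n)\to Z(s)$ a.s.; Fatou's lemma (valid as $Z\ge0$) then passes the bound to $\mathbb{E}[Z(s)]$, which is precisely the claim.

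\textbf{Main obstacle.} The genuinely delicate point is this localization/integrability step: one must be sure that $X_1$ neither explodes nor hits zero on $[t,T]$, so that $Z$ is a well-defined nonnegative continuous process and $\tau_n\uparrow T$; this is where the ``almost linear'' SDE for $X_1^{1-\beta}$ from Lemma \ref{Solvability of $X_1$} is indispensable. Everything else (the It\^o computation, discarding the $j(\psi)$ term, the Lipschitz bound on $f(x)/x$, and the Gr\"onwall step) is routine.
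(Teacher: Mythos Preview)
Your proposal is correct and follows essentially the same route as the paper: apply It\^o to $(X_2/X_1)^{\beta-1}$ via \eqref{eq: process of X_2/X_1}, discard the nonpositive $j(\psi)$ contribution, use $f(X_2)\ge -C_fX_2$, and close with Gr\"onwall. The only difference is that you handle the martingale part through an explicit localization and Fatou argument, whereas the paper simply ``integrates and takes expectations''; your treatment is more careful but not a different idea.
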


\begin{proof}
 From a direct computation using \eqref{eq: process of X_2/X_1}, we see that 
\begin{equation}\label{process of (X_2/X_1)^(beta-1)}
	\begin{aligned}
		d\left(\frac{X_2}{X_1}\right)^{\beta-1} 
		& = (\beta-1 )\left(\frac{X_2}{X_1}\right)^{\beta-2}\left\{ \frac{f(X_2)}{X_1} - \left(A\left(\frac{X_2}{X_1}\right)^{2-\beta} - \left(\frac{X_2}{X_1}\right)^{2}j(\psi)\right)  \right\} ds\\
		&\quad +  (\beta-1 )\left(\frac{X_2}{X_1}\right)^{\beta-1}\left({{\sigma}}dB_2 - \epsilon dB_1\right)   +\frac{1}{2}(\beta-1)(\beta-2)\left(\frac{X_2}{X_1}\right)^{\beta-1} \left( {{\sigma}^2} + {\epsilon}^2\right)ds.\\
	\end{aligned}
\end{equation}
Using Assumption \ref{Assum:drift rate in population model}, we have $f(X_2)\ge -C_fX_2$. Following \eqref{eq:condition on j(x)}, we also have the fact     that $j(\psi)>0$. Hence,
\begin{equation*}
	\begin{aligned}
		d\left(\frac{X_2}{X_1}\right)^{\beta-1} &\le (\beta-1) \left(\frac{X_2}{X_1}\right)^{\beta-1}\left(-C_f+0\right)ds + A(1-\beta)ds\\
		&\quad +  \frac{1}{2}(\beta-1)(\beta-2)\left(\frac{X_2}{X_1}\right)^{\beta-1} \left( {{\sigma}^2} + {\epsilon}^2\right)ds
		+(\beta-1) \left(\frac{X_2}{X_1}\right)^{\beta-1}({{\sigma}}dB_2-{\epsilon}dB_1).
	\end{aligned}
\end{equation*}
Let $y(s)=\mathbb{E}\left[\left(\frac{X_2}{X_1}(s)\right)^{\beta -1}\right]$. Then, it follows from the initial conditions  $\eqref{eq:process of X_1}_2$ and $\eqref{eq:process of X_2}_2$ that   $y(t)=\left(\frac{N}{K}\right)^{\beta-1}$. So  directly integrating and taking  expectations on both sides, it yields 
\begin{equation*}
	\begin{aligned}
		y(s) -\left(\frac{N}{K}\right)^{\beta-1}
		&\le \int_{t}^{s} b\cdot y(\tau)d\tau + A(1-\beta)(s-t),
	\end{aligned}
\end{equation*}
where $b= (\beta-1)(-C_f +\frac{\beta -2}{2}({{\sigma}^2} + {\epsilon}^2))>0,$ since $\beta<1$. We now use the integral form of  Gr\"onwall's inequality to conclude that
\begin{equation*}
	\begin{aligned}
		y(s)&\le A(1-\beta)(s-t) + \left(\frac{N}{K}\right)^{\beta-1} + \int_{t}^{s}\left(A(1-\beta)(\tau-t) + \left(\frac{N}{K}\right)^{\beta-1} \right)\cdot b e^{\int_{\tau}^{s}bdr} d\tau\\
		& = e^{b(s-t)}\left(\frac{N}{K}\right)^{\beta-1} + A(1-\beta)\frac{ e^{b(s-t)} -1 }{b}. 
	\end{aligned}
\end{equation*}
\end{proof}
\begin{lem}[Upper bound for $\lambda$ when $\gamma > 2\beta -1$]\label{lem:upper bound}
Under the same assumptions in Theorem \ref{thm:crucial estimate theorem}, and that $\gamma >2\beta -1$, 
\begin{equation*}
	\lambda(t,K,N) =\mathbb{E}[G(T)]\le C_1\left(\frac{N}{K}\right)^{\gamma} + C_2\left(\frac{N}{K}\right)^{\beta},
\end{equation*}
where the constants $C_1$ and $C_2$ are given by
\begin{equation}\label{eq: values for C_1 and C_2}
	\begin{aligned}
		&C_1: = a_3 \max\left\{1,   \exp\left({\frac{a_{p(\beta-\gamma)}T}{p}}+\frac{a_{p\beta}T}{p} + {b(\gamma-\beta)T}/{(\beta-1)}\right) \right\} ,\\
		&C_2 :=  a_3\max\left\{\left(A(1-\beta)\frac{ e^{bT}-1}{b} \right)^{\frac{\gamma-\beta}{\beta-1}},~  \exp\left({\frac{a_{p(\beta-\gamma)}T}{p}}+{\frac{a_{p\beta}T}{p} }\right)\left(A(1-\beta)\frac{e^{bT}-1}{b} \right)^{\frac{\gamma-\beta}{\beta-1}} \right\}.
			\end{aligned}
	\end{equation}
	Here, we denote
			\begin{equation}\label{eq: values for a_{p(beta-gamma)} and a_{pbeta}}
					\begin{aligned}
				&a_{p(\beta-\gamma)}:=C_f{p(\beta-\gamma)}  + \frac{1}{2}{{\sigma}^2}{p(\beta-\gamma)}({p(\beta-\gamma)}-1)+\frac{1}{2}{\epsilon ^2}{p(\beta-\gamma)}({p(\beta-\gamma)}-1)+\frac{a_1{p(\beta-\gamma)} }{c_0},\\
				&a_{p\beta}:=C_f{p\beta}  + \frac{1}{2}({\sigma}^2+\epsilon^2)(p(p-1)\beta^2+p\tilde{a}_5)+\frac{ {a_1}{p\beta} }{c_0},~ \textrm{and finally p is chosen as}~p:=2\frac{1-\beta}{1+\gamma-2\beta}.
			\end{aligned}
		\end{equation}
	\end{lem}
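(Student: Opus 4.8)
\emph{Setup and the obstruction.} The plan is to run everything through the Feynman--Kac representation of Lemma~\ref{lem:Probabilistic representation for lambda}, $\lambda(t,K,N)=\mathbb{E}[G(T)]$, with $R:=X_2/X_1>0$ (positivity was already recorded when $X_1,X_2$ were constructed). Using $\tilde u_2(x)\le a_3x^{\gamma}$ from \eqref{Conditons on tilde u_2} we get
\[
G(s)=e^{\int_t^s A\beta R(\tau)^{1-\beta}\,d\tau}\,\tilde u_2(R(s))\le a_3\,e^{\int_t^s A\beta R(\tau)^{1-\beta}\,d\tau}\,R(s)^{\gamma},
\]
so the whole issue is to bound $\mathbb{E}$ of the right-hand side. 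In the subcase $\gamma\le 2\beta-1$ (Lemma~\ref{lem:upper bound case1}) the exponential weight was tamed via the pointwise inequality $R^{1-\beta+\gamma}\le C(\beta,\gamma)R^{\beta}+R^{\gamma}$; when $\gamma>2\beta-1$ that inequality fails, so the factor $e^{\int_t^s A\beta R^{1-\beta}d\tau}$ has to be controlled head-on. The tool for this is the process $R^{\beta-1}$, whose SDE \eqref{process of (X_2/X_1)^(beta-1)} is ``almost linear'' (the drift contribution of the $-AR^{2-\beta}$ term of $dR$ collapses to the constant $A(1-\beta)$), giving the moment estimate of Lemma~\ref{lem:Estimate for frac{X_2}{X_1}(s)^{beta -1}}.

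\emph{The Hölder splitting.} First I would write $R^{\gamma}=R^{\beta}\cdot R^{\gamma-\beta}$ and $R^{\gamma-\beta}=(R^{\beta-1})^{\delta}$ with $\delta:=\frac{\gamma-\beta}{\beta-1}=\frac{\beta-\gamma}{1-\beta}$; the hypothesis $\gamma>2\beta-1$ (with $\gamma<\beta<1$) is precisely what makes $\delta\in(0,1)$. Introducing $\Theta(s):=e^{\int_t^s A\beta R^{1-\beta}d\tau}\tilde u_2(R(s))R(s)^{\beta-\gamma}$, so that $G(s)=\Theta(s)\,(R(s)^{\beta-1})^{\delta}$, I would apply Hölder's inequality with the conjugate exponents $p:=\frac{2(1-\beta)}{1+\gamma-2\beta}>1$ and $q:=\frac{p}{p-1}=\frac{2(1-\beta)}{1-\gamma}$:
\[
\mathbb{E}[G(T)]\le\bigl(\mathbb{E}[\Theta(T)^{p}]\bigr)^{1/p}\bigl(\mathbb{E}\bigl[(R(T)^{\beta-1})^{\delta q}\bigr]\bigr)^{1/q}.
\]
For the first factor I would compute $d(\Theta^{p})$ by It\^o from \eqref{eq:Process of G(s)}--\eqref{eq: process of X_2/X_1}; the decisive point is that all the $R^{1-\beta}$-terms in the drift of $\Theta^{p}$ combine into $pAR^{1-\beta}\bigl(\gamma-\tfrac{R\tilde u_2'(R)}{\tilde u_2(R)}\bigr)\le 0$ because $\tfrac{R\tilde u_2'}{\tilde u_2}\ge\gamma$ by \eqref{Conditons on tilde u_2}, hence they may be dropped. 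What remains is a drift with bounded coefficients, where $f(X_2)/X_2\le C_f$ (Assumption~\ref{Assum:drift rate in population model}), $Rj(\psi)\le a_1/c_0$ (from $\psi\ge c_0^{\gamma}R^{\gamma}$ and \eqref{eq:condition on j(x)}), and the two-sided bounds on $x^2\tilde u_2''/\tilde u_2$ are used; a Grönwall argument then yields $\mathbb{E}[\Theta(T)^{p}]\le a_3^{p}\,e^{(a_{p\beta}+a_{p(\beta-\gamma)})(T-t)}(N/K)^{\beta p}$ with the constants of \eqref{eq: values for a_{p(beta-gamma)} and a_{pbeta}} (here $\Theta(t)^{p}=(\tilde u_2(N/K)(N/K)^{\beta-\gamma})^{p}\le a_3^{p}(N/K)^{\beta p}$). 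For the second factor, since $\delta q=\frac{2(\beta-\gamma)}{1-\gamma}\le 1$ (again equivalent to $\gamma\ge2\beta-1$), Jensen gives $\mathbb{E}[(R^{\beta-1})^{\delta q}]\le(\mathbb{E}[R^{\beta-1}])^{\delta q}$, so by Lemma~\ref{lem:Estimate for frac{X_2}{X_1}(s)^{beta -1}} and the subadditivity of $x\mapsto x^{\delta}$ on $[0,\infty)$ ($\delta<1$),
\[
\bigl(\mathbb{E}[(R(T)^{\beta-1})^{\delta q}]\bigr)^{1/q}\le\Bigl(e^{b(T-t)}(N/K)^{\beta-1}+A(1-\beta)\tfrac{e^{b(T-t)}-1}{b}\Bigr)^{\delta}\le e^{b\delta(T-t)}(N/K)^{\gamma-\beta}+\Bigl(A(1-\beta)\tfrac{e^{b(T-t)}-1}{b}\Bigr)^{\delta}.
\]
Multiplying the two factors, the $(N/K)^{\beta}$ from $\Theta$ pairs with the $(N/K)^{\gamma-\beta}$ piece to give the $(N/K)^{\gamma}$ term (with rate $\tfrac{a_{p\beta}+a_{p(\beta-\gamma)}}{p}+b\delta$, and $b\delta=\tfrac{b(\gamma-\beta)}{\beta-1}$) and with the constant piece to give the $(N/K)^{\beta}$ term, and taking suprema over $t\in[0,T]$ reproduces exactly $C_1(N/K)^{\gamma}+C_2(N/K)^{\beta}$ as in \eqref{eq: values for C_1 and C_2}.

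\emph{Main obstacle and loose ends.} I expect the genuinely delicate part to be the bookkeeping that makes the exponential weight $e^{\int_t^s A(\beta-\gamma)R^{1-\beta}d\tau}$ disappear: it works only because $p$ is tuned so that every power to which $R^{\beta-1}$ is ultimately raised is $\le1$ (so Jensen and subadditivity apply) while the powers used for the $L^{p}$ moments are $\ge1$, \emph{and} the sign condition $\tfrac{R\tilde u_2'}{\tilde u_2}\ge\gamma$ is available; dropping $\gamma>2\beta-1$ destroys both, which is the structural reason this subcase must be separated. The remaining routine points are: justifying that the stochastic integrals produced by It\^o's formula are true (not merely local) martingales, via localization together with the a priori growth bound of Theorem~\ref{thm:existence when psi is smooth} (which provides enough integrability of $R$ and $\Theta$); verifying that every Grönwall/moment estimate on $R^{\beta}$, $R^{p(\beta-\gamma)}$, $\Theta^{p}$ and $R^{\beta-1}$ starts from the deterministic datum $R(t)=N/K$; and the explicit arithmetic identifying the resulting exponential rates with $a_{p\beta}$ and $a_{p(\beta-\gamma)}$ of \eqref{eq: values for a_{p(beta-gamma)} and a_{pbeta}}.
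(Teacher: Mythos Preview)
Your approach is essentially the paper's: the same H\"older/Jensen splitting driven by the ``almost linear'' process $R^{\beta-1}$, the same cancellation of the $R^{1-\beta}$ drift via $R\tilde u_2'/\tilde u_2\ge\gamma$, and the same final assembly into $(N/K)^{\gamma}+(N/K)^{\beta}$. The only structural difference is packaging. You use a two-factor H\"older with exponents $(p,q_s)$, $1/p+1/q_s=1$, and then run a single Gr\"onwall on $\Theta^p$; the paper instead splits $G$ into \emph{three} factors
\[
G=\underbrace{e^{\int A(\beta-\gamma)R^{1-\beta}}R^{\beta-\gamma}}_{f_1}\cdot\underbrace{e^{\int A\gamma R^{1-\beta}}\tilde u_2(R)}_{f_2}\cdot R^{\gamma-\beta},
\]
applies H\"older with exponents $(p,p,q_p)$, $2/p+1/q_p=1$, and runs separate Gr\"onwalls on $\mathbb{E}[f_1^p]$ and $\mathbb{E}[f_2^p]$ to produce exactly $a_{p(\beta-\gamma)}$ and $a_{p\beta}$. (Note $\Theta^p=f_1^pf_2^p$, and both third factors collapse, after Jensen, to $(\mathbb{E}[R^{\beta-1}])^{\delta}$, so the back end is identical.)

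The practical consequence is only in the constants. If you compute $R^2\Psi''/\Psi$ for $\Psi=\tilde u_2^pR^{p(\beta-\gamma)}$ you pick up the cross term $2p(\beta-\gamma)\cdot p\,\dfrac{R\tilde u_2'}{\tilde u_2}\le 2p^2\beta(\beta-\gamma)$, so your direct Gr\"onwall rate is $a_{p\beta}+a_{p(\beta-\gamma)}+(\sigma^2+\epsilon^2)p^2\beta(\beta-\gamma)$, not $a_{p\beta}+a_{p(\beta-\gamma)}$ as you assert. The paper's three-way split avoids exactly this cross term by estimating $\tilde u_2^p$ and $R^{p(\beta-\gamma)}$ in separate expectations. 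So your method gives the right bound with a slightly larger (but equally explicit) constant; if you want to hit the stated $C_1,C_2$ on the nose, make the further split $\Theta=f_1f_2$ before Gr\"onwall.
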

	\begin{proof}
		If we look at the process $h(s)$ defined in \eqref{eq:process h}, it has a special structure; we rewrite $\mathbb{E}[u_2(s)]$ as follows to use this special structure and then   use the H\"older's inequality to derive some relevant bounds. We have
		\begin{equation}\label{eq:split of expectation of G(s)}
			\begin{aligned}
				\mathbb{E}[G(s)]&=\mathbb{E}\left[e^{\int_{t}^{s}A\beta(X_2(\tau))^{1-\beta}(X_1(\tau))^{\beta-1}d\tau}\tilde{u}_2\left(\frac{X_2(s)}{X_1(s)}\right)\right]\\
				& = \mathbb{E}\left[e^{\int_{t}^{s}A(\beta-\gamma)(X_2(\tau))^{1-\beta}(X_1(\tau))^{\beta-1}d\tau}\left(\frac{X_2(s)}{X_1(s)}\right)^{\beta-\gamma}\right.\\ 
				&\qquad\cdot \left.e^{\int_{t}^{s}A\gamma(X_2(\tau))^{1-\beta}(X_1(\tau))^{\beta-1}d\tau}\tilde{u}_2\left(\frac{X_2(s)}{X_1(s)}\right) \cdot\left(\frac{X_2(s)}{X_1(s)}\right)^{\gamma-\beta}\right]\\
				&\le \left( \mathbb{E}\left[e^{\int_{t}^{s}Ap(\beta-\gamma)(X_2(\tau))^{1-\beta}(X_1(\tau))^{\beta-1}d\tau}\left(\frac{X_2(s)}{X_1(s)}\right)^{p(\beta-\gamma)}\right]\right)^{\frac{1}{p}}\\
				&\quad\cdot
				\left(\mathbb{E}\left[e^{\int_{t}^{s}Ap\gamma(X_2(\tau))^{1-\beta}(X_1(\tau))^{\beta-1}d\tau}\tilde{u}_2^{p}\left(\frac{X_2(s)}{X_1(s)}\right)\right]\right)^{\frac{1}{p}}\cdot 	\left(\mathbb{E}\left[\left(\frac{X_2(s)}{X_1(s)}\right)^{q(\gamma-\beta)}\right]\right)^{\frac{1}{q}},
			\end{aligned}
		\end{equation}
		where $p,q>1$ and $\frac{2}{p}+\frac{1}{q}=1$.
		In the last inequality, we have used H\"older's inequality and $p,q$ will be determined later. Now, following the same argument  in the proof of  Lemma \ref{lem:Estimate for process h(s)}, we can easily obtain upper bounds for $\mathbb{E}\left[e^{\int_{t}^{s}Ap(\beta-\gamma)\left(\frac{X_2}{X_1}(\tau)\right)^{1-\beta}d\tau}\left(\frac{X_2(s)}{X_1(s)}\right)^{p(\beta-\gamma)}\right]$ and $\mathbb{E}\left[e^{\int_{t}^{s}Ap\gamma\left(\frac{X_2}{X_1}(\tau)\right)^{1-\beta}d\tau}\tilde{u}_2^{p}\left(\frac{X_2(s)}{X_1(s)}\right)\right]$. 
		In particular, one may directly obtain
		\begin{equation}\label{eq:estimate for first term in upper bound of lambda}
			\begin{aligned}
				&\left( \mathbb{E}\left[e^{\int_{t}^{s}Ap(\beta-\gamma)(X_2(\tau))^{1-\beta}(X_1(\tau))^{\beta-1}d\tau}\left(\frac{X_2(s)}{X_1(s)}\right)^{p(\beta-\gamma)}\right]\right)^{\frac{1}{p}}\le e^{\frac{a_{p(\beta-\gamma)}(s-t)}{p}}\left(\frac{N}{K}\right)^{\beta-\gamma},\\
			\end{aligned}
		\end{equation}
		where  $a_{p(\beta-\gamma)}:=C_f{p(\beta-\gamma)}  + \frac{ {\sigma}^2 +\epsilon^2}{2}{p(\beta-\gamma)}({p(\beta-\gamma)}-1) + \frac{a_1{p(\beta-\gamma)}}{c_0}$.
		Furthermore, conditions \eqref{Conditons on tilde u_2}  on $\tilde{u}_2$ imply the following conditions on $\tilde{u}_2^p$:
		\begin{equation*}
			p\gamma\tilde{u}_2^p \le (\tilde{u}_2^p)'\cdot x = p\tilde{u}_2^{p-1}\tilde{u}_2'\cdot x\le p\beta \tilde{u}_2^p,
		\end{equation*}
		  and
		  \begin{equation*}
		\left(p(p-1)\gamma^2+p\tilde{a}_4\right)\tilde{u}_2^p \le(\tilde{u}_2^p)''\cdot x^2=p(p-1)\tilde{u}_2^{p-2}(\tilde{u}_2')^2+p\tilde{u}_2^{p-1}\tilde{u}_2''
		\le \left(p(p-1)\beta^2+p\tilde{a}_5\right)\tilde{u}_2^p.
	\end{equation*}
		Hence,    the calculations similar to that in the proof of Lemma \ref{lem:Estimate for process h(s)} yields
		\begin{equation}\label{eq:estimate for second term in upper bound of lambda}
			\begin{aligned}
				&\left(\mathbb{E}\left[e^{\int_{t}^{s}Ap\gamma(X_2(\tau))^{1-\beta}(X_1(\tau))^{\beta-1}d\tau}\tilde{u}_2^{p}\left(\frac{X_2(s)}{X_1(s)}\right)\right]\right)^{\frac{1}{p}}\le a_3 e^{\frac{a_{p\beta}(s-t)}{p}}\left(\frac{N}{K}\right)^{\gamma},\\
			\end{aligned}
		\end{equation}
		where $a_{p\beta}:=C_f{p\beta}  + \frac{1}{2}({\sigma}^2+\epsilon^2)(p(p-1)\beta^2+p\tilde{a}_5)+\frac{{a_1}{p\beta}}{c_0}$.
		For the last term on the right-hand side of \eqref{eq:split of expectation of G(s)}, we apply  Jensen's inequality and Lemma \ref{lem:Estimate for frac{X_2}{X_1}(s)^{beta -1}} to deduce that 
		\begin{equation}
			\begin{aligned}
				\mathbb{E}\left[\left(\frac{X_2(s)}{X_1(s)}\right)^{q(\gamma-\beta)}\right]&= \mathbb{E}\left[\left(\left(\frac{X_2(s)}{X_1(s)}\right)^{\beta -1}\right)^
				{ \frac{q(\gamma-\beta)}{\beta-1}  }\right] \le \left(\mathbb{E}\left[\left(\frac{X_2(s)}{X_1(s)}\right)^{\beta -1}\right]\right)^{\frac{q(\gamma-\beta)}{\beta-1}}\\
				\qquad &\le \left(e^{b(s-t)}\left(\frac{N}{K}\right)^{\beta-1} + A(1-\beta)\frac{ e^{b(s-t)}-1}{b} \right)^{\frac{q(\gamma-\beta)}{\beta-1}},\\
			\end{aligned}
		\end{equation}
		provided that ${\frac{q(\gamma-\beta)}{\beta-1}}\le1$.
		Since $\beta<1$ and $\gamma>2\beta-1$, we know that  $0<\frac{\gamma-\beta}{\beta-1}<1$. Then we can choose $q: =\frac{\beta-1}{\gamma-\beta}$ and $p:= \frac{2q}{q-1} = 2\frac{1-\beta}{1+\gamma-2\beta}$.		
		Choosing $s=T$, we obtain 
		\begin{equation}\label{eq:estimate for last term in upper bound of lambda}
			\begin{aligned}
				&\quad\left(\mathbb{E}\left[\left(\frac{X_2(T)}{X_1(T)}\right)^{q(\gamma-\beta)}\right]\right)^{\frac{1}{q}}
				\le \left(e^{b(T-t)}\left(\frac{N}{K}\right)^{\beta-1} + A(1-\beta)\frac{ e^{b(T-t)}-1}{b} \right)^{\frac{\gamma-\beta}{\beta-1}}\\
				& \le \left(e^{b(T-t)}\left(\frac{N}{K}\right)^{\beta-1}\right)^{{\frac{\gamma-\beta}{\beta-1}}} + \left(A(1-\beta)\frac{ e^{b(T-t)}-1}{b} \right)^{\frac{\gamma-\beta}{\beta-1}}\\
				& = e^{b(\gamma-\beta)(T-t)/(\beta-1)}\left(\frac{N}{K}\right)^{\gamma-\beta} + \left(A(1-\beta)\frac{e^{b(T-t)}-1}{b} \right)^{\frac{\gamma-\beta}{\beta-1}},\\
			\end{aligned}
		\end{equation}
		where for the second inequality we have used the inequality $(x_1+ x_2)^{\theta} \le x_1^{\theta} + x_2^{\theta}$  for any two positive numbers $x_1$ and $x_2$, provided that $0 \le {\theta}\le 1$.
		Applying the equations  \eqref{eq:split of expectation of G(s)} to \eqref{eq:estimate for last term in upper bound of lambda}   and evaluating at $s=T$, we finally obtain
		\begin{equation}
			\begin{aligned}
				&\lambda(t,K,N) = \mathbb{E}[G(T)]
				\le  a_3 e^{\frac{a_{p(\beta-\gamma)}(T-t)}{p}}\left(\frac{N}{K}\right)^{\beta-\gamma}\cdot e^{\frac{a_{p\beta}(T-t)}{p}}\left(\frac{N}{K}\right)^{\gamma}\\
				&\qquad \qquad \qquad \qquad \qquad  \cdot\left[e^{b(\gamma-\beta)(T-t)/(\beta-1)}\left(\frac{N}{K}\right)^{\gamma-\beta} + \left(A(1-\beta)\frac{-1+ e^{b(T-t)}}{b} \right)^{\frac{\gamma-\beta}{\beta-1}}\right]\\
				&   \qquad \qquad \qquad \qquad \quad \le C_1\left(\frac{N}{K}\right)^{\gamma} + C_2\left(\frac{N}{K}\right)^{\beta}.
			\end{aligned}
		\end{equation}
		This finishes the proof.
	\end{proof}
	
	Combining Lemma \ref{lem:lower bound} and Lemma \ref{lem:upper bound}, we complete the proof of Theorem \ref{thm:crucial estimate theorem}.
	
	\begin{rem}
		The lower bound of the  input  function $\psi$ is used in an essential way to bound  $\mathbb{E}[h(s)]$ from above (in the proof of Lemma \ref{lem:Estimate for process h(s)}), $\mathbb{E}[h_1(s)]$ (in the proof of Lemma \ref{lem:upper bound case1}) and other similar stochastic processes (in the proof of Lemma \ref{lem:upper bound}). It is because the upper bound for the solution $\lambda$ relies on the singular behavior of $j(\psi)$ (in Equation \eqref{eq: differential equation for $lambda$}) in a crucial way.
	\end{rem}
	
	\subsection{Lower and upper bounds for the solution in the case $\beta<\gamma$}\label{Upper bound for the solution when $beta < gamma$}
	In this case, the crucial lower bound is more difficult to obtain compared with the case $\beta\geq \gamma$, as the first term in $dG(s)$ in \eqref{eq:Process of G(s)}, which is nonlinear and most challenging  to handle, will be  negative now. Our main ingredient is to apply a reverse H\"older's inequality to overcome this issue.
	We  give the lower bound and upper bound of $\lambda$ in Lemma \ref{lem: lower bound for lambda for beta<gamma} and Lemma \ref{lem: upper bound for lambda for beta<gamma}, respectively. 
	\begin{lem}[Lower bound]\label{lem: lower bound for lambda for beta<gamma}
		Under the same assumption of Theorem \ref{thm:crucial estimate theorem for case beta<gamma}, we have $\lambda(t,K,N) \ge C_3 \left(\frac{N}{K}\right)^{\beta}\left(  C_4 \left(\frac{N}{K}\right)^{\beta-1} + C_5 \right)^{ \frac{\gamma-\beta}{\beta-1 } }$, where 
		\begin{equation}\label{eq:value for C3, C4, C5}
			\begin{aligned}
				C_3 &:= a_2 e^{\left(-C_{f}\beta + \frac{1}{2}\beta \left(\frac{\beta -\beta^2}{\gamma+1-2\beta} -1\right) \left(\sigma^2+\epsilon^2 \right) \right)T}, \quad
				C_4 :=   e^{(\beta-1)(-C_f +\frac{\beta -2}{2}({{\sigma}^2} + {\epsilon}^2))T}\\
				C_5 &:= A(1-\beta)\frac{ e^{bT}-1}{b} = A(1-\beta) \frac{e^{(\beta-1)(-C_f +\frac{\beta -2}{2}({{\sigma}^2} + {\epsilon}^2))T} -1}{(\beta-1)(-C_f +\frac{\beta -2}{2}({{\sigma}^2} + {\epsilon}^2))}.
			\end{aligned}
		\end{equation}
	\end{lem}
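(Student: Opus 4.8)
The plan is to read off the lower bound from the Feynman--Kac representation $\lambda(t,K,N)=\mathbb{E}[G(T)]$ of Lemma~\ref{lem:Probabilistic representation for lambda}, combined with a \emph{reverse} H\"older inequality. The obstruction, already flagged before the statement, is that for $\beta<\gamma$ the leading drift term of $G$ in \eqref{eq:Process of G(s)}, namely $A\big(\beta-\tfrac{\tilde u_2'}{\tilde u_2}\tfrac{X_2}{X_1}\big)\big(\tfrac{X_2}{X_1}\big)^{1-\beta}G$, is nonpositive — by \eqref{Conditons on tilde u_2} one has $\tfrac{\tilde u_2'(x)}{\tilde u_2(x)}x\in[\beta,\gamma]$ — and its negative part carries the unbounded factor $\big(\tfrac{X_2}{X_1}\big)^{1-\beta}$, so the Gr\"onwall comparison used for $\beta\ge\gamma$ in Lemma~\ref{lem:lower bound} is unavailable here.

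First I would bound $G(T)$ pathwise from below using $\tilde u_2(x)\ge a_2x^\gamma$ from \eqref{Conditons on tilde u_2} and split the power so as to expose the process $h_1$ of \eqref{eq:eqforh_1}:
\[
G(T)\ \ge\ a_2\,e^{\int_t^T A\beta (X_2(\tau))^{1-\beta}(X_1(\tau))^{\beta-1}d\tau}\Big(\tfrac{X_2(T)}{X_1(T)}\Big)^{\beta}\Big(\tfrac{X_2(T)}{X_1(T)}\Big)^{\gamma-\beta}=a_2\,h_1(T)\Big(\tfrac{X_2(T)}{X_1(T)}\Big)^{\gamma-\beta}.
\]
The reason for isolating the exponent $\beta$ in $h_1$ is that it matches the exponential rate $A\beta$: computing $d(\log h_1)$ from \eqref{eq: process of X_2/X_1}, the $A$-contributions cancel and
\[
\log h_1(s)=\beta\log\tfrac{N}{K}+\int_t^s\beta\Big(\tfrac{f(X_2)}{X_2}+\tfrac{X_2}{X_1}j(\psi)-\tfrac12(\sigma^2+\epsilon^2)\Big)d\tau+\beta\big(\sigma(B_2(s)-B_2(t))-\epsilon(B_1(s)-B_1(t))\big),
\]
a drifted Brownian motion with neither $A$ nor any singular coefficient left; the residual defect $\big(\tfrac{X_2}{X_1}\big)^{\gamma-\beta}$ is handled separately.

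Next I would apply the reverse H\"older inequality $\mathbb{E}[XY]\ge(\mathbb{E}[X^p])^{1/p}(\mathbb{E}[Y^q])^{1/q}$ with $X=h_1(T)$, $Y=\big(\tfrac{X_2(T)}{X_1(T)}\big)^{\gamma-\beta}$, and exponents $p:=\tfrac{1-\beta}{\gamma+1-2\beta}$, $q:=\tfrac{p}{p-1}=-\tfrac{1-\beta}{\gamma-\beta}$; one checks $\tfrac1p+\tfrac1q=1$, $0<p<1$ (using $\beta<\gamma$ together with $\beta<1$, the latter giving $2\beta-1<\gamma$), $q<0$, and the key identity $q(\gamma-\beta)=\beta-1$. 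The integrability the inequality needs, $\mathbb{E}[Y^q]=\mathbb{E}\big[\big(\tfrac{X_2(T)}{X_1(T)}\big)^{\beta-1}\big]<\infty$, is exactly what Lemma~\ref{lem:Estimate for frac{X_2}{X_1}(s)^{beta -1}} supplies. For the first factor: exponentiating the log-formula, using $\tfrac{f(X_2)}{X_2}\ge-C_f$ (Assumption~\ref{Assum:drift rate in population model}) and $j(\psi)>0$ pointwise, and then taking expectations — the only randomness left is the Gaussian increment, with explicit exponential moment — gives $\mathbb{E}[h_1(T)^p]\ge(N/K)^{\beta p}e^{c_*(T-t)}$ with $c_*=-C_f\beta p+\tfrac12\beta p(\beta p-1)(\sigma^2+\epsilon^2)$; since $\beta p<1$ we have $c_*<0$, hence $(\mathbb{E}[h_1(T)^p])^{1/p}\ge(N/K)^{\beta}e^{(c_*/p)T}=(C_3/a_2)(N/K)^{\beta}$, which is precisely the constant $C_3$ of \eqref{eq:value for C3, C4, C5}. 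For the second factor, Lemma~\ref{lem:Estimate for frac{X_2}{X_1}(s)^{beta -1}} (whose bound is increasing in $s-t$, so evaluate at $s=T$) gives $\mathbb{E}\big[\big(\tfrac{X_2(T)}{X_1(T)}\big)^{\beta-1}\big]\le C_4(N/K)^{\beta-1}+C_5$, and since $\tfrac1q=\tfrac{\gamma-\beta}{\beta-1}<0$, raising to the power $\tfrac1q$ reverses this: $(\mathbb{E}[Y^q])^{1/q}\ge\big(C_4(N/K)^{\beta-1}+C_5\big)^{\frac{\gamma-\beta}{\beta-1}}$. Multiplying the two factors by $a_2$ yields the asserted bound.

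Two comments on what is delicate. As in Remark~\ref{rem: remark for lower bound beta>gamma} for the case $\beta\ge\gamma$, this argument uses only $j(\psi)>0$ and not the quantitative lower bound on $\psi$ from \eqref{eq:condition on j(x)}, so the lower bound on $\lambda$ is in fact \emph{a priori} in $\psi$. The step requiring the most care is the simultaneous bookkeeping of the exponents — verifying that $0<p<1$, $c_*<0$ and $q(\gamma-\beta)=\beta-1$ hold together, which is what pins down the algebraic form of $p$ (equivalently the $\tfrac{\beta-\beta^2}{\gamma+1-2\beta}$ appearing in $C_3$) and confines the argument to the regime $\beta<\gamma<1$; and ensuring the reverse H\"older inequality genuinely applies, where Lemma~\ref{lem:Estimate for frac{X_2}{X_1}(s)^{beta -1}} is indispensable, since finiteness of $\mathbb{E}[Y^q]$ together with $\lambda<\infty$ (as $\lambda$ is a classical solution) also forces $\mathbb{E}[h_1(T)^p]<\infty$, so no separate integrability estimate for $h_1$ is needed.
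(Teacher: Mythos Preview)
Your proof is correct and takes essentially the same route as the paper's: both bound $G(T)\ge a_2\,h_1(T)\,(X_2/X_1)^{\gamma-\beta}$ and apply a reverse H\"older inequality, estimating the $h_1$-factor from below (your explicit log-formula with Gaussian moment is equivalent to the paper's Gr\"onwall bound on $h_2=h_1^{1/p}$) and the $(X_2/X_1)$-factor via Lemma~\ref{lem:Estimate for frac{X_2}{X_1}(s)^{beta -1}}. Your exponent $p_s=\tfrac{1-\beta}{\gamma+1-2\beta}\in(0,1)$ is the reciprocal of the paper's $p>1$, and your identity $q_s(\gamma-\beta)=\beta-1$ is precisely the paper's optimal choice of $p$ (at which its Jensen step collapses to an equality), so the constants $C_3,C_4,C_5$ coincide.
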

	\begin{proof}
		Noting that $\beta<\gamma$,		we can use the reverse H\"older's inequality and conditions \eqref{Conditons on tilde u_2} on $\tilde{u}_2$ to obtain
		\begin{equation}\label{eq:split of expectation of G(s) beta<gamma}
			\begin{aligned}
				\mathbb{E}[G(s)]&=\mathbb{E}\left[e^{\int_{t}^{s}A\beta(X_2(\tau))^{1-\beta}(X_1(\tau))^{\beta-1}d\tau}\tilde{u}_2\left(\frac{X_2(s)}{X_1(s)}\right)\right]\\
				& = \mathbb{E}\left[e^{\int_{t}^{s}A\beta (X_2(\tau))^{1-\beta}(X_1(\tau))^{\beta-1}d\tau}\left(\frac{X_2(s)}{X_1(s)}\right)^{\beta}\right. \cdot \left. \left(\frac{X_2(s)}{X_1(s)}\right)^{-\beta} \tilde{u}_2\left(\frac{X_2(s)}{X_1(s)}\right) \right]\\
				&\ge a_2 \left( \mathbb{E}\left[e^{\int_{t}^{s}A \frac{\beta}{p} (X_2(\tau))^{1-\beta}(X_1(\tau))^{\beta-1}d\tau} \left(\frac{X_2(s)}{X_1(s)}\right)^{\frac{\beta}{p}}\right]\right)^{p}
				\cdot
				\left(\mathbb{E}\left[\left(\frac{X_2(s)}{X_1(s)}\right)^{\frac{\gamma-\beta}{1-p}} \right]\right)^{-(p-1)},\\
			\end{aligned}
		\end{equation}
		where $p>1$ will be determined later.
		Let 
		\begin{equation*}
			h_2(s) = e^{\int_{t}^{s}A\frac{\beta}{p}(X_2(\tau))^{1-\beta}(X_1(\tau))^{\beta-1}d\tau} \left(\frac{X_2(s)}{X_1(s)}\right)^{\frac{\beta}{p}}.
		\end{equation*}
		Following exactly the same argument as Lemma \ref{lem:Estimate for process h(s)} (but now for ``$\ge$'' direction), we can obtain
		\begin{equation}\label{eq:estimate for h_2}
			\mathbb{E}[h_2(s)] \ge e^{b_5(s-t)}\left(\frac{N}{K}\right)^{\frac{\beta}{p}}\quad \textrm{for $s \in [t, T]$} ,
		\end{equation}
		where $ b_5:= -C_f\frac{\beta}{p} + \frac{1}{2}{\frac{\beta}{p} \left(\frac{\beta}{p}-1 \right)} (\sigma^2+\epsilon^2) <0$.
		Moreover, it follows from Jensen's inequality that 
		\begin{equation}\label{Jensen inequality for lower bound}
			\begin{aligned}
				\mathbb{E}\left[\left(\frac{X_2(s)}{X_1(s)}\right)^{\frac{\gamma-\beta}{1-p}} \right] &= \mathbb{E}\left[\left(\left(\frac{X_2(s)}{X_1(s)}\right)^{\beta-1} \right)^{\frac{\gamma-\beta}{(1-p)(\beta-1) }} \right] 
				\le \left(\mathbb{E}\left[\left(\frac{X_2(s)}{X_1(s)}\right)^{\beta-1} \right]  \right)^{ \frac{\gamma-\beta}{(1-p)(\beta-1) } },
			\end{aligned}
		\end{equation}
		as long as $\frac{\gamma-\beta}{(1-p)(\beta-1) } \le 1 $ or equivalently 
		$p \ge 	\frac{\gamma +1 -2\beta}{1-\beta}>1 $.
		Combining Equations \eqref{eq:split of expectation of G(s) beta<gamma}, \eqref{eq:estimate for h_2} and \eqref{Jensen inequality for lower bound} and using Lemma \ref{lem:Estimate for frac{X_2}{X_1}(s)^{beta -1}} leads to
		\begin{equation}
			\begin{aligned}
				\mathbb{E}[G(s)] \ge a_2 e^{b_5 p(s-t)} \left(\frac{N}{K}\right)^{\beta} \left(  e^{b(s-t)}\left(\frac{N}{K}\right)^{\beta-1} + A(1-\beta)\frac{ e^{b(s-t)}-1}{b} \right)^{ \frac{\gamma-\beta}{\beta-1 } }.
			\end{aligned}
		\end{equation}
		Since $b_5p = -C_{f}\beta + \frac{1}{2}\beta \left(\frac{\beta}{p} -1 \right) \left(\sigma^2+\epsilon^2 \right)<0 $, it is clear that we can maximize the lower bound of $\mathbb{E}[G(s)] $ above by choosing $p =\frac{\gamma +1 -2\beta}{1-\beta}$.
		Now, let $p =\frac{\gamma +1 -2\beta}{1-\beta}$ and  $s= T$, using $b>0$ and $b_5 < 0$, we obtain 
		\begin{equation*}
			\begin{aligned}
				&\lambda(t, K, N) = \mathbb{E}[G(T)]\\
				\ge& a_2 e^{b_5 p(T-t)} \left(\frac{N}{K}\right)^{\beta} \left(  e^{b(T-t)}\left(\frac{N}{K}\right)^{\beta-1} + A(1-\beta)\frac{ e^{b(T-t)}-1}{b} \right)^{ \frac{\gamma-\beta}{\beta-1 } }\\
				\ge& a_2 e^{b_5pT}  \left(\frac{N}{K}\right)^{\beta}\left(  e^{bT}\left(\frac{N}{K}\right)^{\beta-1} + A(1-\beta)\frac{e^{bT}-1}{b} \right)^{ \frac{\gamma-\beta}{\beta-1 } }
				= C_3  \left(\frac{N}{K}\right)^{\beta}\left(  C_4 \left(\frac{N}{K}\right)^{\beta-1} + C_5\right)^{ \frac{\gamma-\beta}{\beta-1 } }.
			\end{aligned}
		\end{equation*}
	\end{proof}
	
	\begin{rem}
		Similar to Remark \ref{rem: remark for lower bound beta>gamma}, in the proof of Lemma  \ref{lem: lower bound for lambda for beta<gamma} we only require that $\beta < \gamma$ and $\psi>0$. In this sense, the lower bound of $\lambda$ mainly comes from the terminal data and we can weaken the assumption on $\psi$ when considering the lower bound of $
		\lambda$.	\end{rem}
	\begin{lem}[Upper bound]\label{lem: upper bound for lambda for beta<gamma}
		Under the  assumptions of Theorem \ref{thm:crucial estimate theorem for case beta<gamma} and additional assumption that $ \inf_{x>0} \frac{\tilde{u}_2' }{\tilde{u}_2} (x) >\beta$, we have $\lambda(t,K,N) \le C_6\left(\frac{N}{K}\right)^{\gamma} $,
		where 
		\begin{equation}\label{eq:value for C6}
			C_6 = a_3 e^{\left( b_6 +  C_f \gamma +  \frac{\tilde{a}_5}{2}({\sigma}^2+\epsilon^2)\right)T},
		\end{equation}
		and $b_6=b_6 \left(C_f, a_1, a_2, \sigma, \epsilon, \gamma, \beta, A, T, \inf_{x>0} \frac{\tilde{u}_2' (x)}{\tilde{u}_2(x)} \right) $ is implicitly determined in \eqref{eq: estimate for term j(psi)}.
	\end{lem}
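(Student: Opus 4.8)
The plan is to run a stochastic Gr\"onwall estimate directly on the process $G(s)$ defined in \eqref{eq:defofG}, in the same spirit as the proof of Lemma~\ref{lem:Estimate for process h(s)}, starting from its It\^o dynamics \eqref{eq:Process of G(s)}. Writing $x:=X_2(s)/X_1(s)$, I would bound the drift of $dG(s)$ term by term against $G$ itself: the second--order term is $\le\tfrac{\tilde a_5}{2}(\sigma^2+\epsilon^2)\,G$ by the last inequality in \eqref{Conditons on tilde u_2} (with $\tilde a_5$ as in \eqref{eq:constansa4a5}); the population--drift term is $\le C_f\gamma\,G$, using $f(X_2)\le C_f X_2$ from Assumption~\ref{Assum:drift rate in population model} and $x\tilde u_2'(x)\le\gamma\tilde u_2(x)$; the $j(\psi)$--term is $\le\gamma\,x\,j(\psi)\,G$, again by $x\tilde u_2'(x)\le\gamma\tilde u_2(x)$; and the first ("nonlinear") term is exactly $A\big(\beta-\tfrac{x\tilde u_2'(x)}{\tilde u_2(x)}\big)x^{1-\beta}\,G$. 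The stochastic part is a local martingale $dM_s$.

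The crux is to show that the "bad" part of the coefficient of $G$,
\[
\Phi(x):=A\Big(\beta-\frac{x\tilde u_2'(x)}{\tilde u_2(x)}\Big)x^{1-\beta}+\gamma\,x\,j\!\big(\psi(s,X_1(s),X_2(s))\big),
\]
is bounded above, uniformly in $x>0$, by a finite constant $b_6$. This is where the standing hypothesis $\inf_{x>0}\tfrac{\tilde u_2'}{\tilde u_2}(x)>\beta$ is essential: used in the form below, it supplies a constant $c>\beta$ with $x\tilde u_2'(x)\ge c\,\tilde u_2(x)$ for all $x$, so the first summand of $\Phi$ is $\le A(\beta-c)x^{1-\beta}$, i.e.\ \emph{negative} and of order $-x^{1-\beta}$ as $x\to\infty$. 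Feeding the lower bound on $\psi$ assumed in Theorem~\ref{thm:crucial estimate theorem for case beta<gamma}, evaluated along the trajectory, into \eqref{eq:condition on j(x)} gives
\[
\gamma\,x\,j(\psi)\le\gamma a_1 C_3^{-1/\gamma}\,x^{\,1-\beta/\gamma}\big(C_4 x^{\beta-1}+C_5\big)^{\frac{\gamma-\beta}{\gamma(1-\beta)}}.
\]
As $x\to\infty$ the right side is of order $x^{1-\beta/\gamma}$, and since $\gamma<1$ one has $1-\beta/\gamma<1-\beta$, so the negative first summand dominates and $\Phi(x)\to-\infty$; as $x\to0^{+}$ the term $C_4 x^{\beta-1}$ dominates inside the bracket and the powers $x^{1-\beta/\gamma}$ and $x^{(\beta-1)\frac{\gamma-\beta}{\gamma(1-\beta)}}$ multiply to $x^{0}$, so the $j(\psi)$--contribution stays bounded while $A(\beta-c)x^{1-\beta}\to0$. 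Continuity of the right--hand bound on compact subsets of $(0,\infty)$ then yields a finite $b_6:=\sup_{x>0}\Phi(x)$, the constant implicitly determined in the lemma.

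Assembling the bounds gives $dG(s)\le\big(b_6+C_f\gamma+\tfrac{\tilde a_5}{2}(\sigma^2+\epsilon^2)\big)G(s)\,ds+dM_s$. I would localize with a sequence of stopping times to remove $M$, take expectations, apply the integral form of Gr\"onwall's inequality, and pass to the limit in the localization --- legitimate since $\mathbb{E}[G(T)]=\lambda(t,K,N)$ is finite by the Feynman--Kac representation (Lemma~\ref{lem:Probabilistic representation for lambda}) together with the a priori growth bound on $\lambda$. This gives $\mathbb{E}[G(s)]\le G(t)\,e^{(b_6+C_f\gamma+\frac{\tilde a_5}{2}(\sigma^2+\epsilon^2))(s-t)}$; using $G(t)=\tilde u_2(N/K)\le a_3(N/K)^{\gamma}$ from \eqref{Conditons on tilde u_2}, enlarging $b_6$ if necessary so that the exponent is nonnegative, and evaluating at $s=T$ yields $\lambda(t,K,N)=\mathbb{E}[G(T)]\le C_6(N/K)^{\gamma}$ with $C_6$ as in \eqref{eq:value for C6}.

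The main obstacle is exactly the uniform estimate on $\Phi$ in the second step: one must win the competition between the exponents $1-\beta$, $1-\beta/\gamma$ and the mixed power $\frac{\gamma-\beta}{\gamma(1-\beta)}$ carried by $C_4 x^{\beta-1}+C_5$, and it is precisely the \emph{strict} gap $c>\beta$ --- not the bare $c\ge\beta$ that \eqref{Conditons on tilde u_2} already provides --- that makes the negative term dominate the singular coefficient $j(\psi)$; without it $\gamma\,x\,j(\psi)$ could not be absorbed into a constant and the Gr\"onwall step would collapse.
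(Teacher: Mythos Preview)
Your proposal is correct and follows essentially the same route as the paper: both rewrite the drift of $G(s)$ in the form (coefficient)$\cdot G$, bound the $j(\psi)$ contribution via $j(\psi)\le a_1\psi^{-1/\gamma}$ and the assumed lower bound on $\psi$ to obtain growth of order $x^{1-\beta/\gamma}$, and then use the strict gap $\inf_{x>0}x\tilde u_2'(x)/\tilde u_2(x)>\beta$ to absorb this into the negative $A(\beta-c)x^{1-\beta}$ term (since $1-\beta/\gamma<1-\beta$), yielding a uniform constant $b_6$ and a Gr\"onwall bound. The only cosmetic difference is that the paper packages the key inequality as \eqref{eq: estimate for term j(psi)}, bounding the $j(\psi)$ term by $b_6+A\big(\tfrac{\tilde u_2'}{\tilde u_2}x-\beta\big)x^{1-\beta}$ so that it cancels exactly against the first drift term, whereas you define $\Phi(x)$ and analyze its asymptotics directly; these are equivalent formulations.
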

	
	\begin{proof}
		We start from Equation \eqref{eq:Process of G(s)} and rewrite it as
		\begin{equation}\label{eq:rewrite for dG(s)}
			\begin{aligned}
				dG(s) &=
				G(s) \cdot \left( A \left(\beta- \frac{\tilde{u}_2' }{\tilde{u}_2} \right)\left(\frac{X_2(s)}{X_1(s)}\right)^{1-\beta}  + \frac{\tilde{u}_2' }{\tilde{u}_2} \cdot \left(\frac{X_2}{X_1}\right)^{2}j(\psi) \right) ds\\
				& \quad +e^{\int_{t}^{s}A\beta(X_2(\tau))^{1-\beta}(X_1(\tau))^{\beta-1}d\tau}\tilde{u}_2'\left(\frac{X_2(s)}{X_1(s)}\right) \left[\frac{1}{X_1}f(X_2)ds  \right] \\
				&\quad +   e^{\int_{t}^{s}A\beta(X_2(\tau))^{1-\beta}(X_1(\tau))^{\beta-1}d\tau}\left[ \frac{1}{2}\tilde{u}_2''\left(\frac{X_2(s)}{X_1(s)}\right)\left(\frac{X_2}{X_1}\right)^{2} ({{\sigma}^2}+ {\epsilon}^2)ds \right]\\ 
				&\quad  + e^{\int_{t}^{s}A\beta(X_2(\tau))^{1-\beta}(X_1(\tau))^{\beta-1}d\tau}\tilde{u}_2'\left(\frac{X_2(s)}{X_1(s)}\right)\left(\frac{X_2}{X_1}\right)\left({{\sigma}}dB_2 - \epsilon dB_1 \right).\\
			\end{aligned}
		\end{equation}
		
		We observe that using the lower bound on $\psi$ and the definition of $C_3, C_4$ and $C_5$ in \eqref{eq:value for C3, C4, C5}, we have 
		\begin{equation}\label{eq: estimate for term j(psi)}
			\begin{aligned}
				& \quad \  \frac{\tilde{u}_2' }{\tilde{u}_2}\cdot \left(\frac{X_2}{X_1}\right)^{2}j(\psi) \le a_1\gamma  \frac{X_2}{X_1} \psi^{-\frac{1}{\gamma}}\\
				& \le   a_1\gamma \frac{X_2}{X_1}\left(a_2 e^{b_5pT}  \left(\frac{X_1}{X_2}\right)^{\beta}\left(  e^{bT}\left(\frac{X_1}{X_2}\right)^{\beta-1} + A(1-\beta)\frac{ e^{bT}-1}{b} \right)^{ \frac{\gamma-\beta}{\beta-1 } } \right)^{-\frac{1}{\gamma}}
				\\
			 &  	\le a_1\gamma \frac{X_2}{X_1} a_2^{-\frac{1}{\gamma}}e^{-\frac{1}{\gamma}b_5pT} \left(\frac{X_1}{X_2}\right)^{-\frac{\beta}{\gamma}} \left( e^{ \frac{\gamma-\beta}{\gamma(1-\beta)} bT}   \left(\frac{X_1}{X_2}\right)^{\frac{\beta}{\gamma} -1}  +  \left(A(1-\beta)\frac{ e^{bT}-1}{b} \right)^{ \frac{\gamma-\beta}{\gamma(1-\beta) } } \right) \\
				 & =  a_1\gamma a_2^{-\frac{1}{\gamma}}e^{-\frac{1}{\gamma}b_5pT + \frac{\gamma-\beta}{\gamma(1-\beta)} bT } + 
				a_1\gamma a_2^{-\frac{1}{\gamma}} e^{-\frac{1}{\gamma}b_5pT}
				\left(A(1-\beta)\frac{ e^{bT}-1}{b} \right)^{ \frac{\gamma-\beta}{\gamma(1-\beta) } } \left(\frac{X_1}{X_2}\right)^{1- \frac{\beta}{\gamma} }  \\
			 &  	\le b_6 \left(C_f, a_1, a_2, \sigma, \epsilon, \gamma, \beta, A, T, \inf_{x>0} \frac{\tilde{u}_2' (x)}{\tilde{u}_2(x)} \right) + A \left( \frac{\tilde{u}_2' }{\tilde{u}_2} -\beta \right) \left(\frac{X_2}{X_1}\right)^{1-\beta},
			\end{aligned}
		\end{equation}
		where $p =\frac{\gamma +1 -2\beta}{1-\beta}$, and  the third inequality above follows by  using
		\begin{equation*}
			\begin{aligned}
				&\left(  e^{bT}\left(\frac{X_1}{X_2}\right)^{\beta-1} + A(1-\beta)\frac{ e^{bT}-1}{b} \right)^{ \frac{\gamma-\beta}{\gamma(1-\beta) } } 
				\le e^{-\frac{bT}{\gamma}} \left(\frac{X_1}{X_2}\right)^{ \frac{\beta}{\gamma}-1 }  +  \left(A(1-\beta)\frac{ e^{bT}-1}{b} \right)^{ \frac{\gamma-\beta}{\gamma(1-\beta) } },
			\end{aligned}
		\end{equation*}
		due to $\frac{\gamma-\beta}{\gamma(1-\beta) } \le 1 $ and inequality $(x_1+ x_2)^{\theta} \le x_1^{\theta} + x_2^{\theta}$  for any two positive numbers $x_1$ and $x_2$, provided that $0\le {\theta} \le 1$. We also use the fact  $0 < 1-\frac{\beta}{\gamma} < 1-\beta$, and hence, there exists a  positive constant $b_6$ such that the last inequality in Equation \eqref{eq: estimate for term j(psi)} holds.
		It then follows from Equations  \eqref{eq:rewrite for dG(s)}, \eqref{eq: estimate for term j(psi)} and  conditions  \eqref{Conditons on tilde u_2} on $\tilde{u}_2$ that
		\begin{equation*}
			dG(s)\le \tilde{a}G(s)ds + e^{\int_{t}^{s}A\beta(X_2(\tau))^{1-\beta}(X_1(\tau))^{\beta-1}d\tau}  \tilde{u}_2'\left(\frac{X_2(s)}{X_1(s)}\right)\left(\frac{X_2}{X_1}\right) ({{\sigma}}dB_2 - \epsilon dB_1 ),
		\end{equation*}
		where the constant $\tilde{a}= b_6 +  C_f \gamma +  \frac{\tilde{a}_5}{2}({\sigma}^2+\epsilon^2) $.
		Using     $\mathbb{E}[G(t)]  \le {a}_3 \left(\frac{N}{K} \right) ^{\gamma}$,
		we deduce that 
		\begin{equation*}
			\begin{aligned}
				&\mathbb{E}[G(s)]-a_3\left(\frac{N}{K}\right)^{\gamma} 
				\le \mathbb{E}[G(s)-G(t)]\\
				\le & \mathbb{E}\left[\int_t^s \tilde{a}G(\tau)d\tau + \int_{t}^{s}e^{\int_{t}^{\tau}A\beta\left(\frac{X_2}{X_1}(\tau)\right)^{1-\beta}d\tau}  \tilde{u}_2'\left(\frac{X_2(\tau)}{X_1(\tau)}\right)\left(\frac{X_2}{X_1}\right) ({{\sigma}}dB_2 - \epsilon dB_1 )\right]
				=\mathbb{E}\left[\int_t^s \tilde{a}G(\tau)d\tau \right].
			\end{aligned}
		\end{equation*}
		Using the Gr\"onwall  inequality, we find that
		$
		\mathbb{E}[G(s)] \le a_3e^{\tilde{a}(s-t)}\left(\frac{N}{K}\right)^{\gamma}.
		$
		In particular, 
		\begin{align*}
			&\lambda(t, K, N) = \mathbb{E}[G(T)]\le a_3e^{\tilde{a}(T-t)}\left(\frac{N}{K}\right)^{\gamma} \le  	C_6\left(\frac{N}{K}\right)^{\gamma}.
		\end{align*}	
	\end{proof}
	
	\section{Energy estimates for the solution to  linearized problem \eqref{eq: differential equation for $lambda$} in weighted Sobolev spaces}\label{sec:energy estimate}
	
	In  this and the next section, we shall show that  the solution map $\Gamma:\psi \mapsto \lambda$ generated by solving the linearized problem  \eqref{eq: differential equation for $lambda$} has a fixed point in  the set $S$ defined in \eqref{def:definition for the growth rate set} based on the pointwise estimates in Section \ref{section: pointwise estimate}.  
	This section will focus on the energy estimates for the linearized problem in a suitably chosen weighted Sobolev spaces, which will be applied in the next section to extend the existence result in Section \ref{sec:paraboliceq} to the case with a merely measurable input $\psi$.
	Moreover, we shall utilize a compactness result based on the energy estimates to obtain a fixed point and then show the uniqueness and regularity of the solution to the nonlinear problem \eqref{eq: differential equation for original $lambda$} in the next section. In this and the next section, we use $C$ to denote some universal constant that may differ in different lines.
	
	Inspired by the upper bounds we obtained for the linearized problem \eqref{eq: differential equation for $lambda$}, we choose the weight 
	\begin{equation}\label{eq: the choice of the weight}
		\varphi =\frac{K^{6}}{K^{10} +1}\frac{N^{2}}{N^{10} +1},
	\end{equation}
	so  that all the following energy estimates remain finite.
	First of all, let us define another function space over the spatial domain for later use,
	\begin{equation}\label{functional space {H}^2_varphi}
		Y_2:=\left\{{\psi \in Y_1  \left| \left|\left|\frac{\partial^2 \psi}{\partial K^2}\right|\right|^2_{L^2_{2,K,\varphi}},\right.  ~ \left|\left|\frac{\partial^2 \psi}{\partial K^2}\right|\right|^2_{L^2_{2,N,\varphi}}, ~ \left|\left|\frac{\partial^2 \psi}{\partial K\partial N}\right|\right|^2_{L^2_{1,K,1,N,\varphi}}<\infty}\right\},
	\end{equation}
	equipped with the norm
	\begin{equation*}
		\begin{aligned}
			||\psi||_{Y_2}^2:=||\psi||^2_{Y_1}
			+ \left|\left|\frac{\partial^2 \psi}{\partial K^2}\right|\right|^2_{L^2_{2,K,\varphi}} +\left|\left|\frac{\partial^2 \psi}{\partial K^2}\right|\right|^2_{L^2_{2,N,\varphi}}+\left|\left|\frac{\partial^2 \psi}{\partial K\partial N}\right|\right|^2_{L^2_{1,K,1,N,\varphi}}.
		\end{aligned}
	\end{equation*}
	We also define its spatial-time variants
	{\color{black}
		\begin{equation}\label{functional space mathcal{H}^2_varphi}
			\mathcal{H}^2_\varphi :=  \left\{ \psi\in L^2([0,T];Y_2)\left| \frac{\partial \psi}{\partial t}\in L^2([0,T];Y)\right.\right\}.
		\end{equation}
	}
	\begin{rem}
		It follows from the  Sobolev embedding theorem that if $\psi(t)$ belongs to $Y_2$ for some $t$, then $\psi(t)$ $\in$ $\cap_{\delta\in(0,1)}C^{\delta}_{loc}(\mathbb{R}_+^2)$, where $C^{\delta}_{loc}(\mathbb{R}_+^2)$ is the local H\"older space with the exponent $\delta$.
	\end{rem}
	The  main result in this section is:
	\begin{thm}\label{thm: A priori estimates}
		Let $\psi$ be a smooth input function satisfying the pointwise bound in Theorem \ref{thm:existence when psi is smooth},
		 then there exists a classical solution $\lambda$ to Equation \eqref{eq: differential equation for $lambda$} such that  this $\lambda$ belongs to $\mathcal{H}^1_\varphi $, and it is also a weak solution to Equation \eqref{eq: differential equation for $lambda$}
		in the sense of Definition \ref{def: definiton of the weak solution of the linearized problem}. Furthermore, $\lambda \in \mathcal{H}^2_\varphi $, precisely, we have 
		\begin{equation}\label{eq: energy estimates}
			\sup_{0\le t\le T} ||\lambda(t)||_{Y_1}^2  + ||\lambda||_{L^2([0,T];Y_2)} + ||\lambda_t||_{L^2([0,T];Y)}  \le C\left|\left|u'_2\left(\frac{K}{N}\right)\right|\right|_{Y_1}^2 + \bar{C},
		\end{equation}
		for some constant $C$ and $\bar{C}$ that depend on $a_1$, $a_2$, $a_3$, $ {a}_4$, $a_5$ $A, C_f, \beta, \gamma, \epsilon, {\sigma}$ and $T$. Moreover,  $\lambda$ satisfies the pointwise estimates in Theorems
		\ref{thm:crucial estimate theorem} and \ref{thm:crucial estimate theorem for case beta<gamma}, that is $\lambda \in S$ for $S$ defined in  \eqref{def:definition for the growth rate set}.  
	\end{thm}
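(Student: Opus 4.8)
The plan is to upgrade the classical solution already furnished by Theorem~\ref{thm:existence when psi is smooth} into the weighted Sobolev scale by a chain of weighted energy identities, each justified on the unbounded, degenerate domain $\mathbb{R}_+^2$ through a cut-off argument. By Theorem~\ref{thm:existence when psi is smooth} there is a classical $\lambda\in C^{1+\frac{\alpha}{2},2+\alpha}_{loc}([0,T]\times\mathbb{R}_+^2)$ with $|\lambda(t,K,N)|\le B((N/K)^{\beta_1}+(N/K)^{\gamma_1})$, $0<\beta_1,\gamma_1<1$; by Theorems~\ref{thm:crucial estimate theorem}--\ref{thm:crucial estimate theorem for case beta<gamma} this $\lambda$ obeys the refined pointwise bounds, so $\lambda(t)\in S$ for all $t$. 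Applying interior Schauder estimates to the (uniformly parabolic, exponentially-growing-coefficient) log-transformed equation~\eqref{eq: equation for tilde lamba} converts these pointwise bounds on $\lambda$ into pointwise bounds on $\lambda_K,\lambda_N$ of the form $\lesssim K^{-1}(N/K)^{\mu}$ and $N^{-1}(N/K)^{\mu}$ with $\mu<2$. Since $\varphi$ in~\eqref{eq: the choice of the weight} and $\tilde\varphi$ in~\eqref{eq:def of tildevarphi} decay polynomially both as $K,N\to0^+$ and as $K,N\to\infty$ --- the exponents $6,10$ (in $K$) and $2,10$ (in $N$) being tuned exactly so that $\int_{\mathbb{R}_+^2}(N/K)^m\varphi\,dKdN<\infty$ on a wide range of $m$ --- these pointwise bounds, together with the bound on $u'_2(K/N)$ from Assumption~\ref{Assum: terminal utility function $u_2$}, make every weighted $L^2$-norm entering $Y_1$, $Y_2$ and the terminal-data norm finite, of sizes depending only on the listed parameters; these a priori finite quantities are the origin of the additive constant $\bar C$ in~\eqref{eq: energy estimates}.

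For the first (energy) identity, fix a smooth cut-off $\zeta_R$ equal to $1$ on $\{R^{-1}<K,N<R\}$ with $\zeta_R\uparrow1$, and test~\eqref{eq: differential equation for $lambda$} with $\lambda\zeta_R^2\varphi$. Integration by parts on the diffusion terms produces the dissipative quantities $\frac{1}{2}\epsilon^2\|\zeta_R\lambda_K\|_{L^2_{1,K,\varphi}}^2$ and $\frac{1}{2}\sigma^2\|\zeta_R\lambda_N\|_{L^2_{1,N,\varphi}}^2$, together with commutator terms involving $(K^2\varphi)_K$, $(N^2\varphi)_N$ and $\zeta_R'$; using $|(K^2\varphi)_K|\lesssim K\varphi$, $|(N^2\varphi)_N|\lesssim N\varphi$ (direct from~\eqref{eq: the choice of the weight}) and Young's inequality the first two are absorbed into the dissipative terms plus $C\|\lambda\|_{L^2_\varphi}^2$, while the $\zeta_R'$ terms tend to $0$ as $R\to\infty$ by dominated convergence, using the pointwise control of $\lambda,\lambda_K,\lambda_N$ and the decay of $\varphi$. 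The transport terms $(AN^{1-\beta}K^\beta-Nj(\psi)+\epsilon^2K)\lambda_K$ and $f(N)\lambda_N$ paired with $\lambda\varphi$, and the zeroth-order term $A\beta N^{1-\beta}K^{\beta-1}\lambda^2\varphi$, are handled by Young's inequality once one observes that the lower bound on $\psi$ forces $Nj(\psi)\le a_1N\psi^{-1/\gamma}\lesssim K$ (when $\beta\ge\gamma$) or $\lesssim K+N^{1-\beta/\gamma}K^{\beta/\gamma}$ (when $\beta<\gamma$), and that $AN^{1-\beta}K^\beta$, $f(N)$ and $N^{1-\beta}K^{\beta-1}$, divided by the appropriate power-weight, are each pointwise $\lesssim1+\tilde\varphi/\varphi$; the extra $\tilde\varphi$-mass they create is controlled by $\sup_t\|\lambda(t)\|_{L^2_{\tilde\varphi}}^2\le\bar C$ from the preceding step. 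Collecting terms, letting $R\to\infty$, and applying Grönwall's inequality on $[t,T]$ (the condition being terminal) gives
\begin{equation*}
\sup_{0\le t\le T}\|\lambda(t)\|_{L^2_\varphi}^2+\int_0^T\left(\|\lambda_K\|_{L^2_{1,K,\varphi}}^2+\|\lambda_N\|_{L^2_{1,N,\varphi}}^2\right)dt\le C\|u'_2(K/N)\|_{Y_1}^2+\bar C,
\end{equation*}
so, together with the $L^2_{\tilde\varphi}$ bound, $\lambda\in L^2([0,T];Y_1)$. Testing~\eqref{eq: differential equation for $lambda$} against $\phi\in C^\infty_c(\mathbb{R}_+^2)$ (no boundary terms), using the density of $C^\infty_c(\mathbb{R}_+^2)$ in $Y_1$ and the continuity estimate $|B[\lambda(t),\phi;t]|\le C\|\lambda(t)\|_{Y_1}\|\phi\|_{Y_1}$ noted after~\eqref{eq:bilinear form}, shows that $\lambda$ is a weak solution; the same estimate yields $\|\lambda_t(t)\|_{Y_0}\le C\|\lambda(t)\|_{Y_1}$, hence $\lambda_t\in L^2([0,T];Y_0)$ and $\lambda\in\mathcal{H}^1_\varphi$.

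For the second-order bound, I would differentiate~\eqref{eq: differential equation for $lambda$} in $K$ and in $N$ and test against $\lambda_K\zeta_R^2K^2\varphi$ and $\lambda_N\zeta_R^2N^2\varphi$; integration by parts now yields the dissipation $\|\lambda_{KK}\|_{L^2_{2,K,\varphi}}^2$, $\|\lambda_{KN}\|_{L^2_{1,K,1,N,\varphi}}^2$, $\|\lambda_{NN}\|_{L^2_{2,N,\varphi}}^2$ against commutators with the differentiated coefficients, the worst of which, $\partial_K(Nj(\psi))=Nj'(\psi)\psi_K$ and $\partial_N(Nj(\psi))$, are controlled through~\eqref{eq:another condition on j(x)}, the lower bound on $\psi$, and the smoothness of $\psi$. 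Together with the terminal data $\lambda_K(T),\lambda_N(T)$, whose weighted second-order norms are finite because Assumption~\ref{Assum: terminal utility function $u_2$}(3)--(4) bound $u''_2$ and $u'''_2$ in terms of $u'_2$, a further Grönwall estimate gives $\sup_t\|\lambda(t)\|_{Y_1}^2+\|\lambda\|_{L^2([0,T];Y_2)}^2\le C\|u'_2(K/N)\|_{Y_1}^2+\bar C$. Solving~\eqref{eq: differential equation for $lambda$} pointwise for $\lambda_t$ and estimating each coefficient against $\varphi+\tilde\varphi$ then gives $\|\lambda_t\|_{L^2([0,T];Y)}^2\le C\|u'_2(K/N)\|_{Y_1}^2+\bar C$, completing~\eqref{eq: energy estimates}; the assertion $\lambda\in S$ is exactly Theorems~\ref{thm:crucial estimate theorem}--\ref{thm:crucial estimate theorem for case beta<gamma}.

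The main obstacle is precisely the transport and zeroth-order terms carrying \emph{mismatched} powers of $N/K$: the Cobb--Douglas coefficient $AN^{1-\beta}K^\beta$ and, in the case $\beta<\gamma$, the singular coefficient $Nj(\psi)$ produce powers of $N/K$ that the plain weight $\varphi$ cannot absorb. This is exactly why the auxiliary weight $\tilde\varphi$, and hence the spaces $Y$, $Y_1$, $Y_2$, $\mathcal{H}^1_\varphi$, $\mathcal{H}^2_\varphi$, are defined as they are; the technical heart of the argument is the bookkeeping verification that, after dividing by the relevant power-weight, each of these terms is pointwise $\lesssim1+\tilde\varphi/\varphi$ with a margin large enough that the $\epsilon^2$- and $\sigma^2$-dissipation can swallow the resulting cross terms via Young's inequality.
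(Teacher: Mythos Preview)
Your first-order energy argument is essentially the paper's (Theorem~\ref{thm:L^2([0,T];Y_1) estimate} and its proof in the Appendix): test \eqref{eq: differential equation for $lambda$} against $\lambda\xi_m^2\varphi$ with a cutoff, absorb the Cobb--Douglas and $j(\psi)$ transport terms via Young's inequality into $\|\lambda\|_{L^2_{\tilde\varphi}}^2$, which is already finite by the pointwise bounds of Theorems~\ref{thm:crucial estimate theorem}--\ref{thm:crucial estimate theorem for case beta<gamma} (this is Lemma~\ref{lem: { L^2_varphi} estimate}), and pass to the limit. One difference worth noting: the paper does not invoke pointwise Schauder bounds on $\lambda_K,\lambda_N$ to dispose of the cutoff residuals. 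It instead chooses scale-invariant cutoffs with $K|\partial_K\xi_m|+N|\partial_N\xi_m|\le 20$, so the commutator terms in \eqref{eq:estimate for second order term1}--\eqref{eq:estimate for second order term2} are dominated by $\tilde\alpha\|\xi_m\lambda_K\|_{L^2_{1,K,\varphi}}^2+C(\tilde\alpha)\|\lambda\|_{L^2_\varphi}^2$ and absorbed directly. Your route via interior Schauder in log-coordinates could in principle work, but you would have to track how the Schauder constant depends on the exponential size of the coefficients on each unit ball in order to recover a global power law for the derivatives; the paper's choice of cutoff sidesteps this entirely.

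There is, however, a genuine gap in your second-order step. Differentiating \eqref{eq: differential equation for $lambda$} in $K$ produces the new coefficient $\partial_K\bigl(Nj(\psi)\bigr)=Nj'(\psi)\psi_K$, and you propose to control it ``through \ldots\ the smoothness of $\psi$''. But the theorem asserts that $C$ and $\bar C$ depend only on $a_1,\dots,a_5,A,C_f,\beta,\gamma,\epsilon,\sigma,T$ --- \emph{not} on $\psi$ or its derivatives --- and this uniformity is precisely what the subsequent fixed-point step (Lemma~\ref{lem:extension lemma}) needs to pass from smooth $\psi$ to arbitrary $\psi\in S$. Your bound, as written, would carry a factor of $\|\psi_K\|_{L^\infty}$ or the like, which is not uniform over smooth $\psi$ satisfying only the pointwise lower bound defining $S$. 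A route that avoids this is to test the \emph{undifferentiated} equation against $\lambda_t\varphi$ (with cutoff): integration by parts converts the diffusion terms into $-\tfrac12\tfrac{d}{dt}\bigl(\epsilon^2\|\lambda_K\|_{L^2_{1,K,\varphi}}^2+\sigma^2\|\lambda_N\|_{L^2_{1,N,\varphi}}^2\bigr)$, while the transport contributions $Nj(\psi)\lambda_K\lambda_t$, $AN^{1-\beta}K^\beta\lambda_K\lambda_t$, $f(N)\lambda_N\lambda_t$ are handled by Young's inequality using only $Nj(\psi)\lesssim K+K(N/K)^{1-\beta}$, i.e.\ only the lower bound on $\psi$ and no derivatives of it. This yields $\sup_t\|\lambda(t)\|_{Y_1}^2+\|\lambda_t\|_{L^2([0,T];Y)}^2$ with $\psi$-independent constants; the remaining second spatial derivatives in $\|\lambda\|_{L^2([0,T];Y_2)}$ can then be read off from the equation itself, again without touching $\psi_K$ or $\psi_N$.
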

	\begin{proof}
		When $\psi \in  {C^{\infty}([0,T]\times\mathbb{R}^2_+)}$ and satisfies the pointwise bound in Theorem \ref{thm:existence when psi is smooth}, we already have a classical solution to 
		\eqref{eq: differential equation for $lambda$} by Theorem \ref{thm:existence when psi is smooth}, and   due to Theorems
		\ref{thm:crucial estimate theorem} and \ref{thm:crucial estimate theorem for case beta<gamma}, we have $\lambda \in S$. If we can  establish 
		\eqref{eq: energy estimates}, then it follows that $\lambda$ is a weak solution in the sense of Definition \ref{def: definiton of the weak solution of the linearized problem} by a density argument, as mentioned in Section \ref{sec:Solving the HJB}, the estimates \eqref{eq: energy estimates} will be shown in the next two sections.
	\end{proof}
	\begin{rem}\label{rem: only need lower bound of psi}
		 We only require  $\psi$ in Theorem \ref{thm: A priori estimates} to satisfy the lower bound condition appeared in the definition of the set $S$, as   the singular behavior of $j(\psi)$, and hence, the upper bounds of the solution $\lambda$ in Theorems 
		\ref{thm:crucial estimate theorem} and 
		\ref{thm:crucial estimate theorem for case beta<gamma} are only determined by the lower bounds of $\psi$.
	\end{rem}
	We shall give the weighted energy estimates in $\mathcal{H}^1_\varphi$ and their proofs; for precise statements, see Theorems \ref{thm:L^2([0,T];Y_1) estimate} and \ref{thm:Y_0 estimate}.   The  weighted energy estimates in $	\mathcal{H}^2_\varphi$ are similar and will be omitted. Let us begin with some preliminary results first. 
	
	\begin{lem}[${ L^2_\varphi}$ and ${ L^2_{\tilde{\varphi}}}$-estimates for $\lambda$]\label{lem: { L^2_varphi} estimate} 
		Under the assumption of Theorem \ref{thm: A priori estimates}, there exists a constant $C_7$ depending on $C_1,C_2, C_6, \beta$ and $\gamma$,  such that 
		$\sup_{0\le t\le T} ||\lambda(t)||_{L^2_{\varphi}}$, $\sup_{0\le t\le T} ||\lambda(t)||_{L^2_{\tilde{\varphi}}}$ $\le$ $C_7$ and $\int_{0}^{T}\int_{\mathbb{R}_+^2 }{\lambda}^2\varphi dKdNdt$,  $\int_{0}^{T}\int_{\mathbb{R}_+^2 }{\lambda}^2\tilde{\varphi} dKdNdt\le C_7^2T$.
	\end{lem}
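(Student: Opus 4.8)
The plan is to obtain both bounds directly from the pointwise upper estimates for $\lambda$ already proved in Theorems \ref{thm:crucial estimate theorem} and \ref{thm:crucial estimate theorem for case beta<gamma}, together with the explicit, numerically fixed exponents of the weight $\varphi$ in \eqref{eq: the choice of the weight} and of $\tilde\varphi$ in \eqref{eq:def of tildevarphi}. Since $\lambda>0$, in the case $\beta\ge\gamma$ we have $0<\lambda(t,K,N)\le C_1(N/K)^{\gamma}+C_2(N/K)^{\beta}$, which by $(x+y)^2\le 2x^2+2y^2$ yields the $t$-uniform pointwise bound
\begin{equation*}
	\lambda(t,K,N)^2\le 2C_1^2\left(\frac{N}{K}\right)^{2\gamma}+2C_2^2\left(\frac{N}{K}\right)^{2\beta},\qquad t\in[0,T],
\end{equation*}
while in the case $\beta<\gamma$ one gets the simpler $\lambda^2\le C_6^2(N/K)^{2\gamma}$. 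Consequently it suffices to show that, for $\mu\in\{\beta,\gamma\}$, the integrals $\int_{\mathbb{R}_+^2}(N/K)^{2\mu}\varphi\,dKdN$ and $\int_{\mathbb{R}_+^2}(N/K)^{2\mu}\tilde\varphi\,dKdN$ are finite with values depending only on $\beta$ and $\gamma$; one may then take $C_7$ to be the square root of the corresponding linear combination of these integrals weighted by $C_1^2$, $C_2^2$, $C_6^2$.

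The finiteness is a routine check at the four ends of $\mathbb{R}_+^2$. Because $\varphi$ factors as a function of $K$ times a function of $N$, each such integral splits into a product of one-variable integrals; for instance
\begin{equation*}
	\int_{\mathbb{R}_+^2}\left(\frac{N}{K}\right)^{2\mu}\varphi\,dKdN=\left(\int_0^\infty\frac{K^{6-2\mu}}{K^{10}+1}\,dK\right)\left(\int_0^\infty\frac{N^{2+2\mu}}{N^{10}+1}\,dN\right).
\end{equation*}
Since $0<\mu<1$, the first integrand behaves like $K^{6-2\mu}$ with exponent $>4>-1$ as $K\to0^+$ and like $K^{-4-2\mu}$ with exponent $<-4<-1$ as $K\to\infty$, while the second behaves like $N^{2+2\mu}$ as $N\to0^+$ and like $N^{2\mu-8}$ with exponent $<-6<-1$ as $N\to\infty$, so the product is finite. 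For $\tilde\varphi$ one multiplies $\varphi$ by the monomials from \eqref{eq:def of tildevarphi} --- $N^{4-4\beta}K^{4\beta-4}$ together with $N^{2+2\beta}K^{-2-2\beta}$ when $\beta\ge\gamma$, or $N^{4-4\beta}K^{4\beta-4}$ together with $N^{2+2\gamma}K^{-2-2\gamma}$ when $\beta<\gamma$ --- and carries out the same factorisation; using only $0<\beta,\gamma<1$ one verifies case by case that every shifted exponent remains strictly above $-1$ at the relevant origin and strictly below $-1$ at the relevant infinity in each variable, so all these integrals converge, again with values determined solely by $\beta,\gamma$.

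With $C_7$ chosen as above we obtain $\sup_{0\le t\le T}\|\lambda(t)\|_{L^2_\varphi}\le C_7$ and $\sup_{0\le t\le T}\|\lambda(t)\|_{L^2_{\tilde\varphi}}\le C_7$, and the time-integrated statements then follow immediately, since
\begin{equation*}
	\int_0^T\!\!\int_{\mathbb{R}_+^2}\lambda^2\varphi\,dKdN\,dt\le T\sup_{0\le t\le T}\|\lambda(t)\|_{L^2_\varphi}^2\le C_7^2T,
\end{equation*}
and likewise with $\tilde\varphi$ in place of $\varphi$. The only step demanding any care is the exponent bookkeeping for the $\tilde\varphi$-integrals, but there is no genuine analytic obstacle here: the weights in \eqref{eq: the choice of the weight}--\eqref{eq:def of tildevarphi} were designed precisely so that these integrals converge for every admissible pair $\beta,\gamma\in(0,1)$.
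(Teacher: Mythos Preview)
Your proposal is correct and follows essentially the same approach as the paper's own proof: both use the pointwise upper bounds for $\lambda$ from Theorems \ref{thm:crucial estimate theorem} and \ref{thm:crucial estimate theorem for case beta<gamma}, insert them into the explicit weights $\varphi$ and $\tilde\varphi$, and verify integrability of the resulting monomial-times-weight terms. You give a bit more detail on the factorisation and the exponent bookkeeping, whereas the paper simply writes out the expanded integrands and asserts that they are integrable; otherwise the arguments coincide.
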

	\begin{proof}
		For $\beta\ge \gamma$,	it follows from Theorem \ref{thm:crucial estimate theorem}, $\varphi =\frac{K^{6}}{K^{10} +1}\frac{N^{2}}{N^{10} +1}$ and ${\eqref{eq:def of tildevarphi}}_1$, i.e.,  $\tilde{\varphi} = (N^{4-4\beta}K^{4\beta-4}  + N^{2+2\beta}K^{-2-2\beta})\varphi$, we have 
		\begin{equation*}
			\begin{aligned}
				{\lambda}^2 \varphi &\le 2\left(C_1^2\frac{N^{2\gamma}}{K^{2\gamma}}+ C_2^2\frac{N^{2\beta}}{K^{2\beta}}\right)\cdot\frac{K^{6}}{K^{10} +1}\frac{N^{2}}{N^{10} +1} \\
				& = 2C_1^2\frac{K^{6-2\gamma}}{K^{10}+1}\frac{N^{2+2\gamma}}{N^{10}+1} + 2C_2^2\frac{K^{6-2\beta}}{K^{10}+1}\frac{N^{2+2\beta}}{N^{10}+1}.
			\end{aligned}
		\end{equation*}
		\begin{equation*}
			\begin{aligned}
				{\lambda}^2 \tilde{\varphi} &\le 2\left(C_1^2\frac{N^{2\gamma}}{K^{2\gamma}}+ C_2^2\frac{N^{2\beta}}{K^{2\beta}}\right)\cdot\left(\frac{N^{4-4\beta}}{K^{4-4\beta}} + \frac{N^{2 + 2\beta}}{K^{2 + 2\beta}} \right) 
				\cdot\frac{K^{6}}{K^{10}+1}\frac{N^{2}}{N^{10}+1} \\
				& = 2C_1^2\frac{K^{2+4\beta-2\gamma}}{K^{10}+1}\frac{N^{6-4\beta+2\gamma}}{N^{10}+1} + 2C_1^2\frac{K^{4-2\gamma - 2\beta}}{K^{10}+1}\frac{N^{4+2\beta+2\gamma}}{N^{10}+1}\\
				&\quad+ 2C_2^2\frac{K^{2+2\beta}}{K^{10}+1}\frac{N^{6-2\beta}}{N^{10}+1}+ 2C_2^2\frac{K^{4 - 4\beta}}{K^{10}+1}\frac{N^{4+4\beta}}{N^{10}+1}.
			\end{aligned}
		\end{equation*}
		For $\beta <\gamma$, 	 we see from Theorem \ref{thm:crucial estimate theorem for case beta<gamma}, $\varphi =\frac{K^{6}}{K^{10} +1}\frac{N^{2}}{N^{10} +1}$ and ${\eqref{eq:def of tildevarphi}}_2$, i.e.,
		$\tilde{\varphi} = (N^{4-4\beta}K^{4\beta-4}  + N^{2+2\gamma}K^{-2-2\gamma})\varphi$ that 
		\begin{equation*}
			\begin{aligned}
				{\lambda}^2 \varphi \le 2C_6^2\frac{K^{6-2\gamma}}{K^{10}+1}\frac{N^{2+2\gamma}}{N^{10}+1}, ~
				{\lambda}^2 \tilde{\varphi} \le C_6^2\frac{K^{2+4\beta-2\gamma}}{K^{10}+1}\frac{N^{6-4\beta+2\gamma}}{N^{10}+1} + C_6^2\frac{K^{4 - 4\gamma}}{K^{10}+1}\frac{N^{4+ 4\gamma}}{N^{10}+1}.
			\end{aligned}
		\end{equation*}
		We can easily verify that all the terms on the right-hand side are integrable on $\mathbb{R}^2_+$, then 
		there exists a constant $C_7=C_7(C_1, C_2, C_6, \beta, \gamma)$ such that the statement holds.
	\end{proof}

	\begin{thm}[$C( 0,T; Y)$ and $L^2(0,T;Y_1)$ estimates for $\lambda$]\label{thm:L^2([0,T];Y_1) estimate}
		Under the same
		assumption of Theorem \ref{thm: A priori estimates},  there exists a constant $\bar{C}$ depending on
		$a_1, a_2$, $a_3, a_4$, $a_5, A$, $C_f,\beta$,$\gamma, \epsilon,{\sigma}$, and $T$, such that 
		\begin{equation}\label{eq:L^2([0,T];Y_1) estimate} 
			\begin{aligned}
				\sup_{0\le t\le T} ||\lambda(t)||_{Y}^2 + \int_{0}^{T}||\lambda||_{Y_1}^2dt \le & \frac{4}{\min{(\epsilon^2,{\sigma}^2)}}\left|\left|u'_2\left(\frac{K}{N}\right)\right|\right|_{L^2_{\varphi}}^2  + \bar{C}(a_1, a_2, a_3, a_4, a_5, A, C_f,\beta,\gamma, \epsilon,{\sigma}, T).
			\end{aligned}
		\end{equation}
	\end{thm}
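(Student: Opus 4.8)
The plan is to perform the standard energy-estimate procedure adapted to the weighted Sobolev framework: multiply the equation for $\lambda$ by an appropriate test function weighted by $\varphi$, integrate over $\mathbb{R}^2_+$, integrate by parts to move one spatial derivative onto the test function, and then control all the resulting terms using the weights $K^2\varphi$, $N^2\varphi$ together with the pointwise bounds on $\lambda$ from Lemma \ref{lem: { L^2_varphi} estimate} and Theorems \ref{thm:crucial estimate theorem}, \ref{thm:crucial estimate theorem for case beta<gamma}. Concretely, I would test \eqref{eq: differential equation for $lambda$} against $\lambda$ itself (to get the $C(0,T;L^2_\varphi)$ and $L^2(0,T;H^1_\varphi)$ bounds), handling the backward-in-time structure by running Gr\"onwall from $t=T$ where the terminal data $u_2'(K/N)$ is controlled. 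The $\frac{1}{2}\epsilon^2 K^2 \lambda_{KK}$ term integrated against $\lambda\varphi$ yields, after integration by parts, the coercive term $-\frac{1}{2}\epsilon^2\|\lambda_K\|^2_{L^2_{1,K,\varphi}}$ plus lower-order commutator terms involving $\partial_K(K^2\varphi)$; similarly for the $N$-Laplacian term, giving coercivity with constant $\min(\epsilon^2,\sigma^2)/2$, which explains the $\frac{4}{\min(\epsilon^2,\sigma^2)}$ prefactor in \eqref{eq:L^2([0,T];Y_1) estimate}.

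The key steps, in order, would be: (1) justify the integration-by-parts / testing procedure rigorously — since $\lambda$ is classical and satisfies the pointwise bounds, all boundary terms at $K,N\to 0^+$ and $\to\infty$ vanish because $\varphi$ decays like $K^{-4}N^{-8}$ at infinity and like $K^6N^2$ at the origin, dominating the polynomial growth of $\lambda$ and its derivatives; (2) identify the good (coercive) terms coming from the second-order operators after integration by parts; (3) bound the first-order transport terms $-Nj(\psi)\lambda_K$, $AN^{1-\beta}K^\beta\lambda_K$, $\epsilon^2 K\lambda_K$, and $f(N)\lambda_N$ against $\lambda\varphi$ using Cauchy–Schwarz with a small parameter, absorbing the $\lambda_K$, $\lambda_N$ factors into the coercive terms and leaving $\|\lambda\|^2$ terms weighted by $\varphi$ or $\tilde\varphi$ — this is precisely why the space $Y = L^2_{\varphi+\tilde\varphi}$ and the weight $\tilde\varphi$ in \eqref{eq:def of tildevarphi} were introduced, to accommodate the mismatched exponent $K^\beta N^{1-\beta}$; (4) bound the zeroth-order term $A\beta N^{1-\beta}K^{\beta-1}\lambda$ against $\lambda\varphi$ similarly; (5) use $\frac{d}{dt}\|\lambda(t)\|^2_{L^2_\varphi} = 2\langle\lambda_t,\lambda\rangle$ and assemble a differential inequality $-\frac{d}{dt}\|\lambda(t)\|^2_{L^2_\varphi} + c\|\lambda(t)\|^2_{Y_1} \le C\|\lambda(t)\|^2_{Y} \le CC_7^2$ (the last step using Lemma \ref{lem: { L^2_varphi} estimate}), then integrate from $t$ to $T$ and apply Gr\"onwall to get both the supremum bound and the $L^2(0,T;Y_1)$ bound.

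The main obstacle I anticipate is step (3): controlling the transport term $AN^{1-\beta}K^\beta \lambda_K$ tested against $\lambda\varphi$. Writing this as $\int AN^{1-\beta}K^\beta \lambda_K \lambda\varphi$, one cannot simply Cauchy–Schwarz $\lambda_K$ against $K\lambda_K$ because the coefficient $N^{1-\beta}K^{\beta-1}$ is not bounded; instead one must split $AN^{1-\beta}K^\beta\lambda_K\lambda\varphi = (\epsilon K \lambda_K \sqrt\varphi)\cdot (A\epsilon^{-1} N^{1-\beta}K^{\beta-1}\lambda\sqrt\varphi)$ and absorb the first factor into the coercive $\|\lambda_K\|^2_{L^2_{1,K,\varphi}}$ term, leaving $\int A^2\epsilon^{-2} N^{2-2\beta}K^{2\beta-2}\lambda^2\varphi$, which is exactly $\lesssim \|\lambda\|^2_{L^2_{\tilde\varphi}}$ — finite by Lemma \ref{lem: { L^2_varphi} estimate} only because $\tilde\varphi$ was chosen to include the factor $N^{4-4\beta}K^{4\beta-4}$ (and one needs the pointwise upper bounds from Theorems \ref{thm:crucial estimate theorem}–\ref{thm:crucial estimate theorem for case beta<gamma} to see $\lambda^2 N^{2-2\beta}K^{2\beta-2}\varphi$ is integrable). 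A parallel but more delicate analysis is needed for the singular coefficient $-Nj(\psi)\lambda_K$: since $j(\psi)\lesssim \psi^{-1/\gamma}$ and $\psi$ has the lower bound defining $S$, one gets $Nj(\psi) \lesssim N(N/K)^{-1} = K$ when $\beta\ge\gamma$ (using $\psi\gtrsim (N/K)^\gamma$ so $\psi^{-1/\gamma}\lesssim (K/N)$), so this term behaves like $\epsilon^2 K\lambda_K$ and is harmless; the $\beta<\gamma$ case requires the more intricate lower bound on $\psi$ but reduces similarly. Keeping careful track of which constants ($C_1,\dots,C_7$) enter the final $\bar C$, and verifying integrability of every weight combination on $\mathbb{R}^2_+$, is tedious but routine once the splitting is set up correctly.
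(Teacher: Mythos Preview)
Your overall strategy matches the paper's: test the equation against $\lambda\varphi$, extract coercivity from the second-order terms, absorb the first-order transport terms via Young's inequality with small parameter (routing the $N^{1-\beta}K^{\beta-1}$ factor into the $\tilde\varphi$-weighted norm), and close with Gr\"onwall from $t=T$. Your treatment of the individual terms in steps (2)--(5) is essentially what the paper does.

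However, step (1) contains a real gap. You write that the boundary terms in the integration by parts vanish ``because $\varphi$ decays\ldots\ dominating the polynomial growth of $\lambda$ and its derivatives.'' But at this point you have \emph{no} growth control on $\lambda_K$ or $\lambda_N$: Theorems \ref{thm:crucial estimate theorem} and \ref{thm:crucial estimate theorem for case beta<gamma} bound only $\lambda$ itself, and the local H\"older regularity from Theorem \ref{thm:existence when psi is smooth} gives nothing global. In particular you do not know a priori that $\lambda(t)\in Y_1$, which is precisely the conclusion you are trying to establish, so you cannot legitimately use $\lambda$ as a test function in \eqref{eq:another writing for weak solution}, and you cannot justify the integrations by parts on $\mathbb{R}_+^2$.

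The paper handles this by introducing smooth compactly supported cutoffs $\xi_m$ with $\xi_m=1$ on $[\frac1m,m]^2$, $K|\partial_K\xi_m|+N|\partial_N\xi_m|\le C$, and testing against $\lambda\xi_m^2$; this is automatically in $Y_1$ and all boundary terms vanish trivially. The commutator terms produced when derivatives hit $\xi_m$ are controlled by $\|\lambda\|_{L^2_\varphi}$ (not $\|\lambda\xi_m\|_{L^2_\varphi}$), which is already bounded by Lemma \ref{lem: { L^2_varphi} estimate}. One then passes $m\to\infty$ by monotone convergence to obtain the $L^2(0,T;Y_1)$ bound. Your argument becomes rigorous once this localization step is inserted; without it the testing procedure is circular.
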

	For the energy estimates here, we formally  multiply \eqref{eq: differential equation for original $lambda$} by $\lambda \varphi$, then integrate on $\mathbb{R}_+^2$. All the terms can be handled in a classical manner, except for $\int_{\mathbb{R}_+^2 }N\frac{\partial \lambda}{\partial K}j(\psi) \lambda \varphi dKdN$, $\int_{\mathbb{R}_+^2 } A N^{1-\beta}K^{\beta}\frac{\partial \lambda}{\partial K} \lambda \varphi  dKdN$ and $\int_{\mathbb{R}_+^2 } A \beta N^{1-\beta}K^{\beta-1}\lambda^2 \varphi dKdN$ due to the unbounded coefficients. However, they can be controlled with the help of pointwise estimates in Theorems \ref{thm:crucial estimate theorem} and  \ref{thm:crucial estimate theorem for case beta<gamma}  and the decaying weights $\varphi$ and $\tilde{\varphi}$. Taking $\int_{\mathbb{R}_+^2 } A N^{1-\beta}K^{\beta}\frac{\partial \lambda}{\partial K} \lambda \varphi  dKdN$ as an example, we can estimate it as follows:
	\begin{equation*}
		\begin{aligned}
			&\quad	\left|\int_{\mathbb{R}_+^2 } A N^{1-\beta}K^{\beta}\frac{\partial \lambda}{\partial K}\lambda  \varphi dKdN\right| 
			\le A{\tilde{\alpha}}  \left|\left|\frac{\partial \lambda}{\partial K}(t) \right|\right|^2_{L^2_{1,K,\varphi}} + \frac{A}{8{\tilde{\alpha}}} ||\lambda||^2_{L^2_{\tilde{\varphi}}} + \frac{AC(\beta)}{8{\tilde{\alpha}}} ||\lambda ||^2_{L^2_{\varphi}},
		\end{aligned}
	\end{equation*}
	where the second term on the right-hand side is finite due to the pointwise estimates and the choice of our weights $\varphi$ and $\tilde{\varphi}$. We put the complete proof in Appendix \ref{appen:Solvability of X1 and Probabilistic representation of lambda}.

	%
	\begin{thm} [$Y_0$-estimate for $\frac{\partial\lambda}{\partial t}$]\label{thm:Y_0 estimate}
		The weak derivative $\frac{\partial\lambda}{\partial t}$ is  in $L^2( [0,T];Y_0 )$, hence, $\lambda$ is in $\mathcal{H}^1_\varphi$.
	\end{thm}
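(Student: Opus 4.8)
The plan is to establish the pointwise‑in‑time bound $\|\lambda_t(t)\|_{Y_0}\le C\|\lambda(t)\|_{Y_1}$ and then integrate it over $[0,T]$, using the $L^2([0,T];Y_1)$ control already obtained in Theorem \ref{thm:L^2([0,T];Y_1) estimate}. The starting point is the weak identity \eqref{eq:another writing for weak solution}: since $\lambda$ is the classical solution produced in Theorem \ref{thm:existence when psi is smooth}, multiplying \eqref{eq: differential equation for $lambda$} by $\phi\varphi$ with $\phi\in C^{\infty}_c(\mathbb{R}_+^2)$, integrating over $\mathbb{R}_+^2$, and integrating by parts in the two second–order terms gives $\langle\lambda_t(t),\phi\rangle_{Y_0,Y_1}=-B[\lambda(t),\phi;t]$, where $B$ is the bilinear form \eqref{eq:bilinear form}; the boundary contributions as $K,N\to0^+$ and $K,N\to\infty$ vanish because of the decay of $\varphi$ and its first derivatives together with the algebraic growth of $\lambda$ from Theorems \ref{thm:crucial estimate theorem} and \ref{thm:crucial estimate theorem for case beta<gamma}. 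Thus it suffices to prove $|B[\lambda(t),\phi;t]|\le C\|\lambda(t)\|_{Y_1}\|\phi\|_{Y_1}$ with $C$ independent of $t,\lambda,\phi$; the same bound simultaneously justifies extending the identity to all $\phi\in Y_1$ by the density of $C^{\infty}_c(\mathbb{R}_+^2)$ in $Y_1$ recorded in Section \ref{sec:Solving the HJB}.

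The core of the argument is therefore the continuity of $B[\cdot,\cdot;t]$ on $Y_1$, which I would verify term by term in \eqref{eq:bilinear form}. The benign terms — those carrying $\epsilon^2K$, $f(N)$ (using $|f(N)|\le C_f N$ from Assumption \ref{Assum:drift rate in population model}), and the genuine second–order pieces $\frac{1}{2}\epsilon^2\partial_K\lambda\,\partial_K\phi\,K^2\varphi$ and $\frac{1}{2}\sigma^2\partial_N\lambda\,\partial_N\phi\,N^2\varphi$ — are controlled directly by weighted Cauchy–Schwarz in $L^2_\varphi$, $L^2_{1,K,\varphi}$, $L^2_{1,N,\varphi}$. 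The drift–correction pieces $\partial_K\lambda\,\phi\,\partial_K(K^2\varphi)$ and $\partial_N\lambda\,\phi\,\partial_N(N^2\varphi)$ reduce to the same type once one observes that the explicit weight \eqref{eq: the choice of the weight} obeys $|K\varphi_K|\le C\varphi$, $|N\varphi_N|\le C\varphi$, hence $|\partial_K(K^2\varphi)|\le CK\varphi$ and $|\partial_N(N^2\varphi)|\le CN\varphi$. The two singular contributions are the decisive ones. For $\int_{\mathbb{R}_+^2}Nj(\psi)\,\partial_K\lambda\,\phi\,\varphi$ I would use $j(x)\le a_1x^{-1/\gamma}$ from \eqref{eq:condition on j(x)} together with the lower bound on $\psi$ assumed in Theorem \ref{thm:existence when psi is smooth}; this yields $Nj(\psi)\le CK$ when $\beta\ge\gamma$ and $Nj(\psi)\le C(K+K^{\beta/\gamma}N^{1-\beta/\gamma})$ when $\beta<\gamma$, so the term is dominated by $\|\partial_K\lambda\|_{L^2_{1,K,\varphi}}$ times a weighted $L^2$–norm of $\phi$ which is finite because of $\varphi$ and, for the extra monomial in the case $\beta<\gamma$, of $\tilde\varphi$. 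For the mismatched term $\int_{\mathbb{R}_+^2}AN^{1-\beta}K^{\beta}\,\partial_K\lambda\,\phi\,\varphi=\int_{\mathbb{R}_+^2}AK\,(N/K)^{1-\beta}\,\partial_K\lambda\,\phi\,\varphi$ and the zeroth–order term $\int_{\mathbb{R}_+^2}A\beta\,(N/K)^{1-\beta}\,\lambda\,\phi\,\varphi$, I would split by Cauchy–Schwarz and absorb the unbounded factor $(N/K)^{1-\beta}$ through the elementary inequality $(N/K)^{1-\beta}\le(N/K)^{4-4\beta}+1$ and the inclusion $N^{4-4\beta}K^{4\beta-4}\varphi\le\tilde\varphi$ built into \eqref{eq:def of tildevarphi}, together with $Y_1\hookrightarrow L^2_{\tilde\varphi}\cap L^2_\varphi$ — precisely the purpose for which $\tilde\varphi$ was inserted into the definition of $Y_1$. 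The same computation goes through in the case $\beta<\gamma$, with the lower bound of Theorem \ref{thm:crucial estimate theorem for case beta<gamma} replacing that on $\psi$ and with the second alternative of $\tilde\varphi$ in \eqref{eq:def of tildevarphi}. Summing all pieces gives $|B[\lambda(t),\phi;t]|\le C\|\lambda(t)\|_{Y_1}\|\phi\|_{Y_1}$.

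Finally, taking the supremum over $\phi\in Y_1$ with $\|\phi\|_{Y_1}\le1$ in $\langle\lambda_t(t),\phi\rangle_{Y_0,Y_1}=-B[\lambda(t),\phi;t]$ gives $\|\lambda_t(t)\|_{Y_0}\le C\|\lambda(t)\|_{Y_1}$ for a.e. $t\in[0,T]$, whence
\[
\int_0^T\|\lambda_t\|_{Y_0}^2\,dt\le C^2\int_0^T\|\lambda(t)\|_{Y_1}^2\,dt<\infty
\]
by Theorem \ref{thm:L^2([0,T];Y_1) estimate}. Thus $\lambda_t\in L^2([0,T];Y_0)$, and combined with $\lambda\in L^2([0,T];Y_1)$ this says exactly that $\lambda\in\mathcal{H}^1_\varphi$. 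The one genuine obstacle is the continuity of $B$ on $Y_1$, and inside it the two terms carrying $N^{1-\beta}K^{\beta}$ and $N^{1-\beta}K^{\beta-1}$: all the other terms are routine weighted Cauchy–Schwarz, but these mismatched–exponent terms cannot be absorbed by $\varphi$ alone and force the simultaneous use of the auxiliary weight $\tilde\varphi$ and the pointwise bounds of Section \ref{section: pointwise estimate}.
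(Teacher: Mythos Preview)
Your proposal is correct and follows essentially the same approach as the paper: both establish the bilinear bound $|B[\lambda(t),\phi;t]|\le C\|\lambda(t)\|_{Y_1}\|\phi\|_{Y_1}$ term by term---using $Nj(\psi)\lesssim K$ (resp.\ $K+K(N/K)^{1-\beta}$) from the lower bound on $\psi$ and absorbing the factor $(N/K)^{2-2\beta}$ into $\varphi+\tilde\varphi$ for the mismatched Cobb--Douglas terms---then take suprema over $\|\phi\|_{Y_1}\le1$ and integrate in time against Theorem~\ref{thm:L^2([0,T];Y_1) estimate}. Two cosmetic remarks: after Cauchy--Schwarz the exponent to control is $(N/K)^{2-2\beta}$ rather than $(N/K)^{1-\beta}$, and the continuity of $B$ here uses only the assumed lower bound on $\psi$, not the pointwise estimates on $\lambda$ from Section~\ref{section: pointwise estimate}.
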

	\begin{proof}
		We note that for the bilinear form $B[\lambda(t),\phi;t]$ defined in \eqref{eq:bilinear form} we have  $|B[\lambda(t),\phi;t]| \leq C||\lambda(t)||_{Y_1}||\phi||_{Y_1}$. Indeed, using Lemma \ref{lem: { L^2_varphi} estimate}, Theorem \ref{thm:L^2([0,T];Y_1) estimate}, and the Cauchy-Schwarz inequality, we have
		\begin{equation*}
			\begin{aligned}
				&\quad \left|\int_{\mathbb{R}_+^2 } N^{1-\beta} K^{\beta} \frac{\partial \lambda}{\partial K}\phi\varphi dKdN\right| \\
				&\le \left(\int_{\mathbb{R}_+^2 } \left(\frac{\partial \lambda}{\partial K}\right)^2 K^2\varphi dKdN\right)^{\frac{1}{2}}\left(\int_{\mathbb{R}_+^2 } N^{2-2\beta}K^{2\beta-2}\phi^2 \varphi dKdN\right)^{\frac{1}{2}}
		\leq C ||\lambda||_{Y_1}||\phi||_{Y}.
			\end{aligned}
		\end{equation*}
		\begin{equation*}
			\begin{aligned}
				\left|\int_{\mathbb{R}_+^2 } N^{1-\beta} K^{\beta-1} \lambda \phi\varphi dKdN\right| &\le \left(\int_{\mathbb{R}_+^2 }  \lambda^2 \varphi dKdN\right)^{\frac{1}{2}}\left(\int_{\mathbb{R}_+^2 } N^{2-2\beta}K^{2\beta-2}\phi^2 \varphi dKdN\right)^{\frac{1}{2}}
					\leq C ||\lambda||_{Y}||\phi||_{Y}.
			\end{aligned}
		\end{equation*}
		For $\beta \ge \gamma$, using the fact that  $j(\psi)\frac{N}{K} \le \frac{a_1}{c_0}$, we have  
		\begin{equation*}
			\begin{aligned}
				& \left|\int_{\mathbb{R}_+^2} N j(\psi)\frac{\partial \lambda}{\partial K} \phi   \varphi dKdN\right| 
				\le \frac{a_1}{c_0}\left(  \int_{\mathbb{R}_+^2}\left(\frac{\partial \lambda}{\partial K}\right)^2K^2 \varphi dK\right)^{\frac{1}{2}} \left(\int_{\mathbb{R}_+^2}\phi^2  \varphi dKdN\right)^{\frac{1}{2}}
					\leq C ||\lambda||_{Y_1}||\phi||_{Y}.
			\end{aligned}
		\end{equation*}
		While for $\beta<\gamma$, using Equation \eqref{eq:estimate for j(psi)}, we obtain 
		\begin{equation*}
			\begin{aligned}
				&\quad\left| \int_{\mathbb{R}_+^2 } N j(\psi) \frac{\partial \lambda}{\partial K} \phi \varphi dKdN  \right| 
				\le \left| \int_{\mathbb{R}_+^2 } \left(b_7+  \left(\frac{N}{K}\right)^{1-\beta}\right)K\frac{\partial \lambda}{\partial K} \phi \varphi dKdN  \right| \\ 
				& \le  b_7\left( \int_{\mathbb{R}_+^2}\left(\frac{\partial \lambda}{\partial K}\right)^2K^2 \varphi dKdN \right)^{\frac{1}{2}} \left( \int_{\mathbb{R}_+^2}\phi^2  \varphi dKdN \right)^{\frac{1}{2}}\\
				& \qquad + \left(  \int_{\mathbb{R}_+^2}\left(\frac{\partial \lambda}{\partial K}\right)^2K^2 \varphi dKdN \right)^{\frac{1}{2}} \left(\int_{\mathbb{R}_+^2}\phi^2  \left(\frac{N}{K}\right)^{2-2\beta} \varphi dKdN\right)^{\frac{1}{2}}
					\leq C ||\lambda||_{Y_1}||\phi||_{Y}.
			\end{aligned}
		\end{equation*}
		All the other terms are rather standard and can be estimated directly. Hence, we can obtain from \eqref{eq:another writing for weak solution} that 
		$$
		\left|\langle \lambda_t, \phi \rangle \right|=\left| B[\lambda(t),\phi;t] \right|\le C ||\lambda(t)||_{Y_1}||\phi||_{Y_1}$$
		for some  $C$ depending on the given parameters. 
		Then, for a.e. $t$,
		\begin{equation*}
			||\lambda_t(t)||_{Y_0} = \sup_{||\phi||_{Y_1}= 1}\left|\langle \lambda_t, \phi \rangle \right|=\sup_{||\phi||_{Y_1} = 1}\left| B[\lambda(t),\phi;t] \right|\le \sup_{||\phi||_{Y_1} = 1}C ||\lambda||_{Y_1}||\phi||_{Y_1} \le C||\lambda||_{Y_1}.
		\end{equation*}
		Finally, we conclude that
		$
		||\lambda_t||_{L^2([0,T];Y_0)}^2= \int_{0}^{T}||\lambda_t(t)||_{Y_0}^2dt \le C\int_{0}^{T}||\lambda||_{Y_1}^2dt<\infty.
		$
	\end{proof}

	\begin{cor}[Uniqueness of weak solution]\label{cor:Uniqueness of weak solution}
		Let $\psi \in  S$, then the weak solution  $\lambda \in \mathcal{H}^1_\varphi \cap S$ 
		to Equation \eqref{eq: differential equation for $lambda$} in the sense of Definition \ref{def: definiton of the weak solution of the linearized problem} is unique. 
	\end{cor}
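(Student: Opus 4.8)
The plan is to run the classical energy method for the linear backward problem \eqref{eq: differential equation for $lambda$}, exploiting that the bilinear form $B[\cdot,\cdot;t]$ of \eqref{eq:bilinear form} depends on $\psi$ but not on the solution. By linearity it suffices to prove that if $w\in\mathcal{H}^1_\varphi$ is a weak solution with zero terminal data -- i.e. $\langle w_t(t),\phi\rangle_{Y_0,Y_1}+B[w(t),\phi;t]=0$ for all $\phi\in Y_1$ and a.e. $t$, with $w(T)=0$ in $L^2_\varphi$ -- then $w\equiv 0$; applying this to $w=\lambda_1-\lambda_2$ for two weak solutions in $\mathcal{H}^1_\varphi\cap S$ with the same input $\psi\in S$ yields the corollary. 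Since $w\in\mathcal{H}^1_\varphi$ we may take $\phi=w(t)\in Y_1$ (valid for a.e. $t$), and the Lions--Magenes lemma, already used to make the terminal condition meaningful, gives that $t\mapsto\|w(t)\|_{L^2_\varphi}^2$ is absolutely continuous with $\tfrac{d}{dt}\|w(t)\|_{L^2_\varphi}^2=2\langle w_t(t),w(t)\rangle_{Y_0,Y_1}=-2B[w(t),w(t);t]$.

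The crux is a one-sided bound $B[\phi,\phi;t]\le C_0\|\phi\|_{L^2_\varphi}^2$ for all $\phi\in Y_1$, uniform in $t$, with $C_0$ depending only on the data and on the lower bound of $\psi$ in $S$ (through $c_0$, or through $C_3,C_4,C_5$ when $\beta<\gamma$). The second-order part of $B[\phi,\phi;t]$ contributes $-\tfrac12\epsilon^2\|\partial_K\phi\|_{L^2_{1,K,\varphi}}^2-\tfrac12\sigma^2\|\partial_N\phi\|_{L^2_{1,N,\varphi}}^2$ plus the cross terms $\partial_K\phi\cdot\phi\,\partial_K(K^2\varphi)$ and $\partial_N\phi\cdot\phi\,\partial_N(N^2\varphi)$, which are harmless because $\partial_K(K^2\varphi)/(K\varphi)$ and $\partial_N(N^2\varphi)/(N\varphi)$ are bounded for the explicit weight \eqref{eq: the choice of the weight}; Young's inequality disposes of them against a small fraction of the gradient norms plus $C\|\phi\|_{L^2_\varphi}^2$. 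The first-order terms $\epsilon^2K\partial_K\phi\cdot\phi\,\varphi$ and $f(N)\partial_N\phi\cdot\phi\,\varphi$ are handled the same way, using $|f(N)|\le C_fN$. The genuinely singular terms are $-Nj(\psi)\partial_K\phi\cdot\phi\,\varphi$, $AN^{1-\beta}K^{\beta}\partial_K\phi\cdot\phi\,\varphi$, and $A\beta N^{1-\beta}K^{\beta-1}\phi^2\varphi$. For the first I would use, exactly as in Section \ref{section: pointwise estimate}, that $\psi\ge c_0^\gamma(N/K)^\gamma$ forces $Nj(\psi)/K\le a_1/c_0$ when $\beta\ge\gamma$, and that \eqref{eq:estimate for j(psi)} gives $Nj(\psi)/K\le b_7+(N/K)^{1-\beta}$ when $\beta<\gamma$; Young then bounds it by a small multiple of $\|\partial_K\phi\|_{L^2_{1,K,\varphi}}^2$ plus $C(\|\phi\|_{L^2_\varphi}^2+\|\phi\|_{L^2_{\tilde\varphi}}^2)$. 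The remaining two are dominated by $C(\|\phi\|_{L^2_\varphi}^2+\|\phi\|_{L^2_{\tilde\varphi}}^2)$ plus, after Young on the Cobb--Douglas term, a small multiple of $\|\partial_K\phi\|_{L^2_{1,K,\varphi}}^2$; here one uses that $\tilde\varphi$ dominates $(N/K)^{2-2\beta}\varphi$ and $(N/K)^{1-\beta}\varphi$ by the choice \eqref{eq:def of tildevarphi} (since $1-\beta<2-2\beta<4-4\beta$ and $2+2\beta>2-2\beta$). Finally the accumulated $\|\phi\|_{L^2_{\tilde\varphi}}^2$ is absorbed via a weighted Poincar\'e inequality $\|\phi\|_{L^2_{\tilde\varphi}}^2\le\eta\big(\|\partial_K\phi\|_{L^2_{1,K,\varphi}}^2+\|\partial_N\phi\|_{L^2_{1,N,\varphi}}^2\big)+C_\eta\|\phi\|_{L^2_\varphi}^2$, valid for every $\eta>0$ -- essentially a reformulation of the compactness of the embedding $Y_1\hookrightarrow L^2_{\tilde\varphi}$ underlying the fixed-point argument -- so that for $\eta$ small all gradient contributions are absorbable into the negative second-order part, leaving $B[\phi,\phi;t]\le C_0\|\phi\|_{L^2_\varphi}^2$.

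Granting this bound, $\tfrac{d}{dt}\|w(t)\|_{L^2_\varphi}^2=-2B[w(t),w(t);t]\ge -2C_0\|w(t)\|_{L^2_\varphi}^2$, hence $\tfrac{d}{dt}\big(e^{2C_0t}\|w(t)\|_{L^2_\varphi}^2\big)\ge0$; since $\|w(T)\|_{L^2_\varphi}=0$ and $t\mapsto\|w(t)\|_{L^2_\varphi}^2$ is continuous, this forces $\|w(t)\|_{L^2_\varphi}^2\equiv0$ on $[0,T]$, i.e. $w=0$ in $C([0,T];L^2_\varphi)$ and hence in $\mathcal{H}^1_\varphi$. I expect the main obstacle to be the one-sided bound on $B$, and within it the treatment of the Cobb--Douglas term $AN^{1-\beta}K^{\beta}\partial_K\phi\cdot\phi\,\varphi$: it is precisely here that the mismatch of the $K$- and $N$-exponents forces the $L^2_{\tilde\varphi}$ correction, so one must both carry out the explicit weight estimates (bounds on $\partial_K(K^\beta\varphi)/\varphi$ and domination of the various $(N/K)^{\mathrm{power}}\varphi$ by $\varphi+\tilde\varphi$) and secure the weighted Poincar\'e inequality in its scale-invariant ``for all $\eta$'' form. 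When $\psi$ is smooth there is a shortcut: the weak solution is then the classical solution of Theorem \ref{thm:existence when psi is smooth} with the stated growth, and uniqueness is immediate from the modified maximum principle, Corollary \ref{cor:Uniqueness of the classical solution}; but for a merely measurable $\psi\in S$ the energy argument above is what is needed.
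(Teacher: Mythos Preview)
Your energy approach founders on the ``weighted Poincar\'e inequality'' you invoke to absorb $\|\phi\|_{L^2_{\tilde\varphi}}^2$ into a small multiple of the gradient terms plus $C_\eta\|\phi\|_{L^2_\varphi}^2$. This inequality is false. It would require, in particular, that $H^1_\varphi$ embed into $L^2_{\tilde\varphi}$, but the paper defines $Y_1=H^1_\varphi\cap L^2_{\tilde\varphi}$ as a genuine intersection precisely because this embedding does not hold. A concrete counterexample: take $\phi_R(K,N)=\chi(K)\psi(N/R)$ with $\chi,\psi$ fixed smooth bumps supported in $[1,2]$. Then $\|\phi_R\|_{L^2_\varphi}^2$, $\|K\partial_K\phi_R\|_{L^2_\varphi}^2$, and $\|N\partial_N\phi_R\|_{L^2_\varphi}^2$ are all of order $R^{-7}$, while $\|\phi_R\|_{L^2_{\tilde\varphi}}^2$ is of order $R^{-7}\cdot R^{\max(2+2\beta,\,4-4\beta)}$, which dominates $R^{-7}$ by an unbounded factor as $R\to\infty$. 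So no inequality of the form $\|\phi\|_{L^2_{\tilde\varphi}}^2\le \eta(\text{gradient terms})+C_\eta\|\phi\|_{L^2_\varphi}^2$ can hold, for any $\eta$. The compactness you allude to is not $Y_1\hookrightarrow L^2_{\tilde\varphi}$; the paper explicitly introduces the auxiliary spaces $\bar Y,\bar Y_1$ with an \emph{extra} decaying factor $\bar\varphi$ because the natural embeddings among $Y,Y_1,Y_2$ are not compact (see the opening of Section~\ref{sec:The extension lemma and fixed point argument}). Without this inequality your one-sided bound $B[\phi,\phi;t]\le C_0\|\phi\|_{L^2_\varphi}^2$ does not close, and the Gr\"onwall step collapses. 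Note too that in the paper's own $L^2$ estimate (Theorem~\ref{thm:L^2([0,T];Y_1) estimate}) the troublesome $\|\lambda\|_{L^2_{\tilde\varphi}}$ term is controlled not by any functional inequality but by the \emph{pointwise} bounds on $\lambda\in S$ (Lemma~\ref{lem: { L^2_varphi} estimate}); the difference $w=\lambda_1-\lambda_2$ enjoys no such pointwise control.

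The paper's proof takes a completely different route: it is a maximum-principle argument, not an energy argument. After the change of variables $K=e^x$, $N=e^y$ one sets $w=\tilde\lambda/\rho$ with the weight $\rho$ from Theorem~\ref{thm:Maximum principle}; because $\lambda\in S$, one has $\sup_{|x|+|y|\ge R}|w|\to 0$ uniformly in $t$. A weak maximum principle for parabolic equations on bounded domains (e.g.\ \cite[Theorem~6.25, Corollary~6.26]{MR1465184}) then gives $\sup_{[0,T]\times B_R}|w|\le \sup_{[0,T]\times\partial B_R\cup\{T\}\times B_R}|w|$, and letting $R\to\infty$ with zero terminal data forces $w\equiv 0$. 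Your closing remark that the maximum principle is only a shortcut for smooth $\psi$ is therefore mistaken: a weak-solution maximum principle is exactly what is used, and it works for measurable $\psi\in S$ because the membership $\lambda\in S$ supplies the decay at infinity that the argument needs.
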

	Note carefully that, we do not assume $\psi$ to be smooth here, and we have not shown the existence of a weak solution in this case (this will be done in Section \ref{sec:Weak solution and fixed point for nonlinear equation}, cf. Lemma \ref{lem:extension lemma}). What we try to prove here is that   if a weak solution exists   in $\mathcal{H}^1_\varphi  \cap S$,  it is unique. Moreover, similar to Remark \ref{rem: only need lower bound of psi}, we can weaken the assumption $\psi\in S$ to   that $\psi$ only satisfies the lower bounds under respective cases in the definition of $S$ in \eqref{def:definition for the growth rate set}.
	
	\begin{proof}
		The key to the proof is also the maximum principle, but this time, for the weak solution.
		We just need to prove the weak solution $\lambda$ of $\eqref{eq:weak solution of equation of lambda}_1$ has to be 0 if the terminal data $u'_2$ in $\eqref{eq:weak solution of equation of lambda}_2$ is replaced by 0.  First, we note that $w = \frac{\tilde{\lambda}}{h}$ would be a weak solution to Equation \eqref{eq: the equation of w} (with 0 terminal data) after change-of-variables $K=e^x$, $N=e^y$ and setting $\tilde{\lambda}(t,x,y) = \lambda(t, e^x, e^y)$ if $\lambda$ is a weak solution to Equation \eqref{eq: differential equation for $lambda$}. Since $\lambda \in S$, after choosing the same weight $\rho$  as in the proof of Theorem \ref{thm:Maximum principle}, we know that $\lim_{R \to \infty} \sup_{|x| + |y| \geq R} |w|$$ \to 0$ {uniformly in $t \in [0,T]$}.  We use the maximum principle for the weak solution to conclude that $\sup_{[0,T]\times B_{R}} |w| \le	 \sup_{[0,T]\times \partial B_{R} \cup \{T\}\times B_R} |w|$ (the boundary value of $w$ is interpreted in the trace sense), for a reference, one can see   Theorem 6.25 and Corollary 6.26 in \cite{MR1465184}. As $w(T, x,y) =0$, we can choose $R$ arbitrarily large to see that $|w|$ can be arbitrarily small on the parabolic boundary $[0,T]\times \partial B_{R} \cup \{T\}\times B_R$. Hence, we must have $|w|=0$, this also shows that $\tilde{\lambda}$ and $\lambda$ are 0. 
	\end{proof}
	The  weighted energy estimates in $	\mathcal{H}^2_\varphi$, i.e, Theorem \ref{thm: A priori estimates} can be derived similarly; we leave them to  interested readers, one can also refer to \cite[Section 5.2]{HKUHUB_Hao}.

\section{Solution  for the nonlinear problem and the original HJB equation}\label{sec:Weak solution and fixed point for nonlinear equation}	
Up to now, we have shown that if   $\psi$ is smooth,  $\psi(K,N,t) \ge c_0^{\gamma}\left(\frac{N}{K}\right)^{\gamma}$ when $\beta \ge \gamma$,  and  $\psi(t,K,N) \ge C_3  \left(\frac{N}{K}\right)^{\beta}\left(  C_4 \left(\frac{N}{K}\right)^{\beta-1} + C_5\right)^{ \frac{\gamma-\beta}{\beta-1 } }$ when $\beta < \gamma$, then the linearized problem \eqref{eq: differential equation for $lambda$} has a unique weak solution  in $\mathcal{H}^2_\varphi \cap S$ (cf. Theorem \ref{thm: A priori estimates}) which is also classical (cf. Theorem \ref{thm:existence when psi is smooth}). 
In Subsection \ref{sec:The extension lemma and fixed point argument}, we shall
show that the weak solution still exists 
in $\mathcal{H}^2_\varphi \cap S$ even when the input function $\psi$ is just measurable and satisfies the prescribed lower bound for a.e. $t$, $K$, and $N$ by an approximation argument based on the energy estimates  in Theorem \ref{thm: A priori estimates} (cf. Lemma \ref{lem:extension lemma}).
Next, we consider the solution map 
$\Gamma$ defined on $S$, which is a self-map now, 
and use Schauder’s fixed point theorem to find a solution to the nonlinear Equation \eqref{eq: differential equation for original $lambda$}. The key to applying Schauder’s fixed point theorem is the compactness of $\Gamma(S)\subset \mathcal{H}^2_{\varphi}\cap S$.   It will be done in Subsection \ref{sec: Compact embedding}  (cf. Lemma \ref{lem:compactness of Hvarphi2}).
Subsection \ref{Existence  and uniqueness to the original HJB equation} is devoted to the regularity and uniqueness of weak solutions to the nonlinear problem \eqref{eq: differential equation for original $lambda$} obtained in Subsection \ref{sec:The extension lemma and fixed point argument} and to the original HJB equation \eqref{eq:second HJB for v(K,N)}.
\subsection{The extension lemma for the linearized equation and the fixed point}\label{sec:The extension lemma and fixed point argument}
As already explained above, to show the existence of a fixed point, the key is to prove and apply a certain compactness of  $\Gamma$.
At the first glance, one may try to show the compactness of  $\mathcal{H}^2_\varphi \cap S$ in $L^2([0,T]; Y)$ or $L^2([0,T]; Y_1)$, where the function space $\mathcal{H}^2_\varphi$ is defined in \eqref{functional space mathcal{H}^2_varphi}. However,  $Y_2$ is not compactly embedded in $Y_1$ or $Y$, thus preventing us from using the standard Aubin-Lions lemma to obtain the compactness of  $\mathcal{H}^2_\varphi \cap S$.

To overcome this issue, we introduce two additional spaces: $\bar{Y}:=L^2_{(\varphi+\tilde{\varphi})\bar{\varphi}}$ and $\bar{Y}_1$, defined as Hilbert spaces equipped with the inner product $(\phi,\psi):=\int_{\mathbb{R}_+^2 } \left(\phi \psi (\varphi+\tilde{\varphi}) +\frac{\partial \phi}{\partial K}\frac{\partial \psi}{\partial K}K^2\varphi +\frac{\partial \phi}{\partial N}\frac{\partial \psi}{\partial N}N^2\varphi\right) \bar{\varphi} dKdN$, and $\bar{\varphi}$ is a positive bounded smooth function and $\bar{\varphi} \to 0$ uniformly  as either $K$ or $N$ $\to 0$ or  $\infty$. For example, we can choose  $\bar{\varphi} = \frac{K}{K^2+1}\frac{N}{N^2+1}$. It is obvious that $Y$ is a subspace of $\bar{Y}$. 
The idea is that we use faster decay weights for spaces $\bar{Y}$ and $\bar{Y}_1$ to weaken their corresponding norms (topologies), so that we have more compact sets and make it possible to have a compact embedding.   We shall show that  ${\mathcal{H}}^2_\varphi$ is compactly embedded into $L^2([0,T];\bar{Y}_1)$ in Subsection \ref{sec: Compact embedding}. We now state the main result in this section.
\begin{thm}[Existence of weak solution of  nonlinear equation \eqref{eq: differential equation for original $lambda$}]\label{thm:weak solution of original equation}
	For any given $\psi \in S$, we have a unique weak solution $\lambda \in {\mathcal{H}}^2_\varphi \cap S$ to the linearized problem \eqref{eq: differential equation for $lambda$} in the sense 
	of Definition \ref{def: definiton of the weak solution of the linearized problem}.  Furthermore, the solution map $\Gamma: \psi \mapsto \lambda$ defined by solving the linearized problem \eqref{eq: differential equation for $lambda$} has a fixed point in ${\mathcal{H}}^2_\varphi \cap S$; this gives a weak solution to the nonlinear problem \eqref{eq: differential equation for original $lambda$}. 
\end{thm}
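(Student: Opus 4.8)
The plan is to establish the two assertions in two stages: an extension lemma that removes the smoothness hypothesis on the input (this is Lemma~\ref{lem:extension lemma}), followed by a Schauder fixed-point argument carried out on a compact convex subset of $L^2([0,T];\bar{Y}_1)$.

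For the first stage, fix $\psi\in S$ and mollify it in all variables to get $\psi_m\in C^\infty([0,T]\times\mathbb{R}^2_+)$ with $\psi_m\to\psi$ a.e.\ and, after a harmless truncation from below, $\psi_m$ still satisfying the lower bound that defines $S$; by Remark~\ref{rem: only need lower bound of psi} only this lower bound matters for the estimates that follow. For each $m$, Theorem~\ref{thm:existence when psi is smooth} gives a classical solution $\lambda_m$ of \eqref{eq: differential equation for $lambda$} with input $\psi_m$, and Theorem~\ref{thm: A priori estimates} puts it in $\mathcal{H}^2_\varphi\cap S$ with a bound on $\|\lambda_m\|_{\mathcal{H}^2_\varphi}$ independent of $m$. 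Passing to a subsequence, $\lambda_m\rightharpoonup\lambda$ weakly in $\mathcal{H}^2_\varphi$, hence strongly in $L^2([0,T];\bar{Y}_1)$ by the compact embedding (Lemma~\ref{lem:compactness of Hvarphi2}), hence a.e.\ along a further subsequence; in particular $\lambda\in S$, since $S$ is cut out by pointwise a.e.\ inequalities, and $\lambda(T,K,N)=u'_2(K/N)$ via $\mathcal{H}^1_\varphi\hookrightarrow C([0,T];L^2_\varphi)$. To pass to the limit in the weak formulation \eqref{eq:weak solution of equation of lambda} every term is linear in $\lambda$ except the coefficient $j(\psi_m)$; for that one I would combine $j(\psi_m)\to j(\psi)$ a.e.\ with the uniform bound $j(\psi_m)\frac{N}{K}\le\frac{a_1}{c_0}$ (valid when $\beta\ge\gamma$, with the counterpart from \eqref{eq:estimate for j(psi)} when $\beta<\gamma$) and with the weak convergence of $\partial_K\lambda_m$ in $L^2_{1,K,\varphi}$, invoking the routine fact that a bounded a.e.-convergent sequence times a weakly-$L^2$-convergent sequence converges weakly (Egorov plus the uniform bound). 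This yields a weak solution $\lambda\in\mathcal{H}^2_\varphi\cap S$; uniqueness is already recorded in Corollary~\ref{cor:Uniqueness of weak solution}.

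For the fixed-point stage, the first stage makes $\Gamma:\psi\mapsto\lambda$ well defined on $S$, and because the bound in Theorem~\ref{thm: A priori estimates} depends only on the data there is $M>0$ with $\|\Gamma(\psi)\|_{\mathcal{H}^2_\varphi}\le M$ for all $\psi\in S$. I would then take $\mathcal{K}:=\{\mu\in\mathcal{H}^2_\varphi:\mu\in S,\ \|\mu\|_{\mathcal{H}^2_\varphi}\le M\}$, regarded as a subset of $L^2([0,T];\bar{Y}_1)$: it is nonempty, convex, satisfies $\Gamma(\mathcal{K})\subset\mathcal{K}$, and is compact in $L^2([0,T];\bar{Y}_1)$ (relative compactness from $\mathcal{H}^2_\varphi$-boundedness plus the compact embedding, closedness from weak lower semicontinuity of $\|\cdot\|_{\mathcal{H}^2_\varphi}$ and a.e.\ convergence along subsequences). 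The remaining point is continuity of $\Gamma$ on $\mathcal{K}$ for the $L^2([0,T];\bar{Y}_1)$ topology: if $\psi_m\to\psi$ there, then $\{\Gamma(\psi_m)\}$ is $\mathcal{H}^2_\varphi$-bounded, every subsequence has a further subsequence converging weakly in $\mathcal{H}^2_\varphi$ and strongly in $L^2([0,T];\bar{Y}_1)$ to some $\mu$, and the same limit passage as above (with $\psi_m\to\psi$ a.e.\ along a subsequence, so $j(\psi_m)\to j(\psi)$ a.e.) identifies $\mu$ as a weak solution of \eqref{eq: differential equation for $lambda$} with input $\psi$; by uniqueness $\mu=\Gamma(\psi)$, whence the whole sequence converges. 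Schauder's fixed-point theorem then provides $\lambda^\ast\in\mathcal{K}\subset\mathcal{H}^2_\varphi\cap S$ with $\Gamma(\lambda^\ast)=\lambda^\ast$; since $\Gamma(\lambda^\ast)$ solves \eqref{eq: differential equation for $lambda$} with $\psi=\lambda^\ast$, i.e.\ with $j(\psi)$ replaced by $j(\lambda^\ast)$, this $\lambda^\ast$ is exactly a weak solution of the nonlinear problem \eqref{eq: differential equation for original $lambda$}.

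The hard part, showing up identically in both stages, is the limit passage in the singular coefficient term $Nj(\psi_m)\partial_K\lambda_m$: one factor converges only weakly while $j(\psi_m)\sim\psi_m^{-1/\gamma}$ is a priori large, and it is precisely the lower bound built into $S$, together with the decay of the weights $\varphi$ and $\tilde{\varphi}$, that keeps the weighted $j(\psi_m)$ uniformly bounded and forces the product to converge to the intended limit; the $\beta<\gamma$ case requires the sharper bound \eqref{eq:estimate for j(psi)}. The other technical ingredient is the compact embedding $\mathcal{H}^2_\varphi\hookrightarrow\hookrightarrow L^2([0,T];\bar{Y}_1)$ used for the compactness of $\mathcal{K}$, which is proved separately in Subsection~\ref{sec: Compact embedding}.
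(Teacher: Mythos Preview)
Your proposal is correct and follows essentially the same strategy as the paper: an extension lemma to define $\Gamma$ on all of $S$ via approximation, the compact embedding $\mathcal{H}^2_\varphi\hookrightarrow\hookrightarrow L^2([0,T];\bar{Y}_1)$, and Schauder's fixed-point theorem, with the singular term $Nj(\psi_m)\partial_K\lambda_m$ handled by splitting and the uniform bound on $j(\psi_m)N/K$. The only cosmetic difference is in the Schauder setup: the paper works in $L^2([0,T];\bar{Y})$, takes $S$ itself as the closed bounded convex set, and uses that $\Gamma(S)$ is pre-compact (the ``compact map'' form of Schauder), whereas you pass to the smaller compact convex set $\mathcal{K}=\{\mu\in S:\|\mu\|_{\mathcal{H}^2_\varphi}\le M\}$ in $L^2([0,T];\bar{Y}_1)$; both are legitimate and lead to the same conclusion.
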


\begin{proof}
	We  shall show  that the solution map $\Gamma$ initially defined on $S\cap{C^{\infty}([0,T]\times\mathbb{R}^2_+)}$ can be extended to the whole $S$ (which is bounded, closed and convex   in $L^2([0,T];\bar{Y})$; see Lemma \ref{lem:S is convex and closed in L^2([0,T];Y)} below) continuously (under the norm of $L^2([0,T]; \bar{Y})$) in Lemma \ref{lem:extension lemma} below. The uniqueness follows from Corollary \ref{cor:Uniqueness of weak solution}. We then consider the extended map $\Gamma:S\to S$, which is well-defined and continuous under the norm of $L^2([0,T];\bar{Y})$.  Due to Theorem \ref{thm: A priori estimates}, $\Gamma(S)\subset  \mathcal{H}^2_{\varphi} \cap S$ is a bounded set in $ \mathcal{H}^2_{\varphi}$, and hence a pre-compact set  in  $L^2([0,T];\bar{Y}_1)$, then obviously a  pre-compact set  in  $L^2([0,T];\bar{Y})$ by Lemma \ref{lem:compactness of Hvarphi2}. It follows from the Schauder fixed point theorem that we have a fixed point in $S$, and this fixed point  belongs to $\mathcal{H}^2_{\varphi}\cap S$. 
\end{proof}

{\color{black}
	
	
	\begin{lem}\label{lem:S is convex and closed in L^2([0,T];Y)}
	The set	$S$ is bounded, closed and convex   in $L^2([0,T];\bar{Y})$.
		\begin{proof}
			
			The boundedness and convexity of $S$  is clear 
			and we only need to show that $S$ is closed in $L^2([0,T]; \bar{Y})$. Indeed, let $\psi_n$ be a sequence in $S$
			and converge to some $\psi \in L^2([0,T]; \bar{Y})$, which implies that
			\begin{equation*}
				\int_{0}^{T}||\psi_n(t)-\psi(t)||_{\bar{Y}}^2dt \to 0.
			\end{equation*}
			It follows that  after passing to a subsequence, which is still denoted by $\{\psi_n\}$, we have 
			\begin{equation*}
				||\psi_n(t)-\psi(t)||_{\bar{Y}}  \to 0 \quad   \textrm{a.e. $t$}.
			\end{equation*}
			Now, we fix a $t$ such that $||\psi_n(t)-\psi(t)||_{\bar{Y}}  \to 0$,  by passing to a subsequence $\{\psi_{k_n}\}$ again, we have $\psi_{k_n}(K,N,t)\to \psi(K,N,t)$ for a.e. $K$ and $N$. Since $c_0^{\gamma}\left(\frac{N}{K}\right)^{\gamma}\le\psi_{k_n}\le C_1\left(\frac{N}{K}\right)^{\gamma} + C_2\left(\frac{N}{K}\right)^{\beta}$ when $\beta\ge \gamma$, we conclude that $c_0^{\gamma}\left(\frac{N}{K}\right)^{\gamma}\le\psi\le C_1\left(\frac{N}{K}\right)^{\gamma} + C_2\left(\frac{N}{K}\right)^{\beta}$ for a.e. $K$ and $N$ and a.e. $t$, and hence $\psi\in S$. The case $\beta<\gamma$ follows in the same manner.
		\end{proof}
	\end{lem}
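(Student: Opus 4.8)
The plan is to check the three assertions one at a time, all of them reducing to the explicit two-sided pointwise description of $S$ in \eqref{def:definition for the growth rate set} combined with the rapid decay of the weights $\varphi$, $\tilde{\varphi}$ and $\bar{\varphi}$.

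First I would treat boundedness. Any $\psi \in S$ obeys $0\le \psi(t,K,N) \le C_1\left(\frac{N}{K}\right)^{\gamma} + C_2\left(\frac{N}{K}\right)^{\beta}$ a.e.\ when $\beta\ge\gamma$ (respectively $0\le\psi\le C_6\left(\frac{N}{K}\right)^{\gamma}$ when $\beta<\gamma$), so $\psi^2 \lesssim \left(\frac{N}{K}\right)^{2\gamma} + \left(\frac{N}{K}\right)^{2\beta}$. Multiplying by $(\varphi+\tilde{\varphi})\bar{\varphi}$ and integrating over $\mathbb{R}_+^2\times[0,T]$, one verifies — by exactly the monomial-by-monomial bookkeeping carried out in the proof of Lemma \ref{lem: { L^2_varphi} estimate}, now with the extra favourable factor $\bar{\varphi}=\frac{K}{K^2+1}\frac{N}{N^2+1}$ — that every resulting power of $K$ and of $N$ is integrable, since the denominators $K^{10}+1$ and $N^{10}+1$ in $\varphi$ dominate all the exponents that occur. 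Hence $\sup_{\psi\in S}\|\psi\|_{L^2([0,T];\bar{Y})}<\infty$.

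Next, convexity is immediate: if $\psi_0,\psi_1\in S$ and $\theta\in[0,1]$, then for a.e.\ $(t,K,N)$ the number $\theta\psi_1 + (1-\theta)\psi_0$ is a convex combination of two reals lying in the interval bounded below and above by the two fixed functions of $(K,N)$ appearing in \eqref{def:definition for the growth rate set}, hence it lies in the same interval; so $\theta\psi_1+(1-\theta)\psi_0\in S$.

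The only part that needs genuine care is closedness, and this is where I expect the (mild) obstacle to lie — namely the nested extraction of subsequences. Let $\psi_n\in S$ converge to $\psi$ in $L^2([0,T];\bar{Y})$, i.e.\ $\int_0^T\|\psi_n(t)-\psi(t)\|_{\bar{Y}}^2\,dt\to 0$; passing to a subsequence (not relabelled) we get $\|\psi_n(t)-\psi(t)\|_{\bar{Y}}\to 0$ for a.e.\ $t$. Fix such a $t$. Since the weight $(\varphi+\tilde{\varphi})\bar{\varphi}$ is strictly positive a.e.\ on $\mathbb{R}_+^2$, convergence in the corresponding weighted $L^2$-norm implies convergence in measure, hence along a further subsequence $\psi_{n_k}(K,N,t)\to\psi(K,N,t)$ for a.e.\ $(K,N)$. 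The two-sided bounds defining $S$ hold for each $\psi_{n_k}$ and therefore pass to this a.e.\ limit, giving those bounds for $\psi(\cdot,\cdot,t)$ a.e.; since this holds for a.e.\ $t$, we conclude $\psi\in S$. The case $\beta<\gamma$ is identical with the respective bounds. Apart from this subsequence bookkeeping, the argument is entirely elementary.
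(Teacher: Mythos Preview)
Your proposal is correct and follows essentially the same route as the paper: boundedness and convexity are declared clear (you just spell them out a bit more), and for closedness you perform exactly the paper's nested subsequence extraction --- first to get $\|\psi_n(t)-\psi(t)\|_{\bar Y}\to 0$ for a.e.\ $t$, then at each such $t$ a further subsequence for a.e.\ $(K,N)$ pointwise convergence --- after which the two-sided bounds defining $S$ pass to the limit.
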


	\begin{lem}\label{lem:extension lemma}
		The solution map $\Gamma$ initially defined on $S\cap{C^{\infty}([0,T]\times\mathbb{R}^2_+)}$ can be  extended  to the whole set $S$ (under the norm of $L^2([0,T]; \bar{Y})$) continuously. Moreover, the following pointwise estimates of the weak solution $\lambda$ to linearized Equation \eqref{eq: differential equation for $lambda$}:
		\begin{equation*}
			c_0^{\gamma}\left(\frac{N}{K}\right)^{\gamma}\le\lambda \le C_1\left(\frac{N}{K}\right)^{\gamma} + C_2\left(\frac{N}{K}\right)^{\beta} \textrm{for a.e. $K$, $N$, and a.e. $t$,}
		\end{equation*}
		when $\beta \ge \gamma$, and 
		\begin{equation*}
			C_3 \left(\frac{N}{K}\right)^{\beta}\left(  C_4 \left(\frac{N}{K}\right)^{\beta-1} + C_5 \right)^{ \frac{\gamma-\beta}{\beta-1 } } \le \lambda\le C_6^{\gamma}\left(\frac{N}{K}\right)^{\gamma} \textrm{for a.e. $K$, $N$, and a.e. $t$,}
		\end{equation*}
		when  $\beta < \gamma$
		hold for all $\psi\in S$. Hence, the solution map  $\Gamma: S \to \mathcal{H}^2_{\varphi} \cap S \subset S$ is well-defined and continuous (under the norm of $L^2([0,T]; \bar{Y})$).
	\end{lem}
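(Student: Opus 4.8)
# Proof Proposal for Lemma \ref{lem:extension lemma}

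The plan is to establish the extension by a density-plus-uniform-estimate argument. Given $\psi \in S$, the first step is to approximate $\psi$ by a sequence of smooth functions $\psi_n \in S \cap C^{\infty}([0,T] \times \mathbb{R}^2_+)$ that converges to $\psi$ in $L^2([0,T]; \bar{Y})$. This requires care: the standard mollification of $\psi$ need not preserve the pointwise lower and upper bounds defining $S$, so I would mollify after passing to logarithmic coordinates $x = \ln K$, $y = \ln N$ (where the bounds become exponential-growth two-sided envelopes that are stable under convex-combination-type smoothing) and, if needed, truncate-and-mollify the quantity $\psi$ against the fixed barriers, e.g.\ replacing $\psi_n$ by $\max\{\,\min\{\psi_n, \text{upper barrier}\}, \text{lower barrier}\,\}$ and then re-mollifying; iterating or using a smooth cutoff between barriers yields smooth $\psi_n \in S$. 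Since $\bar{\varphi}$ decays at $0$ and $\infty$ and the barriers are fixed, dominated convergence gives $\psi_n \to \psi$ in $L^2([0,T];\bar{Y})$.

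Next, for each smooth $\psi_n$, Theorem \ref{thm:existence when psi is smooth} and Theorem \ref{thm: A priori estimates} furnish a classical solution $\lambda_n = \Gamma(\psi_n)$ lying in $\mathcal{H}^2_\varphi \cap S$, and crucially the energy estimate \eqref{eq: energy estimates} gives a bound on $\|\lambda_n\|_{\mathcal{H}^2_\varphi}$ that is \emph{uniform in $n$}, because the right-hand side of \eqref{eq: energy estimates} depends only on $\|u'_2(K/N)\|_{Y_1}$ and the fixed model parameters, and by Remark \ref{rem: only need lower bound of psi} the bound uses only the common lower bound shared by all elements of $S$. Hence $\{\lambda_n\}$ is bounded in $\mathcal{H}^2_\varphi$; by the compact embedding $\mathcal{H}^2_\varphi \hookrightarrow\hookrightarrow L^2([0,T];\bar{Y}_1)$ (Lemma \ref{lem:compactness of Hvarphi2}, cited forward from Subsection \ref{sec: Compact embedding}) we extract a subsequence converging strongly in $L^2([0,T];\bar{Y}_1)$ and weakly in $L^2([0,T];Y_2)$, with time-derivatives converging weakly in $L^2([0,T];Y)$, to some limit $\lambda$.

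I would then pass to the limit in the weak formulation \eqref{eq:weak solution of equation of lambda}: the linear terms pass by weak convergence of $\lambda_n$ and its derivatives against the fixed test function $\phi \in Y_1$; the only term requiring the strong convergence of the \emph{inputs} is $\int N j(\psi_n) \frac{\partial \lambda_n}{\partial K}\phi\varphi$, where I combine strong $L^2_{\bar\varphi}$-convergence $\psi_n \to \psi$ (hence a.e.\ along a further subsequence, hence $j(\psi_n)\to j(\psi)$ a.e.\ by continuity of $j$), the uniform lower bound making $j(\psi_n)\tfrac{N}{K}$ (or the $\beta<\gamma$ analogue via \eqref{eq:estimate for j(psi)}) uniformly bounded, and weak $L^2_{1,K,\varphi}$-convergence of $\partial_K\lambda_n$, to conclude via a Vitali/dominated-convergence argument. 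This shows $\lambda$ is a weak solution to \eqref{eq: differential equation for $lambda$} with input $\psi$; the terminal condition survives by the $C([0,T];L^2_\varphi)$ embedding. The pointwise bounds for $\lambda$ follow from passing the pointwise bounds on $\lambda_n$ (each $\lambda_n \in S$) to the a.e.\ limit, exactly as in the proof of Lemma \ref{lem:S is convex and closed in L^2([0,T];Y)}, so $\lambda \in S$. Uniqueness of this weak solution in $\mathcal{H}^1_\varphi \cap S$ is Corollary \ref{cor:Uniqueness of weak solution}, which shows the limit is independent of the approximating sequence, so $\Gamma(\psi):=\lambda$ is well-defined; the same subsequence-compactness argument applied to any sequence $S \ni \psi_n \to \psi$ (not necessarily smooth) together with uniqueness shows $\Gamma$ is continuous in the $L^2([0,T];\bar Y)$ topology. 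Combined with Theorem \ref{thm: A priori estimates}, $\Gamma(S) \subset \mathcal{H}^2_\varphi \cap S$, so $\Gamma: S \to S$ is well-defined and continuous.

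The main obstacle I anticipate is the approximation step — producing smooth $\psi_n$ that genuinely remain in $S$ (the barrier-preserving mollification), since naive mollification destroys the sharp pointwise envelopes — together with the careful handling of the nonlinear $j(\psi_n)$ term in the limit passage, where one must leverage the uniform lower bound on the inputs to control the singular coefficient rather than any compactness of $\psi_n$ itself.
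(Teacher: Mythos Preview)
Your proposal is correct and follows essentially the same route as the paper: uniform $\mathcal{H}^2_\varphi$-bounds on $\lambda_n=\Gamma(\psi_n)$ from Theorem \ref{thm: A priori estimates}, the compact embedding $\mathcal{H}^2_\varphi\hookrightarrow L^2([0,T];\bar Y_1)$ to extract a strongly convergent subsequence, passage to the limit in the weak formulation (the paper isolates this as Claim \ref{claim: the solution map is continuous} and handles the nonlinear term by the splitting $Nj(\psi)(\partial_K\lambda-\partial_K\lambda_n)+N(j(\psi)-j(\psi_n))\partial_K\lambda_n$, which is exactly what your Vitali/dominated-convergence sketch amounts to), closedness of $S$ for the pointwise bounds, and uniqueness via Corollary \ref{cor:Uniqueness of weak solution} to conclude well-definedness and continuity. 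The one place you are more careful than the paper is the density step---producing smooth $\psi_n$ that remain in $S$---which the paper simply takes for granted; your barrier-preserving mollification in log coordinates (or, invoking Remark \ref{rem: only need lower bound of psi}, approximating by smooth functions satisfying only the lower barrier, which is easier to preserve) is a legitimate way to fill that gap.
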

	\begin{proof}
		Let  $\{\psi_n\}$ be a sequence in $S\cap{C^{\infty}([0,T]\times\mathbb{R}^2_+)}$
		that converges to some $\psi \in S$ under the norm of $L^2([0,T]; \bar{Y})$. We shall show that the corresponding weak solution  $\lambda_n=\Gamma(\psi_n)$ will converge to some $\lambda \in L^2([0,T];\bar{Y})$, which is the weak solution corresponding to $\psi$.  By Theorem \ref{thm:existence when psi is smooth}, Theorem \ref{thm:crucial estimate theorem} and Theorem \ref{thm:L^2([0,T];Y_1) estimate}, we know that $\lambda_n \in S\cap{C^{1+\frac{\alpha}{2}, 2+ \alpha}_{loc}([0,T]\times\mathbb{R}^2_+)}\cap \mathcal{H}^2_{\varphi}$ and $\{\lambda_n\}$ is uniformly bounded in $\mathcal{H}^2_{\varphi}$. It follows from Banach-Alaoglu theorem that  there exists a subsequence of $\{\lambda_n\}$ which converges  to some $\lambda \in \mathcal{H}^2_{\varphi}$ weakly. 
		Using the fact that $ \mathcal{H}^2_{\varphi} $ is compactly embedded into $L^2([0,T]; \bar{Y}_1)$, this subsequence of $\{\lambda_n\}$ should converge to $\lambda$ in $L^2([0,T];\bar{Y}_1)$ strongly, and hence, also in $L^2([0,T];\bar{Y})$  strongly. We now fix this subsequence and still use $\{\lambda_n\}$ to denote it. Similar to  the proof of Lemma \ref{lem:S is convex and closed in L^2([0,T];Y)}, by passing to further subsequence, we conclude that $\lambda \in S$,
		and hence, $\lambda\in  \mathcal{H}^2_{\varphi} \cap S $.
	Finally, using the weak convergence, the pointwise bounds on $\lambda$ and the property of  weights, we can show  that the corresponding integrals in the definition of weak solutions will also converge accordingly and hence the following holds:
		\begin{claim}\label{claim: the solution map is continuous}
			$\lambda$ is a weak solution to the linearized problem  \eqref{eq: differential equation for $lambda$} with the input $\psi$ in the sense of Definition \ref{def: definiton of the weak solution of the linearized problem}, i.e., $\lambda = \Gamma(\psi)$. 
		\end{claim}
		The technical details of the proof of Claim \ref{claim: the solution map is continuous} will be presented in  Appendix \ref{appen:Solvability of X1 and Probabilistic representation of lambda}. 
		Now by the uniqueness result  in Corollary \ref{cor:Uniqueness of weak solution}, we know that the whole sequence $\lambda_n$ converges to this $\lambda$, indeed. 
		We then have the following result: for any (not necessarily smooth) $\psi_n \in S$, $\psi_n\to \psi \in S$   under the norm $L^2([0,T]; \bar{Y})$ the corresponding weak solutions $\{\lambda_n\}$  converge to $\lambda$ in $L^2([0,T];\bar{Y}_1)$ and of course in $L^2([0,T];\bar{Y})$, hence, the solution map $\Gamma:$ $S \to S$ is continuous under the norm of  $L^2([0,T];\bar{Y})$.
	\end{proof}

}
{\color{black} 

	\subsection{Compact embedding of ${\mathcal{H}}^2_\varphi  $ into $L^2([0,T];\bar{Y}_1)$}\label{sec: Compact embedding}

	In this subsection, we give several compactness results which have been used in the proof of Theorem \ref{thm:weak solution of original equation} to ensure the existence of a fixed point.
	
	\begin{lem}[Compactness of $Y_1$]\label{lem: the compactness of Y1}

		$Y_2 $ is  compactly embedded into $\bar{Y}_1$ and $\bar{Y}_1$ is continuously embedded into $\bar{Y}$.
	\end{lem}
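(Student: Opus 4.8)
The plan is to handle the two assertions separately, the continuous embedding being immediate and the compact embedding following the standard ``interior compactness $+$ uniformly small tails'' scheme, in which the role of the extra weight $\bar\varphi$ is precisely to kill the tails. For $\bar Y_1\hookrightarrow\bar Y$: by definition the $\bar Y_1$-inner product equals $\int_{\mathbb{R}_+^2}\big(\phi\psi(\varphi+\tilde\varphi)+\partial_K\phi\,\partial_K\psi\,K^2\varphi+\partial_N\phi\,\partial_N\psi\,N^2\varphi\big)\bar\varphi\,dKdN$, whose first term is exactly $\|\phi\|_{\bar Y}^2$ and whose remaining two terms are nonnegative, so $\|\phi\|_{\bar Y}\le\|\phi\|_{\bar Y_1}$ and nothing more is required.

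For $Y_2\hookrightarrow\hookrightarrow\bar Y_1$ I would take a sequence $\{\psi_n\}$ with $\|\psi_n\|_{Y_2}\le M$ and extract a $\bar Y_1$-convergent subsequence as follows. Since $Y_2$ is a Hilbert space, first pass to a subsequence converging weakly in $Y_2$ to some $\psi$ with $\|\psi\|_{Y_2}\le M$. Then localize: on each box $\Omega_R:=(1/R,R)^2$ the weights $\varphi$, $\tilde\varphi$, $K^2\varphi$, $N^2\varphi$, $K^4\varphi$, $N^4\varphi$, $K^2N^2\varphi$ are smooth and bounded above and below by positive constants, because $\varphi=\frac{K^6}{K^{10}+1}\frac{N^2}{N^{10}+1}>0$ on $\mathbb{R}_+^2$ and $\overline{\Omega_R}$ is a compact subset of $\mathbb{R}_+^2$; hence $\{\psi_n|_{\Omega_R}\}$ is bounded in $H^2(\Omega_R)$, and by Rellich--Kondrachov $H^2(\Omega_R)$ embeds compactly in $H^1(\Omega_R)$, so along a further subsequence $\psi_n\to\psi$ strongly in $H^1(\Omega_R)$ (the local $H^1$ limit agrees with the weak $Y_2$ limit since both are distributional limits). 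A diagonal argument over $R=1,2,3,\dots$ produces a single subsequence, still denoted $\{\psi_n\}$, with $\psi_n\to\psi$ in $H^1(\Omega_R)$ for every $R$; and since $\bar\varphi$ together with the $\bar Y_1$-weights $(\varphi+\tilde\varphi)\bar\varphi$, $K^2\varphi\,\bar\varphi$, $N^2\varphi\,\bar\varphi$ are bounded on $\Omega_R$, this yields $\|\psi_n-\psi\|_{\bar Y_1(\Omega_R)}\le C_R\|\psi_n-\psi\|_{H^1(\Omega_R)}\to 0$ for each fixed $R$.

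It then remains to bound the tails uniformly in $n$. Since $\bar\varphi=\frac{K}{K^2+1}\frac{N}{N^2+1}\to 0$ uniformly as $K$ or $N\to 0$ or $\infty$, the number $\delta_R:=\sup_{\mathbb{R}_+^2\setminus\Omega_R}\bar\varphi$ tends to $0$ as $R\to\infty$, and therefore for any $\phi\in Y_2$
\[
\int_{\mathbb{R}_+^2\setminus\Omega_R}\big(\phi^2(\varphi+\tilde\varphi)+(\partial_K\phi)^2K^2\varphi+(\partial_N\phi)^2N^2\varphi\big)\bar\varphi\,dKdN\le\delta_R\,\|\phi\|_{Y_1}^2\le\delta_R\,\|\phi\|_{Y_2}^2.
\]
Given $\varepsilon>0$, choose $R$ so large that $\delta_R(2M)^2<\varepsilon^2/2$, so that the $\bar Y_1$-norm of $\psi_n-\psi$ over $\mathbb{R}_+^2\setminus\Omega_R$ is less than $\varepsilon/\sqrt2$ for all $n$; then pick $N_0$ with $\|\psi_n-\psi\|_{\bar Y_1(\Omega_R)}<\varepsilon/\sqrt2$ for $n\ge N_0$. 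Summing the interior and tail contributions gives $\|\psi_n-\psi\|_{\bar Y_1}<\varepsilon$ for $n\ge N_0$, proving $\psi_n\to\psi$ in $\bar Y_1$ and hence the compactness of the embedding.

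The only subtlety — a bookkeeping matter rather than a real obstacle — is coordinating the two regimes: in the interior one has only local $H^2$ control (forcing compactly contained boxes and a diagonal extraction), while in the tail $\bar\varphi$ must absorb whatever growth is hidden in $\tilde\varphi$. The key simplification is that the $\bar Y_1$-norm only ever involves the products $(\varphi+\tilde\varphi)\bar\varphi$, $K^2\varphi\,\bar\varphi$, $N^2\varphi\,\bar\varphi$, never $\tilde\varphi$ with its bare $N^{4-4\beta}K^{4\beta-4}$-type factors; consequently the single inequality $\|\cdot\|_{Y_1}\le\|\cdot\|_{Y_2}$ supplies all the control over $\tilde\varphi$ that is needed, and no case distinction between $\beta\ge\gamma$ and $\beta<\gamma$ enters at this step.
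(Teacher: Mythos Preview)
Your proof is correct and follows essentially the same approach as the paper: the paper only sketches the argument, stating that the continuous embedding is obvious and that the compact embedding follows from ``a diagonal argument together with the classical Rellich's lemma'' using ``the decay property of the weights,'' which is precisely what you have fleshed out in detail (local Rellich--Kondrachov on boxes $\Omega_R$, diagonal extraction, and uniform tail control via $\sup_{\mathbb{R}_+^2\setminus\Omega_R}\bar\varphi\to 0$).
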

	The fact that $\bar{Y}_1$ is continuously embedded into $\bar{Y}$ is obvious. For the first part of this lemma, 
	using the decay property of the weights, we can prove it by a diagonal argument together with the classical Rellich's lemma (cf. \cite[ Lemma 6.2.2 ]{HKUHUB_Hao}). Indeed, these kinds of Rellich's lemma and its enhancements in unbounded domains are very subtle. Interested readers can see also \cite{Bensoussan22}.

	\begin{lem}\label{lem:compactness of Hvarphi2}
		The space ${\mathcal{H}}^2_\varphi  $ is compactly embedded into $L^2([0,T];\bar{Y}_1)$.
	\end{lem}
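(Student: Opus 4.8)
The plan is to obtain Lemma \ref{lem:compactness of Hvarphi2} as a direct application of the Aubin--Lions--Simon compactness lemma, using the three-space chain already supplied by Lemma \ref{lem: the compactness of Y1}. Recall that $\mathcal{H}^2_\varphi=\{\psi\in L^2([0,T];Y_2):\ \partial_t\psi\in L^2([0,T];Y)\}$. By Lemma \ref{lem: the compactness of Y1}, $Y_2$ is compactly embedded into $\bar{Y}_1$ and $\bar{Y}_1$ is continuously embedded into $\bar{Y}$. In addition, since $\bar{\varphi}=\frac{K}{K^2+1}\frac{N}{N^2+1}$ is bounded on $\mathbb{R}_+^2$, one has $\|f\|_{\bar{Y}}\le\|\bar{\varphi}\|_{L^\infty}^{1/2}\|f\|_{Y}$, so $Y$ is continuously embedded into $\bar{Y}$ and hence $L^2([0,T];Y)\hookrightarrow L^2([0,T];\bar{Y})$ continuously. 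Thus I would record the chain $Y_2\hookrightarrow\hookrightarrow\bar{Y}_1\hookrightarrow\bar{Y}$, with the first embedding compact and the second continuous, and all three spaces Hilbert (in particular reflexive and separable).

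Next I would invoke the Aubin--Lions--Simon lemma (see, e.g., \cite{Lions72}) in the form: if $X_0\hookrightarrow\hookrightarrow X_1\hookrightarrow X_2$ are Banach spaces with the first embedding compact and the second continuous, then
\[
W:=\left\{\psi\in L^2([0,T];X_0):\ \partial_t\psi\in L^2([0,T];X_2)\right\}
\]
is compactly embedded into $L^2([0,T];X_1)$. Applying this with $X_0=Y_2$, $X_1=\bar{Y}_1$ and $X_2=\bar{Y}$ shows that $W$ is compactly embedded into $L^2([0,T];\bar{Y}_1)$.

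Finally I would verify that $\mathcal{H}^2_\varphi$ embeds continuously into $W$: for $\psi\in\mathcal{H}^2_\varphi$ we have $\psi\in L^2([0,T];Y_2)$ and $\partial_t\psi\in L^2([0,T];Y)\subset L^2([0,T];\bar{Y})$, with
\[
\|\psi\|_{L^2([0,T];Y_2)}+\|\partial_t\psi\|_{L^2([0,T];\bar{Y})}\le C\left(\|\psi\|_{L^2([0,T];Y_2)}+\|\partial_t\psi\|_{L^2([0,T];Y)}\right),
\]
so the inclusion $\mathcal{H}^2_\varphi\hookrightarrow W$ is bounded. Composing the continuous inclusion $\mathcal{H}^2_\varphi\hookrightarrow W$ with the compact inclusion $W\hookrightarrow\hookrightarrow L^2([0,T];\bar{Y}_1)$ yields the assertion.

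As to where the real work lies: the substantive ingredient is the \emph{spatial} compact embedding $Y_2\hookrightarrow\hookrightarrow\bar{Y}_1$ over the unbounded domain $\mathbb{R}^2_+$, which relies on a Rellich-type argument together with the decay of the weights $\varphi,\tilde{\varphi}$ and the faster decay of $\bar{\varphi}$ as $K$ or $N\to0$ or $\infty$; this is precisely what Lemma \ref{lem: the compactness of Y1} already delivers. Granting that, the present statement is a routine packaging via Aubin--Lions, and the only points requiring care are the correct choice of the intermediate and ambient spaces $\bar{Y}_1$ and $\bar{Y}$ so that the compactness of the spatial embedding and the continuity $Y\hookrightarrow\bar{Y}$ are simultaneously available, and the observation that the weak time-derivative, which a priori lies only in $L^2([0,T];Y)$, is thereby controlled in $L^2([0,T];\bar{Y})$.
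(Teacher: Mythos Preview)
Your proposal is correct and follows essentially the same approach as the paper: both observe that $\mathcal{H}^2_\varphi\subset\{\psi\in L^2([0,T];Y_2):\partial_t\psi\in L^2([0,T];\bar{Y})\}$ via the continuous inclusion $Y\hookrightarrow\bar{Y}$, and then apply the Aubin--Lions lemma together with the chain $Y_2\hookrightarrow\hookrightarrow\bar{Y}_1\hookrightarrow\bar{Y}$ from Lemma~\ref{lem: the compactness of Y1}. Your write-up simply makes the steps of the paper's two-line argument more explicit.
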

	\begin{proof}
		We note that 
		\begin{equation*}
			\begin{aligned}
				\mathcal{H}^2_\varphi &=  \left\{ \psi\in L^2([0,T];Y_2)\left| \frac{\partial \psi}{\partial t}\in L^2([0,T];Y)\right.\right\}  \subset \left\{ \psi\in L^2([0,T];Y_2)\left| \frac{\partial \psi}{\partial t}\in L^2([0,T];\bar{Y})\right.\right\}.
			\end{aligned}
		\end{equation*}
		By  the Aubin–Lions lemma and Lemma 
		\ref{lem: the compactness of Y1},  ${\mathcal{H}}^2_\varphi $ is compactly embedded into $L^2([0,T]; \bar{Y}_1)$. 
	\end{proof}
	
}

\begin{rem}
	If we use a faster decay weight for the norm in $\bar{Y}$, for example we define $\bar{Y}:=L^2_{(\varphi+\tilde{\varphi})\bar{\varphi}^2}$, then one can prove that $\bar{Y}_1$ is compactly embedded into $\bar{Y}$ by a similar argument for Lemma \ref{lem: the compactness of Y1}.
\end{rem}     

\subsection{Existence, regularity, and uniqueness of solutions to the nonlinear problem and the original HJB equation}\label{Existence  and uniqueness to the original HJB equation}
We first consider the regularity of the weak solution to Equation \eqref{eq: differential equation for original $lambda$} obtained in  Section \ref{sec:The extension lemma and fixed point argument}. Then we shall show the unique existence of classical solutions to the original problem \eqref{eq:second HJB for v(K,N)}.
\begin{thm}[Smoothness  of the solution to Equation \eqref{eq: differential equation for original $lambda$}]\label{thm: classical soul of nonlinear eq of lambda}
	The weak solution  of  Equation \eqref{eq: differential equation for original $lambda$}   given by	Theorem \ref{thm:weak solution of original equation} is indeed classical.
\end{thm}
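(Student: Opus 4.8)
The plan is to bootstrap the regularity of the weak solution $\lambda$ to the nonlinear equation \eqref{eq: differential equation for original $lambda$} by freezing the nonlinearity and invoking the classical solvability result already established for the linearized problem. Let $\lambda \in \mathcal{H}^2_\varphi \cap S$ be the fixed point obtained in Theorem \ref{thm:weak solution of original equation}, so that $\lambda = \Gamma(\lambda)$; that is, $\lambda$ is the weak solution of the linearized problem \eqref{eq: differential equation for $lambda$} with input $\psi = \lambda$. First I would observe that since $\lambda \in S$, the input $\psi = \lambda$ satisfies the pointwise lower bounds required in Theorems \ref{thm:crucial estimate theorem} and \ref{thm:crucial estimate theorem for case beta<gamma}, and moreover $\lambda$ is bounded between explicit powers of $N/K$. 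The coefficient $j(\lambda)$ appearing in \eqref{eq: differential equation for original $lambda$} is therefore bounded above and below by powers of $N/K$ on account of \eqref{eq:condition on j(x)} and the membership $\lambda \in S$; in particular $N j(\lambda)$ is locally bounded away from $0$ and $\infty$ on any compact subset of $[0,T]\times \mathbb{R}^2_+$.

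Next I would upgrade the regularity in stages. Since $\lambda \in \mathcal{H}^2_\varphi$, by the Sobolev embedding noted after \eqref{functional space mathcal{H}^2_varphi} we have $\lambda(t) \in \cap_{\delta\in(0,1)} C^\delta_{loc}(\mathbb{R}^2_+)$ for a.e.\ $t$, and in fact, using the time regularity encoded in $\partial_t \lambda \in L^2([0,T];Y)$ together with a parabolic embedding, $\lambda$ is (at least) continuous — better, locally H\"older in the parabolic sense — on $[0,T]\times\mathbb{R}^2_+$. Thus $\psi := \lambda$ is a continuous function satisfying the lower bound, so Theorem \ref{thm:existence when psi is smooth} does not apply verbatim (which requires $\psi$ smooth), but the \emph{interior} theory does: one treats \eqref{eq: differential equation for $lambda$} with this fixed continuous $\psi$ after the change of variables $K=e^x$, $N=e^y$ as in \eqref{eq: equation for tilde lamba}, obtaining a uniformly parabolic equation on $\mathbb{R}^2$ whose coefficients are now locally bounded and locally H\"older continuous (the coefficient $e^{y-x}j(\psi)$ inherits local H\"older continuity from $\psi$ via the smoothness of $j$ on $(0,\infty)$). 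Interior $L^p$ / Schauder estimates for linear parabolic equations with locally H\"older coefficients then give $\tilde\lambda \in C^{1+\frac{\alpha}{2},2+\alpha}_{loc}([0,T]\times\mathbb{R}^2)$ for some $\alpha\in(0,1)$, hence $\lambda \in C^{1+\frac{\alpha}{2},2+\alpha}_{loc}([0,T]\times\mathbb{R}^2_+)$. With this one reads off that the first-order coefficient $-N j(\lambda)$ is itself in $C^{\alpha/2,\alpha}_{loc}$, feeding back into \eqref{eq: differential equation for original $lambda$} viewed as a linear equation; a further application of interior Schauder estimates then bootstraps $\lambda$ to $C^{1+\frac{\alpha}{2},2+\alpha}_{loc}$ — and if $j$, $f$, $u_2$ are smooth, to $C^\infty_{loc}$ — by the standard iteration, exactly as in the argument at the end of Subsection \ref{sec: L^infty a prior estimates}. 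A point requiring a small argument is the compatibility of the terminal data: one must check that the weak terminal trace $\lambda(T,\cdot) = u_2'(K/N)$ is attained in the classical sense, which follows from the parabolic boundary regularity near $t=T$ since $u_2'(K/N)$ is locally smooth by Assumption \ref{Assum: terminal utility function $u_2$}.

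The main obstacle I anticipate is not the interior bootstrap itself, which is routine once $\psi = \lambda$ is known to be continuous with a good lower bound, but rather establishing \emph{that} initial continuity (or local H\"older regularity) of the weak solution $\lambda$ rigorously — i.e.\ passing from the energy-space membership $\lambda \in \mathcal{H}^2_\varphi$ to a pointwise-defined, continuous representative on the open domain. This is where the weighted structure matters: one localizes to a compact box $[\delta, R]^2$ in $(K,N)$, where the weight $\varphi$ (and $\bar\varphi$) is bounded above and below, so the weighted norms control the unweighted $H^2$ norms in space and $L^2 H^0$ norm in time on that box; the equation restricted there is uniformly parabolic with bounded coefficients (again using $\lambda \in S$ to bound $j(\lambda)$), so local parabolic $L^2$-regularity theory (De Giorgi–Nash–Moser, or the $W^{2,1}_p$ theory) applies and yields local H\"older continuity of $\lambda$ on $[0,T]\times[\delta,R]^2$. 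Letting $\delta\to 0$ and $R\to\infty$ gives continuity on all of $[0,T]\times\mathbb{R}^2_+$, after which the Schauder bootstrap described above closes the proof.
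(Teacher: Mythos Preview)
Your proposal is correct and, in its final ``main obstacle'' paragraph, lands on exactly the paper's argument: apply De~Giorgi--Nash--Moser to the weak solution (viewed as solving a linear parabolic equation with bounded measurable coefficients, which is legitimate because $\lambda\in S$ bounds $Nj(\lambda)$ locally), obtain $\lambda\in C^{\alpha/2,\alpha}_{loc}$, and then bootstrap via interior Schauder to $C^{1+\alpha/2,2+\alpha}_{loc}$. The paper cites Nash's theorem (Ladyzhenskaia et al., Ch.~3, Thm.~10.1) for the first step and Ch.~4, Thm.~5.1 for the second, and notes that iteration gives smoothness when $f$ is smooth.

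Your earlier Sobolev-embedding route is a detour that, as you yourself suspect, does not close on its own: the membership $\lambda\in\mathcal{H}^2_\varphi$ places $\lambda$ in the borderline parabolic space $W^{2,1}_2$ locally (two spatial derivatives and one time derivative in $L^2$, in $2+1$ dimensions), which fails to embed into any $C^{\alpha/2,\alpha}$---the critical exponent is $p>2$. So the hand-wave ``together with a parabolic embedding, $\lambda$ is \ldots\ locally H\"older in the parabolic sense'' is not justified, and the paper avoids it entirely: it uses only $\lambda\in S$ (for coefficient bounds) and the weak formulation, never the $\mathcal{H}^2_\varphi$ regularity, to invoke Nash directly. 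You can drop the first two paragraphs and keep the last.
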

\begin{proof}
	Right now we have a weak solution for Equation \eqref{eq: differential equation for original $lambda$} with pointwise estimates so that $\lambda \in S$. By a deep regularity result deals with rough coefficients equations originally due to Nash (Chapter 3, Theorem 10.1 in \cite{ladyzhenskaia1988linear}), we know that $\lambda$ is indeed locally H\"older continuous, that is in  $C_{loc}^{\frac{\alpha}{2},\alpha}( [0,T)\times \mathbb{R}_+^2 )$  for some positive $\alpha$.
	By interior Schauder estimates this  solution $\lambda$ to  Equation\eqref{eq: differential equation for original $lambda$} is in $C_{loc}^{1+\frac{\alpha}{2},2+\alpha}( [0,T)\times \mathbb{R}^2_+)$
	(Chapter 4, Theorem 5.1 in \cite{ladyzhenskaia1988linear}).
	If $f$  in \eqref{eq: differential equation for original $lambda$} is smooth, we can repeatedly use this argument and  get a smooth solution. 
\end{proof}
Now we have proven that the solution  $\lambda$ to \eqref{eq: differential equation for original $lambda$}  exists in the classical sense, we try to use this $\lambda$ to obtain $v$. There are several ways to do so. 
One way is replacing  $v_K$ by $\lambda$ in  Equation \eqref{eq:second HJB for v(K,N)}  viewing this $\lambda$ as given, and then solving the resulting equation.  We then obtain 
\begin{equation}\label{eq:third HJB for v(K,N)}
	\left\{
	\begin{aligned}
		&0 = v_t + Nu_1\left((u'_1)^{-1}(\lambda)\right) -N\lambda\cdot(u'_1)^{-1}(\lambda) + AK^{\beta}N^{1-\beta}\lambda
		+f(N)v_N 
		+\frac{1}{2}{\epsilon}^2K^2v_{KK} + \frac{1}{2}{\sigma}^2N^2v_{NN},\\
		&v(T, K, N) = Nu_2\left(\frac{K}{N}\right).\\
	\end{aligned}
	\right.
\end{equation}
It means that we treat $Nu_1\left((u'_1)^{-1}(\lambda)\right) -N\lambda\cdot(u'_1)^{-1}(\lambda) + AK^{\beta}N^{1-\beta}\lambda$ in \eqref{eq:third HJB for v(K,N)} as given source terms, then solve the resulting  parabolic equation uniquely. 
We need to be more precise  for what we mean ```uniquely'', as there is  no uniqueness for the linear equation \eqref{eq:third HJB for v(K,N)} when there is no restriction on the growth rate of the  solution itself (there are some ``physically incorrect'' solutions). For this purpose, we 
let $K=e^x$, $N=e^y$, $\tilde{v}(t, x,y) = v(t, e^x, e^y)$, and $\tilde{\lambda}(t, x,y) = \lambda(t, e^x, e^y)$, Equation \eqref{eq:third HJB for v(K,N)} is transformed to 
\begin{equation}\label{eq: equation for tildev}
	\left\{
	\begin{aligned}
		&\tilde{v}_{t} - \frac{1}{2}\epsilon^2\tilde{v}_{x} + \left(\frac{f(e^y)}{e^y}-\frac{1}{2}\sigma^2\right)\tilde{v}_{y}  +\frac{1}{2}\epsilon^2\tilde{v}_{xx} + \frac{1}{2}\sigma^2\tilde{v}_{yy}\\
		&  = -e^{y} u_1\left(j(\tilde{\lambda})\right) + e^{y}\tilde{\lambda} j(\tilde{\lambda})- Ae^{\beta x +(1-\beta)y}\tilde{\lambda}, ~
		x, y \in \mathbb{R}, ~t \in [0,T),\\
		&\tilde{v}(T,x,y)= e^{y}u_2(e^{x-y}),\\
	\end{aligned}\right.
\end{equation}
where we recall that $j(\cdot) = (u'_1)^{-1}(\cdot)$. The existence of the solution to this linear equation \eqref{eq: equation for tildev} is clear due to 
its uniformly parabolic nature.
Moreover, since the coefficients in front of $\tilde{v}$ and its various derivatives  are bounded, we conclude that there exists at most one solution $\tilde{v}$ to the  above equation satisfying $|\tilde{v}|\le Be^{C(x^2+y^2)}$ for some positive constants $B$ and $C$, the claim follows by referring to, for example  \cite [Chapter 2, Section 4, Theorem 10]{MR0181836}.
It means that Equation \eqref{eq:third HJB for v(K,N)} is uniquely solvable within the  function class with the growth rate no faster than $Be^{C((\ln K)^2+(\ln N)^2)}$ for some positive constants $B$ and $C$.

Next, we need to verify that the solution $v$ (or $\tilde{v}$) we obtained by solving \eqref{eq:third HJB for v(K,N)} (or \eqref{eq: equation for tildev}) indeed satisfies the compatible condition $v_K=\lambda$.
For this purpose, differentiating Equation \eqref{eq:third HJB for v(K,N)} with respect to $K$, we have \begin{equation*}
	\left\{
	\begin{aligned}
		&{\bar{\lambda}}_t - Nj({\lambda})\frac{\partial {\lambda}}{\partial K} + A N^{1-\beta}K^{\beta}\frac{\partial {\lambda}}{\partial K} +{\epsilon}^2K\frac{\partial \bar{\lambda}}{\partial K} + \frac{1}{2}{\epsilon}^2K^2\frac{\partial^2 \bar{\lambda}}{\partial K^2} \\
		&\quad + f(N)\frac{\partial \bar{\lambda}}{\partial N} +  \frac{1}{2}{\sigma}^2N^2 \frac{\partial^2 \bar{\lambda}}{\partial N^2} + A\beta N^{1-\beta}K^{\beta-1}{\lambda}= 0,\\
		&\bar{\lambda}(T,K,N) = u'_2\left(\frac{K}{N}\right),
	\end{aligned}
	\right.
\end{equation*}
where $\bar{\lambda}:=v_K$, and $v$ is the solution to  \eqref{eq:third HJB for v(K,N)}.
Comparing with Equation \eqref{eq: differential equation for original $lambda$} for $\lambda$, $\bar{\lambda}-\lambda$ will satisfy the following equation
\begin{equation*}
	\left\{
	\begin{aligned}
		&\frac{\partial}{\partial t}({\bar{\lambda}} - \lambda)  +{\epsilon}^2K\frac{\partial ({\bar{\lambda}} - \lambda)}{\partial K} + \frac{1}{2}{\epsilon}^2K^2\frac{\partial^2 ({\bar{\lambda}} - \lambda)}{\partial K^2} 
		+ f(N)\frac{\partial ({\bar{\lambda}} - \lambda)}{\partial N} +  \frac{1}{2}{\sigma}^2N^2 \frac{\partial^2 ({\bar{\lambda}} - \lambda)}{\partial N^2} = 0,\\
		&({\bar{\lambda}} - \lambda)(T,K,N) = 0.
	\end{aligned}
	\right.
\end{equation*}
It is then obvious that $\bar{\lambda} (t,K,N)-\lambda(t,K,N) \equiv 0$ (within the ``physically correct'' function class), hence, $\bar{\lambda} \equiv \lambda$. Indeed, if not, then $\bar{\lambda}-\lambda$ and hence  $\bar{\lambda}$ grows faster than $Be^{C((\ln K)^2+(\ln N)^2)}$ for any positive constants $B$ and $C$ since $\lambda$ has pointwise estimates stated in the set $S$ defined in \eqref{def:definition for the growth rate set}; this will contradict with  the fact that $|v_K| \lesssim \frac{1}{K}\rho(\ln K, \ln N)$ which we will prove now, and	 $\rho$ is defined in \eqref{eq:definofrho} below. 

To see this fact, 
we  choose  the weight 
\begin{equation}\label{eq:definofrho}
	\rho(x,y) = e^x +e^y  \textrm{ for }  \beta \ge \gamma, ~ \textrm{ and } \rho(x,y) = e^x +e^y +e^{-x} + e^{-8x} +e^{8y} \textrm{ for } \beta < \gamma,
\end{equation}
and consider $w:= \frac{\tilde{v}}{\rho}$, it satisfies 
\begin{equation}\label{eq:differential eq for tildev/h}
	\left\{
	\begin{aligned}
		&{w}_{t} +\left(  - \frac{1}{2}\epsilon^2 + \epsilon^2\frac{\rho_x}{\rho}\right) {w}_{x} + \left(\frac{f(e^y)}{e^y}-\frac{1}{2}\sigma^2 + \sigma\frac{\rho_y}{\rho}\right) {w}_{y} 
		+ \frac{1}{2}\epsilon^2{w}_{xx} + \frac{1}{2}\sigma^2{w}_{yy}  \\
		&+\left[- \frac{1}{2}\epsilon^2 \frac{\rho_x}{\rho} + \left(\frac{f(e^y)}{e^y}-\frac{1}{2}\sigma^2 \right)\frac{\rho_y}{\rho} + \frac{1}{2}\epsilon^2\frac{\rho_{xx}}{\rho} + \frac{1}{2}\sigma^2\frac{\rho_{yy}}{\rho} \right]w \\
		&\quad = \frac{-e^{y} u_1\left(j(\tilde{\lambda})\right) + e^{y}\tilde{\lambda}\cdot j(\tilde{\lambda}) - Ae^{\beta x +(1-\beta)y}\tilde{\lambda} }{\rho}, ~
		x, y \in \mathbb{R}, ~t \in [0,T),\\
		&{w}(T,x,y)= \frac{e^{y}u_2(e^{x-y})}{\rho} \simeq \frac{e^{(1-\gamma) x+  \gamma y}}{\rho}.\\
	\end{aligned}\right.
\end{equation}
When $\beta\ge \gamma$, using  Assumptions \eqref{Assum: running utility function $u_1$} on $u_1$ and conditions \eqref{eq:condition on j(x)} on $j(x)$, together with $\lambda >rsim \left(\frac{N}{K}\right)^{\gamma}$, we find that $\left|-e^{y} u_1\left(j(\tilde{\lambda})\right)\right.$ $\left. + e^{y}\tilde{\lambda}\cdot j(\tilde{\lambda})\right|$ $ \lesssim e^{(1-\gamma)x+\gamma y} $. Also, by the estimate $|\tilde{\lambda}| \lesssim e^{\beta(y-x)} + e^{\gamma(y-x)}$, we have $\left|Ae^{\beta x +(1-\beta)y}\tilde{\lambda}\right|$ $\lesssim$ $e^y$$+e^{(\beta-\gamma)x+(1-\beta+\gamma)y}$. When $\beta\ge \gamma$, 
$e^{(\beta-\gamma)x+(1-\beta+\gamma)y} \lesssim e^{x} +e^{y}$.
Then we easily verify that the solution $w$ and its derivatives are all bounded as the coefficients, the source term and the terminal data in \eqref{eq:differential eq for tildev/h} are all bounded and Lipschitz continuous. And  we can then obtain that $|\tilde{v}|, |\tilde{v}_x|$, $|\tilde{v}_y|$ $\lesssim \rho$. The case $\beta<\gamma$ can be proved similarly. By a change of variable, we have proven that $|v_K| \lesssim \frac{1}{K}\rho(\ln K, \ln N)$. Hence, we finish the proof of the compatible condition $v_K=\lambda$.

Our main goal  next is  to show the uniqueness of the solution $v$ to the  HJB equation \eqref{eq:second HJB for v(K,N)} within  the class of functions satisfying  a certain growth rate  at infinity and near the boundary $K=0$ and $N=0$. We have:
\begin{thm}[Smoothness  and uniqueness of the solution to HJB \eqref{eq:second HJB for v(K,N)}]\label{thm:uniqueness for HJB}
	Under the same assumption of Theorem \ref{thm:weak solution of original equation}, we have a  classical solution	for the original HJB equation \eqref{eq:second HJB for v(K,N)}. Moreover,  it is a unique one that satisfies the following:
	\begin{equation*}
		\left\{
		\begin{aligned}
		&B_1\left(\frac{N}{K}\right)^{\gamma} \le \frac{\partial v}{\partial K}(t,K,N) \le B_2 \left(\frac{N}{K}\right)^{\gamma} + B_3  \left(\frac{N}{K}\right)^{\beta} ~\textrm{when} ~ \beta \ge \gamma;\\
		&B_4  \left(\frac{N}{K}\right)^{\beta}\left(  B_5 \left(\frac{N}{K}\right)^{\beta-1} + B_6\right)^{ \frac{\gamma-\beta}{\beta-1 } }\le \frac{\partial v}{\partial K} (t,K,N) \le B_7\left(\frac{N}{K}\right)^{\gamma} ~\textrm{when} ~ \beta < \gamma,
		\end{aligned}
		\right.
	\end{equation*}
	and 
	\begin{equation}\label{eq: requirment at far field}
		|v| \le B_8 K^{\beta_1}+ B_8N^{\gamma_1}+B_9,
	\end{equation}
	for some positive  constants  $B_1$, $B_2$, $B_3$, $B_4$, $B_5$, $B_6$, $B_7$, $B_8$, $B_9$, and (not necessarily positive)  constants $\beta_1$ and  $\gamma_1$.
\end{thm}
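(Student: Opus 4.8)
## Proof proposal for Theorem (uniqueness and smoothness for the HJB equation)

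The plan is to assemble the solution $v$ from the already-constructed classical solution $\lambda$ to the nonlinear problem \eqref{eq: differential equation for original $lambda$}, and then run two separate uniqueness arguments: one at the level of $v_K$ and one at the level of $v$ itself. For \textbf{existence}, I would take the $\lambda \in \mathcal{H}^2_\varphi \cap S$ produced by Theorem \ref{thm:weak solution of original equation}, which is classical by Theorem \ref{thm: classical soul of nonlinear eq of lambda}, and solve the linear parabolic equation \eqref{eq:third HJB for v(K,N)} (equivalently \eqref{eq: equation for tildev} after $K=e^x$, $N=e^y$) treating the $\lambda$-dependent terms as given source terms. Uniform parabolicity of \eqref{eq: equation for tildev} with bounded, Lipschitz coefficients and $C^3$ terminal data gives a solution $\tilde v$; the growth bound $|\tilde v| \le Be^{C(x^2+y^2)}$ follows from the weight argument with $\rho$ defined in \eqref{eq:definofrho}, by checking (case $\beta \ge \gamma$ and $\beta < \gamma$ separately, using Assumption \ref{Assum: running utility function $u_1$}, \eqref{eq:condition on j(x)}, and the pointwise bounds $\lambda \in S$) that the source term and terminal data divided by $\rho$ are bounded, so that $w = \tilde v/\rho$ and its first derivatives are bounded. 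Then the compatibility $v_K = \lambda$ is obtained by differentiating \eqref{eq:third HJB for v(K,N)} in $K$, observing that $\bar\lambda := v_K$ and $\lambda$ satisfy the \emph{same} terminal-value problem, so their difference solves a homogeneous linear equation with zero terminal data; within the growth class $|v_K| \lesssim \tfrac1K \rho(\ln K, \ln N)$ (just established) this forces $\bar\lambda \equiv \lambda$. Substituting $v_K = \lambda$ back shows $v$ solves \eqref{eq:second HJB for v(K,N)} classically, and the pointwise bounds $(\ast1)$–$(\ast2)$ on $v_K$ are exactly the $S$-bounds on $\lambda$ from Theorems \ref{thm:crucial estimate theorem} and \ref{thm:crucial estimate theorem for case beta<gamma}; the bound \eqref{eq: requirment at far field} on $|v|$ follows by integrating $v_K = \lambda$ in $K$ (using $(\ast1)$–$(\ast2)$, which are integrable powers since $0<\beta,\gamma<1$) and controlling the remaining function of $(N,t)$ via the boundedness of $w=\tilde v/\rho$.

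For \textbf{uniqueness}, suppose $v^{(1)}, v^{(2)}$ are two classical solutions of \eqref{eq:second HJB for v(K,N)} satisfying the stated bounds. First I would show their $K$-derivatives coincide: $\lambda^{(i)} := v^{(i)}_K$ each solves the nonlinear equation \eqref{eq: differential equation for original $lambda$} (obtained by differentiating the HJB equation in $K$ and using the first-order condition), and each lies in $S$ by the assumed bounds $(\ast1)$–$(\ast2)$. After $K=e^x$, $N=e^y$ the difference $\tilde\lambda^{(1)} - \tilde\lambda^{(2)}$ satisfies a \emph{linear} parabolic equation: the only nonlinear term is $-Nj(\lambda)\lambda_K$, and writing $j(\lambda^{(1)})\lambda^{(1)}_K - j(\lambda^{(2)})\lambda^{(2)}_K = j(\lambda^{(1)})(\lambda^{(1)}_K - \lambda^{(2)}_K) + (j(\lambda^{(1)}) - j(\lambda^{(2)}))\lambda^{(2)}_K$ and using the mean value theorem on $j$ together with the bounds \eqref{eq:condition on j(x)}, \eqref{eq:another condition on j(x)} and the $S$-bounds, one sees the difference solves an equation of the form \eqref{eq: equation for tilde lamba}-type with coefficients having at most exponential growth and zero terminal data. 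The growth control $|\tilde\lambda^{(1)} - \tilde\lambda^{(2)}| \le B(e^{\beta_1(y-x)} + e^{\gamma_1(y-x)})$ holds because each $\tilde\lambda^{(i)}$ obeys the $S$-bounds. Then the modified maximum principle, Corollary \ref{cor:Uniqueness of the classical solution}, gives $\tilde\lambda^{(1)} \equiv \tilde\lambda^{(2)}$, i.e.\ $v^{(1)}_K \equiv v^{(2)}_K$.

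Having $v^{(1)}_K = v^{(2)}_K$, the difference $V := v^{(1)} - v^{(2)}$ has $V_K \equiv 0$, so $V = V(t,N)$ depends only on $t$ and $N$; subtracting the two copies of \eqref{eq:second HJB for v(K,N)} (all $v_K$-dependent terms cancel identically) shows $V$ solves the linear equation $V_t + f(N)V_N + \tfrac12\sigma^2 N^2 V_{NN} = 0$ with $V(T,N)=0$. Passing to $y = \ln N$ turns this into a one-dimensional uniformly parabolic Cauchy problem with bounded drift coefficient $f(e^y)/e^y - \tfrac12\sigma^2$ (bounded because $f$ is Lipschitz with $f(0)=0$) and zero terminal data. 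The growth bound \eqref{eq: requirment at far field} gives $|V| \lesssim N^{\gamma_1} + 1 = e^{\gamma_1 y} + 1$, which is far slower than $e^{ky^2}$; so the standard one-dimensional parabolic uniqueness theorem (e.g.\ \cite[Ch.\ 2, \S4, Theorem 10]{MR0181836}, or Lemma \ref{lem: lemma of maximum principle} applied to $V$ and $-V$ after multiplying by a modest weight such as $e^{-2|y|}$) forces $V \equiv 0$. Hence $v^{(1)} \equiv v^{(2)}$.

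The \textbf{main obstacle} is the uniqueness step at the level of $v_K$: one must verify carefully that, after the logarithmic change of variables, the equation satisfied by the \emph{difference} $\tilde\lambda^{(1)} - \tilde\lambda^{(2)}$ genuinely falls under the hypotheses of Corollary \ref{cor:Uniqueness of the classical solution} — in particular that the zeroth-order coefficient arising from the term $(j(\lambda^{(1)}) - j(\lambda^{(2)}))\lambda^{(2)}_K / (\lambda^{(1)} - \lambda^{(2)})$ is bounded above (not merely finite) on the whole space, which requires combining the lower bound $\psi \gtrsim (N/K)^\gamma$ (here $\psi = \lambda^{(i)}$) with $|xj'(x)| \lesssim j(x) + 1$ and the matching upper $S$-bounds, exactly the kind of delicate cancellation between the singular coefficient $j$ and the power-law pointwise estimates that pervades this paper. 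Once that boundedness is in hand, the rest of the uniqueness argument is a routine application of the customized maximum principle established in Section \ref{sec:Maximum Principle and Uniqueness}, and the remaining regularity claims follow from the interior Schauder theory already invoked in Theorem \ref{thm: classical soul of nonlinear eq of lambda}.
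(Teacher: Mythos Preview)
Your existence argument matches the paper's construction in Section~\ref{Existence  and uniqueness to the original HJB equation} and is fine.

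Your uniqueness argument, however, has a genuine gap in Step~1. When you linearize the difference $\mu := \lambda^{(1)} - \lambda^{(2)}$ of the nonlinear equation~\eqref{eq: differential equation for original $lambda$}, the decomposition
\[
j(\lambda^{(1)})\lambda^{(1)}_K - j(\lambda^{(2)})\lambda^{(2)}_K
= j(\lambda^{(1)})\mu_K + j'(\theta)\,\lambda^{(2)}_K\,\mu
\]
produces a zeroth-order coefficient $-N j'(\theta)\,\lambda^{(2)}_K$ that involves $\lambda^{(2)}_K = v^{(2)}_{KK}$. The $S$-bounds you invoke control $\lambda^{(i)}$ (and hence $\theta$ and $j'(\theta)$ via~\eqref{eq:condition on j(x)}--\eqref{eq:another condition on j(x)}), but they say nothing about the \emph{derivative} $\lambda^{(2)}_K$. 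Your proposed remedy---combining the lower bound on $\psi$ with $|xj'(x)| \lesssim j(x)+1$---addresses only the $j'(\theta)$ factor, not the $\lambda^{(2)}_K$ factor. Since the theorem assumes no bound on $v_{KK}$, and no concavity of $v$ in $K$ is established a~priori, this coefficient cannot be shown bounded from above, and the modified maximum principle (Theorem~\ref{thm:Maximum principle}/Corollary~\ref{cor:Uniqueness of the classical solution}) does not apply to the $\mu$-equation.

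The paper sidesteps this entirely by working one level down, directly with $\tilde v_3 := \tilde v_1 - \tilde v_2$. Subtracting the two copies of~\eqref{eq:second HJB for v(K,N)}, the nonlinear terms $Nu_1(j(v_K)) - Nv_K\,j(v_K)$ depend on $v_K$ only (not on $v_{KK}$), and after the mean value theorem they become \emph{first-order} coefficients multiplying $\tilde v_{3x}$:
\[
e^{y}\bigl(u_1(j(\tilde v_{1x}/e^x)) - u_1(j(\tilde v_{2x}/e^x))\bigr)
= e^{y-x}\,\theta_1 j'(\theta_1)\,\tilde v_{3x},
\]
and similarly for the other term, with $\theta_1,\theta_2$ lying between $v^{(1)}_K$ and $v^{(2)}_K$. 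These intermediate values are controlled by the assumed $S$-bounds alone. The weighted maximum principle (Lemma~\ref{lem: lemma of maximum principle}) applied to $w=\tilde v_3/\rho$ with a suitable weight $\rho$ tuned to the exponents $\beta_1,\gamma_1$ then yields $v^{(1)}\equiv v^{(2)}$ in a single step, obviating both your Step~1 and Step~2.
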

\begin{proof}
	We have obtained a classical solution from $\lambda = \frac{\partial v}{\partial K}$ for  Equation \eqref{eq:second HJB for v(K,N)}.
	For the uniqueness of the classical solution, we suppose that there exist two classical solutions $v_1$ and $v_2$ to Equation \eqref{eq:second HJB for v(K,N)}, let $v_3 = v_1 -v_2$,  $K=e^x$ and $N=e^y$. We consider $\tilde{v}_3(t, x,y) := v_3(t, e^x, e^y)$, it satisfies the following equations:
	\begin{equation}\label{eq: equation for tilde lamba_3}
		\left\{
		\begin{aligned}
			&\quad \tilde{v}_{3t} +\left( Ae^{(y-x)(1-\beta)} - \frac{1}{2}\epsilon^2\right)\tilde{v}_{3x} + \left(\frac{f(e^y)}{e^y}-\frac{1}{2}\sigma^2\right)\tilde{v}_{3y} +
	 \frac{1}{2}\epsilon^2\tilde{v}_{3xx} + \frac{1}{2}\sigma^2\tilde{v}_{3yy}\\
			&= e^{y}\left(u_1\left(j\left(\frac{\tilde{v}_{1x}}{e^x}\right)\right) - u_1\left(j\left(\frac{\tilde{v}_{2x}}{e^x}\right)\right)\right) 
			-e^{y-x}\left(\tilde{v}_{1x}j\left(\frac{\tilde{v}_{1x}}{e^x}\right) - \tilde{v}_{2x}j\left(\frac{\tilde{v}_{2x}}{e^x}\right) \right)
			~x, y \in \mathbb{R}, ~t \in [0,T),\\
			&\quad\tilde{v}_3(T,x,y)=0,\\
		\end{aligned}\right.
	\end{equation}
	where $j(x)  = (u_1')^{-1}(x)$, $\tilde{v}_1(t, x,y) = v_1(t, e^x, e^y)$ and $\tilde{v}_2(t, x,y) = v_2(t, e^x, e^y)$.		   
	Note that 
	\begin{equation*}
		\begin{aligned}
			&e^{y}u_1\left(j\left(\frac{\tilde{v}_{1x}}{e^x}\right)\right) - e^{y} u_1\left(j\left(\frac{\tilde{v}_{2x}}{e^x}\right)\right) = e^{y}\theta_1 j'(\theta_1)\left(\frac{\tilde{v}_{1x}}{e^x} - \frac{\tilde{v}_{2x}}{e^x} \right) = e^{y-x}\theta_1 j'(\theta_1)\tilde{v}_{3x} \\ 
			&e^{y-x}\tilde{v}_{1x} j\left(\frac{\tilde{v}_{1x}}{e^x}\right) - e^{y-x} \tilde{v}_{2x}j\left(\frac{\tilde{v}_{2x}}{e^x}\right) = e^{y}(\theta_2 j'(\theta_2) + j(\theta_2)) \left(\frac{\tilde{v}_{1x}}{e^x} - \frac{\tilde{v}_{2x}}{e^x} \right) 
			= 
			e^{y-x} (\theta_2 j'(\theta_2) + j(\theta_2))\tilde{v}_{3x},
		\end{aligned}
	\end{equation*}
	for some $\theta_1$ and $\theta_2$ between $\frac{\tilde{v}_{1x}}{e^x}$ and $\frac{\tilde{v}_{2x}}{e^x}$.
We shall apply the maximum principle to conclude that $w:= \frac{\tilde{v}_{3}}{\rho}$ is 0 for a suitable weight $\rho$ to be specified later, similar to the proof of Theorem \ref{thm:Maximum principle}.
And we only need to check the following two facts
	\begin{equation}\label{eq: decay at far field}
		\lim_{R \to \infty} \sup_{|x| + |y| \ge R} |w| \to 0 ~ \textrm{uniformly in $t \in [0,T]$};
	\end{equation}
	\begin{equation}\label{eq:boundess}
		Ae^{(y-x)(1-\beta)}\frac{\rho_x}{\rho} ~\textrm{is bounded from above}.
	\end{equation}
	When $\beta \ge \gamma$, using $\frac{\tilde{v}_{1x}}{e^x}, \frac{\tilde{v}_{2x}}{e^x} \ge c_0^{\gamma}e^{\gamma(y-x)}$ and conditions \eqref{eq:condition on j(x)} on $j(x)$ we know 
	$e^{y-x}\theta_1 j'(\theta_1)$ and $e^{y-x} (\theta_2 j'(\theta_2) + j(\theta_2))$  are bounded. We first consider the case of $0<\beta_1,\gamma_1<1$ and explain the main idea. In this case, we can choose weight $\rho =  e^{-x+5y} + e^{3(x-y)} + e^{x-y} + e^{-x+y} + e^{-(x+y)}$.
	For this purpose, note that $\frac{1}{2}e^{-x+5y} + \frac{1}{2}e^{3(x-y)} \ge e^{x+y}$, and hence $\rho >rsim  e^{x+y} + e^{x-y} + e^{-x+y} + e^{-(x+y)}$
	$\ge e^{|x|+|y|}$, then by assumption \eqref{eq: requirment at far field} and $0<\beta_1, \gamma_1<1$, \eqref{eq: decay at far field} follows easily. Moreover, 
	\begin{equation*}
		Ae^{(y-x)(1-\beta)}\frac{\rho_x}{\rho} = \frac{e^{(2+\beta)x - (2+\beta)y} + e^{\beta x- \beta y} + ~\textrm{negative terms}}{e^{-x+5y} + e^{3(x-y)} + e^{x-y} + e^{-x+y} + e^{-(x+y)}}.
	\end{equation*}
	The fact \eqref{eq:boundess}, 
	$Ae^{(y-x)(1-\beta)}\frac{\rho_x}{\rho} $ is bounded from above  follows from 
	\begin{equation*}
		\begin{aligned}
			e^{(2+\beta)x - (2+\beta)y}  \lesssim  e^{3(x-y)}+ 1,\quad
			e^{\beta x- \beta y}\lesssim e^{x-y}+ 1,\quad
			\rho >rsim e^{3(x-y)} + e^{x-y}+ 1.
		\end{aligned}
	\end{equation*}
	For general cases,
	we  use 
	$\rho =  e^{-x+(2k_1-1) y} + e^{k_1(x-y)} + e^{k_2(x-y)} + e^{-k_2x+k_2y} + e^{-k_2(x+y)}$ 
	to be our weight, where $k_1 =1 + \max\{2 |\gamma_1| + 2\epsilon_0, 2|\beta_1| + 2\epsilon_0\}$  and $k_2 =\max\{ |\gamma_1| + \epsilon_0, |\beta_1| + \epsilon_0\}$ for a fixed $\epsilon_0>0$. Then $\rho>rsim e^{\frac{k_1-1}{2}(x+y)} + e^{k_2(x-y)} + e^{-k_2x+k_2y} + e^{-k_2(x+y)} \ge e^{k_2(|x|+|y|)}$ by our choice of $k_2$. Then by \eqref{eq: requirment at far field}, \eqref{eq: decay at far field}  follows. The fact  \eqref{eq:boundess} can be obtained by a direct calculation similarly as above. 
	
	When $\beta < \gamma$, we need to deal with the additional term 	$e^{y-x}\theta_1 j'(\theta_1) \frac{\rho_x}{\rho}\simeq - e^{y-x} j(\theta_1) \frac{\rho_x}{\rho}$ and $e^{y-x} (\theta_2 j'(\theta_2) + j(\theta_2)) \frac{\rho_x}{\rho}\simeq - e^{y-x} j(\theta_2) \frac{\rho_x}{\rho}$. This could also be done    by a direct calculation and noting that $\theta_1, \theta_2 \ge  B_4 e^{\beta(y-x)}\left( B_5e^{(\beta-1)(y-x)} + B_6\right)^{\frac{\gamma-\beta}{\beta-1}} $ (cf. \cite[Section 6.3]{HKUHUB_Hao}). We complete the proof.
\end{proof}

	\noindent\textbf{Acknowledgements} 
	
	
	Tak Kwong Wong was
	partially supported by the HKU Seed Fund for Basic Research under the project code 201702159009,
	the Start-up Allowance for Croucher Award Recipients, and Hong Kong General Research Fund (GRF)
	grants with project numbers 17306420, 17302521, and 17315322.  
	Phillip Yam acknowledges the financial supports from HKGRF-14301321 with the project title ``General Theory for Infinite Dimensional Stochastic Control: Mean Field and Some Classical Problems'', and HKGRF-14300123 with the project title ``Well-posedness of Some Poisson-driven Mean Field Learning Models and their Applications''. The work described in this article was supported by a grant from the Germany/Hong Kong Joint Research Scheme sponsored by the Research Grants Council of Hong Kong and the German Academic Exchange Service of Germany (Reference No. G-CUHK411/23). He also thanks The University of Texas at Dallas for the kind invitation to be a Visiting Professor in Naveen Jindal School of Management during his sabbatical leave.
	
	\appendix
	\section{Technical Proofs}\label{appen:Solvability of X1 and Probabilistic representation of lambda}
	We give  the technical details of  the respective proof for  Lemma \ref{Solvability of $X_1$}, Lemma \ref{lem:Probabilistic representation for lambda}, Theorem \ref{eq:L^2([0,T];Y_1) estimate} and Claim \ref{claim: the solution map is continuous}.
	\begin{proof}[Proof of Lemma \ref{Solvability of $X_1$}]
		By applying It\^o's lemma, we have that
		\begin{equation}
			\begin{aligned}
				dX_1^{1-\beta} 
				&= (1-\beta)X_1^{-\beta}dX_1-\frac{1}{2}(1-\beta)\beta X_1^{-\beta-1} d \langle X_1\rangle_{t}\\
				&=(1-\beta)X_1^{-\beta}\left( \left[AX_2^{1-\beta}(s)X_1^{\beta}(s) -X_2(s)j\left(\psi\left(s,X_1(s),X_2(s)\right)\right) + {\epsilon}^2X_1(s)\right]ds  \right)\\
				&\quad+(1-\beta){\epsilon}X_1^{1-\beta}(s)dB_1(s)-\frac{1}{2}(1-\beta)\beta X_1^{-\beta-1}(\epsilon^2X_1^2ds)\\
				&= (1-\beta) \left[AX_2^{1-\beta}(s)-X_1^{1-\beta}(s)\frac{X_2(s)}{X_1(s)}j\left(\psi\left(s,X_1(s),X_2(s)\right)\right) + {\epsilon}^2X_1^{1-\beta}(s)\right]ds \\
				&\qquad \qquad + (1-\beta){\epsilon}X_1^{1-\beta}(s)dB_1(s)-\frac{1}{2}(1-\beta)\beta\epsilon^2 X_1^{1-\beta}ds,\\
			\end{aligned}
		\end{equation}
		where $ \langle X_1\rangle_t$ is the quadratic variation process of $X_1$.
		Since $\frac{X_2(s)}{X_1(s)}j\left(\psi\left(s,X_1(s),X_2(s)\right)\right)$ is locally bounded because of \eqref{eq:condition on j(x)} and that $\psi$ is demanded to  satisfy the lower bound  stated in Theorem \ref{thm:crucial estimate theorem} and Theorem \ref{thm:crucial estimate theorem for case beta<gamma}, and it is also continuous, then $X_1^{1-\beta}$ admits a solution by using Theorem 2.3 and 2.4 in Section 3 \cite{ikeda2014stochastic}. Similarly for $X_2$, we can use the comparison principle to show that $X_1^{1-\beta}$ is positive; this implies $X_1$ admits a positive solution. 
	\end{proof}
	
	\begin{proof}[Proof of Lemma \ref{lem:Probabilistic representation for lambda}]
		We consider  $
		Y(s) =e^{\int_{t}^{s}A\beta(X_2(\tau))^{1-\beta}(X_1(\tau))^{\beta-1}d\tau}$$\lambda(s,X_1(s),X_2(s)).
		$
		It follows from  It\^o's lemma and Equation $\eqref{eq: differential equation for $lambda$}_1$ that
		
		\begin{align*}
			dY(s)&= 
			e^{\int_{t}^{s}A{\beta}(X_2(\tau))^{1-\beta}(X_1(\tau))^{\beta-1}d\tau}A\beta(X_2(s))^{1-\beta}(X_1(s))^{\beta-1}\lambda(s,X_1(s),X_2(s))ds\\
			&\quad +e^{\int_{t}^{s}A{\beta}(X_2(\tau))^{1-\beta}(X_1(\tau))^{\beta-1}d\tau}\left[ \frac{\partial \lambda}{\partial s}ds + \frac{\partial \lambda}{\partial K} (AX_1^{\beta}X_2^{1-\beta}  -X_2j(\psi)+ {\epsilon}^2X_1)ds  \right.\\
			&\quad \left.+\frac{\partial \lambda}{\partial N} f(X_2(s))ds + \epsilon \frac{\partial \lambda}{\partial K}X_1dB_1(s) + \sigma \frac{\partial \lambda}{\partial N}X_2dB_2(s) \right]\\
			& \quad+  e^{\int_{t}^{s}A{\beta}(X_2(\tau))^{1-\beta}(X_1(\tau))^{\beta-1}d\tau}\left[ \frac{1}{2}\epsilon^2\frac{\partial ^2\lambda}{\partial K^2}X_1^2ds +\frac{1}{2}{\sigma}^2\frac{\partial ^2 \lambda}{\partial N^2}X_2^2ds \right]\\
			& =  e^{\int_{t}^{s}A{\beta}(X_2(\tau))^{1-\beta}(X_1(\tau))^{\beta-1}d\tau}\left(\epsilon \frac{\partial \lambda}{\partial K}X_1dB_1(s) + {\sigma_N}\frac{\partial \lambda}{\partial N}X_2dB_2(s)\right).
		\end{align*}
		It implies that
		\begin{equation*}
			\begin{aligned}
				Y(T)-Y(t) = \int_{t}^{T}e^{\int_{t}^{s}A{\beta}(X_2(\tau))^{1-\beta}(X_1(\tau))^{\beta-1}d\tau}\left(\epsilon \frac{\partial \lambda}{\partial K}X_1dB_1(s) + {\sigma_N}\frac{\partial \lambda}{\partial N}X_2dB_2(s)\right).
			\end{aligned}
		\end{equation*}
		Taking expectations on both sides, by also stopping time argument, so as to vanish the martingale part,
		we finally obtain
		\begin{equation*}
			\begin{aligned}
				\lambda(t,K,N)	&= \mathbb{E}[Y(t)] = \mathbb{E}[Y(T)] 
				=\mathbb{E}\left[e^{\int_{t}^{T}A\beta(X_2(\tau))^{1-\beta}(X_1(\tau))^{\beta-1}d\tau}\lambda(T,X_1(T),X_2(T))\right] \\
				&= \mathbb{E}\left[e^{\int_{t}^{T}A\beta(X_2(\tau))^{1-\beta}(X_1(\tau))^{\beta-1}d\tau}u'_2\left(\frac{X_1(T)}{X_2(T)}\right)\right].
			\end{aligned}
		\end{equation*} 
		This completes the proof. 
	\end{proof}
	
	\begin{proof}[Proof of Theorem \ref{eq:L^2([0,T];Y_1) estimate}]
		Choose a sequence of cut-off functions $\xi_m(K,N)$ on $\mathbb{R}^2_+$, such that $\xi_m(K,N)$ $\in$ $C^{\infty}_0({\mathbb{R}^2_+})$ and 
		\begin{equation*}
			\begin{aligned}
				&\textrm{(i)}~ \xi_m(K,N) = 1 ~ \textrm{on $\left[\frac{1}{m}, m \right]\times \left[\frac{1}{m},m \right]$}; \quad \textrm{(ii)} ~ 0\le \xi_m(K,N) \le 1 ~ \textrm{on $\mathbb{R}_+^2$};\\
				&  \textrm{(iii)} ~\xi_m(K,N)=0 ~ \textrm{outside $\left[\frac{1}{2m}, 2m\right]\times\left[\frac{1}{2m},2m\right]$},\quad
			\textrm{(iv)}~ 	N \left|\frac{\partial \xi_m}{\partial N} \right|\le 10, \quad ~K\left|\frac{\partial \xi_m}{\partial K}\right|\le 10.\\
			\end{aligned}
		\end{equation*}
		At this stage we only know that $\lambda$ is classical, whether $\lambda(t) \in Y_1$ is not clear. That is the reason why we need to use a cutoff function. 
		Using $\phi : =\lambda(t) \xi_m^2$ as the test function in \eqref{eq:another writing for weak solution}, we have
		\begin{equation}\label{eq:testfunc}
			\left\{
			\begin{aligned}
				&\langle\lambda_t, \lambda \xi_m^2\rangle_{Y_0, Y_1}   + B[\lambda(t),\lambda \xi_m^2;t]  =0 ~\textrm{a.e.}~  t\in [0,T],\\
				&\lambda(T,K,N) = u'_2\left(\frac{K}{N}\right),
			\end{aligned}
			\right.
		\end{equation}
		where $B[\lambda(t),\phi;t] $ is defined in \eqref{eq:bilinear form}.
		We estimate $B[\lambda(t),\lambda(t)\xi_m^2;t] $ first.  For $\beta \ge \gamma$  using Conditions \eqref{eq:condition on j(x)}  on $j(x)$ and the fact that $\psi \ge c_0^{\gamma} \left(\frac{N}{K}\right)^{\gamma}$ we have $0<j(\psi)^2\left(\frac{N}{K}\right)^2\le \frac{a_1^2}{c_0^2}$. Then using  Young's inequality, we have
		\begin{equation*}
			\begin{aligned}
				&\quad\left|\int_{\mathbb{R}_+^2 } N\frac{\partial \lambda}{\partial K}j(\psi)\lambda \xi_m^2 \varphi dKdN\right| 
				\le {\tilde{\alpha}}  \left|\left|\frac{\partial \lambda}{\partial K} \xi_m \right|\right|^2_{L^2_{1,K,\varphi}} + \frac{a_1^2}{4\tilde{\alpha}c_0^2} ||\lambda \xi_m||_{L^2_{\varphi}}^2.
			\end{aligned}
		\end{equation*}
		where ${\tilde{\alpha}}$ is a positive constant that will be determined later.
		For $\beta < \gamma$, similar to Equation \eqref{eq: estimate for term j(psi)}, we have 
		\begin{equation}\label{eq:estimate for j(psi)}
			\begin{aligned}
				j(\psi) \frac{N}{K} &\le  a_1 \frac{N}{K} \left(C_3  \left(\frac{N}{K}\right)^{\beta}\left(  C_4 \left(\frac{N}{K}\right)^{\beta-1} + C_5\right)^{ \frac{\gamma-\beta}{\beta-1 } }\right)^{-\frac{1}{\gamma}}\\
				& \le a_1 a_2^{-\frac{1}{\gamma}}e^{-\frac{1}{\gamma}b_5pT + \frac{\gamma-\beta}{\gamma(1-\beta)} bT } + 
				a_1 a_2^{-\frac{1}{\gamma}} e^{-\frac{1}{\gamma}b_5pT}
				\left(A(1-\beta)\frac{-1+ e^{bT}}{b} \right)^{ \frac{\gamma-\beta}{\gamma(1-\beta) } } \left(\frac{N}{K}\right)^{1- \frac{\beta}{\gamma} }  \\
				&\le b_7(C_f, a_1, a_2, \sigma, \epsilon, \gamma, \beta, A, T) +  \left(\frac{N}{K}\right)^{1-\beta},
			\end{aligned}
		\end{equation}
		for $p =\frac{\gamma +1 -2\beta}{1-\beta}$ and some  constant $b_7>0$. 
		For  the later use, we also assume that $b_7$ is large enough such that $j(\psi) \frac{N}{K} \le b_7 + \left(\frac{N}{K}\right)^{2-2\beta}$.
		We have 
		\begin{equation*}
			\begin{aligned}
				&\quad\left|\int_{\mathbb{R}_+^2 } N\frac{\partial \lambda}{\partial K}j(\psi)\lambda \xi_m^2 \varphi dKdN\right| 
				\le {\tilde{\alpha}}  \left|\left|\frac{\partial \lambda}{\partial K} \xi_m\right|\right|^2_{L^2_{1,K,\varphi}} + \frac{b_7^2}{2\tilde{\alpha}} ||\lambda \xi_m||_{L^2_{\varphi}}^2 + \frac{1}{2\tilde{\alpha}} ||\lambda \xi_m||_{L^2_{\tilde{\varphi}}}^2.
			\end{aligned}
		\end{equation*}
		Using Young's inequality again, we also have 	for any $\tilde{\alpha}>0$, 
		\begin{equation*}
			\begin{aligned}
				&	\left|\int_{\mathbb{R}_+^2 } A N^{1-\beta}K^{\beta}\frac{\partial \lambda}{\partial K}\lambda \xi_m^2 \varphi dKdN\right| 
				\le A{\tilde{\alpha}}  \left|\left|\frac{\partial \lambda}{\partial K}(t) \xi_m\right|\right|^2_{L^2_{1,K,\varphi}} + \frac{A}{8{\tilde{\alpha}}} ||\lambda\xi_m||^2_{L^2_{\tilde{\varphi}}} + \frac{AC(\beta)}{8{\tilde{\alpha}}} ||\lambda \xi_m||^2_{L^2_{\varphi}};\\
				&\epsilon^2 \left|\int_{\mathbb{R}_+^2 }K\frac{\partial \lambda}{\partial K}\lambda \xi_m^2 \varphi dKdN\right|
				\le   \epsilon^2 \tilde{\alpha}  \left|\left|\frac{\partial \lambda}{\partial K}(t)  \xi_m\right|\right|^2_{L^2_{1,K,\varphi}} + \frac{\epsilon^2}{4{\tilde{\alpha}}} ||\lambda \xi_m||^2_{L^2_{\varphi}}.
			\end{aligned}
		\end{equation*}
		Using Young's inequality and Assumption \ref{Assum:drift rate in population model}, we have
		\begin{equation*}
			\begin{aligned}
				&\quad\left|\int_{\mathbb{R}_+^2 } f(N)\frac{\partial \lambda}{\partial N}\lambda \xi_m^2 \varphi dKdN\right| 
				\le {\tilde{\alpha}}  \left|\left|\frac{\partial \lambda}{\partial N} \xi_m \right|\right|^2_{L^2_{1,N,\varphi}} + \frac{C_f^2}{4\tilde{\alpha}} ||\lambda \xi_m||_{L^2_\varphi}^2;\\
				& \left|\int_{\mathbb{R}_+^2} A\beta N^{1-\beta}K^{\beta-1}\lambda^2 \xi_m^2 \varphi dKdN\right| 
				\le A\beta\left(\tilde{\alpha}||\lambda \xi_m ||_{L^2_\varphi}^2+ \frac{1}{8\tilde{\alpha}}||\lambda \xi_m||_{ L^2_{\tilde{\varphi}}}^2 + \frac{C(\beta)}{8\tilde{\alpha}}||\lambda \xi_m||_{ L^2_{\varphi}}^2\right).
			\end{aligned}
		\end{equation*}
		Next, we estimate the remaining terms of $B[\lambda(t),\lambda(t) \xi_m^2;t]$ as follows. We note that $
		\left|K\frac{\partial \varphi}{\partial K}\right| = \left|\frac{4K^{10}-6}{K^{10}+1}\right|\varphi\le 6\varphi$,
		by Young's inequality, we have
		\begin{equation*}
			\begin{aligned}
				&\quad\left|\int_{\mathbb{R}_+^2 }\frac{1}{2}{\epsilon}^2 \frac{\partial \lambda}{\partial K} \lambda \xi_m^2 \frac{\partial }{\partial K}(K^2\varphi)dKdN\right|\le \int_{\mathbb{R}_+^2 }\left|\frac{1}{2}{\epsilon}^2 \frac{\partial \lambda}{\partial K}\cdot 8K\lambda \xi_m^2 \varphi \right| dKdN\\
				& \le  4\epsilon^2 \left({\tilde{\alpha}}  \left|\left|\frac{\partial \lambda}{\partial K} \xi_m \right|\right|^2_{L^2_{1,K,\varphi}}dt + \frac{1}{4\tilde{\alpha}} ||\lambda \xi_m||_{L^2_\varphi}^2\right).
			\end{aligned}
		\end{equation*}
		Similarly, 
		$\left|N\frac{\partial \varphi}{\partial N} \right|= \left|\frac{8N^{10}-2}{N^{10}+1}\right|\varphi< 8\varphi$,
		by Young's inequality again, we have
		\begin{equation*}
			\begin{aligned}
				&\quad\left| \int_{\mathbb{R}_+^2 }\frac{1}{2}{{{\sigma}^2}}\frac{\partial \lambda}{\partial N} \lambda \xi_m^2\frac{\partial}{\partial N}\left(N^2\varphi \right)dKdN\right| 
				\le {5{\sigma}^2}\left({\tilde{\alpha}}  \left|\left|\frac{\partial \lambda}{\partial N} \xi_m\right|\right|^2_{L^2_{1,N,\varphi}} + \frac{1}{4\tilde{\alpha}} ||\lambda \xi_m||_{L^2_\varphi}^2\right).
			\end{aligned}
		\end{equation*}
		Using the properties of the cutoff functions $\xi_m$, we have
		\begin{equation}\label{eq:estimate for second order term1}
			\begin{aligned}
				&-\int_{\mathbb{R}_+^2 }\frac{1}{2}{\epsilon}^2\frac{\partial \lambda }{\partial K} \frac{\partial \lambda \xi_m^2}{\partial K}K^2\varphi dKdN
				=-\int_{\mathbb{R}_+^2 }\frac{1}{2}{\epsilon}^2\frac{\partial \lambda }{\partial K} \left(\frac{\partial \lambda }{\partial K} \xi_m^2 + 2\lambda  \xi_m \frac{\partial \xi_m}{\partial K}\right) K^2\varphi dKdN\\
				\le & -\int_{\mathbb{R}_+^2 }\frac{1}{2}{\epsilon}^2 \left(\frac{\partial \lambda }{\partial K}\right)^2\xi_m^2  K^2\varphi dKdN + 100{\tilde{\alpha}} \epsilon^2  \left|\left|\frac{\partial \lambda}{\partial K} {\xi_m}\right|\right|^2_{L^2_{1,K,\varphi}}dt + \frac{\epsilon^2}{4\tilde{\alpha}} 
				||\lambda  ||_{L^2_\varphi}^2.
			\end{aligned}
		\end{equation}
		\begin{equation}\label{eq:estimate for second order term2}
			\begin{aligned}
				&-\int_{\mathbb{R}_+^2 }\frac{1}{2}{\sigma}^2\frac{\partial \lambda }{\partial N} \frac{\partial \lambda \xi_m^2}{\partial N}N^2\varphi dKdN\\
				\le & -\int_{\mathbb{R}_+^2 }\frac{1}{2}{\sigma}^2 \left(\frac{\partial \lambda }{\partial N}\right)^2\xi_m^2  N^2\varphi dKdN + 100{\tilde{\alpha}} \sigma^2  \left|\left|\frac{\partial \lambda}{\partial N} {\xi_m}\right|\right|^2_{L^2_{1,N,\varphi}}dt + \frac{\sigma^2}{4\tilde{\alpha}} ||\lambda ||_{L^2_\varphi}^2.
			\end{aligned}
		\end{equation}
		Applying all of the above  estimates to \eqref{eq:testfunc}, we finally obtain
		\begin{equation}
			\begin{aligned}
				 B[\lambda(t),\lambda(t) \xi_m^2;t] 
				&\le -\int_{\mathbb{R}_+^2 }\frac{1}{2}{\epsilon}^2 \left(\frac{\partial \lambda}{\partial K}\right)^2  \xi_m^2 K^2 \varphi dKdN  -\int_{\mathbb{R}_+^2 }\frac{1}{2}{{\sigma}}^2 \left(\frac{\partial \lambda}{\partial N}\right)^2 \xi_m^2  N^2 \varphi dKdN \\
				&\quad+C(A,\epsilon,{\sigma})\cdot\tilde{\alpha} \left( \left|\left|\frac{\partial \lambda}{\partial K} \xi_m \right|\right|^2_{L^2_{1,K,\varphi}}+  \left|\left|\frac{\partial \lambda}{\partial N} \xi_m^2  \right|\right|^2_{L^2_{1,N,\varphi}}\right) \\
					&\quad + C(\tilde{\alpha}, a_1, c_0, b_7, A,C_f,\beta,\epsilon,{\sigma})||\lambda||_{L^2_{\varphi}}^2
			 + C(\tilde{\alpha}, A, \beta) ||\lambda||_{L^2_{\tilde{\varphi}}}^2.
			\end{aligned}
		\end{equation}
		Now, we choose $\tilde{\alpha}$ suitably small such that $
		C(A,\epsilon,{\sigma})\cdot\tilde{\alpha} \le \frac{1}{4}\min{(\epsilon^2,{\sigma}^2)},
		$
		then we  get 
		\begin{equation*}
			\begin{aligned}
				 B[\lambda(t),\lambda(t) \xi_m^2 ;t] 
				&\le -\int_{\mathbb{R}_+^2 }\frac{1}{4}{\epsilon}^2 \left(\frac{\partial \lambda}{\partial K}\right)^2 \xi_m^2 K^2 \varphi dKdN  -\int_{\mathbb{R}_+^2 }\frac{1}{4}{{\sigma}}^2 \left(\frac{\partial \lambda}{\partial N}\right)^2 \xi_m^2  N^2 \varphi dKdN \\
				&\quad+C( a_1, c_0, b_7, A,C_f,\beta,\epsilon, {\sigma})||\lambda||_{L^2_{\varphi}}^2 + C(A, \beta, \epsilon, {\sigma}) ||\lambda||_{L^2_{\tilde{\varphi}}}^2\\
				&\le -\frac{1}{4}\min{(\epsilon^2,{\sigma}^2)}\int_{\mathbb{R}_+^2 } \left( \left(\frac{\partial \lambda}{\partial K}\right)^2 \xi_m^2 K^2 + \left(\frac{\partial \lambda}{\partial N}\right)^2 \xi_m^2  N^2 \right)\varphi dKdN \\
				&\quad +C(a_1, c_0, b_7, A,C_f,\beta,\epsilon,{\sigma})||\lambda||_{L^2_{\varphi}}^2 + C(A, \beta, \epsilon, {\sigma}) ||\lambda||_{L^2_{\tilde{\varphi}}}^2.
			\end{aligned}
		\end{equation*}	
		We note that by Lions–Magenes lemma  $
		\frac{1}{2}\frac{d}{dt}||\lambda(t) \xi_m||_{L^2_{\varphi}}^2=\langle\lambda_t,\lambda\xi_m^2\rangle_{Y_0, Y_1},
		$
		and hence,
		\begin{equation*}
			\begin{aligned}
				0&=\frac{d}{dt}||\lambda(t) \xi_m||_{L^2_{\varphi}}^2 +B[\lambda(t),\lambda(t) \xi_m^2 ;t]  \\
				&\le \frac{d}{dt}||\lambda(t) \xi_m||_{L^2_{\varphi}}^2-\frac{1}{4}\min{(\epsilon^2,{\sigma}^2)}\int_{\mathbb{R}_+^2 } \left( \left(\frac{\partial \lambda}{\partial K}\right)^2 \xi_m^2 K^2 + \left(\frac{\partial \lambda}{\partial N}\right)^2 \xi_m^2  N^2 \right)\varphi dKdN \\
				&\quad +C(a_1, c_0, b_7, A,C_f,\beta,\epsilon,{\sigma})||\lambda||_{L^2_{\varphi}}^2 + C(A, \beta, \epsilon, {\sigma}) ||\lambda||_{L^2_{\tilde{\varphi}}}^2.
			\end{aligned}
		\end{equation*}
		Integrating from $t$ to $T$ with respect to time $s$, we obtain  
		\begin{equation}
			\begin{aligned}
				&||\lambda(t) \xi_m||_{L^2_{\varphi}}^2 + \frac{1}{4}\min{(\epsilon^2,{\sigma}^2)} \int_{t}^{T}\int_{\mathbb{R}_+^2 } \left( \left(\frac{\partial \lambda}{\partial K}\right)^2 \xi_m^2 K^2 + \left(\frac{\partial \lambda}{\partial N}\right)^2 \xi_m^2  N^2 \right)\varphi dKdN ds \\
				\le& \left|\left|u'_2\left(\frac{K}{N}\right) \xi_m \right|\right|_{L^2_{\varphi}}^2 + C(a_1, c_0, b_7, A,C_f,\beta,\epsilon,{\sigma}) \int_{t}^{T}||\lambda||_{L^2_{\varphi}}^2 ds 
			+ C(A, \beta, \epsilon, {\sigma}) \int_{t}^{T} ||\lambda||_{L^2_{\tilde{\varphi}}}^2 ds.
			\end{aligned}
		\end{equation}
		Now we can pass to the limit as $m\to \infty$, and then let $t=0$ to obtain 
		\begin{equation*}
			\begin{aligned}
				&\quad\frac{1}{4}\min{(\epsilon^2,{\sigma}^2)} \int_{0}^{T}\int_{\mathbb{R}_+^2 } \left( \left(\frac{\partial \lambda}{\partial K}\right)^2  K^2 + \left(\frac{\partial \lambda}{\partial N}\right)^2   N^2 \right)\varphi dKdN ds\\
				&\le \left|\left|u'_2\left(\frac{K}{N}\right) \right|\right|_{L^2_{\varphi}}^2 + C(a_1, c_0, b_7, A,C_f,\beta,\epsilon,{\sigma}) \int_{0}^{T}||\lambda||_{L^2_{\varphi}}^2 ds  + C(A, \beta, \epsilon, {\sigma}) \int_{0}^{T} ||\lambda||_{L^2_{\tilde{\varphi}}}^2 ds.
			\end{aligned}
		\end{equation*}
		Finally, by combining Lemma \ref{lem: { L^2_varphi} estimate} and noting the dependence of $c_0$, $b_7$ and $C_7$ on the given parameters, we obtain Equation \eqref{eq:L^2([0,T];Y_1) estimate}.
	\end{proof}
	
	\begin{proof}[Proof of Claim \ref{claim: the solution map is continuous}]\label{proof: proof of claim the solution map is continuous}
		We must  show that $\lambda$ is a weak solution to the linearized problem  \eqref{eq: differential equation for $lambda$} with the input $\psi$. For this purpose, the weak convergence of $\{\lambda_n\}$ in  $\mathcal{H}^1_{\varphi}$ will suffice, that is we consider the sequence $\{{\lambda}_n\}$ converges to $\lambda$ in  $L^2([0,T];Y_1)$ weakly and $\left\{\frac{\partial {\lambda}_n}{\partial t}\right\}$ converges to $\frac{\partial \lambda}{\partial t}$ in $L^2([0,T];Y_0)$ in weak*-sense.  
		We have for any $\phi \in Y_1$ and $\phi_1(t)\in C_c^{\infty}(0,T)$ (so that $\phi \phi_1(t)  \in L^2([0,T];Y_1)$) that 
		\begin{equation}
			\left\{
			\begin{aligned}
				&\int_{0}^{T}\left\langle\frac{\partial \lambda_n}{\partial t},\phi\right\rangle_{Y_0, Y_1} \phi_1(t) dt +\int_{0}^{T} \int_{\mathbb{R}_+^2 }\left(- N\frac{\partial \lambda_n}{\partial K}j(\psi_n)  +  A N^{1-\beta}K^{\beta}\frac{\partial \lambda_n}{\partial K} +{\epsilon}^2K\frac{\partial \lambda_n}{\partial K}\right.\\
				&\qquad\left.+ f(N)\frac{\partial \lambda_n}{\partial N} +  A\beta N^{1-\beta}K^{\beta-1}\lambda_n\right) \phi  \phi_1(t) \varphi dKdNdt \\
				& ~~-\int_{0}^{T} \int_{\mathbb{R}_+^2 }\frac{1}{2}{\epsilon}^2\frac{\partial \lambda_n}{\partial K}\left(\frac{\partial \phi}{\partial K}K^2\varphi + \phi\frac{\partial (K^2\varphi)}{\partial K}\right) \phi_1(t) dKdNdt  \\
				&~~- \int_{0}^{T} \frac{1}{2}{{\sigma}}^2\frac{\partial \lambda_n}{\partial N}\left(\frac{\partial \phi}{\partial N}N^2\varphi + \phi\frac{\partial (N^2\varphi)}{\partial N}\right) \phi_1(t) dKdNdt 
				= 0 
				\\
				&\lambda_n(T,K,N)  = u_2'\left(\frac{K}{N}\right).
			\end{aligned}
			\right.
		\end{equation}
		By the  weak convergence of $\{\lambda_n\}$ to $\lambda$ in $L^2([0,T];Y_1)$, one can check quickly   that
		all the terms except for $- N\frac{\partial \lambda_n}{\partial K}j(\psi_n) \phi  \phi_1(t) \varphi$ will converge accordingly.
		For example, 
		as 
		\begin{equation*}
			\begin{aligned}
				&\quad \int_{0}^{T} \int_{\mathbb{R}_+^2 } A N^{1-\beta}K^{\beta}\frac{\partial \lambda}{\partial K} \phi  \phi_1(t) \varphi dKdNdt 
				\\
				&\le   A\left( \int_{0}^{T} \int_{\mathbb{R}_+^2 } \left(\frac{\partial \lambda}{\partial K} \right)^2 K^2 \varphi dKdNdt \right)^{\frac{1}{2}} \left( \int_{0}^{T} \int_{\mathbb{R}_+^2 } N^{2-2\beta} K^{2\beta -2} \phi^2  \phi_1^2(t)  \varphi dKdNdt \right)^{\frac{1}{2}} \\
				&\le    A||\phi \phi_1||_{L^2([0,T];Y)} ||\lambda||_{L^2([0,T];Y_1)},
			\end{aligned}
		\end{equation*}
the left-hand side above can be viewed as a linear functional on $L^2([0,T];Y_1)$,  		then by the weak convergence of $\{\lambda_n\}$ in $L^2([0,T];Y_1)$,  we have, as $n \to \infty$,
		\begin{equation*}
			\begin{aligned}
				\int_{0}^{T} \int_{\mathbb{R}_+^2 } A N^{1-\beta}K^{\beta}\frac{\partial \lambda_n}{\partial K} \phi  \phi_1(t) \varphi dKdNdt \to \int_{0}^{T} \int_{\mathbb{R}_+^2 } A N^{1-\beta}K^{\beta}\frac{\partial \lambda}{\partial K} \phi  \phi_1(t) \varphi dKdNdt.
			\end{aligned}
		\end{equation*}
		We are left to show, as $n \to \infty$,
		\begin{equation}\label{eq:convergence of nonlinear term}
			\int_{0}^{T} \int_{\mathbb{R}_+^2 }- N\frac{\partial \lambda_n}{\partial K}j(\psi_n)\phi\varphi \phi_1(t) dKdNdt \to 		\int_{0}^{T} \int_{\mathbb{R}_+^2 }- N\frac{\partial \lambda}{\partial K}j(\psi)\phi\varphi \phi_1(t) dKdN dt.  
		\end{equation}
		If so, then
		$\int_{0}^{T} \left(\langle\lambda_t, \phi\rangle_{Y_0, Y_1}  +  B[\lambda(t),\phi;t] \right)\phi_1(t) dt  =0$, 
		for all $\phi_1(t) \in C_c^{\infty}(0,T)$, and hence, 
		$\langle\lambda_t, \phi\rangle_{Y_0, Y_1}   + B[\lambda(t),\phi;t]   =0, ~ \textrm{for a.e. $t \in [0,T]$}.$
		Without loss of generality, we may assume that $\psi_n \to \psi$ a.e.. Then 
		\begin{equation}\label{eq:mainerror}
			\begin{aligned}
				&\left| \int_{0}^{T}  \int_{\mathbb{R}_+^2 }- N\frac{\partial \lambda_n}{\partial K}j(\psi_n) 
				\phi \phi_1(t) \varphi dKdNdt -	\int_{0}^{T}  \int_{\mathbb{R}_+^2 }- N\frac{\partial \lambda}{\partial K}j(\psi) \phi \phi_1(t) \varphi dKdN dt \right|\\
				= &\left| \int_{0}^{T}  \int_{\mathbb{R}_+^2} Nj(\psi)\left(\frac{\partial \lambda}{\partial K}-\frac{\partial \lambda_n}{\partial K}\right) \phi \phi_1(t) \varphi + N\left(j(\psi)- j(\psi_n)\right)\frac{\partial\lambda_n}{\partial K}\phi \phi_1(t) \varphi dKdN dt\right|\\
				\le& \left| \int_{0}^{T}  \int_{\mathbb{R}_+^2} Nj(\psi)\left(\frac{\partial \lambda}{\partial K}-\frac{\partial \lambda_n}{\partial K}\right) \phi \phi_1(t) \varphi dKdN dt\right| 
				+ \left|\int_{0}^{T}  \int_{\mathbb{R}_+^2}N\left(j(\psi)- j(\psi_n)\right)\frac{\partial\lambda_n}{\partial K}\phi \phi_1(t)  \varphi dKdN dt\right|.
			\end{aligned}
		\end{equation}
		For the first term, when $\beta \ge \gamma$,  using the fact that  $j(\psi)\frac{N}{K} \le \frac{a_1}{c_0}$, we have  
		\begin{equation*}
			\begin{aligned}
				&\quad \left|\int_{0}^{T}  \int_{\mathbb{R}_+^2} Nj(\psi)\frac{\partial \lambda}{\partial K} \phi \phi_1(t)  \varphi dKdNdt\right| \\
				&\le \frac{a_1}{c_0}\left(\int_{0}^{T}  \int_{\mathbb{R}_+^2}\left(\frac{\partial \lambda}{\partial K}\right)^2K^2 \varphi dKdNdt\right)^{\frac{1}{2}} \left(\int_{0}^{T}  \int_{\mathbb{R}_+^2}\phi^2 \phi^2_1(t) \varphi dKdNdt\right)^{\frac{1}{2}},
			\end{aligned}
		\end{equation*}
		then due to the weak convergence of $\{\lambda_n\}$ to $\lambda$, $\int_{0}^{T}  \int_{\mathbb{R}_+^2} Nj(\psi)\left(\frac{\partial \lambda}{\partial K}-\frac{\partial \lambda_n}{\partial K}\right) \phi \phi_1(t) \varphi dKdN dt$ tends to 0. 
		When $\beta<\gamma$, the same conclusion holds by noting that using \eqref{eq:estimate for j(psi)}, we have 
		\begin{equation*}
			\begin{aligned}
				&\quad\left|\int_{0}^{T} \int_{\mathbb{R}_+^2 } N j(\psi) \frac{\partial \lambda}{\partial K} \phi \phi_1(t) \varphi dKdN dt \right| 
				\le \int_{0}^{T} \int_{\mathbb{R}_+^2 } \left(b_7+  \left(\frac{N}{K}\right)^{1-\beta}\right) \left|K\frac{\partial \lambda}{\partial K} \phi \phi_1(t) \right| \varphi dKdN dt \\ 
				& \le  b_7\left(\int_{0}^{T}  \int_{\mathbb{R}_+^2}\left(\frac{\partial \lambda}{\partial K}\right)^2K^2 \varphi dKdNdt\right)^{\frac{1}{2}} \left(\int_{0}^{T}  \int_{\mathbb{R}_+^2}\phi^2 \phi^2_1(t) \varphi dKdNdt\right)^{\frac{1}{2}}\\
				& \quad + \left(\int_{0}^{T}  \int_{\mathbb{R}_+^2}\left(\frac{\partial \lambda}{\partial K}\right)^2K^2 \varphi dKdNdt\right)^{\frac{1}{2}} \left(\int_{0}^{T}  \int_{\mathbb{R}_+^2}\phi^2 \phi^2_1(t) \left(\frac{N}{K}\right)^{2-2\beta} \varphi dKdNdt\right)^{\frac{1}{2}}\\
				&\leq C ||\lambda||_{L^2([0,T]; Y_1)}||\phi \phi_1(t)||_{L^2([0,T]; Y)}.
			\end{aligned}
		\end{equation*}
		
		For the second term in the right-hand side of \eqref{eq:mainerror}, we first note that by the mean value theorem,
		there exists a real number $\theta$ between $\psi_n$ and $\psi$ such that $\left|j(\psi)- j(\psi_n)\right|=|j'(\theta)| |\psi-\psi_n|$. 
		When $\beta\ge \gamma$, $\theta \ge c_0^{\gamma} \left(\frac{N}{K}\right)^{\gamma}$ as $\psi_n$, $\psi$$\ge c_0^{\gamma} \left(\frac{N}{K}\right)^{\gamma}$. Using conditions \eqref{eq:another condition on j(x)} on $j'(x)$, we have  $\left|j(\psi)- j(\psi_n)\right| \le  (\tilde{a}_1 \theta^{-1-\frac{1}{\gamma}} + \tilde{\tilde{a}}_1) |\psi-\psi_n| \le\tilde{a}_1 {c_0}^{-1-\gamma}N^{-1-\gamma}K^{1+\gamma}|\psi-\psi_n| + {\tilde{\tilde{a}}}_1|\psi-\psi_n|$. Therefore, 
		\begin{equation*}
			\begin{aligned}
				&\quad\left| \int_{0}^{T} \int_{\mathbb{R}_+^2}N\left(j(\psi)- j(\psi_n)\right)\frac{\partial\lambda_n}{\partial K}\phi \phi_1(t) \varphi dKdNdt\right|\\
				&\le \tilde{a}_1 {c_0}^{-1-\gamma} \left(\int_{0}^{T} \int_{\mathbb{R}_+^2}\left(\frac{\partial {\lambda}_n}{\partial K}\right)^2K^2\varphi dKdNdt\right)^{\frac{1}{2}}
				\cdot \left(\int_{0}^{T} \int_{\mathbb{R}_+^2}{N^{-2\gamma}}{K^{2\gamma}}|\psi-\psi_n|^2\phi^2 \phi^2_1(t)  \varphi dKdNdt\right)^{\frac{1}{2}}\\
				& \quad+  {\tilde{\tilde{a}}}_1 \left(\int_{0}^{T} \int_{\mathbb{R}_+^2}\left(\frac{\partial {\lambda}_n}{\partial K}\right)^2K^2\varphi dKdNdt\right)^{\frac{1}{2}}
				\cdot \left(\int_{0}^{T} \int_{\mathbb{R}_+^2}{N^{2}}{K^{-2}}|\psi-\psi_n|^2\phi^2 \phi^2_1(t) \varphi dKdNdt\right)^{\frac{1}{2}}.
			\end{aligned}
		\end{equation*}
		By weak convergence (or by Theorem \ref{thm:L^2([0,T];Y_1) estimate}), $\int_{0}^{T} \int_{\mathbb{R}_+^2}\left(\frac{\partial {\lambda}_n}{\partial K}\right)^2K^2\varphi dK$ $dNdt$ is  uniformly bounded and 
		\begin{equation*}
			\begin{aligned}
				&\quad\int_{0}^{T} \int_{\mathbb{R}_+^2}{N^{-2\gamma}}{K^{2\gamma}}|\psi-\psi_n|^2\phi^2 \phi^2_1(t) \varphi dKdNdt
				\le 2\int_{\mathbb{R}_+^2}{N^{-2\gamma}}{K^{2\gamma}}(\psi^2+\psi_n^2)\phi^2 \phi^2_1(t) \varphi dKdNdt\\
				&\le 8\int_{0}^{T} \int_{\mathbb{R}_+^2}{N^{-2\gamma}}{K^{2\gamma}}\left(C_1^2\left(\frac{N}{K}\right)^{2\gamma} + C_2^2\left(\frac{N}{K}\right)^{2\beta}\right)\phi^2 \phi^2_1(t) \varphi dKdNdt\\
				&= 8\int_{0}^{T} \int_{\mathbb{R}_+^2}\left(C_1^2+ C_2^2\left(\frac{N}{K}\right)^{2\beta-2\gamma}\right)\phi^2 \phi^2_1(t)\varphi dKdNdt
				< (8C_1^2 + 8C(\beta,\gamma)C_2^2)||\phi\phi_1(t) ||_{L^2([0,T];Y)}^2.
			\end{aligned}
		\end{equation*}
		In the above, the last inequality follows from the fact that $0 \le 2\beta-2\gamma < 2+2\beta$
		and hence, there exists a constant $C(\beta, \gamma)$ such that $\left(\frac{N}{K}\right)^{2\beta-2\gamma}\varphi$ $<\left(C(\beta, \gamma) + \left(\frac{N}{K}\right)^{2+2\beta}\right)$$\varphi$$ \le C(\beta, \gamma) (\varphi + \tilde{\varphi})$.
		Similarly 
		\begin{equation*}
			\begin{aligned}
				& \quad\int_{0}^{T} \int_{\mathbb{R}_+^2}{N^{2}}{K^{-2}}|\psi-\psi_n|^2\phi^2 \phi^2_1(t)\varphi dKdNdt
				\le 2\int_{\mathbb{R}_+^2}{N^{2}}{K^{-2}}(\psi^2+\psi_n^2)\phi^2 \phi^2_1(t)\varphi dKdNdt\\
				& \le  8\int_{0}^{T} \int_{\mathbb{R}_+^2}{N^{2}}{K^{-2}}\left(C_1^2\left(\frac{N}{K}\right)^{2\gamma} + C_2^2\left(\frac{N}{K}\right)^{2\beta}\right)\phi^2 \phi^2_1(t)\varphi dKdNdt\\
				&= 8\int_{0}^{T} \int_{\mathbb{R}_+^2}\left(C_1^2 \left(\frac{N}{K}\right)^{2+2\gamma}+ C_2^2\left(\frac{N}{K}\right)^{2+2\beta}\right)\phi^2 \phi^2_1(t)\varphi dKdNdt
				< 8(C(\beta, \gamma)C_1^2+C_2^2)||\phi\phi_1(t)||_{L^2([0,T];Y)}^2.
			\end{aligned}
		\end{equation*}		
		Now we can  apply Lebesgue's dominated  convergence theorem to conclude that $\int_{0}^{T} \int_{\mathbb{R}_+^2}$  $N\left(j(\psi)- j(\psi_n)\right)$ $\frac{\partial\lambda_n}{\partial K}$$\phi \phi_1(t) $$\varphi $$dKdNdt$$\to 0$.
		
		 If $\beta < \gamma$, $\left|j(\psi)- j(\psi_n)\right|\le \tilde{a}_1 \left(C_3 \left(\frac{N}{K}\right)^{\beta}\left(  C_4 \left(\frac{N}{K}\right)^{\beta-1} + C_5 \right)^{ \frac{\gamma-\beta}{\beta-1 } } \right)^{-\frac{\gamma_+1}{\gamma}} |\psi-\psi_n|$ $ + {\tilde{\tilde{a}}}_1|\psi-\psi_n|$. We only need to consider the first term; we have
		\begin{equation*}
			\begin{aligned}
				&~\left(C_3 \left(\frac{N}{K}\right)^{\beta}\left(  C_4 \left(\frac{N}{K}\right)^{\beta-1} + C_5 \right)^{ \frac{\gamma-\beta}{\beta-1 } }  \right)^{-\frac{\gamma_+1}{\gamma}} \\
				&= C_3^{-\frac{\gamma_+1}{\gamma}} \left(\frac{N}{K}\right)^{-\frac{\beta(\gamma +1)}{\gamma}}
				\left(	\left(  C_4 \left(\frac{N}{K}\right)^{\beta-1} + C_5 \right)^{ \frac{\gamma-\beta}{\gamma (1-\beta) } }  \right)^{\gamma+1}\\
				 &\leq C_3^{-\frac{\gamma_+1}{\gamma}} \left(\frac{N}{K}\right)^{-\frac{\beta(\gamma +1)}{\gamma}}
				\left(	C_4^{\frac{\gamma -\beta}{\gamma (1-\beta)}}\left(\frac{N}{K}\right)^{-\frac{\gamma -\beta}{\gamma}}  + C_5^{\frac{\gamma -\beta}{\gamma (1-\beta)}} \right)^{\gamma+1}\\
				&\leq C \left(\left(\frac{N}{K}\right)^{-\gamma-1} + \left(\frac{N}{K}\right)^{-\frac{\beta(\gamma +1)}{\gamma}}\right)
				\le  C'+ C'\left(\frac{N}{K}\right)^{-\gamma-1},
			\end{aligned}
		\end{equation*}
		for some constants $C$  and $C'$ depending on $C_3$, $C_4$, $C_5$, $\beta$ and $\gamma$. Then, we can use 
		the same argument  above for the case $\beta\geq \gamma$ to show that
		$\int_{0}^{T} \int_{\mathbb{R}_+^2}N\left(j(\psi)- j(\psi_n)\right)\frac{\partial\lambda_n}{\partial K}$ $\phi \phi_1(t) \varphi dK$$dNdt$ $\to$ 0
		by noting that $\psi_n$, $\psi$ $\le C_6 \left(\frac{N}{K}\right)^{\gamma}$ and $\tilde{\varphi} = (N^{4-4\beta}K^{4\beta-4}  + N^{2+2\gamma}K^{-2-2\gamma})\varphi$  when $\beta < \gamma$. We have proven \eqref{eq:convergence of nonlinear term}, hence  $\lambda$ is a weak solution to the linearized problem \eqref{eq: differential equation for $lambda$} with the input $\psi$.
	\end{proof}
	
	\bibliographystyle{plain} 
	\bibliography{ref} 
	
	
	

\end{document}